\newcommand\A{\mathbb{A}}
\newcommand\B{\mathbb{B}}
\newcommand\E{\mathbb{E}}
\newcommand\N{\mathbb{N}}
\newcommand\Q{\mathbb{Q}}
\newcommand\R{\mathbb{R}} 
\newcommand\Z{\mathbb{Z}}
\newcommand\FC{\mathcal{F}}
\newcommand\JC{\mathcal{J}}
\newcommand\OC{\mathcal{O}}
\newcommand\UF{\mathfrak{U}}
\newcommand\XF{\mathfrak{X}}
\newcommand\ab\allowbreak 
\newcommand{\AAinf}{\operatorname{\mathbb{A}_{inf}}}
\newcommand{\AAC}{\operatorname{\mathbb{A}_{cris}}}
\newcommand{\Bdrp}{\operatorname{\mathbb{B}^+_{dR}}}
\newcommand{\Bdr}{\operatorname{\mathbb{B}_{dR}}}
\newcommand{\Bst}{\operatorname{\mathbb{B}_{st}}}
\newcommand{\Spec}{\operatorname{Spec}}
\newcommand{\Spf}{\operatorname{Spf}}
\newcommand{\holim}{\operatorname{holim}}
\newcommand{\Kos}{\operatorname{Kos}}
\newcommand{\Syn}{\operatorname{Syn}}
\newcommand{\syn}{\operatorname{syn}}
\newcommand{\Gal}{\operatorname{Gal}}
\newcommand{\Lie}{\operatorname{Lie}}
\newcommand{\im}{\operatorname{Im}}
\newcommand{\Spa}{\operatorname{Spa}}
\newcommand{\dR}{\operatorname{dR}}
\newcommand{\HK}{\operatorname{HK}}
\newcommand{\cris}{\operatorname{cris}}
\newcommand{\et}{\operatorname{\text{\'et}}}
\newcommand{\proet}{\operatorname{\text{pro\'et}}}
\newcommand{\hocolim}{\operatorname{hocolim}}
\newcommand{\Sp}{\operatorname{Sp}}
\newcommand{\tr}{\operatorname{tr}}
\newcommand{\FM}{\operatorname{FM}}
\theoremstyle{definition} 
\newtheorem{Def}{D\'efinition}[section]
\theoremstyle{plain} 
\newtheorem{Pro}[Def]{Proposition} 
\newtheorem{Lem}[Def]{Lemme} 
\newtheorem{The}[Def]{Th\'eor\`eme}
\newtheorem{Cor}[Def]{Corollaire}
\theoremstyle{remark}
\newtheorem{Rem}[Def]{Remarque}
\title{Morphismes de p\'eriodes et cohomologie syntomique}
\author{Sally Gilles}
\date{}
\begin{document}

\maketitle

\begin{abstract}
On commence par donner la version g\'eom\'etrique d'un r\'esultat de Colmez et Nizio{\l} \'etablissant un th\'eor\`eme de comparaison entre
les cycles proches $p$-adiques arithm\'etiques et la cohomologie des faisceaux syntomiques.
La construction locale de cet isomorphisme utilise la th\'eorie des $(\varphi, \Gamma)$-modules et s'obtient en
r\'eduisant l'isomorphisme de p\'eriode \`a un th\'eor\`eme de comparaison entre des
cohomologies d'alg\`ebres de Lie. En appliquant ensuite la m\'ethode des "coordonn\'ees plus g\'en\'erales" utilis\'ee par Bhatt-Morrow-Scholze, on construit un isomorphisme global.  
On peut notamment d\'eduire de ce th\'eor\`eme la conjecture semi-stable de
Fontaine-Jannsen. Ce r\'esultat a
\'egalement \'et\'e prouv\'e par (entre autres) Tsuji, via l'application de Fontaine-Messing, et
par \v{C}esnavi\v{c}ius et Koshikawa, qui g\'en\'eralisent la preuve de la conjecture cristalline de
Bhatt, Morrow et Scholze. On utilise l'application
construite pr\'ec\'edemment pour montrer que les morphismes de p\'eriode de Tsuji et de
\v{C}esnavi\v{c}ius-Koshikawa sont \'egaux.
\end{abstract}

\tableofcontents

\section{Introduction}

Les morphismes de p\'eriode permettent de d\'ecrire la cohomologie \'etale $p$-adique de vari\'et\'es alg\'ebriques \`a l'aide de formes diff\'erentielles et rendent ainsi possible son calcul. Plusieurs constructions de tels morphismes existent. Dans \cite{Tsu99}, par exemple, Tsuji \'etablit que l'application de Fontaine-Messing (d\'ecrite dans \cite{FM87},  \cite{Kat94}) d\'efinit un quasi-isomorphisme, en degr\'es born\'es, entre les cycles proches $p$-adiques et la cohomologie \'etale $p$-adique de la vari\'et\'e rigide associ\'ee \`a un sch\'ema \`a r\'eduction semi-stable . Tsuji d\'eduit de ce r\'esultat une preuve de la conjecture semi-stable de Fontaine-Jannsen qui relie la cohomologie $p$-adique de la vari\'et\'e \`a sa cohomologie de Hyodo-Kato (telle qu'elle est d\'efinie dans \cite{Bei2013}). Une autre preuve de la conjecture semi-stable, g\'en\'eralisant la preuve de la conjecture cristalline de Bhatt-Morrow-Scholze (voir \cite{BMS16}), a \'et\'e donn\'ee par \v{C}esnavi\v{c}ius-Koshikawa dans \cite{CK17}. Ce travail comporte deux parties : dans un premier temps, on donne une preuve diff\'erente du r\'esultat de Tsuji en construisant un nouveau morphisme de p\'eriode (qui est la version g\'eom\'etrique de celui de Colmez-Nizio{\l} dans \cite{CN2017}). On utilise ensuite cette construction pour montrer que l'application de Fontaine-Messing et le morphisme de Bhatt-Morrow-Scholze sont \'egaux.

\subsection{\'Enonc\'e des principaux th\'eor\`emes} 

D\'ecrivons plus pr\'ecis\'ement les r\'esultats obtenus. On fixe $p$ un nombre premier. Soit $\OC_K$ un anneau de valuation discr\`ete complet, de corps de fraction $K$ de caract\'eristique $0$ et de corps r\'esiduel parfait $k$ de caract\'eristique $p$. On note $\varpi$ une uniformisante de $K$, $\OC_F:= W(k)$ l'anneau des vecteurs de Witt associ\'e \`a $k$ et $F= \text{Frac}(\OC_F)$ (avec $e:= [K : F]$). Soit $\overline{K}$ la cl\^oture alg\'ebrique de $K$ et $\OC_{\overline{K}}$ la cl\^oture int\'egrale de $\OC_K$ dans $\overline{K}$. Enfin, on note $C$ le corps complet alg\'ebriquement clos $\widehat{\overline{K}}$ et $\OC_C$ son anneau de valuation.

On note $\OC_K^{\times}$ le sch\'ema formel $\Spf(\OC_K)$ muni de la log-structure donn\'ee par son point ferm\'e et $\OC_F^0$ le sch\'ema formel $\Spf(W(k))$ muni de la log-structure $(\N \to \OC_F, 1 \mapsto 0)$. On consid\`ere $\XF$ un log-sch\'ema formel \`a r\'eduction semi-stable, log-lisse sur $\OC_K^{\times}$. On note $Y:=\XF_k$ sa fibre sp\'eciale, $\overline{\XF}:= \XF_{\OC_{C}}$ et $\overline{Y}:= \XF_{\overline{k}}$.   Soit $\mathcal{S}_n(r)_{\XF}$ dans $D(Y_{\text{\'et}}, \Z/p^n \Z)$ \footnote{O\`u $D(Y_{\text{\'et}},A)$ est la cat\'egorie d\'eriv\'ee des faisceaux de $A$-modules sur le site \'etale de $Y$, pour $A$ un anneau.}   et $\mathcal{S}_n(r)_{\overline{\XF}}$ dans $D((\overline{Y})_{\text{\'et}}, \Z/p^n \Z)$ les faisceaux syntomiques arithm\'etique et g\'eom\'etrique. On note $\XF_{K, \tr}$ (respectivement $\XF_{C, \tr}$) le lieu de $\XF_K$ (respectivement de $\XF_C$) o\`u la structure logarithmique est triviale, et $i$ et $j$ (respectivement $\overline{i}$ et $\overline{j}$) les morphismes \footnote{Les morphismes $j$ et $\overline{j}$ n'existent pas au niveau des espaces, mais on dispose bien de morphismes de topo\"i $Rj_*$ et $R\overline{j}_*$.}    $Y \hookrightarrow \XF$ et $\XF_{K,\rm{tr}}\dashrightarrow \XF$ (respectivement $\overline{Y} \hookrightarrow \overline{\XF}$ et $\XF_{C,\rm{tr}} \dashrightarrow \overline{\XF}$). Enfin, soit $\Z_p(r)':= \frac{1}{a(r)! p^{a(r)}}\Z_p(r)$, $r=a(r)(p-1)+b(r)$ avec $0 \le b(r) \le p-2$.

On a le th\'eor\`eme suivant :  

\begin{The}
\label{Main the1}
Pour tout $0 \le k \le r$, il existe un $p^{N}$-isomorphisme \footnote{On appelle $p^{N}$-isomorphisme, un morphisme dont le noyau et le conoyau sont tu\'es par $p^N$.} 
\[ \alpha^{0}_{r,n} : \mathcal{H}^k(\mathcal{S}_n(r)_{\overline{\XF}}) \to \overline{i}^{*}R^k \overline{j}_{*} \Z/p^n(r)'_{\XF_{C,\tr}} \]
o\`u $N$ est un entier qui d\'epend de $p$ et de $r$, mais pas de $\XF$ ni de $n$. 
\end{The}

Notons 
$ \alpha^{\rm FM}_{r,n} : \mathcal{S}_n(r)_{\XF} \to i^{*}Rj_{*} \Z/p^n(r)'_{\XF_{K,\rm{tr}}} $ (respectivement $\mathcal{S}_n(r)_{\overline{\XF}} \to \overline{i}^{*}R\overline{j}_{*} \Z/p^n(r)'_{\XF_{C,\rm{tr}}}$) l'application de Fontaine-Messing arithm\'etique (respectivement g\'eom\'etrique). Dans \cite[Th. 0.4]{Tsu99}, Tsuji montre le th\'eor\`eme \eqref{Main the1} dans le cas o\`u $\XF$ est un sch\'ema \`a r\'eduction semi-stable sur $\Spec(\OC_K)$ en prouvant que l'application de Fontaine-Messing fournit le $p^N$-isomorphisme voulu. Pour cela, il commence par se ramener au cas $n=1$ par d\'evissage. En multipliant le faisceau syntomique (respectivement le faisceau de cycle proche) par $t$ avec $t= \log([\varepsilon])$, o\`u $\varepsilon \in \OC_C^{\flat}$ est un syst\`eme de racine $p$-i\`eme de l'unit\'e (respectivement par $\zeta_p$), il se ram\`ene au cas $k=r$. Il utilise ensuite une description des faisceaux $\mathcal{H}^r(\mathcal{S}_n(r)_{\overline{\XF}})$ via des applications symboles qu'il compare avec celle de $\overline{i}^{*}R^k \overline{j}_{*} \Z/p^n(r)'_{\XF_{C,{\rm tr}}}$ donn\'ee par Bloch-Kato (\cite{BK86}) dans le cas de la bonne r\'eduction, puis \'etendue \`a la r\'eduction semi-stable par Hyodo (\cite{Hyo88}).

Dans \cite{CN2017}, Colmez-Nizio{\l} prouvent la version arithm\'etique du th\'eor\`eme ci-dessus c'est-\`a-dire : 

\begin{The}\cite[Th. 1.1]{CN2017}
 \label{qis arithmetique1}
Pour tout $0 \le k \le r$, l'application 
\[ \alpha^{\FM}_{r,n} : \mathcal{H}^k(\mathcal{S}_n(r)_{\XF}) \to i^{*}R^k j_{*} \Z/p^n(r)'_{\XF_{K,\rm{tr}}} \]
est un $p^{N}$-isomorphisme pour une constante $N$ qui d\'epend de $e$, $p$ et de $r$, mais pas de $\XF$ ni de $n$. 
\end{The}

Pour cela, ils construisent un $p^{N}$-quasi-isomorphisme local qu'ils comparent ensuite \`a la d\'efinition locale de l'application de Fontaine-Messing. 
Le th\'eor\`eme \ref{Main the1} peut ensuite se d\'eduire de \ref{qis arithmetique1} en passant \`a la limite sur les extensions finies $L$ de $K$. Une nouvelle preuve du r\'esultat de Colmez-Nizio{\l} a r\'ecemment \'et\'e donn\'ee dans \cite{AMMN20} dans le cas o\`u $\XF$ est propre et lisse sur $\OC_K$. 

On prouve ici le th\'eor\`eme \ref{Main the1} directement. On commence par montrer un r\'esultat local semblable \`a celui de Colmez-Nizio{\l}. On consid\`ere $R$ la compl\'etion $p$-adique d'une alg\`ebre \'etale sur $R_{\square} := \OC_{C} \{ X_1, \dots, X_d, \frac{1}{X_1  \cdots X_a}, \frac{ \varpi}{X_{a+1} \cdots X_{a+b}}\}$ (pour $a$, $b$, $c$ et $d$ des entiers tels que $a+b+c=d$). On note $D=\{ X_{a+b+1} \cdots X_d=0 \}$ le diviseur \`a l'infini et $\overline{R} [ \frac{1}{p} ]$ l'extension maximale de $R[\frac{1}{p}]$ non ramifi\'ee en dehors de $D$. Soit $G_R= \Gal(\overline{R}[\frac{1}{p}]/R[\frac{1}{p}])$
\footnote{On utilise ici la notation $\Gal$ pour d\'esigner le groupe des automorphismes de $\overline{R}[\frac{1}{p}]$ qui fixent $R[\frac{1}{p}]$.}.
On rel\`eve ensuite $R$ en une alg\`ebre $R_{{\rm inf}}^+$ sur $R_{\text{inf}, \square}^{+}:= \A_{\text{inf}} \{ X, \frac{1}{X_1  \cdots X_a}, \frac{ [\varpi^{\flat}]}{X_{a+1} \cdots X_{a+b}} \} $ (o\`u $\varpi^{\flat} \in \OC_{C}^{\flat}$ est un syst\`eme de racines $p$-i\`emes de $\varpi$), on d\'efinit\footnote{Ici le produit tensoriel est compl\'et\'e pour la topologie $p$-adique.} $R_{{\rm cris}}^+:=R_{{\rm inf}}^+ \widehat{\otimes}_{\A_{{\rm inf}}} \AAC$. Ici $\Spf(R)$ est muni de la log-structure donn\'ee par la fibre sp\'eciale et le diviseur \`a l'infini et $\Spf(\AAC)$, $\Spf(\AAinf)$ sont munis des log-structures usuelles d\'ecrites dans \cite[\textsection1.6]{CK17} (voir la section 2.2.2 ci-dessous). On s'int\'eresse au complexe syntomique donn\'e par la fibre homotopique : 
\[ \Syn(R_{{\rm cris}}^+,r):=[F^r \Omega_{R_{{\rm cris}}^+}^{\bullet} \xrightarrow{p^r- \varphi} \Omega_{R_{{\rm cris}}^+}^{\bullet}] \]
o\`u $\varphi$ est le Frobenius sur $R_{{\rm cris}}^+$ qui \'etend celui de $R_{{\rm inf}, \square}^{+}$ donn\'e par le Frobenius usuel sur $\A_{{\rm inf}}$ et par $\varphi(X_i)=X_i^p$. 
Le complexe $\Syn(R_{{\rm cris}}^+,r)$ calcule alors la cohomologie syntomique de $\Spf(R)$. On note $\Syn(R_{{\rm cris}}^+,r)_n$ sa r\'eduction modulo $p^n$.
On a le r\'esultat local suivant : 

 \begin{The}
 \label{syn/galois1}
 Il existe des $p^{N}$-quasi-isomorphismes : 
\[ \alpha^0_r : \tau_{\le r} \Syn(R_{{\rm cris}}^+,r) \xrightarrow{\sim} \tau_{\le r} R \Gamma( G_R, \Z_p(r)) \]
\[ \alpha^0_{r,n} : \tau_{\le r} \Syn(R_{{\rm cris}}^+,r)_n \xrightarrow{\sim} \tau_{\le r} R \Gamma( G_R, \Z/p^n(r)) \]
o\`u $N$ est une constante qui ne d\'epend que de $r$. 
\end{The}

Les applications ci-dessus sont notamment utilis\'ees par Colmez-Nizio{\l} dans \cite{CN4} pour donner une interpr\'etation "g\'eom\'etrique" des morphismes de p\'eriodes, \emph{i.e.} voir les th\'eor\`emes de comparaison entre les cohomologies \'etale et syntomique comme des th\'eor\`emes de comparaison entre les $C$-points d'espaces de Banach-Colmez. Cela leur permet ensuite d'\'etendre le th\'eor\`eme de comparaison semi-stable \`a des vari\'et\'es rigides analytiques plus g\'en\'erales dans \cite{CN5}.    

Si $\XF$ est propre sur $\OC_K$, on d\'eduit de \ref{Main the1} que $\alpha^{0}_{r}$ induit un quasi-isomorphisme Frobenius et $G_K$-\'equivariant, qui pr\'eserve la filtration apr\`es tensorisation par $\B_{\rm dR}$ : 
\begin{align*}
\widetilde{\alpha}^0 :  & H^i_{\text{\'et}}( \XF_{C}, \Q_p) \otimes_{\Q_p} \B_{\text{st}} \cong H^i_{\rm{HK}}(\XF) \otimes_{F} \B_{\text{st}} \\
 & H^i_{\text{\'et}}( \XF_{C}, \Q_p) \otimes_{\Q_p} \B_{\mathrm{dR}} \cong H^i_{\rm dR}( \XF_K) \otimes_{K} \B_{\mathrm{dR}}
\end{align*}
o\`u $R \Gamma_{\rm{HK}}(\XF) \cong R \Gamma_{\rm{cris}}(\XF_k/W(k)^0)_{\Q}$. 
Par les r\'esultats de Tsuji et de Colmez-Nizio{\l}, le morphisme $\alpha^{\rm FM}_{r}$ induit, lui aussi, un isomorphisme $\widetilde{\alpha}^{\rm{FM}}$ entre les m\^emes objets. Enfin, \v{C}esnavi\v{c}ius-Koshikawa dans \cite{CK17}, donnent une troisi\`eme construction d'un tel isomorphisme $\widetilde{\alpha}^{\rm CK}$ (qui \'etend la construction de Bhatt-Morrow-Scholze dans \cite{BMS16}). Dans la seconde partie de cet article, on montre : 

\begin{The}
\label{thmcomp1}
Les morphismes $\widetilde{\alpha}^{\rm FM}$ et $\widetilde{\alpha}^{0}$ d'une part et $\widetilde{\alpha}^{\rm CK}$ et $\widetilde{\alpha}^{0}$  d'autre part sont \'egaux. En particulier, $\widetilde{\alpha}^{\rm FM}=\widetilde{\alpha}^{\rm CK}$ .
\end{The}

Ce dernier th\'eor\`eme \'etend les r\'esultats de Nizio{\l} dans \cite{Niziol09} et \cite{Niziol18} dans lesquels elle prouve l'\'egalit\'e entre les morphismes de p\'eriode de \cite{Fal02}, \cite{Bei2013} et \cite{Niz08}. La preuve de Nizio{\l} fait intervenir la $h$-topologie et sa m\'ethode ne s'applique pas au morphisme $\widetilde{\alpha}^{\rm CK}$, puisque dans leur d\'efinition, \v{C}esnavi\v{c}ius-Koshikawa n'autorisent pas de diviseur horizontal. L'unicit\'e est ici obtenue en comparant "\`a la main" les constructions locales des morphismes de p\'eriode. Cela est possible car, localement, $\widetilde{\alpha}^{\rm CK}$ est construit en utilisant des complexes de Koszul similaires \`a ceux qui interviennent dans la construction de $\widetilde{\alpha}^{0}$. La principale diff\'erence est que la d\'efinition de \cite{CK17} utilise le foncteur de d\'ecalage $L\eta_{\mu}$ (\emph{i.e.} les termes des complexes sont multipli\'es par des puissances de $\mu=[\varepsilon]-1$) alors que dans le cas de $\widetilde{\alpha}^{0}$ les complexes sont multipli\'es par des puissances de $t=\log([\varepsilon])$. 

La d\'efinition de Fontaine-Messing et celle de Bhatt-Morrow-Scholze ont chacune leurs propres avantages. Le morphisme de p\'eriode construit par Bhatt-Morrow-Scholze et \v{C}esnavi\v{c}ius-Koshikawa est, par exemple, plus adapt\'e pour travailler au niveau int\'egral. Ce r\'esultat d'unicit\'e permet ainsi d'obtenir pour l'application de Bhatt-Morrow-Scholze, les propri\'et\'es d\'ej\`a connues pour l'application de Fontaine-Messing (notamment, la compatibilit\'e aux applications symboles).

\subsection{Preuve du th\'eor\`eme \ref{syn/galois1}}

Nous expliquons dans un premier temps comment construire le $p^{N}$-quasi-isomorphisme local. On part du complexe $\Syn(R_{{\rm cris}}^+,r)$. On commence par "changer la convergence" des \'el\'ements de $R_{\rm cris}^+$. Pour $0 < u <v$, on d\'efinit l'anneau $\A^{[u]}$ (respectivement $\A^{[u,v]}$) comme la compl\'etion $p$-adique de $\A_{{\rm inf}}[ \frac{[\beta]}{p}]$ (respectivement de $\A_{{\rm inf}}[\frac{p}{[\alpha]}, \frac{[\beta]}{p}]$) pour $\beta$ un \'el\'ement de $\OC_C^{\flat}$ de valuation $\frac{1}{u}$ (respectivement $\alpha$ et $\beta$ des \'el\'ements de $\OC_C^{\flat}$ de valuation $\frac{1}{v}$ et $\frac{1}{u}$). Pour des valeurs de $u$ et de $v$ convenables, on d\'eduit de la suite exacte 
\begin{equation}
\label{SEAcris}
 0 \to \Z_p t^{\{ r\}} \to F^r\AAC \xrightarrow{1 - \frac{\varphi}{p^r}} \AAC \to 0
 \end{equation}
o\`u $t^{\{r\}}:= \frac{t^r}{a(r)! p^{a(r)}}$, des suites $p^{6r}$-exactes : 
\begin{equation}
\label{SEAuAuv}
\begin{aligned}
& 0 \to \Z_p(r) \to F^r \A^{[u]} \xrightarrow{p^r - \varphi} \A^{[u]} \to 0 \\
& 0 \to \Z_p(r) \to F^r \A^{[u,v]} \xrightarrow{p^r - \varphi} \A^{[u,v]} \to 0.
\end{aligned}
 \end{equation}
On d\'efinit\footnote{Les produits tensoriels sont compl\'et\'es pour la topologie $p$-adique.}  $R^{[u]}:= R_{{\rm inf}}^+ \widehat{\otimes}_{\A_{{\rm inf}}} \A^{[u]}$ et $R^{[u,v]}:= R_{{\rm inf}}^+ \widehat{\otimes}_{\A_{{\rm inf}}} \A^{[u,v]}$ et pour $S \in \{ R^{[u]}, R^{[u,v]} \}$, on pose $C(S,r):=[F^r \Omega_{S}^{\bullet} \xrightarrow{p^r- \varphi} \Omega_{S}^{\bullet}]$. Les suites exactes \eqref{SEAuAuv} ci-dessus permettent alors de montrer les $p^{N_1r}$-quasi-isomorphismes \footnote{pour un $N_1 \in \N$.} :
\[ \Syn(R_{{\rm cris}}^+,r) \cong C(R^{[u]},r) \cong C(R^{[u,v]},r). \]
En plongeant $R^{[u]}$ et $R^{[u,v]}$ dans des anneaux de p\'eriodes $\A_{\overline{R}}^{[u]}$ et $\A_{\overline{R}}^{[u,v]}$ (construits de mani\`ere similaire \`a $\A^{[u]}$ et $\A^{[u,v]}$ en utilisant $\overline{R}$ au lieu de $\OC_C$), on les munit d'une action du groupe de Galois $G_R$. On note $\A_{R}^{[u]}$ et $\A_{R}^{[u,v]}$ leurs images respectives par ce plongement. On se place ensuite dans une base de $\Omega^1_R$ pour \'ecrire le complexe $C(R^{[u,v]},r)$ sous la forme d'un complexe de Koszul $\Kos(\varphi, \partial, F^r \A_R^{[u,v]})$. La multiplication par $t= \log([\varepsilon]) \in \AAC$ (o\`u $\varepsilon \in \OC_C^{\flat}$ est un syst\`eme de racine $p$-i\`eme de l'unit\'e) permet de se d\'ebarasser de la filtration en transformant l'action des diff\'erentielles en une action de l'alg\`ebre de Lie du groupe $\Gamma_R = \Gal(R_{\infty}[\frac{1}{p}]/R[\frac{1}{p}])\cong \Z_p^d$ avec $R_{\infty}:= (\varinjlim_m R_m)^{\wedge_p}$ et $$R_m:=\Big(\OC_C \{ X^{\frac{1}{p^m}}, \frac{1}{(X_1  \cdots X_a)^{\frac{1}{p^m}}}, \frac{ \varpi^{\frac{1}{p^m}}}{(X_{a+1} \cdots X_{a+b})^{\frac{1}{p^m}}} \}\Big) \widehat{\otimes}_{\OC_C} R.$$ C'est \`a cette \'etape qu'il est n\'ecessaire de tronquer les complexes en degr\'e $r$ pour garder des isomorphismes. Finalement, on a la suite de $p^{N_2r}$-quasi-isomorphismes \footnote{pour un $N_2 \in \N$.} suivante : 
\begin{equation}
\label{qisdiffLieGamma}
\tau_{\le r} \Kos(\varphi, \partial, F^r\A_R^{[u,v]}) \xrightarrow{\sim}  \tau_{\le r}\Kos( \varphi, \Lie \Gamma_R, \A_R^{[u,v]}(r) ) \xleftarrow{\sim} \tau_{\le r}\Kos(\varphi, \Gamma_R, \A_R^{[u,v]}(r)) 
\end{equation}
Soit $R_{{\rm inf}}$ la compl\'etion $p$-adique de $R_{{\rm inf}}^+ [ \frac{1}{[\varpi^{\flat}]}]$ et $\A_R$ son image dans $\A_{\rm inf}(\overline{R}^{\flat})$. On montre ensuite qu'on a un quasi-isomorphisme $\Kos(\varphi, \Gamma_R, \A_{R}^{[u,v]}) \cong \Kos(\varphi, \Gamma_R, \A_{R})$. Pour cela, on passe par l'interm\'ediaire d'un anneau $\A_R^{(0,v]+}$ (construit \`a partir de l'anneau $\A^{(0,v]+}:= \{ x=\sum_{n \in \N} [x_n] p^n \in \A_{\rm inf} \; | \; x_n \in \OC_C^{\flat}, \; v_{\OC_C^{\flat}}(x_n) + \frac{n}{v} \ge 0 \text{ et } v_{\OC_C^{\flat}}(x_n) + \frac{n}{v} \to + \infty \text{ quand } n \to \infty \}$) et on utilise $\psi$, un inverse \`a gauche du Frobenius $\varphi$. Enfin, des arguments de descente presque \'etale et de d\'ecompl\'etion permettent d'obtenir un quasi-isomorphisme : 
\[ \Kos(\varphi, \Gamma_R, \A_{R}) \xrightarrow{\sim} \Kos(\varphi, \Gamma_R, \A_{R_{\infty}}) \xrightarrow{\sim} \Kos(\varphi, G_R, \A_{\overline{R}})\]
et la suite exacte $0 \to \Z_p \to \A_{\overline{R}} \xrightarrow{1- \varphi} \A_{\overline{R}} \to 0$ montre que le complexe $\Kos(\varphi, G_R, \A_{\overline{R}})$ calcule la cohomologie galoisienne $R\Gamma(G_R, \Z_p)$.
En r\'esum\'e le morphisme construit est le suivant : 
\begin{equation}
\label{qisconstruction}
\begin{split} R\Gamma_{\text{syn}}(R,r) \xrightarrow{\sim} \Syn(R_{\text{cris}}^{+}, r) \xrightarrow{\sim} C(R^{[u,v]}, r) \xrightarrow{\sim} \Kos( \varphi, \partial, F^r \A_R^{[u,v]}) \xrightarrow{\sim} \Kos( \varphi, \Gamma_R,\A_R^{[u,v]}(r))  \\ \xrightarrow{\sim} \Kos( \varphi, \Gamma_R,\A_R(r)) \xleftarrow{\sim} R \Gamma(G_R, \Z_p(r)).
\end{split}
\end{equation}

\begin{Rem}
La construction du morphisme local dans le cas arithm\'etique trait\'e dans \cite{CN2017} est similaire \`a celle d\'ecrite ci-dessus. Il existe cependant quelques diff\'erences que nous soulignons ici. Dans le cas o\`u $\Spf(R)$ est d\'efini sur $\OC_K$, on ne dispose plus du quasi-isomorphisme $R\Gamma_{{\rm cris}}(R/ \OC_F) \cong R\Gamma_{{\rm cris}}(R/ \AAC)$ qui nous permet de travailler avec l'anneau $R_{{\rm cris}}^+$. Au lieu de cela, Colmez et Nizio{\l} \'ecrivent $R$ comme le quotient d'une $\OC_F$-alg\`ebre log-lisse $R_{\varpi}^+$. Cela n\'ecessite l'introduction d'une variable suppl\'ementaire $X_0$ : on d\'efinit $R_{\varpi}^+$ comme le relev\'e \'etale de $R$ sur $\OC_{F} \{ X_0, X_1, \dots, X_d, \frac{1}{X_1  \cdots X_a}, \frac{ X_0}{X_{a+1} \cdots X_{a+b}}\}$. Le r\^ole de l'anneau $\A_{{\rm inf}}$ (respectivement $\A^{[u]}$, $\A^{[u,v]}$) est jou\'e par l'anneau $r_{\varpi}^+:= \OC_F \llbracket X_0 \rrbracket$ (respectivement par l'anneau $r_{\varpi}^{[u]}$ des fonctions analytiques sur $F$ qui convergent sur le disque $v_p(X_0) \ge \frac{u}{e}$, par l'anneau $r_{\varpi}^{[u,v]}$ des fonctions analytiques sur $F$ qui convergent sur la couronne $ \frac{v}{p} \ge v_p(X_0) \ge \frac{u}{e}$). On perd, dans le cas g\'eom\'etrique, cette interpr\'etation des anneaux en termes de s\'eries de Laurent. 

Travailler directement sur $C$ permet aussi de simplifier la preuve lors du passage au complexe de Koszul. Dans le cas arithm\'etique, il existe deux mani\`eres diff\'erentes de plonger $R_{\varpi}^{[u]}$, $R_{\varpi}^{[u,v]}$ dans $\A_{\overline{R}}^{[u]}$, $\A_{\overline{R}}^{[u,v]}$ : le plongement "de Kummer", qui envoie $X_0$ sur $[\varpi^{\flat}]$ et le plongement "cyclotomique" par lequel $X_0$ s'envoie sur un \'el\'ement $\pi_K \in \A_{{\rm inf}}(\overline{K}^{\flat})$ tel que $\bar{\pi}_K=(\varpi_{K_m})_m$, o\`u $(\varpi_{K_m})_m$ est une suite compatible d'uniformisantes des extensions cyclotomiques $K_m$ de $K$. Les quasi-isomorphismes \eqref{qisdiffLieGamma} sont obtenus en utilisant le plongement cyclotomique, mais le Frobenius qui intervient alors diff\`ere de celui apparaissant dans la d\'efinition du complexe syntomique (qui correspond, lui, au plongement de Kummer).

Une autre diff\'erence entre la preuve arithm\'etique et la preuve g\'eom\'etrique se trouve dans les propri\'et\'es de l'inverse \`a gauche $\psi$ du Frobenius. Dans \cite{CN2017}, par construction, il est topologiquement nilpotent sur le quotient $R_{\varpi}^{[u,v]}/R_{\varpi}^{[u]}$. Colmez et Nizio{\l} utilisent notamment cette propri\'et\'e pour montrer le quasi-isomorphisme $C(R_{\varpi}^{[u]},r) \cong C(R_{\varpi}^{[u,v]},r)$. Ce n'est plus le cas lorsqu'on travaille avec l'anneau $R^{[u,v]}$ mais en montrant que $\psi-1$ est surjective sur $\A^{[u,v]}/\A^{[u]}$, on obtient que quasi-isomorphisme voulu. 

Enfin, dans \cite{CN2017}, du fait de cette variable suppl\'ementaire $X_0$, l'alg\`ebre de Lie de $\Gamma_R$ n'est pas commutative. Cette difficult\'e n'apparait pas dans le cas g\'eom\'etrique.     
\end{Rem}

\subsection{Quasi-isomorphisme global}

Si $\XF$ est un sch\'ema formel \`a r\'eduction semi-stable et propre sur $\OC_K$, on montre que le morphisme local du th\'eor\`eme \ref{syn/galois1} se globalise en un $p^{N}$-quasi-isomorphisme : 
\[ \alpha^0_{r} : \tau_{\le r} R \Gamma_{ {\rm syn}}( \XF_{\OC_{C}},r) \to \tau_{\le r} R \Gamma_{\text{\'et}}(\XF_C, \Z_p(r)). \]
Pour faire cela, on applique la m\'ethode utilis\'ee par Bhatt-Morrow-Scholze dans \cite{BMS16} (et \v{C}esnavi\v{c}ius-Koshikawa dans \cite[\textsection 5]{CK17}). L'id\'ee est de construire pour $\XF= \Spf(R)$ assez petit, une version fonctorielle des complexes intervenant en \eqref{qisconstruction}. 

Pour $\XF$ un sch\'ema formel \`a r\'eduction semi-stable, il existe une base $(\Spf(R))$ de $\XF_{\OC_C,\text{\'et}}$ telle que pour chaque $R$ on ait des ensembles finis non vides $\Sigma$ et $\Lambda$ tels qu'il existe une immersion ferm\'ee   
$$
\Spf(R) \to \Spf(R_{\Sigma}^{\square}):= \Spf(\OC_C \{ X_{\sigma}^{\pm 1} \; | \;  \sigma \in \Sigma \})
$$
et pour chaque $\lambda$, une application \'etale 
$$
\Spf(R) \to \Spf(R_{\lambda}^{\square}):= \Spf(\OC_C \{ X_{\lambda, 1}, \dots , X_{\lambda, d}, \frac{1}{X_{\lambda, 1} \cdots X_{\lambda, a_{\lambda}}}, \frac{\varpi}{X_{\lambda,a_{\lambda}+1} \cdots X_{\lambda,d} }\}).
$$
On note $\Spf(R_{\Sigma, \Lambda}^{\square}):= \Spf(R_{\Sigma}^{\square}) \times \prod_{\lambda \in \Lambda} \Spf(R_{\lambda}^{\square})$. On construit alors une application 
\[ \alpha_{r, \Sigma, \Lambda} : \tau_{\le r} \Kos( \varphi, \partial, F^r R_{\Sigma, \Lambda}^{PD} ) \to \tau_{\le r} R \Gamma( G_{R}, \Z_p(r)) \]
de la m\^eme fa\c{c}on qu'en \eqref{qisconstruction} en rempla\c{c}ant $R_{\rm cris}^+$ par $R_{\Sigma, \Lambda}^{PD}$ d\'efini sur $\A_{\text{cris}}(R_{\Sigma, \Lambda}^{\square})$. On d\'eduit ensuite du th\'eor\`eme \eqref{syn/galois1} que $\alpha_{r, \Sigma, \Lambda}$ est un $p^{N}$-quasi-isomorphisme. On obtient des complexes fonctoriels en prenant la limite sur l'ensemble des donn\'ees $(\Sigma, \Lambda)$ comme ci-dessus. Un argument de descente cohomologique permet finalement d'obtenir le $p^{N}$-quasi-isomorphisme $\alpha^0_{r}$. 

\begin{Rem}
Dans \cite{CN2017}, Colmez et Nizio{\l} construisent le $p^{N}$-quasi-isomorphisme localement, puis comparent ce morphisme \`a l'application de Fontaine-Messing, ce qui permet de montrer que l'application de Fontaine-Messing globale est un $p^{N}$-quasi-isomorphisme. Le m\^eme argument que celui que nous utilisons permettrait de montrer que le morphisme arithm\'etique admet, lui aussi, une d\'efinition globale. Il suffit de d\'efinir des anneaux $R_{\varpi, \Sigma, \Lambda}^+$, $R_{\varpi, \Sigma, \Lambda}^{[u]}$, ... de la m\^eme fa\c{c}on que nous le faisons ici.  
\end{Rem}

\subsection{Preuve du th\'eor\`eme \ref{thmcomp1}} 

On commence par comparer les morphismes : \[ \widetilde{\alpha}^0, \widetilde{\alpha}^{\rm FM} : H^i_{\text{\'et}}( \XF_{C}, \Q_p) \otimes_{\Q_p} \B_{\text{st}} \cong H^i_{\text{HK}}( \XF) \otimes_{F} \B_{\text{st}}.\] Pour cela, il suffit de montrer l'\'egalit\'e des morphismes \[ \alpha^0_r, \alpha^{\rm FM}_r : \tau_{\le r} R \Gamma_{ {\rm syn}}( \XF_{\OC_{C}},r) \to \tau_{\le r} R \Gamma_{\text{\'et}}(\XF_C, \Z_p(r)).\] 

D\'efinissons l'anneau $\E_{\overline{R}}^{\rm PD}$ (respectivement $\E_{\overline{R}}^{[u,v]}$) comme la log-PD-enveloppe compl\'et\'ee de $R_{\rm cris}^+ \otimes_{\AAC} \AAC(\overline{R}) \to \AAC(\overline{R})$ (respectivement de $R^{[u,v]} \otimes_{\A^{[u,v]}} \A_{\overline{R}}^{[u,v]} \to \A_{\overline{R}}^{[u,v]}$). On note $C(G_R,M)$ le complexe des cochaines continues de $G_R$ \`a valeurs dans $M$. Localement, $\alpha^{\rm FM}_r$ est la compos\'ee des applications :
\begin{equation*}
\Kos(\partial, \varphi, F^r R_{\rm cris}^+) \to C(G_R, \Kos(\partial, \varphi, F^r \E_{\overline{R}}^{\rm PD}))   \xleftarrow{\sim} C(G_R, \Kos(\varphi, F^r \A_{\rm cris}(\overline{R}))) \xleftarrow{\sim} C(G_R,\Z_p(r))
\end{equation*}
o\`u la deuxi\`eme fl\`eche est un quasi-isomorphisme via le lemme de Poincar\'e : 
\[ F^r \A_{\rm cris}(\overline{R}) \xrightarrow{\sim} F^r \Omega_{ F^r \E_{\overline{R}}^{\rm PD}}^{\bullet} \]
et la troisi\`eme fl\`eche se d\'eduit de la suite exacte \eqref{SEAcris}.
L'application $\alpha^0_r$ est donn\'ee par la compos\'ee \eqref{qisconstruction}. On montre qu'il existe un diagramme commutatif : 
{\footnotesize \begin{equation}
\hspace{-1cm} \label{DiagComp1}
\xymatrix{
K_{\varphi, \partial}( F^r R_{\rm cris}^+) \ar[rd] \ar[rdd] \ar[r] & \ar[d] C_G(K_{\varphi, \partial} (F^r \E_{\overline{R}}^{\rm PD})) & \ar[d]  \ar[l] C_G(K_{\varphi}(F^r \A_{\rm cris}(\overline{R})) & \ar[d] \ar[dr] \ar[l] C_G(\Z_p(r)) & \\
& C_G(K_{\varphi, \partial}( F^r \E_{\overline{R}}^{[u,v]})) &  \ar[l] C_G(K_{\varphi} (F^r \A_{\overline{R}}^{[u,v]}) & C_G(K_{\varphi}( \A_{\overline{R}}^{(0,v]+}(r)) \ar[l] \ar[r] & C_G(K_{\varphi}( F^r \A_{\overline{R}})) \\
&K_{\varphi, \partial}(F^r \A_R^{[u,v]}) \ar[r]_{\tau_{\le r}}\ar[u] &\ar[u] K_{\varphi, \Gamma}(  \A^{[u,v]}_R(r)) & \ar[l] K_{\Gamma, \varphi}( \A^{(0,v]+}_R(r))) \ar[r] \ar[u] &  K_{\varphi, \Gamma}( \A_R(r))  \ar[u] 
}
\end{equation}}
o\`u, pour all\'eger, $C_G(\cdot)$ d\'esigne les complexes de cocha\^ines et $K_{\varphi, \partial}(\cdot)$ la fibre homotopique de l'application $1-\varphi$ sur les complexes de Koszul. On en d\'eduit l'\'egalit\'e locale. On prouve ensuite que le diagramme \eqref{DiagComp1} est toujours valable pour les anneaux construits \`a partir des coordonn\'ees $(\Sigma, \Lambda)$ et en passant \`a la limite, on obtient l'\'egalit\'e globale. 

Il reste \`a comparer $\widetilde{\alpha}^0$ et $\widetilde{\alpha}^{\rm CK}$. Pour les m\^emes raisons, il suffit de montrer l'\'egalit\'e des morphismes locaux. Celle-ci d\'ecoule de leurs constructions. Le morphisme $\widetilde{\alpha}^0$ s'obtient par la compos\'ee \footnote{Si $M$ est un module muni d'une action d'un Frobenius $\varphi$, on note $M\{r\}$ le module $M$ muni de l'action du Frobenius $p^r \varphi$. Voir la section \textsection Notations pour la d\'efinition de $- \widehat{\otimes}^L -$ utilis\'ee ici.} : 
\begin{equation}
\label{constructiontilde0}
\begin{split}
\tau_{\le r} R \Gamma_{\text{\'et}} (R[\frac{1}{p}], \Z_p)_{\Q} \widehat{\otimes}^L_{\Q_p} \Bst \xleftarrow[t^r]{\sim} \tau_{\le r} R \Gamma_{\text{\'et}} (R[\frac{1}{p}], \Z_p(r))_{\Q} \widehat{\otimes}^L_{\Q_p} \Bst \{ r \} \xleftarrow[\alpha^0_r]{\sim} \tau_{\le r} R\Gamma_{\rm syn}(R,r)_{\Q} \widehat{\otimes}_{\Q_p}^L \Bst \{ r \}  \\ \xrightarrow[{\rm can}]{\sim} \tau_{\le r} R\Gamma_{\rm cris}(R/ \AAC)_{\Q} \widehat{\otimes}_{\AAC[\frac{1}{p}]}^L \Bst \xleftarrow[\iota^{\rm B}_{{\rm cris}, \varpi}]{\sim} \tau_{\le r} R\Gamma_{\rm HK}(R) \widehat{\otimes}^L_{F} \Bst
\end{split}
\end{equation}
o\`u \[\iota_{{\rm cris}, \varpi}^{\rm B} : R\Gamma_{\rm cris}((R/p)/W(k)^0) \widehat{\otimes}^L_{W(k)} \B_{\rm st}^+ \xrightarrow{\sim} R\Gamma_{\rm cris}((R/p)/\AAC) \widehat{\otimes}^L_{\AAC} \B_{\rm st}^+\] est le quasi-isomorphisme (qui d\'epend du $\varpi$ choisi) de \cite[1.18.5]{Bei2013}. Rappelons maintenant comment $\widetilde{\alpha}^{\rm CK}$ est d\'efini. La preuve de \cite{CK17} passe par la construction d'un complexe $A\Omega_R \in D^+((\Spf(R))_{\text{\'et}}, \A_{\rm inf})$ qui v\'erifie (voir \cite[2.3]{CK17}) : 
\begin{equation}
\label{ainfet}
R \Gamma_{\text{\'et}}( R[\frac{1}{p}], \Z_p) \widehat{\otimes}^L_{\Z_p} (\A_{\mathrm{inf}} [ \frac{1}{\mu} ]) \xrightarrow{\sim} R\Gamma_{\text{\'et}}(R, A \Omega_{R}) \widehat{\otimes}^L_{\A_{\mathrm{inf}}} (\A_{\mathrm{inf}} [ \frac{1}{\mu}]).
\end{equation} 
o\`u $\mu = [\varepsilon]-1 \in \A_{\rm inf}$. On obtient ensuite $\widetilde{\alpha}^{\rm CK}$ en construisant un quasi-isomorphisme \[ \gamma_{\rm CK} : Ru_*(\OC_{(R/p)/\AAC}) \xrightarrow{\sim} A\Omega_R \widehat{\otimes}^L_{\A_{\rm inf}} \AAC\] o\`u $\OC_{(R/p)/\AAC} \in ((R/p)/\AAC)_{\rm log-cris}$ d\'esigne le faisceau structural sur le site cristallin et $u$ est la projection $((R/p)/\AAC)_{\rm log-cris} \to (\Spf(R))_{\text{\'et}}$. Pour cela, \v{C}esnavi\v{c}ius-Koshikawa commencent par montrer qu'on peut calculer $A\Omega_R$ comme $L\eta_{\mu} R\Gamma(\Gamma_R, \A_{\rm inf}(R_{\infty}))$ o\`u $\eta_{\mu}$ est le foncteur de d\'ecalage de Berthelot-Ogus (\cite{BMS16}). Pour d\'efinir $\gamma_{\rm CK}$, il reste \`a construire un quasi-isomorphisme entre $\Kos(\partial, R_{\rm inf}^+)$ et $\eta_{\mu} \Kos(\Gamma_R, \A_{\rm inf}(R_{\infty})) \widehat{\otimes}_{\A_{\rm inf}}^L \AAC$. Ce passage entre la cohomologie d'une alg\`ebre de Lie et la cohomologie de groupe est similaire \`a celui apparaissant dans la construction de $\alpha_r^0$, avec $\mu$ jouant le r\^ole de $t$ et c'est ce qui permet de d'obtenir un diagramme commutatif : 
\begin{equation}
\label{DiagComp2}
\xymatrix{
\tau_{\le r}R \Gamma_{\text{\'et}}(R, A \Omega_{R})_{\Q} \widehat{\otimes}^L_{\A_{\mathrm{inf}}[\frac{1}{p}]} \Bst & \ar[l]^-{\gamma^{\rm CK}}_-{\sim} \tau_{\le r}R \Gamma_{\mathrm{cris}}( R/ \AAC)_{\Q} \widehat{\otimes}^L_{\AAC[\frac{1}{p}]} \B_{\rm st} \\
\tau_{\le r}R\Gamma_{\text{\'et}}(R[\frac{1}{p}], \Z_p(r))_{\Q}  \widehat{\otimes}_{\Q_p}^L \Bst \{ r \}  \ar[u] & \tau_{\le r}R\Gamma_{\rm syn}(R,r)_{\Q} \widehat{\otimes}_{\Q_p}^L \Bst \{ r \} \ar[u] \ar[l]^{\alpha_r^0}_{\sim} 
 }
\end{equation} 
Finalement, $\widetilde{\alpha}^{\rm CK}$ est donn\'ee par la compos\'ee : 
\begin{equation}
\label{constructiontildeCK}
\begin{split}
 R\Gamma_{\text{\'et}}(R[\frac{1}{p}], \Z_p)_{\Q} \widehat{\otimes}_{\Q_p}^L \Bst  \xrightarrow[\sim]{\eqref{ainfet}} R \Gamma_{\text{\'et}}(R, A \Omega_{R})_{\Q} 
\widehat{\otimes}^L_{\A_{\mathrm{inf}}[\frac{1}{p}]} \Bst  \xleftarrow[\sim]{\gamma^{\rm CK}} R \Gamma_{\mathrm{cris}}(R/ \AAC)_{\Q} \widehat{\otimes}^L_{\AAC[\frac{1}{p}]} \B_{\rm st} 
\\ \xleftarrow[\sim]{\iota^{\rm B}_{{\rm cris}, \varpi}} R\Gamma_{\rm HK}(R) \widehat{\otimes}^L_{F} \B_{\rm st}
\end{split}
\end{equation}
et le diagramme \eqref{DiagComp2} permet d'obtenir l'\'egalit\'e entre \eqref{constructiontildeCK} et \eqref{constructiontilde0}.

\vspace{0.5cm}

\textbf{Remerciements.}
Cet article pr\'esente les r\'esultats obtenus durant ma th\`ese \`a l'\'ENS de Lyon. Je remercie ma directrice de th\`ese, Wies{\l}awa Nizio{\l}, pour m'avoir sugg\'er\'e ce sujet et pour les conseils et l'aide qu'elle m'a apport\'es tout au long de ce projet. Je remercie \'egalement le/la referee anonyme pour sa lecture attentive et ses nombreuses remarques et corrections. 

Ce projet a re\c{c}u des financements du Labex Milyon, dans le cadre du programme "Investissements d'Avenir" (ANR-11-IDEX-0007) et de l'European Research Council (ERC) dans le cadre du programme "European Union's Horizon 2020 research and innovation programme" (grant agreement No. 804176)..   

\vspace{0.5cm} 

\textbf{Notations et conventions.}
Pour $N$ un entier, on dit qu'une application $f : A \to B$ est $p^N$-\emph{injective} (respectivement $p^N$-\emph{surjective}) si son noyau (respectivement son conoyau) est tu\'e par $p^N$. Le morphisme $f$ est un $p^N$-\emph{isomorphisme} si il est \`a la fois $p^N$-injectif et $p^N$-surjectif. La suite \[ 0 \to A \xrightarrow{i} B \xrightarrow{f} C \xrightarrow{g} 0 \]
est $p^N$-\emph{exacte} si $i$ est $p^N$-injective, $\im(g)= p^N \ker(f)$ et $g$ est $p^N$-surjective. On d\'efinit de m\^eme un $p^N$-\emph{quasi-isomorphisme} comme \'etant une application $f: A^{\bullet} \to B^{\bullet}$ dans la cat\'egorie d\'eriv\'ee qui induit un $p^N$-isomorphisme sur la cohomologie.

Dans tout l'article, on utilise les r\'esultats de \cite{DLLZ19} sur les log-espaces adiques. On utilise en particulier que le foncteur usuel des sch\'emas formels localement noeth\'eriens dans les espaces adiques, s'\'etend en un foncteur pleinement fid\`ele des log-sch\'emas formels localement noeth\'eriens dans la cat\'egorie des log-espaces adiques localement noeth\'eriens (\cite[2.2.22]{DLLZ19}).  

Si $T$ est un topos et $A$ un anneau, on note $\mathcal{D}^+(T,A)$ (respectivement $\mathcal{D}^+(A)$) l'$\infty$-cat\'egorie d\'eriv\'ee des faisceaux de $A$-modules sur $T$ inf\'erieurement born\'es (respectivement l'$\infty$-cat\'egorie d\'eriv\'ee des $A$-modules). Les cat\'egories homotopiques associ\'ees sont les cat\'egories d\'eriv\'ees $D^+(T,A)$ et $D^+(A)$. Le foncteur d\'eriv\'e $R\Gamma(-): D^+(T,A) \to D^+(A)$ se rel\`eve en un foncteur $R\Gamma(-) : \mathcal{D}^+(T,A) \to \mathcal{D}^+(A)$. On note aussi $\mathbf{Fsc}(T, \mathcal{D}^+(A))$ la cat\'egorie des faisceaux de $\mathcal{D}^+(A)$ sur $T$ et on identifie les $\infty$-cat\'egories $\mathcal{D}^+(T,A)$ et $\mathbf{Fsc}(T, \mathcal{D}^+(A))$.

Si $f: K^{\bullet} \to L^{\bullet}$ est un morphisme de $\mathcal{D}^+(T,A)$, on note 
$[ K^{\bullet} \xrightarrow{f} L^{\bullet} ] := \holim ( K^{\bullet} \to L^{\bullet} \leftarrow 0 )$ la fibre d'homotopie de l'application $f$.

Pour $A \to B$ un morphisme d'anneaux et $K^{\bullet}$ et $L^{\bullet}$ des objets de $\mathcal{D}(A)$, on note 
\[ K^{\bullet} \widehat{\otimes}^L_A L^{\bullet}:= \holim_n ((K^{\bullet} \otimes^L_A L^{\bullet})\otimes^L_{\Z} \Z/p^n\Z) \]
\[ \text{ et } (K^{\bullet})_{\Q} \widehat{\otimes}^L_A B:= (\holim_n ((K^{\bullet} \otimes^L_A B)\otimes^L_{\Z} \Z/p^n\Z)) \otimes \Q_p. \] 
Plus g\'en\'eralement, si $J=(f_1, \dots, f_r)$ est un id\'eal finiment engendr\'e de $A$, on d\'efinit le produit tensoriel d\'eriv\'e compl\'et\'e pour la topologie $J$-adique: 
\[ K^{\bullet} \widehat{\otimes}^L_A L^{\bullet}:= \holim_n ((K^{\bullet} \otimes^L_A L^{\bullet})\otimes^L_{\Z} \Z[f_1, \dots, f_r]/(f_1^n, \dots, f_r^n)) \]
\[ \text{ et } (K^{\bullet})_{\Q} \widehat{\otimes}^L_A B:= (\holim_n ((K^{\bullet} \otimes^L_A B)\otimes^L_{\Z} \Z[f_1, \dots, f_r]/(f_1^n, \dots, f_r^n))) \otimes \Q_p. \] 

Si $G$ est un groupe qui agit sur un module $M$, on notera $R\Gamma(G,M)$ la cohomologie de groupe \emph{continue} associ\'ee. 

Si $\OC_K$ est un anneau de valuation discr\`ete en caract\'eristique mixte $(0,p)$ et $\varpi$ une uniformisante de $\OC_K$, on dira qu'un sch\'ema (respectivement sch\'ema formel) est \`a \emph{r\'eduction semi-stable} sur $\OC_K$ s'il s'\'ecrit localement pour la topologie \'etale $\Spec(R)$ (respectivement $\Spf(R)$) avec $R$ une alg\`ebre \'etale sur \[\OC_{K}[X_1, \dots, X_d, \frac{1}{X_1 \cdots X_a}, \frac{\varpi}{X_{a+1} \cdots X_{a+b}}] \] 
\[(\text{respectivement } \OC_{K} \{ X_1, \dots, X_d, \frac{1}{X_1 \cdots X_a}, \frac{\varpi}{X_{a+1} \cdots X_{a+b}}\}) \]avec $a,b$ et $d$ des entiers tels que $a+b \le d$. 
Enfin, si $A$ est un anneau de valuation discr\`ete, on notera $A^{\times}$ le log-sch\'ema formel $\Spf(A)$ muni de la log-structure induite par son point ferm\'e (appel\'ee log-structure standard de $A$) : c'est la log-structure donn\'ee par l'inclusion $A \setminus \{ 0 \} \hookrightarrow A$. On notera $A^0$ le log-sch\'ema formel $\Spf(A)$ muni de la log-structure induite par $(\N \to A, 1 \mapsto 0)$.  

\section{Cohomologie syntomique et r\'esultat local}

Dans cette section, on commence par donner les d\'efinitions des faisceaux syntomiques arithm\'etique $\mathcal{S}_n(r)_{\XF}$ et g\'eom\'etrique $\mathcal{S}_n(r)_{\overline{\XF}}$. On d\'ecrit ensuite la forme locale de ces complexes.  

\subsection{D\'efinition de la cohomologie syntomique}

On consid\`ere $\XF$ un sch\'ema formel \`a r\'eduction semi-stable sur $\OC_K$ i.e. localement $\XF$ s'\'ecrit $\Spf(R)$ avec $R$ la compl\'etion d'une alg\`ebre \'etale sur \[\OC_{K}\{X_1, \dots, X_d, \frac{1}{X_1 \cdots X_a}, \frac{\varpi}{X_{a+1} \cdots X_{a+b}}\} \] avec $a,b,c$ et $d$ des entiers tels que $a+b+c=d$. On munit $\XF$ de la log-structure donn\'ee par 
$ \mathcal{M}:= \{ g \in \OC_X \; | \; g \text{ inversible en dehors de } D \cup \XF_k \}$ o\`u $D$ est le diviseur horizontal et on a alors que $\XF$ est log-lisse sur $\OC_K^{\times}$. On note $\XF_{K, {\rm tr}}$ le lieu de $\XF_K$ o\`u la log-structure est triviale : si $\XF= \Spf(R)$ alors $\XF_{K, {\rm tr}}=\Sp(R_K)\setminus D_K$. On appelle $i$ (respectivement $j$) l'immersion $\XF_k \hookrightarrow \XF$ (respectivement le morphisme de topo\"i \'etales $\XF_{K,\tr, \et} \rightarrow \XF_{\et}$) et $\bar{i} : \XF_{\bar{k}} \hookrightarrow \XF_{\OC_C}$ (respectivement $\bar{j} : \XF_{C,\tr} \rightarrow \XF_{\OC_C}$) le changement de base associ\'e. Enfin, pour $n$ dans $\N$, on note $\XF_n$ la r\'eduction de $\XF$ modulo $p^n$. 

On note $R\Gamma_{\cris}(\XF_n):=R\Gamma_{\cris}(\XF_n/W_n(k))$ la cohomologie cristalline absolue de $\XF_n$ et $R\Gamma_{\cris}(\XF):= \holim_n R\Gamma_{\cris}(\XF_n)$. On note $\OC_{\XF_n / W_n(k)}$ le faisceau structurel du site cristallin $(\XF_n / W_n(k))_{\cris}$, 
$ \JC_n = \ker( \OC_{\XF_n / W_n(k)} \to \OC_{\XF_n})$ et $\JC_{n}^{[r]}$ sa $r$-i\`eme puissance divis\'ee.
On d\'efinit ensuite : 
\[ R \Gamma_{\cris}( \XF, \JC^{[r]})_n := R \Gamma(\XF_{\text{\'et}} , R{u_n}_{*} \JC^{[r]}_n ) \text{ et } 
R \Gamma_{\cris}( \XF, \JC^{[r]}):= \holim_n R \Gamma_{\cris}( \XF, \JC^{[r]})_n \]
o\`u $u_n$ est la projection $(\XF_n/W_n(k))_{\cris} \to \XF_{\text{\'et}}$. On a alors les complexes syntomiques : 
\[ R \Gamma_{\syn}(\XF,r)_n := [R\Gamma_{\cris}(\XF, \JC^{[r]})_n \xrightarrow{p^r- \varphi} R\Gamma_{\cris}(\XF_n)] \] \[ \text{ et } R \Gamma_{\syn}(\XF,r):=\holim_n R \Gamma_{\syn}(\XF,r)_n, \]
o\`u $\varphi$ d\'esigne le morphisme de Frobenius. Enfin, on note $\mathcal{S}_n(r)_{\XF}$ le faisceau de $\mathcal{D}^+((\XF_k)_{\text{\'et}}, \Z/p^n \Z)$ associ\'e au pr\'efaisceau $ U \mapsto R \Gamma_{\syn}(U,r)_n$. 

On d\'efinit de m\^eme la version g\'eom\'etrique $\mathcal{S}_n(r)_{\overline{\XF}}$ de ce faisceaux (\textit{i.e.} sur $\OC_C$).

\begin{Rem}
\label{syntomique diff}
Dans la suite, on trouvera utile la description suivante, plus explicite, de la cohomologie syntomique. Consid\'erons un log-sch\'ema formel $\XF$ \`a r\'eduction semi-stable sur $\Spf(\OC_C)$ et notons $Y$ sa fibre sp\'eciale. On choisit $\mathfrak{U}^{\bullet}$ un hyper-recouvrement de $\XF$ pour la toplogie \'etale. Soit $\{Z_n^{\bullet} \}$ un syst\`eme inductif de log-sch\'emas log-lisses simpliciaux sur $\AAC$ tels qu'on ait des immersions ferm\'ees $\mathfrak{U}^{s}_n \hookrightarrow Z^s_n$ et des rel\`evements $\varphi_{Z^{\bullet}_n}$ du Frobenius. On note $D_n^s$ la log-PD-enveloppe de $\mathfrak{U}^{s}_n$ dans $Z^s_n$, $J_{D_n^s}$ l'id\'eal associ\'e et $J_{D_n^s}^{[r]}$ sa $r$-i\`eme puissance divis\'ee. Pour chaque $s$, on d\'efinit le faisceau $\mathcal{S}_n(r)_{\mathfrak{U}^{s}, Z^{s}, \varphi^{s}}$ de $D(Y_{\text{\'et}}, \Z/p^n \Z)$ par :
\[ \mathcal{S}_n(r)_{\mathfrak{U}^{s}, Z^{s}, \varphi^{s}} := [J_{D_n^{s}}^{[r- \bullet]} \otimes_{\OC_{Z_n^{s}}} \omega^{\bullet}_{Z_n^{s}} \xrightarrow{p^r- \varphi} \OC_{D_n^{s}} \otimes_{\OC_{Z_n^{s}}} \omega^{\bullet}_{Z_n^{s}}  ] \]
 o\`u $\omega^{\bullet}_{Z_n^{s}}$ est le complexe des log-diff\'erentielles de $Z_n^{s}$. On note $R \Gamma_{\mathrm{syn}}((\mathfrak{U}^{\bullet},Z^{\bullet}, \varphi^{\bullet}),r)_n$ l'hypercohomologie associ\'ee (on utilise ici la d\'efinition (6.10) de \cite{Con2003}). En particulier, on a une suite spectrale (voir \cite[Th. 6.11]{Con2003}) : 
\begin{equation}
\label{hypercoho}
E_1^{p,q}:= H_{\rm syn}^q((\mathfrak{U}^p, Z^p, \varphi^p),r)_n \Rightarrow H_{\rm syn}^{p+q}((\mathfrak{U}^{\bullet}, Z^{\bullet}, \varphi^{\bullet}),r)_n 
\end{equation} 

On note $\operatorname{HRF}(\XF)$ la cat\'egorie dont les objets sont donn\'es par les triplets $(\mathfrak{U}^{\bullet}, \{ Z_n^{\bullet} \}, \{\varphi_{n}^{\bullet} \})$ o\`u $\mathfrak{U}^{\bullet} \to \XF$ est un hyper-recouvrement et les $\{ Z_n^{\bullet} \}$ et $\{\varphi_{n}^{\bullet} \}$ sont d\'efinis comme ci-dessus. On abr\'egera par  $\mathfrak{U}^{\bullet}$ un tel triplet. Un morphisme $(\mathfrak{U}^{\bullet}, \{ Z_{\mathfrak{U},n}^{\bullet} \}, \{\varphi_{\mathfrak{U},n}^{\bullet} \})\to (\mathfrak{V}^{\bullet}, \{ Z_{\mathfrak{V},n}^{\bullet} \}, \{\varphi_{\mathfrak{V},n}^{\bullet} \})$ de $\operatorname{HRF}(\XF)$ est une paire $(u : \mathfrak{U}^{\bullet} \to \mathfrak{V}^{\bullet}, \tilde{u}_n: Z_{\mathfrak{U},n}^{\bullet} \to Z_{\mathfrak{V},n}^{\bullet}  )$ telle que, pour tout $s$, le diagramme
\[ \xymatrix{
  Z_{\mathfrak{U},n}^{s} \ar[r]^{\tilde{u}_n} & Z_{\mathfrak{V},n}^{s} \\ 
  \mathfrak{U}^{s} \ar[r]^{u} \ar[u] & \mathfrak{V}^{s} \ar[u] } \]
  commute et telle que les $\tilde{u}_n$ commutent aux $\varphi_n^s$. 
On peut alors consid\'erer la limite $\hocolim_{\operatorname{HRF}(\XF)} R \Gamma_{\mathrm{syn}}((\mathfrak{U}^{\bullet},Z^{\bullet}, \varphi^{\bullet}),r)_n$ ; les morphismes de transition sont des quasi-isomorphismes. Comme le morphisme $\mathfrak{U}^{\bullet} \to \XF$ est de descente cohomologique (voir \cite[Ex. 6.9]{Con2003}), on a un quasi-isomorphisme (\cite[Th. 7.11]{Con2003}) : 
\begin{equation}
\label{cohodescente}
R \Gamma_{\mathrm{syn}}(\XF, r)_n \xrightarrow{\sim} \hocolim_{\operatorname{HRF}(\XF)} R \Gamma_{\mathrm{syn}}((\mathfrak{U}^{\bullet},Z^{\bullet}, \varphi^{\bullet}),r)_n
\end{equation}
et cette colimite est cribl\'ee.
\end{Rem}

\subsection{R\'esultat local}

On va supposer $\XF_{\OC_C}= \Spf(R)$. 
 
\subsubsection{R\'esultat pr\'eliminaire}

La proposition suivante est donn\'ee dans \cite[\textsection 2.1]{CN2017}, on l'utilise pour d\'efinir un morphisme de Frobenius sur les anneaux utilis\'es.
On consid\`ere $\lambda : \Lambda_1 \to \Lambda_2$ un morphisme d'anneaux topologiques et $\Lambda_1'$ la compl\'etion d'une $\Lambda_1$-alg\`ebre \'etale : 
\[ \Lambda_1':= \Lambda_1 \{ Z_1, \dots , Z_s \}/ (Q_1, \dots, Q_s) \] 
avec $J:=(\frac{\partial Q_j}{\partial Z_i})_{1 \le i,j \le s}$ inversible. 

\textbf{Notation.} Si $F= \sum_{\mathbf{k} \in \N^s} a_{\mathbf{k}} Z^{\mathbf{k}} $ est une s\'erie dans un anneau $\Lambda_1\{Z\}$, on note $F^{\lambda}$ la s\'erie $\sum_{\mathbf{k} \in \N^s} \lambda(a_{\mathbf{k}}) Z^{\mathbf{k}} $. 

\begin{Pro}\cite[Prop. 2.1 et Rem. 2.2]{CN2017}
\label{fonctions implicites} 
On suppose qu'il existe $Z_{\lambda}$ dans $\Lambda_2^s$ et $I$ un id\'eal de $\Lambda_2$ tels que $\Lambda_2$ est s\'epar\'e et complet pour la topologie $I$-adique et tels que les coordonn\'ees de $Q^{\lambda}(Z_{\lambda})$ sont dans $I$. 
Alors l'\'equation $Q^{\lambda}(Y)=0$ admet une unique solution dans $Z_{\lambda} + I^s$ et on peut \'etendre $\lambda$ de mani\`ere unique \`a $\Lambda_1'$. 
\end{Pro}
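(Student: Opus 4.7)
This is a Hensel/Newton-style lemma for étale algebras, so the plan is to construct the unique solution $Y \in Z_\lambda + I^s$ by Newton iteration and then extend $\lambda$ by sending $Z_i \mapsto Y_i$.

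First I would establish invertibility of $J^\lambda(Z_\lambda)$ in $\Lambda_2$. Since $\det J$ is a unit in $\Lambda_1' = \Lambda_1\{Z\}/(Q_1,\ldots,Q_s)$, there exists an $s\times s$ matrix $K \in M_s(\Lambda_1\{Z\})$ with $KJ \equiv \mathrm{Id} \pmod{(Q_1,\ldots,Q_s)}$. Applying $\lambda$ formally and evaluating at $Z_\lambda$, one gets $K^\lambda(Z_\lambda)\,J^\lambda(Z_\lambda) \equiv \mathrm{Id}$ modulo the ideal generated by the coordinates of $Q^\lambda(Z_\lambda)$, hence modulo $I$. So $J^\lambda(Z_\lambda)$ is invertible modulo $I$, and by $I$-adic completeness of $\Lambda_2$ it is invertible in $\Lambda_2$.

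Next, I would carry out Newton iteration. Set $Z_0 := Z_\lambda$ and inductively
\[ Z_{n+1} := Z_n - J^\lambda(Z_n)^{-1} Q^\lambda(Z_n). \]
Using the Taylor expansion $Q^\lambda(Z_n + h) = Q^\lambda(Z_n) + J^\lambda(Z_n) h + R(Z_n, h)$ with $R(Z_n,h)$ quadratic in the coordinates of $h$, and noting that $J^\lambda(Z_n)$ remains invertible (it differs from $J^\lambda(Z_\lambda)$ by something in $I$), one shows by induction that the coordinates of $Q^\lambda(Z_n)$ lie in $I^{2^n}$ (or at least in $I^n$, which is all that is needed) and that $Z_{n+1} - Z_n \in I^{2^n}$ (resp.\ $I^n$). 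Hence $(Z_n)$ is Cauchy, and by $I$-adic completeness it converges to some $Y \in Z_\lambda + I^s$ with $Q^\lambda(Y)=0$.

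For uniqueness, suppose $Y_1, Y_2 \in Z_\lambda + I^s$ both satisfy $Q^\lambda = 0$ and set $h = Y_1 - Y_2$. The Taylor expansion yields
\[ 0 = Q^\lambda(Y_1) - Q^\lambda(Y_2) = J^\lambda(Y_2)\, h + R(Y_2, h), \]
with $R(Y_2,h)$ quadratic in the coordinates of $h$. If $h \in I^k\cdot\Lambda_2^s$, inverting $J^\lambda(Y_2)$ gives $h \in I^{2k}\cdot\Lambda_2^s$; iterating and using that $\Lambda_2$ is $I$-adically separated yields $h=0$. Finally, the existence of $Y$ with $Q^\lambda(Y)=0$ gives a (necessarily unique) morphism $\Lambda_1\{Z\}/(Q_1,\ldots,Q_s) \to \Lambda_2$ extending $\lambda$ by $Z_i \mapsto Y_i$, which then extends to the completion $\Lambda_1'$.

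The only genuinely delicate point is keeping the Newton iteration well-defined, i.e.\ verifying that $J^\lambda(Z_n)$ remains invertible at every step; this is handled by the same argument as in the first paragraph, since $Z_n - Z_\lambda \in I^s$ forces $J^\lambda(Z_n) \equiv J^\lambda(Z_\lambda) \pmod I$. Everything else is a routine Taylor-expansion estimate in the $I$-adic topology.
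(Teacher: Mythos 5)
Your proof is correct and is the standard Hensel/Newton-iteration argument; the paper itself gives no proof of this proposition, simply citing \cite[Prop. 2.1 et Rem. 2.2]{CN2017}, where essentially the same successive-approximation scheme (invertibility of $J^{\lambda}(Z_{\lambda})$ modulo $I$, Taylor expansion with quadratic remainder, $I$-adic convergence and separatedness for uniqueness) is carried out. The only point you leave implicit is the convergence of the evaluations $Q^{\lambda}(Z_{\lambda})$, $J^{\lambda}(Z_{\lambda})$, $K^{\lambda}(Z_{\lambda})$ of restricted power series at $Z_{\lambda}$, but this is already presupposed by the hypotheses of the statement, so nothing is missing.
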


\subsubsection{Mod\`eles locaux}

On munit $\Spf(\OC_C)$ de la log-structure standard $\OC_C \setminus \{0 \} \hookrightarrow \OC_C$ et on note $\OC_C^{\times}$ ce log-sch\'ema formel. On  note 
\[ R_{\square} := \OC_{C} \{ X, \frac{1}{X_1  \cdots X_a}, \frac{ \varpi}{X_{a+1} \cdots X_{a+b}} \}\]
o\`u $X=(X_1, \dots , X_d)$ et on consid\`ere $R$ la compl\'etion $p$-adique d'une alg\`ebre \'etale sur $R_{\square}$. On munit $\Spf(R_{\square})$ et $\Spf(R)$ de la structure logarithmique induite par la fibre sp\'eciale et le diviseur $D:=\{X_{a+b+1} \cdots X_d =0\}$.  

\begin{Rem}
\label{HomeGo}
Les log-structures d\'ecrites ci-dessus ne sont pas fines, mais de la m\^eme fa\c{c}on que dans \cite{CK17}, on peut, via un changement de base, se ramener \`a des log-structures fines, et de cette fa\c{c}on, travailler avec les log-structures pr\'ec\'edentes comme si elles \'etaient fines. Plus pr\'ecis\'ement, munissons $\Spf(\OC_C)$ et $\Spf(R)$ des log-structures fines suivantes (voir \cite[\textsection 1.6]{CK17}) : 
\begin{itemize}
\item[$\bullet$] sur $\Spf(R)$, on consid\`ere la log-structure donn\'ee par la carte : 
\[ \N^{d-a}  \to \Gamma(\Spf(R), \OC_{\Spf(R)}) \]
donn\'ee par $(n_i) \mapsto \prod_{a+1 \le i \le d} X_i^{n_i}$ sur $\N^{d-a}$.
\item[$\bullet$] sur $\Spf(\OC_C)$, on consid\`ere la log-structure donn\'ee par la carte :
\[ \N \to \OC_C, n \mapsto \varpi^n. \]
\end{itemize} 
Les changements de base le long des applications 
\[ \N^{d-a} \to \OC_{\Spf(R)} \cap \overline{j}_*(\OC^*_{\Sp(R[\frac{1}{p}]) \setminus D_C}), (n_i) \mapsto X_i^{n_i} \text{ et } \N \to \OC_C \setminus \{0 \}, n \mapsto \varpi^n \]
permettent de retrouver les log-structures pr\'ec\'edentes. La plupart des propri\'et\'es des morphismes de log-sch\'emas (formels) \'etant stables par changement de base, on se ram\`ene de cette fa\c{c}on \`a des log-structures fines.  
\end{Rem}

Si, pour tout $n \ge 0$, on munit $\OC_C/p^n$ et $R/p^n$ des log-structures induites, alors $R/p^n$ est log-lisse sur $\OC_C/p^n$, de sorte que les complexes des log-diff\'erentielles $\Omega^i_{(R/p^n)/ (\OC_C/p^n)}$ est le $\OC/p^n$-module engendr\'e par les $\frac{dX_i}{X_i}$ pour $1 \le i \le d$. 
 
On consid\`ere les anneaux de Fontaine $\A_{\inf}$ et $\AAC$ et on rappelle qu'on a les \'el\'ements suivants : 
\[ \varepsilon = (1, \zeta_p, \zeta_{p^2}, \dots) \in \OC_C^{\flat} \text{ (o\`u $\zeta_{p^n}$ d\'esigne une racine primitive $p^n$-i\`eme de l'unit\'e)} \]
\[ \mu= [\varepsilon]-1 \in \A_{\inf}, \qquad \xi = \frac{\mu}{\varphi^{-1}(\mu)} \in \A_{\inf} \qquad \text{et} \qquad t=\log(1+ \mu) \in \AAC. \]
On a une application surjective $\theta : \AAC \to \OC_C$ de noyau engendr\'e par $\xi$. On note $F^r\AAC$ la filtration sur $\AAC$ donn\'ee par les puissances divis\'ees de $\xi$.   

On munit $\AAC$ de la log-structure associ\'ee \`a la pr\'e-log-structure 
\[ \OC_C^{\flat} \setminus \{ 0 \} \to \AAC, x \mapsto [x] \]
et les $\AAC/p^n$ de la log-structure tir\'ee en arri\`ere. En fait, cette log-structure est l'unique log-structure sur $\AAC/p^n$ qui \'etend celle de $\OC_C/p^n$ (voir \cite[\textsection 1.17]{Bei2013}). Enfin, on munit $\A_{\inf}$ de la log-structure venant de celle de $\AAC$.   

 On d\'efinit 
\[R_{\inf, \square}^{+}:= \A_{\inf} \{ X, \frac{1}{X_1  \cdots X_a}, \frac{ [\varpi^{\flat}]}{X_{a+1} \cdots X_{a+b}} \}. \]
$R_{\inf, \square}^{+}$ est complet pour la topologie $(p, \xi)$-adique. On va relever $R_{\square} \to R$ en un morphisme \'etale $R_{\inf,\square}^{+} \to R_{\inf}^+$. Pour cela, on \'ecrit $R:= R_{\square} \{ Z_1, \dots, Z_s \}/ (Q_1, \dots , Q_s)$ avec $\mathrm{det}(\frac{\partial Q_j}{\partial Z_i})$ inversible dans $R$. Soient $\tilde{Q}_j$ des relev\'es des $Q_j$ dans $R_{\inf, \square}^{+}$. On note $R_{\inf}^{+}$ la compl\'etion $(p, \xi)$-adique de \[R_{\inf,\square}^{+} [Z_1, \dots , Z_s]/ (\tilde{Q_1}, \dots, \tilde{Q_s})\] et $\mathrm{det}(\frac{\partial \tilde{Q}_j}{\partial Z_i})$ est inversible dans $R_{\inf, \square}^{+}$ (car il l'est modulo $\xi$ et $R_{\inf, \square}^{+}$ est $\xi$-adiquement complet).
      
 On note $R^+_{\cris, \square}$ (respectivement $R^+_{\cris}$) l'anneau $R_{\inf,\square}^{+} \widehat{\otimes}_{\A_{\inf}} \A_{\cris}$ (resp. $R_{\inf}^{+} \widehat{\otimes}_{\A_{\inf}} \A_{\cris}$), o\`u les produits tensoriels sont compl\'et\'es pour la topologie $p$-adique. L'anneau $R_{\cris}^+$ admet une filtration donn\'ee par $F^rR_{\cris}^+:= R_{\inf}^+ \widehat{\otimes}_{\A_{\inf}} F^r \AAC$.
 
On a le diagramme commutatif :  
\[
   \xymatrix{
    \Spf(R) \ar@{^{(}->}[r] \ar[d] & \Spf(R_{\cris}^+) \ar[d] \\
    \Spf(R_{\square}) \ar@{^{(}->}[r] \ar[d] & \Spf(R_{\cris, \square}^+) \ar[d] \\
    \Spf(\OC_C) \ar@{^{(}->}[r] & \Spf(\AAC) 
  }.
\]

On d\'efinit une log-structure sur $\Spf(R_{\inf, \square}^+)$ (respectivement $\Spf(R_{\cris, \square}^+)$) en ajoutant le diviseur \`a l'infini $\{ X_{a+b+1} \cdots X_d=0 \}$ \`a la log-structure venant de $\Spf(\A_{\inf})$ (respectivement $\Spf(\AAC)$). On munit $R_{\inf}^+$ et $R_{\cris}^+$ de la log-structure venant de $R_{\inf, \square}^+$ et $R_{\cris, \square}^+$, respectivement.    

Comme ci-dessus, on a alors que le complexe des log-diff\'erentielles de $R_{\inf}^+$ (respectivement $R_{\cris}^+$) sur $\A_{\inf}$ (respectivement $\AAC$) est le $\A_{\inf}$-module (respectivement $\AAC$-module) engendr\'e par les $\frac{dX_i}{X_i}$ pour $1 \le i \le d$.     
  
On note $R_{\inf}$ (respectivement $R_{\cris}$) la compl\'etion $p$-adique de $R_{\inf}^+ [ \frac{1}{ [ \varpi^{\flat} ]} ]$ (resp. de $R_{\cris}^+ [ \frac{1}{ [ \varpi^{\flat} ]} ]$).

\subsubsection{Complexe syntomique local}

On va commencer par prouver une version locale du th\'eor\`eme \ref{Main the1}. On suppose donc que $\XF$ s'\'ecrit $\Spf(R_0)$ avec $R_0$ la compl\'etion $p$-adique d'une alg\`ebre \'etale sur \[\OC_{K}\{X_1, \dots, X_d, \frac{1}{X_1 \cdots X_a}, \frac{\varpi}{X_{a+1} \cdots X_{a+b}}\}\] et on \'ecrit $\XF_{\OC_C}:= \Spf(R)$. Dans la suite, on note 
$ R_{\square} := \OC_{C} \{ X_1, \dots, X_d, \frac{1}{X_1  \cdots X_a}, \frac{ \varpi}{X_{a+1} \cdots X_{a+b}} \}.$
On utilise les log-structures d\'efinies dans la partie pr\'ec\'edente. 
 
Via les quasi-isomorphismes suivants montr\'es par Beilinson dans \cite[(1.18.1)]{Bei2013}, 
\begin{equation}
\label{Cohocris-Acris}
 \begin{split}
 R\Gamma((\overline{\XF}_{n}/W_n(\bar{k}))_{\cris}, \FC) \xrightarrow{\sim} R\Gamma((\overline{\XF}_{n}/\A_{\cris,n})_{\cris}, \FC) \\
 R\Gamma((\overline{\XF}/W(\bar{k}))_{\cris}, \FC) \xrightarrow{\sim} R\Gamma((\overline{\XF}/\A_{\cris})_{\cris}, \FC)
 \end{split}
\end{equation}
on obtient que la cohomologie cristalline absolue de $\overline{\XF}_{n}$ et sa filtration en degr\'e $r$ sont calcul\'ees par les complexes  
$R\Gamma((\overline{\XF}_{n}/\A_{\cris,n})_{\cris}, \OC)  \text{ et } R\Gamma((\overline{\XF}_{n}/\A_{\cris,n})_{\cris}, \JC^{[r]}) .$
Comme $\widetilde{\XF}:=\Spf(R^+_{\cris})$ est un rel\`evement log-lisse de $\XF$ sur $\AAC$ et que $\XF \hookrightarrow \widetilde{\XF}$ est exacte on obtient des quasi-isomorphismes naturels 
\[ R\Gamma((\overline{\XF}_{n}/\A_{\cris,n})_{\cris}, \OC) \cong \Omega^{\bullet}_{R^+_{\cris,n}} \text{ et }  R\Gamma((\overline{\XF}_{n}/\A_{\cris,n})_{\cris}, \JC^{[r]}) \cong F^r\Omega^{\bullet}_{R^+_{\cris,n}} \] 
o\`u $R^+_{\cris,n}$ est la r\'eduction modulo $p^n$ de $R^+_{\cris}$ et 
\[F^r\Omega^{\bullet}_{R^+_{\cris,n}} := F^r R_{\cris,n}^+ \to F^{r-1} \Omega^1_{R_{\cris,n}^+ } \to F^{r-2} \Omega^2_{R_{\cris,n}^+ } \to \dots. \]
En particulier, on a $R\Gamma_{\cris}(\XF, \JC^{[r]}) \cong F^r \Omega^{\bullet}_{R_{\cris}^+}$ et $R \Gamma_{\cris}( \XF ) \cong  \Omega^{\bullet}_{R_{\cris}^+}$ et on en d\'eduit que $R \Gamma_{\syn}( \XF_{\OC_C}, r)$ et $R \Gamma_{\syn}( \XF_{\OC_C}, r)_n$ sont calcul\'es par les complexes :
\[ \Syn( R_{\cris}^+, r):= [ F^r \Omega^{\bullet}_{R_{\cris}^+} \xrightarrow{p^r - \varphi}  \Omega^{\bullet}_{R_{\cris}^+} ] \text{ et } \Syn( R_{\cris}^+, r)_n:= [ F^r \Omega^{\bullet}_{R_{\cris,n}^+} \xrightarrow{p^r - \varphi}  \Omega^{\bullet}_{R_{\cris,n}^+} ]. \]
  
 Soit $\overline{R}$ l'extension maximale de $R$ telle que $\overline{R}[\frac{1}{p}]/ R[\frac{1}{p}]$ est non ramifi\'ee en dehors de $D$. On note $G_R:= \text{Gal}(\overline{R}[\frac{1}{p}]/R[\frac{1}{p}])$ le groupe des automorphismes de $\overline{R}[\frac{1}{p}]$ qui fixent $R[\frac{1}{p}]$. Le but des sections suivantes est de montrer la version g\'eom\'etrique du th\'eor\`eme (4.14) de \cite{CN2017} :  
 
 \begin{The}
 \label{syn/galois}
 Il existe des $p^{N}$-quasi-isomorphismes : 
\[ \alpha^0_r : \tau_{\le r} \Syn(R_{\cris}^+,r) \xrightarrow{\sim} \tau_{\le r} R \Gamma( G_R, \Z_p(r)) \]
\[ \alpha^0_{r,n} : \tau_{\le r} \Syn(R_{\cris}^+,r)_n \xrightarrow{\sim} \tau_{\le r} R \Gamma( G_R, \Z/p^n(r)) \]
o\`u $N$ est une constante qui ne d\'epend que de $r$. 
\end{The} 

\begin{Rem}
\label{K(pi,1)}
On verra plus loin que ce r\'esultat implique le th\'eor\`eme global \eqref{Main the1}. Dans le cas o\`u $\XF_{\OC_C}:= \Spf(R)$ n'a pas de diviseur horizontal, la cohomologie de Galois $R\Gamma(G_R, \Z/p^n\Z(r))$ calcule la cohomologie \'etale $R\Gamma_{\text{\'et}}(\Sp(R[\frac{1}{p}]), \Z/p^n\Z)$ car l'affino\"ide $\Spa(R[\frac{1}{p}],R)$ est un espace $K(\pi,1)$ pour les coefficients de torsion (voir \cite[Th. (4.9)]{Sch13}). Quand le diviseur horizontal n'est pas trivial, le r\'esultat a \'et\'e prouv\'e par Colmez et Nizio{\l} (voir \cite[ \textsection 5.1.4]{CN2017}) dans le cas arithm\'etique. La preuve est identique pour le cas g\'eom\'etrique et on obtient : 
\[ R\Gamma(G_R, \Z/p^n(r))  \cong R\Gamma_{\text{\'et}}(\Sp(R\left[ \frac{1}{p} \right] )\setminus D_C, \Z/p^n\Z(r)) .\]
\end{Rem}

\section{Anneaux de p\'eriodes}

Dans cette partie, $R$ est un anneau comme en \textsection 2.2.2 et on suppose de plus que $\Spf(R)$ est connexe. Pour la preuve de \ref{syn/galois}, on a besoin de d\'efinir une notion d'anneaux de p\'eriodes ($\A_{\inf}$, $\AAC$,...) sur $\overline{R}$ et $R$. Les suites exactes prouv\'ees dans les sections suivantes joueront notamment un r\^ole important dans la suite. 

\subsection{D\'efinitions des anneaux}

On reprend ici la construction des anneaux de p\'eriodes donn\'ee dans \cite[\textsection 2.4]{CN2017}. 

\subsubsection{Les anneaux $\E_{\overline{R}}$, $\A_{\overline{R}}$ et $\AAC(\overline{R})$}

La compl\'etion $p$-adique $\widehat{\overline{R}}$ de $\overline{R}$ est une alg\`ebre perfecto\"ide et on peut d\'efinir les anneaux : 
\[ \E^+_{\overline{R}}:= \varprojlim_{x \mapsto \varphi(x)}( \widehat{\overline{R}}/p) \text { et } \A^+_{\overline{R}}:=W(\E^+_{\overline{R}}) \] 
\[ \E_{\overline{R}}= \E^+_{\overline{R}}[ \frac{1}{[p^{\flat}]}] \text{ et } \A_{\overline{R}}=W(\E_{\overline{R}}). \]

On rappelle qu'on a les \'el\'ements : 
\[ \varepsilon = (1, \zeta_p, \zeta_{p^2}, \dots) \in \E^+_{\overline{R}}, \qquad \mu= [\varepsilon]-1 \in \A^+_{\overline{R}} \qquad \text{et} \qquad \xi = \frac{\mu}{\varphi^{-1}(\mu)} \in \A^+_{\overline{R}}. \]

On d\'efinit $v_{\E} : \E_{\overline{R}} \to \Q \cup \{ \infty \} $ par 
$ v_{\E}(x_0, x_1, x_2, \dots)= v_p(x_0) $. On a (voir \cite[\textsection 2.10]{AI08}) : 
\begin{enumerate}
\item $v_{\E}(x) = + \infty \text{ si et seulement si } x=0$, 
\item $v_{\E}(xy) \ge v_{\E}(x) + v_{\E}(y)$, 
\item $v_{\E}(x+y) \ge \min(v_{\E}(x), v_{\E}(y)) \text{ avec \'egalit\'e si } v_{\E}(x) \neq v_{\E}(y)$,
\item $v_{\E}(\varphi(x)) = pv_{\E}(x)$. 
\end{enumerate}
On d\'efinit un morphisme surjectif : 
\[ \theta : \begin{cases} \A_{\overline{R}}^+ & \to   \widehat{\overline{R}} \\
                                        \sum_{k \in \N} p^k [x_k] &\mapsto  \sum_{k \in \N} p^k x_k^{(0)}  
                 \end{cases} \]
o\`u $[.] : \E_{\overline{R}} \to \A_{\overline{R}}$ est le morphisme de rel\`evement et $x_k = (x_k^{(0)}, \dots, x_k^{(n)}, \dots)$. On a de plus que le noyau de $\theta$ est principal engendr\'e par $\xi$ (ou par $\xi_0:=p-[p^{\flat}]$).

On peut aussi munir $\A_{\overline{R}}$ de la topologie faible dont un syst\`eme fondamental de voisinages de 0 est donn\'e par les $U_{n,h}:= p^n \A_{\overline{R}} + \mu^h \A_{\overline{R}}$.  

On d\'efinit $\AAC(\overline{R})$ comme la compl\'etion $p$-adique de $ \A_{\overline{R}}^+[ \frac{\xi^k}{k!}, k \in \N]$  et on le munit de la filtration $F^{\bullet} \AAC( \overline{R})$ o\`u $F^{r} \AAC( \overline{R})$ est l'id\'eal de $\AAC(\overline{R})$ engendr\'e par les $\frac{\xi^k}{k!}$ pour $k \ge r$.  
On a l'\'el\'ement 
\[t=\log(1+ \mu) \in \AAC(\overline{R}). \]
On d\'efinit $\B_{\cris}^{+}(\overline{R})= \AAC(\overline{R}) [\frac{1}{p}]$ et $\B_{\cris}(\overline{R})= \B_{\cris}^{+}(\overline{R})[\frac{1}{t}]$ et on les munit de la filtration induite de celle de $\AAC$.
Soit $\B^{+}_{\dR}(\overline{R})= \varprojlim_{r} \B^{+}_{\cris}(\overline{R})/F^r  \B^{+}_{\cris}(\overline{R})$ et $\B_{\dR}(\overline{R})= \B_{\dR}^{+}(\overline{R})[\frac{1}{t}]$. On munit ces anneaux de la filtration donn\'ee par les puissances de $\xi$. On note ensuite $\B_{\text{st}}^+(\overline{R}):= \B_{\cris}^+(\overline{R})[u]$ o\`u $u$ est une variable. On \'etend l'action du Frobenius $\varphi$ en posant $\varphi(u)=pu$. On d\'efinit une application de monodromie $N : \B_{\text{st}}^+(\overline{R}) \to \B_{\text{st}}^+(\overline{R})$ par $N=- \frac{d}{du}$. On a un plongement $\B_{\text{st}}^+(\overline{R}) \hookrightarrow \B_{\dR}^+(\overline{R})$ donn\'e par $u \mapsto u_{\varpi}:=\log( \frac{[\varpi^{\flat}]}{\varpi})$ et qui permet d'induire l'action de $G_K$ \`a $\B_{\text{st}}^+(\overline{R})$. Enfin, on note $\B_{\text{st}}(\overline{R})= \B_{\cris}(\overline{R})[u_{\varpi}]$, l'anneau de p\'eriode semi-stable.  

\subsubsection{Les anneaux $\A_{\overline{R}}^{(0,v]}$, $\A_{\overline{R}}^{[u]}$ et $\A_{\overline{R}}^{[u,v]}$}

 Pour $0 < u \le v$, on d\'efinit les anneaux : 
  
\begin{itemize}
\item[$\bullet$] $\A_{\overline{R}}^{[u]}$, la compl\'etion $p$-adique de $\A_{\overline{R}}^+ \left[ \frac{[ \beta ]}{p} \right] $ pour $\beta$ un \'el\'ement de $\E_{\overline{R}}^+$ avec $v_{\E}(\beta)=\frac{1}{u}$. 
  
\item[$\bullet$] $\A_{\overline{R}}^{[u,v]}$, la compl\'etion $p$-adique de $\A_{\overline{R}}^+ \left[ \frac{p}{[\alpha]}, \frac{[ \beta ]}{p} \right] $ pour $\alpha$ et $\beta$ des \'el\'ements de $\E_{\overline{R}}^+$ avec $v_{\E}(\alpha)=\frac{1}{v}$ et $v_{\E}(\beta)=\frac{1}{u}$. 

\item[$\bullet$] $\A_{\overline{R}}^{(0, v]}:= \{ x=\sum_{n \in \N} [x_n] p^n \in \A_{\overline{R}} \; | \; x_n \in \E_{\overline{R}}, \; v_{\E}(x_n) + \frac{n}{v} \to + \infty \text{ quand } n \to \infty \}$. 

\end{itemize}
On d\'efinit une application $w_v : \A_{\overline{R}}^{(0, v]} \to \R \cup \{\infty \}$ par : 
 $$ w_v(x) = \begin{cases} \infty & \text{ si } x=0 \\ \inf_{n \in \N} (v v_{\E}(x_n)+n) & \text{ sinon}. \end{cases} $$
On note ensuite $\A_{\overline{R}}^{(0, v]+}$ l'ensemble des \'el\'ements $x$ de $\A_{\overline{R}}^{(0, v]}$ tels que $w_v(x) \ge 0$. L'anneau $\A_{\overline{R}}^{(0, v]}$ est s\'epar\'e et complet pour la topologie induite par $w_v$ (la preuve est identique \`a celle de \cite[Prop. 4.3]{AB08}).

Si $v \ge 1 \ge u$, on a des injections 
$$\A_{\overline{R}}^{[u]} \hookrightarrow \Bdrp( \overline{R}), \; \A_{\overline{R}}^{[u,v]} \hookrightarrow \Bdrp( \overline{R}) \text{ et } \A_{\overline{R}}^{(0, v]} \hookrightarrow \Bdrp( \overline{R}),$$
ce qui nous permet de d\'efinir des filtrations sur ces trois anneaux. 

On note $\A^{[u]}$, $\A^{[u,v]}$ et $\A^{(0,v]}$ les anneaux pr\'ec\'edents obtenus pour $R= \OC_C$. 

\begin{Pro}
\label{FrAuv}
Supposons $u \le 1 \le v$. L'id\'eal $p^r F^r \A^{[u,v]}$ est inclus dans $\xi^r \A^{[u,v]}= \xi_0^r \A^{[u,v]}$ o\`u $\xi_0:= p-[p^{\flat}]$. 
\end{Pro}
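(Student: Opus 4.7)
Le plan est une récurrence sur $r$, le cas non-trivial étant $r = 1$. L'égalité $\xi^r \A^{[u,v]} = \xi_0^r \A^{[u,v]}$ est immédiate : $\xi$ et $\xi_0$ engendrent tous deux le noyau principal de $\theta : \A_{\inf} \to \OC_C$, donc diffèrent par une unité de $\A_{\inf}^\times \subseteq (\A^{[u,v]})^\times$.

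Pour l'étape de récurrence, supposons acquis le résultat pour $r-1$ et prenons $x \in F^r \A^{[u,v]} \subseteq F^{r-1} \A^{[u,v]}$. On écrit $p^{r-1} x = \xi_0^{r-1} z$ avec $z \in \A^{[u,v]}$. Puisque $p$ est inversible dans $\Bdrp$, on a $p^{r-1} x \in p^{r-1} \xi_0^r \Bdrp = \xi_0^r \Bdrp$, et la régularité de $\xi_0$ dans $\Bdrp$ entraîne $z \in \xi_0 \Bdrp \cap \A^{[u,v]} = F^1 \A^{[u,v]}$. Le cas $r=1$ donne alors $pz \in \xi_0 \A^{[u,v]}$, d'où $p^r x = \xi_0^{r-1}(pz) \in \xi_0^r \A^{[u,v]}$.

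Pour le cas $r=1$, l'hypothèse $u \le 1 \le v$ assure que $\theta(p/[\alpha]) = p/\alpha^\# \in \OC_C$ (car $v_p(\alpha^\#) = 1/v \le 1$) et $\theta([\beta]/p) = \beta^\#/p \in \OC_C$ (car $v_p(\beta^\#) = 1/u \ge 1$), d'où une extension de $\theta$ en un morphisme surjectif $\A^{[u,v]} \to \OC_C$. On identifie ainsi $F^1 \A^{[u,v]} = \A^{[u,v]} \cap \xi_0 \Bdrp$ avec $\ker(\theta)$, et il reste à vérifier que le noyau de l'application induite $\A^{[u,v]}/\xi_0 \A^{[u,v]} \to \OC_C$ est annulé par $p$. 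L'identité clé est la suivante : pour tout relèvement $\tilde a \in \A_{\inf}$ de $p/\alpha^\#$ et tout relèvement $\tilde b \in \A_{\inf}$ de $\beta^\#/p$, on a $[\alpha](p/[\alpha] - \tilde a) = p - [\alpha] \tilde a \in \ker(\theta) \cap \A_{\inf} = \xi_0 \A_{\inf}$ et $p([\beta]/p - \tilde b) = [\beta] - p \tilde b \in \xi_0 \A_{\inf}$ ; en multipliant la première identité par $p/[\alpha]$, on obtient $p(p/[\alpha] - \tilde a) \in \xi_0 \A^{[u,v]}$. Ces « différences de relèvements » engendrent topologiquement le noyau de $A/\xi_0 A \to \OC_C$, où $A = \A_{\inf}[p/[\alpha], [\beta]/p]$ est la $\A_{\inf}$-algèbre non complétée sous-jacente à $\A^{[u,v]}$ ; leur propriété de $p$-torsion est préservée par complétion $p$-adique (un module de $p$-torsion étant déjà $p$-adiquement complet).

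La principale difficulté est de justifier précisément que le noyau de $A/\xi_0 A \to \OC_C$ est engendré par ces différences et leurs produits, sans qu'apparaisse de torsion d'ordre supérieur à $p$, ce qui requiert une analyse structurelle explicite des relations définissant $A$ modulo $\xi_0$ (typiquement via l'écriture d'un élément de $\ker(\theta) \cap A$ en base $\{ (p/[\alpha] - \tilde a)^j (\tilde b - [\beta]/p)^k \}$ et le contrôle des coefficients).
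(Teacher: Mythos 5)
Votre schéma global --- récurrence sur $r$, l'étape de récurrence utilisant la régularité de $\xi_0$ dans $\Bdrp$ pour tout ramener au cas $r=1$ --- est correct et diffère de la preuve de l'article, qui travaille directement dans l'anneau complété avec l'écriture $x=\sum_n a_n\frac{p^n}{[\alpha^n]}+\sum_n b_n\frac{[\beta^n]}{p^n}$ et traite les deux morceaux séparément (identité de série géométrique pour la partie en $p/[\alpha]$, valuation $\xi$-adique de $\Bdrp$ pour la partie en $[\beta]/p$, et le fait classique $\A_{\inf}\cap\xi^r\Bdrp=\xi^r\A_{\inf}$). Notez d'abord que la difficulté que vous signalez en fin de preuve n'en est pas une : en présentant $A:=\A_{\inf}[\frac{p}{[\alpha]},\frac{[\beta]}{p}]$ comme quotient de $\A_{\inf}[X,Y]$ et en développant un polynôme par Taylor en $(X-\tilde a, Y-\tilde b)$, on obtient $\ker(A\to\OC_C)=(\xi_0,\ \frac{p}{[\alpha]}-\tilde a,\ \frac{[\beta]}{p}-\tilde b)$ ; or un idéal engendré par des éléments tués par $p$ modulo $\xi_0$ est tué par $p$ modulo $\xi_0$, donc aucune "torsion d'ordre supérieur" ne peut apparaître et cette partie se boucle formellement.

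Le vrai trou est le passage de $A$ à sa complétion $p$-adique $\A^{[u,v]}$, que la parenthèse "un module de $p$-torsion étant déjà $p$-adiquement complet" ne comble pas. Pour déduire l'énoncé sur $\A^{[u,v]}$ de celui sur $A$, il faut : (i) approcher tout $x\in\ker(\theta|_{\A^{[u,v]}})$ par des éléments de $\ker(\theta|_A)$ --- cela se sauve grâce à la surjectivité de $\theta|_A$, qui donne $\theta^{-1}(p^n\OC_C)=\ker(\theta|_A)+p^nA$ ; mais surtout (ii) que $\xi_0\A^{[u,v]}$ soit fermé pour la topologie $p$-adique, c'est-à-dire que $\A^{[u,v]}/\xi_0\A^{[u,v]}$ soit $p$-adiquement séparé, faute de quoi votre argument ne donne que $px\in\bigcap_n(\xi_0\A^{[u,v]}+p^n\A^{[u,v]})$. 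Cette séparation n'a rien d'automatique pour des algèbres sur l'anneau non noethérien $\A_{\inf}$ --- c'est exactement le phénomène qui fait que $\AAC/\mu$ n'est pas $p$-adiquement séparé, comme le rappelle l'article plus loin --- et elle n'est ni énoncée ni démontrée dans votre proposition. La preuve de l'article évite entièrement cette question en ne quittant jamais l'anneau complété et en manipulant des séries convergentes explicites ; en l'état, votre cas $r=1$, et donc toute la récurrence, reste incomplet.
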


\begin{proof}
On rappelle d'abord qu'on a (voir \cite[A2.13]{Tsu99}) :  
\begin{equation}
\label{FrAinf}
\A_{\inf} \cap (\xi^r \Bdrp) = \xi^r \A_{\inf}= \xi_0^r \A_{\inf}.
\end{equation}
Soit maintenant $x$ dans $F^r \A^{[u,v]} = (\xi^r \Bdrp) \cap \A^{[u,v]}$. Par d\'efinition de $\A^{[u,v]}$, on peut \'ecrire 
\[ x= \sum_{n \ge 0} a_n \frac{p^n}{[ \alpha^n]} + \sum_{n \ge 0} b_n \frac{[\beta^n]}{p^n} \] 
avec $v_{\E}(\alpha)=\frac{1}{v}$ et $v_{\E}(\beta)=\frac{1}{u}$ et $(a_n)$ et $(b_n)$ des suites de $\A_{\inf}$ qui tendent vers 0. Pour simplifier, on note 
\[ A:= \sum_{n \ge 0} a_n \frac{p^n}{[\alpha^n]} \text{ et } B:= \sum_{n \ge 0} b_n \frac{[\beta^n]}{p^n}. \]

On montre d'abord que $p^r A$ est dans $\A_{\inf} + \xi^r \A^{[u,v]}$. On a $\frac{p}{[ \alpha]}= \frac{1}{1+ \frac{\xi_0}{p}} [ \alpha' ]$ avec $v_{\E}(\alpha')= 1 - \frac{1}{v} \ge 0$. D'o\`u, dans $\Bdrp$ : 
\[
p^r \frac{p}{[\alpha]}  =   p^r \cdot (\sum_{n \ge 0} (-1)^n \frac{\xi_0^n}{p^n}) \cdot [ \alpha']  
 = (\sum_{0 \le n < r} (-1)^n \xi_0^n p^{r-n}) \cdot [\alpha'] + \xi_0^r \cdot (\sum_{n \ge 0} (-1)^{n+r} \frac{\xi_0^n}{p^n}) \cdot [\alpha']  \]  
c'est-\`a-dire : 
\[ p^r \frac{p}{[\alpha]}= a + \xi_0^r \frac{(-1)^r}{1+ \frac{\xi_0}{p}} [ \alpha' ] =  a + \xi_0^r \cdot (-1)^r \cdot \frac{p}{[\alpha]} \text{ avec } a \in \A_{\inf}. \] 
On en d\'eduit le r\'esultat. 

Montrons ensuite que $p^rB$ est dans $\xi^r \A^{[u,v]}$. Comme pour le point pr\'ec\'edent, on a $\frac{[\beta]}{p}= (1+ \frac{\xi_0}{p}) [\beta']$ avec $v_{\E}(\beta')= \frac{1}{u}-1 \ge 0$. On peut alors \'ecrire 
\[ B = \sum_{n \ge 0} b'_n [(\beta')^n] \frac{\xi_0^n}{p^n} \text{ avec } b'_n \in \A_{\inf} \]
et on peut supposer que $\xi_0$ ne divise pas $b'_n$ dans $\A_{\inf}$. 

Mais $\Bdrp$ est un anneau de valuation discr\`ete dont l'id\'eal maximal est donn\'e par $\xi$ : on note $v_{\xi}$ sa valuation. Par hypoth\`ese, on a $v_{\xi}(B) \ge r$. De plus, en utilisant \eqref{FrAinf}, on voit que 
$v_{\xi} (b'_n [(\beta')^n] \frac{\xi_0^n}{p^n})= n$ et donc $b'_n=0$ pour tout $n < r$. On a alors 
\[ p^r B = p^r \cdot (\sum_{n \ge r} b'_n [(\beta')^n] \frac{\xi_0^n}{p^n} )= \xi_0^r \cdot( \sum_{n \ge 0} b'_{n+r} [(\beta')^{n+r}] \frac{\xi_0^{n}}{p^n})  \in \xi^r \A^{[u,v]}. \]        

On obtient que $p^rx=p^rA+p^rB$ est dans $\A_{\inf} + \xi^r \A^{[u,v]}$, \'ecrivons-le $x_0 + \xi^r \tilde{x}$ avec $x_0 \in \A_{\inf}$ et $\tilde{x} \in \A^{[u,v]}$. On a $x_0=p^rx- \xi^r\tilde{x}$ est dans $\xi^r \B_{\dR}^+$ et comme $x_0$ est aussi dans $\A_{\inf}$, via \eqref{FrAinf}, on obtient que $x_0$ est dans $\xi^r \A_{\inf}$. 

Finalement, on obtient que $p^rx$ est dans $\xi^r \A^{[u,v]}$ et donc $p^rF^r \A^{[u,v]} \subseteq \xi^r \A^{[u,v]}$.

\end{proof}

On utilisera plus loin le r\'esultat suivant : 

\begin{Pro}\cite[\textsection 2.4.2]{CN2017}
\label{Au et Acris}
On a les inclusions : 
\begin{enumerate}
\item $\AAC( \overline{R} ) \subseteq \A_{\overline{R}}^{[u]} $ si $u \ge \frac{1}{p-1}$ ;
\item $\AAC( \overline{R} ) \supseteq \A_{\overline{R}}^{[u]} $ si $u \le \frac{1}{p}$.
\end{enumerate}
\end{Pro}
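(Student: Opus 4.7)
Les deux anneaux s'injectent naturellement dans $\Bdrp(\overline{R})$, donc il suffit de montrer les inclusions au niveau des sous-anneaux. Comme le générateur $\xi_0 = p - [p^{\flat}]$ du noyau de $\theta$ a même image que $\xi$ dans $\Bdrp$ à unité près, il est commode de travailler avec $\xi_0$ plutôt que $\xi$.

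\emph{Preuve de (2).} La clef est de montrer que $[p^\flat]^p/p \in \AAC(\overline{R})$. Pour cela, on développe $[p^\flat]^p = (p-\xi_0)^p$ par la formule du binôme. Pour $1 \le j \le p-1$, le coefficient binomial $\binom{p}{j}$ est divisible par $p$, donc chaque terme $\binom{p}{j} p^{p-j}(-\xi_0)^j$ est divisible par $p$ dans $\A_{\inf}$ ; le terme $p^p$ est aussi divisible par $p$ ; et le terme restant $(-1)^p\xi_0^p$ vérifie $\xi_0^p/p = (-1)^p (p-1)! \cdot \xi_0^p/p! \in \AAC(\overline{R})$ car $(p-1)!$ est inversible dans $\Z_p$. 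On en déduit $[p^\flat]^p/p \in \AAC(\overline{R})$. Ensuite, si $v_{\E}(\beta) = 1/u \ge p$, on pose $\gamma := \beta/(p^\flat)^p \in \E_{\overline{R}}^+$, d'où $[\beta]/p = ([p^\flat]^p/p)\cdot [\gamma] \in \AAC(\overline{R})$ par multiplicativité du relèvement de Teichmüller. Donc $\A_{\overline{R}}^+[[\beta]/p] \subseteq \AAC(\overline{R})$, et comme $\AAC(\overline{R})$ est $p$-adiquement complet, on obtient l'inclusion $\A_{\overline{R}}^{[u]} \subseteq \AAC(\overline{R})$.

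\emph{Preuve de (1).} Par définition $\AAC(\overline{R})$ est la complétion $p$-adique de $\A_{\overline{R}}^+[\xi_0^k/k!, k \in \N]$, donc il suffit de vérifier que $\xi_0^k/k! \in \A_{\overline{R}}^{[u]}$ pour tout $k$. On traite d'abord le cas $k=p-1$. On a $\xi_0^{p-1} = (p-[p^\flat])^{p-1} \equiv (-1)^{p-1}[p^\flat]^{p-1} \pmod{p \A_{\inf}}$, d'où
\[ \frac{\xi_0^{p-1}}{p} = (-1)^{p-1} \frac{[p^\flat]^{p-1}}{p} + (\text{élément de } \A_{\inf}). \]
L'hypothèse $u \ge 1/(p-1)$ donne $v_{\E}((p^\flat)^{p-1}) = p-1 \ge 1/u = v_{\E}(\beta)$, donc on peut écrire $(p^\flat)^{p-1} = \beta \cdot \delta$ avec $\delta \in \E_{\overline{R}}^+$, et ainsi $[p^\flat]^{p-1}/p = ([\beta]/p)\cdot [\delta] \in \A_{\overline{R}}^{[u]}$. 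On conclut que $\xi_0^{p-1}/p \in \A_{\overline{R}}^{[u]}$.

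Pour un $k$ général, on utilise la majoration classique $v_p(k!) \le (k-s_p(k))/(p-1)$ ; en posant $N:= v_p(k!)$, on a $k - (p-1)N = s_p(k) \ge 0$ et $p^N/k!$ est une unité de $\Z_p$. On peut donc écrire
\[ \frac{\xi_0^k}{k!} = \frac{p^N}{k!}\cdot \xi_0^{s_p(k)} \cdot \left( \frac{\xi_0^{p-1}}{p}\right)^N \in \A_{\overline{R}}^{[u]}. \]
Passant à la complétion $p$-adique (et $\A_{\overline{R}}^{[u]}$ est $p$-adiquement complet par construction), on obtient $\AAC(\overline{R}) \subseteq \A_{\overline{R}}^{[u]}$.

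\emph{Point délicat.} L'argument est essentiellement une comptabilité de valuations reliant les deux façons de rendre $\xi_0$ « divisible par $p$ » : dans $\AAC$ on divise formellement $\xi_0^k$ par $k!$, tandis que dans $\A^{[u]}$ on utilise la relation $\xi_0 \equiv -[p^\flat] \pmod{p}$ combinée à ce que $[p^\flat]^{p-1}/p$ est autorisé pour $u = 1/(p-1)$. L'identité $(p-1) v_p(k!) \le k$ explique pourquoi la borne critique est $u = 1/(p-1)$ pour (1) et, symétriquement, $u = 1/p$ pour (2) via l'identité $[p^\flat]^p = \xi_0^p + p \cdot (\text{reste})$.
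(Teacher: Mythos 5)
Your proof is correct and follows what is essentially the standard argument of the cited reference \cite[\S 2.4.2]{CN2017}: everything reduces to comparing the generators $\xi_0^k/k!$ and $[\beta]/p$ via the congruence $\xi_0\equiv -[p^\flat] \pmod{p\A_{\inf}}$ together with Legendre's formula $v_p(k!)=(k-s_p(k))/(p-1)$, exactly as you do. The only point worth making explicit is that the factorizations $\beta=(p^\flat)^p\gamma$ and $(p^\flat)^{p-1}=\beta\delta$ with integral $\gamma,\delta$ require $\beta$ to be chosen in the valuation ring $\OC_C^\flat$ (as in the paper's introduction) rather than an arbitrary element of $\E_{\overline{R}}^+$ of the right $v_{\E}$-value, which is in any case needed for $\A_{\overline{R}}^{[u]}$ to be well defined.
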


\subsection{Suites exactes fondamentales}

On rappelle ici les diff\'erentes suites exactes fondamentales v\'erifi\'ees par les anneaux de p\'eriode d\'efinis pr\'ec\'edemment. 

\subsubsection{Suites exactes pour les anneaux $\A_{\overline{R}}$ et $\A_{\overline{R}}^{(0,v]+}$}

\begin{The}\cite[8.1]{AI08}
On a les suites exactes : 
\begin{equation}
\label{suite exacte Ar}
0 \to \Z_p \to \A_{\overline{R}} \xrightarrow{1- \varphi} \A_{\overline{R}} \to 0
\end{equation}
\begin{equation}
\label{suite exacte Av}
0 \to \Z_p \to \A_{\overline{R}}^{(0,v]+} \xrightarrow{1- \varphi} \A_{\overline{R}}^{(0, \frac{v}{p}]+} \to 0
\end{equation}
\end{The}

\begin{Rem}
\label{suite exacte Ar+}
De la m\^eme fa\c{c}on on a une suite exacte : $0 \to \Z_p \to \A_{\overline{R}}^{+} \xrightarrow{1- \varphi} \A_{\overline{R}}^{+} \to 0$.
\end{Rem}

\subsubsection{Suite exacte pour l'anneau $\AAC$}
 
Pour $r$ dans $\N$, on \'ecrit $r= a(r) (p-1) +b(r)$ avec $0 \le b(r) \le p-2$ et on note $t^{\{r\}}:= \frac{t^r}{a(r)! p^{a(r)}}$. Alors, comme on a 
$$  \frac{t^r}{a(r)! p^{a(r)}}= t^{b(r)} \left(\frac{t^{p-1}}{p}\right)^{[a(r)]} $$
et que $\frac{t^{p-1}}{p}$ est dans $\AAC(\overline{R})$, on obtient que $t^{\{r\}}$ est dans $\AAC(\overline{R})$. 

\begin{The}\cite[A3.26]{Tsu99}
\label{suite exacte Acris1}
Pour tout $r \ge 0$, on a la suite exacte :
\begin{equation}
\label{suite exacte Acris}
 0 \to \Z_p t^{\{ r\}} \to F^r\AAC(\overline{R}) \xrightarrow{1 - \frac{\varphi}{p^r}} \AAC(\overline{R}) \to 0. \end{equation}
\end{The}

\subsubsection{Suites exactes pour les anneaux $\A_{\overline{R}}^{[u,v]}$ et  $\A_{\overline{R}}^{[u]}$ }

Colmez-Nizio{\l} dans \cite{CN2017} ont montr\'e que des r\'esultats similaires sont v\'erifi\'es par les anneaux $\A_{\overline{R}}^{[u,v]}$ et  $\A_{\overline{R}}^{[u]}$.

\begin{The}\cite[Lem. 2.23]{CN2017}
Soit $u$ tel que $ \frac{p-1}{p} \le u \le 1$. 
\begin{enumerate}
\item On a une suite $p^{2r}$-exacte si $p>2$ (ou $p^{3r}$-exacte si $p=2$) : 
\begin{equation}
\label{suite exacte Au 1}
0 \to \Z_p(r) \to (\A_{\overline{R}}^{[u]})^{\varphi=p^r} \to \A_{\overline{R}}^{[u]}/F^r \to 0
\end{equation}
\item On a une suite $p^{2r}$-exacte : 
 \begin{equation}
\label{suite exacte Au 2}
0 \to (\A_{\overline{R}}^{[u]})^{\varphi=p^r} \to \A_{\overline{R}}^{[u]} \xrightarrow{p^r - \varphi} \A_{\overline{R}}^{[u]} \to 0
\end{equation}
\item On a une suite $p^{4r}$-exacte si $p>2$ (ou $p^{5r}$-exacte si $p=2$) :
\begin{equation}
\label{suite exacte Au 3}
0 \to \Z_p(r) \to F^r \A_{\overline{R}}^{[u]} \xrightarrow{p^r - \varphi} \A_{\overline{R}}^{[u]} \to 0.
\end{equation}
\end{enumerate}
\end{The}

\begin{The}\cite[Lem. 2.23]{CN2017}
Soient $u$ et $v$ tels que $ \frac{p-1}{p} \le u \le 1 \le v$. 
\begin{enumerate}
\item On a une suite $p^{3r}$-exacte si $p>2$ (ou $p^{4r}$-exacte si $p=2$) : 
\begin{equation}
\label{suite exacte Auv 1}
0 \to \Z_p(r) \to (\A_{\overline{R}}^{[u,v]})^{\varphi=p^r} \to \A_{\overline{R}}^{[u,v]}/F^r \to 0
\end{equation}
\item On a une suite $p^{3r}$-exacte : 
 \begin{equation}
\label{suite exacte Auv 2}
0 \to (\A_{\overline{R}}^{[u,v]})^{\varphi=p^r} \to \A_{\overline{R}}^{[u,v]} \xrightarrow{p^r - \varphi} \A_{\overline{R}}^{[u,\frac{v}{p}]} \to 0
\end{equation}
\item On a une suite $p^{6r}$-exacte si $p>2$ (ou $p^{7r}$-exacte si $p=2$) :
\begin{equation}
\label{suite exacte Auv 3}
0 \to \Z_p(r) \to F^r \A_{\overline{R}}^{[u,v]} \xrightarrow{p^r - \varphi} \A_{\overline{R}}^{[u,\frac{v}{p}]} \to 0.
\end{equation}
\end{enumerate}
\end{The}

\subsection{Anneaux de convergence et morphisme de Frobenius}

On consid\`ere les anneaux : 
$$ R^{[u]} := \A^{[u]} \widehat{\otimes}_{\A_{\inf}} R_{\inf}^+, \qquad R^{[u,v]} := \A^{[u,v]} \widehat{\otimes}_{\A_{\inf}} R_{\inf}^+, \qquad R^{(0,v]+} := \A^{(0,v]+} \widehat{\otimes}_{\A_{\inf}} R_{\inf}^+ $$
o\`u les produits tensoriels sont compl\'et\'es pour la topologie $(p, \xi)$-adique. On d\'efinit les filtrations : 
\[ R^{[u]} := F^r\A^{[u]} \widehat{\otimes}_{\A_{\inf}} R_{\inf}^+, \qquad R^{[u,v]} := F^r\A^{[u,v]} \widehat{\otimes}_{\A_{\inf}} R_{\inf}^+, \qquad R^{(0,v]+} := F^r\A^{(0,v]+} \widehat{\otimes}_{\A_{\inf}} R_{\inf}^+. \]
On rappelle qu'on avait :
$$ R^{+}_{\cris, \square}:= \AAC\{ X, \frac{1}{X_1  \cdots X_a}, \frac{ [\varpi^{\flat}]}{X_{a+1} \cdots X_{a+b}} \} \text{ et } R^+_{\cris} := \AAC \widehat{\otimes}_{\A_{\inf}} R_{\inf}^+ $$
 et que $R_{\inf}$ et $R_{\cris}$ sont les compl\'etions $p$-adiques de $R^+_{\inf}[\frac{1}{[\varpi^{\flat}]}]$ et $R^+_{\cris}[\frac{1}{[\varpi^{\flat}]}]$.  
 
Si $\varphi$ est le Frobenius sur $\A_{\mathrm{inf}}$, on \'etend $\varphi$ \`a $R_{\inf, \square}^{+}$ en posant $\varphi(X_i)=X_i^p$ pour $i$ dans $\{1, \dots, d \}$. On utilise ensuite la proposition \ref{fonctions implicites} pour l'\'etendre \`a $R_{\inf}^{+}$ (prendre $\Lambda_1=R_{\inf, \square}^{+}$, $\Lambda_2= \Lambda_1'= R_{\inf}^{+}$, $Z_{\varphi}=Z^p$ et $I=(p)$). 

On d\'efinit $\varphi$ sur $\A^{[u] }$, $\A^{[u,v]}$ et $\A^{(0,v]+}$ par : 
\[ \varphi\left(\frac{p}{[\alpha]}\right)= \frac{p}{[\alpha^p]}, \qquad \varphi\left(\frac{[ \beta ]}{p} \right)= \frac{[ \beta^p ]}{p}  \qquad  \text{et} \qquad  \varphi(\sum_{n \in \N} [x_n] p^n) = \sum_{n \in \N} [x_n^p] p^n \]

et on \'etend $\varphi$ \`a $R^{[u]}$, $R^{[u,v]}$, $R^{(0,v]+}$ et $R_{\cris}^+$. On a alors : 
$$ \varphi(R^{[u]})= R^{[\frac{u}{p}]}, \qquad \varphi(R^{[u,v]})= R^{[\frac{u}{p},\frac{v}{p}]} \qquad \text{et} \qquad \varphi(R^{(0,v]+})= R^{(0,\frac{v}{p}]+}. $$

On note $\psi$ l'inverse de $\varphi$ sur $\A_{\inf}$. On va \'etendre $\psi$ aux anneaux $R_{\inf}$, $R_{\inf}^+$, $R_{\cris}$, $R_{\cris}^+$, $R^{[u]}$, $R^{[u,v]}$ et $R^{(0,v]+}$. 


Pour $\alpha=(\alpha_1, \dots , \alpha_d)$ avec $\alpha_i$ dans $\{0, \dots, p-1 \}$, on note 
\[ u_{\alpha} = X_1^{\alpha_1} \cdots X_d^{\alpha_d} \]
et pour $j$ dans $\{ 1, \dots, d \}$, 
\[ \partial_j = X_j \frac{\partial}{\partial X_j}. \]
La m\^eme preuve que \cite[\textsection 2.2.7]{CN2017} donne le lemme et le corollaire suivants : 

\begin{Lem}\cite[Lem. 2.7]{CN2017}
Tout \'el\'ement $x$ de $R_{\rm{inf}}/p$ s'\'ecrit de mani\`ere unique sous la forme $\sum_{\alpha}c_{\alpha}(x)$ avec $c_{\alpha}(x)= x_{\alpha}^p u_{\alpha}$ pour un $x_{\alpha}$ dans $R_{\rm{inf}}/p$.    

De plus, si $x$ est dans $R^+_{\rm{inf}}/p$ alors $c_{\alpha}(x)$ et $x_0$ sont dans $R^+_{\rm{inf}}/p$.
\end{Lem}

\begin{Cor}\cite[Cor. 2.8]{CN2017}
Tout \'el\'ement $x$ de $R_{\rm{inf}}$ s'\'ecrit de mani\`ere unique sous la forme $\sum_{\alpha}c_{\alpha}(x)$ avec $c_{\alpha}(x)= \varphi(x_{\alpha}) u_{\alpha}$ pour un $x_{\alpha}$ dans $R_{\rm{inf}}$.    

De plus, si $x$ est dans $R^+_{\rm{inf}}$ alors $x_0$ est dans $R^+_{\rm{inf}}$ et 
$ \partial_j c_{\alpha}(x)- \alpha_j c_{\alpha}(x) \in p R^+_{\rm{inf}}. $
\end{Cor}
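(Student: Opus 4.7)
The corollary lifts the mod-$p$ decomposition of the preceding lemma to $R_{\inf}$ itself. The plan is to use $p$-adic successive approximation, exploiting that both $R_{\inf}$ and $R_{\inf}^+$ are $p$-adically complete and $p$-torsion free (as $p$-adic completions of étale algebras over $\A_{\inf}$, respectively $\A_{\inf}[1/[\varpi^{\flat}]]$).

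Given $x \in R_{\inf}$, I first reduce modulo $p$ and apply the lemma to obtain unique $\bar{y}_\alpha^{(0)} \in R_{\inf}/p$ with $\bar{x} = \sum_\alpha (\bar{y}_\alpha^{(0)})^p u_\alpha$. Choosing arbitrary lifts $y_\alpha^{(0)} \in R_{\inf}$ and using that $\varphi$ reduces to the Frobenius mod $p$, the difference $x - \sum_\alpha \varphi(y_\alpha^{(0)}) u_\alpha$ equals $p\,x^{(1)}$ for some $x^{(1)} \in R_{\inf}$. Iterating the construction on $x^{(1)}$ and setting $x_\alpha := \sum_{n \ge 0} p^n y_\alpha^{(n)}$, which converges in the $p$-adic topology of $R_{\inf}$, I obtain the decomposition $x = \sum_\alpha \varphi(x_\alpha) u_\alpha$. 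For uniqueness, assume $\sum_\alpha \varphi(x_\alpha) u_\alpha = \sum_\alpha \varphi(x'_\alpha) u_\alpha$; reducing mod $p$ and invoking the uniqueness in the lemma gives $x_\alpha \equiv x'_\alpha \pmod{p}$. Writing the difference as $p z_\alpha$, using that $\varphi$ is a ring map and that $R_{\inf}$ is $p$-torsion free, I get $\sum_\alpha \varphi(z_\alpha) u_\alpha = 0$ and conclude by iteration that $x_\alpha - x'_\alpha \in \bigcap_n p^n R_{\inf} = 0$.

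For the integral part, when $x \in R_{\inf}^+$ the mod-$p$ lemma already gives $c_\alpha(x), x_0 \in R_{\inf}^+/p$, so the same iteration can be carried out inside the $p$-adically complete subring $R_{\inf}^+$, yielding $c_\alpha(x), x_0 \in R_{\inf}^+$. For the derivation identity, a direct computation on monomials using $\varphi(X_j) = X_j^p$ yields $\partial_j \varphi(y) = p\,\varphi(\partial_j y)$ on $R_{\inf, \square}^+$, and this extends to $R_{\inf}^+$ by continuity together with the universal property of the étale extension $R_{\inf, \square}^+ \to R_{\inf}^+$. Leibniz then gives
\[
\partial_j c_\alpha(x) = p\,\varphi(\partial_j x_\alpha)\, u_\alpha + \alpha_j\, \varphi(x_\alpha)\, u_\alpha,
\]
so $\partial_j c_\alpha(x) - \alpha_j c_\alpha(x) = p\,\varphi(\partial_j x_\alpha)\, u_\alpha$, which lies in $p R_{\inf}^+$ since $c_\alpha(x) \in R_{\inf}^+$ forces $\varphi(\partial_j x_\alpha)\, u_\alpha \in R_{\inf}^+$ (as $\partial_j$ stabilizes $R_{\inf}^+$). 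The only technical point requiring care is the propagation of the identity $\partial_j \circ \varphi = p\,(\varphi \circ \partial_j)$ from the polynomial coordinate ring to the étale extension $R_{\inf}^+$; beyond that, the argument is routine $p$-adic bookkeeping.
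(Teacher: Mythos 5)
Your argument is correct and is essentially the proof the paper delegates to \cite[Cor.~2.8]{CN2017}: successive $p$-adic approximation from the mod-$p$ lemma (using that $\varphi$ lifts the absolute Frobenius and that $R_{\rm inf}$ is $p$-adically complete and $p$-torsion free), together with the twisted Leibniz identity $\partial_j\circ\varphi=p\,(\varphi\circ\partial_j)$ propagated to the \'etale extension. The only points to make explicit are two routine facts you use silently: passing from the uniqueness of the $c_\alpha$'s mod $p$ to $x_\alpha\equiv x'_\alpha \pmod p$ requires that $u_\alpha$ be a nonzerodivisor and that $R_{\rm inf}/p$ be reduced (so that $(\bar x_\alpha-\bar x'_\alpha)^p u_\alpha=0$ forces $\bar x_\alpha=\bar x'_\alpha$), and the final division by $p$ inside $R_{\rm inf}^+$ uses $pR_{\rm inf}\cap R_{\rm inf}^+=pR_{\rm inf}^+$.
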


On d\'efinit alors $\psi$ sur $R_{\rm{inf}}^{+}$ par : 
$\psi(x) = \varphi^{-1}(c_0(x))$. L'application $\psi$ n'est pas un morphisme d'anneau, mais on a $\psi \circ \varphi(x)=x$ et plus g\'en\'eralement, $\psi(\varphi(x)y)=x \psi(y)$ pour $x$ et $y$ dans $R_{\rm{inf}}^{+}$. 

On \'etend ensuite $\psi$ \`a $R_{\inf}$, $R_{\rm{cris}}$, $R_{\rm{cris}}^+$, $R^{[u]}$, $R^{[u,v]}$ et $R^{(0,v]+}$ et on obtient des applications surjectives : 
\[ \psi : R^{[u]} \to R^{[pu]}, \;  R^{[u,v]} \to R^{[pu, pv]} \text{ et } R^{(0,v]+} \to R^{(0, pv]+}. \]

\begin{Rem}
\label{psi-decompo}
Comme dans \cite{CN2017}, on peut voir que les applications $x \mapsto c_{\alpha}(x)$ donnent des d\'ecompositions :
$$ S = \oplus_{\alpha} S_{\alpha} \text{ et } S^{\psi=0} = \oplus_{ \alpha \neq 0} S_{\alpha} $$
pour $S \in \{ R_{\rm{inf}}, R_{\rm{inf}}^+, R_{\rm{cris}}, R_{\rm{cris}}^+, R^{[u]}, R^{[u,v]}, R^{(0,v]+} \}$. On a de plus $\partial_j = \alpha_j$ sur $S_{\alpha}/ p S_{\alpha}$. 

\end{Rem}

\section{Passage de $R_{\cris}^+$ \`a $R^{[u,v]}$} 

Soit $R$ comme dans la section pr\'ec\'edente. La premi\`ere \'etape dans la preuve du th\'eor\`eme \ref{syn/galois} est de construire un quasi-isomorphisme entre le complexe $\Syn(R_{\mathrm{cris}}^+,r)$ et un complexe $C(R^{[u,v]},r)$ d\'efini \`a partir de l'anneau $R^{[u,v]}$. 

Dans cette partie, on suppose $u \ge \frac{1}{p-1}$ de telle sorte que $\AAC \subseteq \A^{[u]} \subseteq \A^{[u,v]}$. 
Si $S= R^{[u]}$ (respectivement $R^{[u,v]}$), on \'ecrit $S'= R^{[u]}$ (respectivement $R^{[u, \frac{v}{p}]}$) et $S''=R^{[pu]}$ (respectivement $R^{[pu,v]}$) et on note $\Omega^{\bullet}_S$ le complexe des log-diff\'erentielles de $S$ sur $\A^{[u]}$ (respectivement $\A^{[u,v]}$). 
Pour $i$ dans $\{1, \dots , d \}$, on note $J_i = \{ (j_1, \dots, j_i)  \; | \; 1 \le j_1 \le \dots \le j_i \le d \}$ et $\omega_i= \frac{ d X_i}{X_i}$. Si $\textbf{j}$ est dans $J_i$, on \'ecrit $\omega_{\textbf{j}}= \omega_{j_1} \wedge \dots \wedge \omega_{j_i}$. La filtration $F^r\Omega_S^i$ est le sous-$S$-module de $\Omega_S^i$ engendr\'e par $F^rS.\Omega_S^i$ : 
\[ F^r\Omega^i_S= \bigoplus_{\textbf{j} \in J_i} F^rS. \omega_{\textbf{j}}. \] 
On \'etend $\varphi$ \`a $\Omega^i_S$ par 
\[ \varphi( \sum_{ \textbf{j} \in J_i} f_{\textbf{j} }\omega_{\textbf{j}}) = \sum_{ \textbf{j} \in J_i} \varphi(f_{\textbf{j}}) \omega_{\textbf{j}}. \]
On d\'efinit ensuite le complexe 
$ C(S,r) := [F^r \Omega_S^{\bullet} \xrightarrow{p^r -  p^{\bullet} \varphi} \Omega_{S'}^{\bullet}]. $  
Enfin, on \'etend $\psi$ \`a $\Omega^i_S$ en posant  
$\psi( \sum_{ \textbf{j} \in J_i} f_{\textbf{j} }\omega_{\textbf{j}}) = \sum_{ \textbf{j} \in J_i} \psi(f_{\textbf{j}}) \omega_{\textbf{j}}$ et on note $C^{\psi}(S,r):=[ F^r \Omega_{S}^{\bullet} \xrightarrow{p^r \psi - p^{\bullet}} \Omega_{S''}^{\bullet}]$

Le $p^{10r}$-quasi-isomorphisme $\tau_{\le r} C(R_{\cris}^+,r) \to \tau_{\le r} C(R^{[u,v]}, r)$ (voir \eqref{Rcris-Ruv} ci-dessous) s'obtient de mani\`ere similaire \`a son analogue arthm\'etique (section \textsection 3.2 de \cite{CN2017}). Les principales diff\'erences viennent du fait qu'on ne dispose pas de la variable arithm\'etique $X_0$ qui permet de donner une interpr\'etation des anneaux consid\'er\'es en termes de s\'eries de Laurent (sur l'anneau $W(k)$) ou de d\'efinir $\psi$ de telle sorte \`a ce qu'il soit "suffisamment" topologiquement nilpotent. La d\'emonstration se fait en trois parties. On montre dans un premier temps qu'on a un $p^{8r}$-quasi-isomorphisme $\Syn(R_{\cris}^{+},r) \xrightarrow{\sim} C(R^{[u]},r)$. Contrairement au cas arithm\'etique, si $f$ est un \'el\'ement de $R^{[u]}$, on n'a pas, a priori, une d\'ecomposition $f=f_1+f_2$ avec $f_1$ dans $F^rR^{[u]}$ et $f_2$ tel que $p^rf_2$ est dans $R_{\inf}^+$ (voir \cite[Rem. 2.6]{CN2017}) : au lieu de cela, on utilise ici la $p^{2r}$-exactitude de la suite \eqref{suite exacte Au 1} (voir lemme \ref{OmegaRcrisRu}). 
L'\'etape suivante consiste \`a passer du complexe $C(R^{[u]},r)$ au complexe $C^{\psi}(R^{[u]},r)$. 
Enfin, on montre que l'inclusion $R^{[u]} \hookrightarrow R^{[u,v]}$ induit un $p^{2r}$-quasi-isomorphisme sur les complexes tronqu\'es $\tau_{\le r} C^{\psi}(S,r)$. La preuve diff\`ere ici de celle de \cite{CN2017} dans la mesure o\`u la s\'erie $\sum_{n \ge 1} \psi^n(x)$ pour $x$ dans $R^{[u,v]}/R^{[u]}$ ne converge pas n\'ecessairement pour notre d\'efinition de $\psi$. Il reste vrai, cependant, $\psi-1$ est surjective sur $\A^{(0,v]+}$, ce qui permet de conclure (voir lemme \ref{Ru to Ruv}).

\subsection{Disque de convergence}

On commence par montrer qu'on a un $p^{10r}$-quasi-isomorphisme $\Syn(R_{\cris}^{+},r) \xrightarrow{\sim} C(R^{[u]},r)$. La preuve du lemme suivant est identique \`a celle du cas arithm\'etique.  

\begin{Lem}\cite[Lem. 3.1]{CN2017}
Soit $f$ dans $R^{[u]}$ tel que $f=\sum_{n \ge N} x_n \frac{[\beta]^n}{p^n}$ pour un $N$ dans $\N$ et soit $s$ dans $\Z$. Si $N \ge \frac{s}{p-1}$ alors il existe $g$ dans $R^{[u]}$ tel que $f=(1-p^{-s} \varphi)(g)$. 
\end{Lem}

Comme dans \cite[Lem. 3.2]{CN2017}, on utilise le lemme ci-dessus pour montrer les isomorphismes suivants : 

\begin{Lem}
\label{OmegaRcrisRu}
Soient $r$ dans $\N$ et $u$ et $u'$ des r\'eels tels que $\frac{1}{p-1} \le u \le 1$ et $u' \le u \le pu'$. Alors, 
\begin{enumerate}  
\item L'application $p^r- p^i\varphi$ induit un $p^{5r}$-isomorphisme $F^r \Omega^i_{R^{[u]}} / F^r \Omega^i_{R_{\cris}^{+}} \xrightarrow{\sim} \Omega^i_{R^{[u]}} / \Omega^i_{R_{\cris}^{+}}$. 
\item L'application $p^r- p^i\varphi$ induit un $p^{5r}$-isomorphisme $F^r \Omega^i_{R^{[u]}} / F^r \Omega^i_{R^{[u']}} \xrightarrow{\sim} \Omega^i_{R^{[u]}} / \Omega^i_{R^{[u']}}$.
\end{enumerate}
\end{Lem}

\begin{proof}
Comme on a $\varphi(\omega_{\textbf{j}})=  \omega_{\textbf{j}}$ pour $\textbf{j}$ dans $J_i$, il suffit de montrer que $p^r-p^i \varphi$ induit un $p^{4r}$-isomorphisme : $F^r R^{[u]} / F^r R_{\cris}^{+} \xrightarrow{\sim} R^{[u]} / R_{\cris}^{+}$ (respectivement $F^r R^{[u]} / F^r R^{[u']} \xrightarrow{\sim} R^{[u]} / R^{[u']}$). On note $A=R^{[u']}$ ou $R_{\cris}^+$ et $B= R^{[u]}$.

On montre d'abord la $p^r$-injectivit\'e. Soit $f$ dans $F^rB$ tel que $(p^r-p^i \varphi)(f)$ est dans $A$. Il suffit de voir que $p^r f$ est dans $A$ : c'est bien le cas car $p^r f =  (p^r-p^i \varphi)(f) + p^i \varphi(f)$ et $\varphi(B) \subseteq A$. 

Il reste \`a voir que l'application est $p^{5r}$-surjective. Soit $f$ dans $B$ : on peut \'ecrire $f=f_1+ f_2$ avec 
\[ f_1 = \sum_{n < N} x_n \frac{[\beta]^n}{p^n} \text{ et } f_2 =\sum_{n \ge N} x_n \frac{[\beta]^n}{p^n} \]
o\`u $N = \lfloor \frac{r-i}{p-1} \rfloor$ et $x_n \in R_{\inf}^+$ tend vers $0$ en l'infini. On a alors que $p^r f_1$ est dans $A$. Par le lemme pr\'ec\'edent, il existe $g$ dans $B$ tel que $f_2 = (1- p^{i-r}\varphi)(g)$. En utilisant la suite $p^{3r}$-exacte \eqref{suite exacte Au 1}: 
\[ 0 \to \Z_p(r) \to (\A^{[u]})^{\varphi=p^r} \to \A^{[u]}/F^r \to 0 \] 
on remarque que pour tout \'el\'ement $y$ de $\A^{[u]}$ on a $p^{3r}y= y_1+y_2$ avec $y_1$ dans $(\A^{[u]})^{\varphi=p^r}$ et $y_2$ dans $F^r \A^{[u]}$. Mais on a un $p^r$-isomorphisme : 
\[ (\AAC)^{\varphi=p^r} \xrightarrow{\sim}  (\A^{[u]})^{\varphi=p^r} \] 
et donc $p^r y_1$ est dans $\AAC$ (respectivement dans $\A^{[u']}$). 

Comme $R^{[u]} = \A^{[u]} \widehat{\otimes}_{\A_{\mathrm{inf}}} R_{\mathrm{inf}}^+$, on peut donc \'ecrire $p^{3r} g= g_1 + g_2$ avec $p^rg_1$ dans $A$ et $g_2$ dans $F^r B$. On obtient : 
\[ p^{5r} f = p^{5r} f_1 + (p^r-p^{i} \varphi)(p^r g_1 + p^r g_2) \]  
et donc, modulo $A$, on a 
$ p^{4r} f = (p^{r}-p^i \varphi)(p^rg_2) $, ce qui termine la d\'emonstration. 
 \end{proof}
 
 \begin{Cor}
Pour $u$ et $u'$ comme pr\'ec\'edemment, on a : 
 \begin{enumerate}
 \item L'injection $R_{\cris}^+ \subseteq R^{[u]}$ induit un $p^{10r}$-quasi-isomorphisme $C(R_{\cris}^+, r) \xrightarrow{\sim} C(R^{[u]}, r)$. 
 \item L'injection $R^{[u']} \subseteq R^{[u]}$ induit un $p^{10r}$-quasi-isomorphisme $C(R^{[u']},r) \xrightarrow{\sim} C(R^{[u]} , r)$.
 \end{enumerate}
 \end{Cor}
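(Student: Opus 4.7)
Notons $A = R_{\cris}^+$ (respectivement $A = R^{[u']}$) et $B = R^{[u]}$, de sorte qu'on a dans les deux cas une inclusion $A \hookrightarrow B$ compatible \`a la filtration, aux diff\'erentielles et au Frobenius. L'id\'ee est d'\'etudier la fibre de l'application $C(A,r) \to C(B,r)$ via la suite exacte courte de complexes
\[
0 \to C(A,r) \to C(B,r) \to Q \to 0,
\]
o\`u $Q$ est le bicomplexe quotient
\[
Q = \bigl[ F^r \Omega^{\bullet}_B / F^r \Omega^{\bullet}_A \xrightarrow{p^r - p^{\bullet} \varphi} \Omega^{\bullet}_{B'} / \Omega^{\bullet}_{A'} \bigr],
\]
(o\`u, pour chaque anneau $S$, on d\'esigne par $S'$ l'anneau intervenant dans la d\'efinition de $C(S,r)$, et on utilise que $\varphi(A) \subseteq A'$, $\varphi(B) \subseteq B'$ pour que la fl\`eche horizontale passe au quotient).

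L'essentiel du travail a d\'ej\`a \'et\'e fait dans le Lemme \ref{OmegaRcrisRu} : on y a \'etabli que, dans chaque degr\'e $i$, l'application horizontale
\[
p^r - p^i \varphi : F^r \Omega^{i}_B / F^r \Omega^{i}_A \longrightarrow \Omega^{i}_B / \Omega^{i}_A
\]
est un $p^{5r}$-isomorphisme (tant dans le cas $A=R_{\cris}^+$ que dans le cas $A = R^{[u']}$). Je proc\'ederais donc ainsi : je consid\`ererais $Q$ comme le c\^one de cette fl\`eche horizontale (termwise $p^{5r}$-iso), puis j'utiliserais la suite exacte longue de cohomologie associ\'ee pour conclure que $H^i(Q)$ est tu\'e par $p^{10r}$ en tout degr\'e (la perte d'un facteur $p^{5r}$ suppl\'ementaire provenant de l'extension entre le noyau d'un $p^{5r}$-iso en degr\'e $i+1$ et le conoyau en degr\'e $i$). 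Concr\`etement, pour une classe $[f,g] \in H^i(Q)$ repr\'esent\'ee par $f \in F^r \Omega^i$ et $g \in \Omega^{i-1}$, on utilise la $p^{5r}$-surjectivit\'e pour remonter $f$ puis la $p^{5r}$-injectivit\'e pour corriger l'ambigu\"it\'e.

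On en d\'eduit via la suite exacte longue associ\'ee \`a $0 \to C(A,r) \to C(B,r) \to Q \to 0$ que l'inclusion $C(A,r) \hookrightarrow C(B,r)$ est un $p^{10r}$-quasi-isomorphisme, ce qui donne les deux assertions simultan\'ement. Le point d\'elicat est essentiellement le contr\^ole de la constante : tout repose sur le Lemme \ref{OmegaRcrisRu}, et une fois celui-ci acquis, le passage au c\^one et \`a la suite exacte longue ne coute qu'un facteur $p^{5r}$ suppl\'ementaire, d'o\`u le $p^{10r}$. Notons qu'il n'y a aucune difficult\'e suppl\'ementaire dans le cas $A = R^{[u']}$ par rapport au cas $A = R_{\cris}^+$, les deux \'enonc\'es du Lemme \ref{OmegaRcrisRu} \'etant sym\'etriques.
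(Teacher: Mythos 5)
Votre preuve est correcte et correspond exactement \`a la d\'eduction attendue : le corollaire d\'ecoule du Lemme \ref{OmegaRcrisRu} par l'argument standard selon lequel le c\^one d'un morphisme de complexes qui est un $p^{N}$-isomorphisme terme \`a terme a sa cohomologie tu\'ee par $p^{2N}$, d'o\`u $N=5r$ donne le $p^{10r}$ annonc\'e. Notez seulement que c'est bien votre argument explicite au niveau des cocycles (rel\`evement via la $p^{5r}$-surjectivit\'e terme \`a terme, puis annulation du d\'efaut via la $p^{5r}$-injectivit\'e) qui fournit cette constante ; la lecture litt\'erale de votre parenth\`ese --- extension entre le noyau et le conoyau des morphismes induits sur la cohomologie des deux colonnes --- co\^uterait a priori un facteur $p^{20r}$, puisque chacun de ces groupes n'est contr\^ol\'e que par $p^{2N}$ et non par $p^{N}$.
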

 
 \subsection{Passage de $\varphi$ \`a $\psi$}
 
 On rappelle qu'on a $C^{\psi}(R^{[u]},r):=[ F^r \Omega_{R^{[u]}}^{\bullet} \xrightarrow{p^r \psi - p^{\bullet}} \Omega_{R^{[pu]}}^{\bullet}]$.
 
 \begin{Lem}\cite[Lem. 3.4]{CN2017}
 \label{phi-psi Ru}
 On a un quasi-isomorphisme $C(R^{[u]},r) \xrightarrow{\sim} C^{\psi}(R^{[u]},r)$ donn\'e par le diagramme commutatif : 
\[  \xymatrix{ 
F^r \Omega_{R^{[u]}}^{\bullet} \ar[r]^{p^r-p^{\bullet} \varphi} \ar[d]^{\operatorname{Id}} & \Omega_{R^{[u]}}^{\bullet} \ar[d]^{\psi} \\
F^r \Omega_{R^{[u]}}^{\bullet} \ar[r]^{p^r \psi-p^{\bullet}} & \Omega_{R^{[pu]}}^{\bullet} 
.}\]
\end{Lem}

\begin{Rem}
\label{phi-psi Ruv}
La preuve du lemme est identique \`a celle donn\'ee dans \cite{CN2017}. On montre de la m\^eme fa\c{c}on que le diagramme commutatif : 
\[  \xymatrix{ 
F^r \Omega_{R^{[u,v]}}^{\bullet} \ar[r]^{p^r-p^{\bullet} \varphi} \ar[d]^{\operatorname{Id}} & \Omega_{R^{[u,\frac{v}{p}]}}^{\bullet} \ar[d]^{\psi} \\
F^r \Omega_{R^{[u,v]}}^{\bullet} \ar[r]^{p^r \psi-p^{\bullet}} & \Omega_{R^{[pu,v]}}^{\bullet} 
}\]
induit un quasi-isomorphisme $C(R^{[u,v]},r) \xrightarrow{\sim} C^{\psi}(R^{[u,v]},r)$.
\end{Rem}

\subsection{Anneaux de convergence}

On passe maintenant de l'anneau $R^{[u]}$ \`a $R^{[u,v]}$. 

\begin{Lem}
\label{SVSSS}
Soient $u$ et $v$ tels que $\frac{1}{p-1} \le u \le 1 \le v$. Alors on a un $p^r$-isomorphisme : 
$$ F^r R^{[u,v]} / F^r R^{[u]} \xrightarrow{\sim} R^{[u,v]} / R^{[u]}. $$
\end{Lem}

\begin{proof}
Comme on a pour tout $n \ge 1$, 
\[ \{ x \in \A^{[u,v]}/ \A^{[u]} \; | \; p^nx=0 \} = \{ x \in  \A^{(0,v]+}/ \A_{\inf} \; | \; p^nx=0 \} =0 \]
on obtient une suite exacte : 
\[ 0 \to \A^{[u]}/p^n \to \A^{[u,v]}/p^n \to (\A^{[u,v]}/ \A^{[u]})/p^n \to 0. \]
Pour tout $m \ge 1$, on a de plus que $R_{\inf}^+/p^m$ est plat sur $\A_{\inf}/p^m$ et donc la suite (on peut supposer $m \ge n$) 
\[ 0 \to \A^{[u]}/p^n \otimes_{\A_{\inf}/p^{m}} R^+_{\inf}/p^m \to \A^{[u,v]}/p^n \otimes_{\A_{\inf}/p^{m}} R^+_{\inf}/p^m \to (\A^{[u,v]}/ \A^{[u]})/p^n \otimes_{\A_{\inf}/p^{m}} R^+_{\inf}/p^m \to 0 \]
est exacte. On obtient finalement que, pour tout $n$ et $m$, la suite  
\[ 0 \to \A^{[u]}/p^n \otimes_{\A_{\inf}} R^+_{\inf}/p^m \to \A^{[u,v]}/p^n \otimes_{\A_{\inf}} R^+_{\inf}/p^m \to (\A^{[u,v]}/ \A^{[u]})/p^n \otimes_{\A_{\inf}} R^+_{\inf}/p^m \to 0 \]
est exacte. En prenant la limite sur $(n,m)$ et en utilisant que les syst\`emes projectifs v\'erifient la condition de Mittag-Leffler, on obtient une suite exacte : 
\[ 0 \to R^{[u]} \to R^{[u,v]} \to \A^{[u,v]}/ \A^{[u]} \widehat{\otimes}_{\A_{\inf}} R_{\inf}^+ \to 0. \]
Comme $F^rR^{?}= F^r\A^{?} \widehat{\otimes}_{\A_{\inf}} R_{\inf}^+$ pour $? \in \{[u], [u,v]\}$, le m\^eme raisonnement donne une suite exacte :  
\[ 0 \to F^rR^{[u]} \to F^rR^{[u,v]} \to F^r\A^{[u,v]}/ F^r\A^{[u]} \widehat{\otimes}_{\A_{\inf}} R_{\inf}^+ \to 0. \]
On obtient :
\[ F^r R^{[u,v]} / F^r R^{[u]}\cong F^r\A^{[u,v]}/ F^r \A^{[u]} \widehat{\otimes}_{\A_{\inf}} R_{\inf}^+ \text{ et } R^{[u,v]} / R^{[u]} \cong \A^{[u,v]}/ \A^{[u]} \widehat{\otimes}_{\A_{\inf}} R_{\inf}^+ \]
et il suffit de montrer le r\'esultat pour $R=\OC_C$. 

Comme $\A^{[u,v]} $ est la compl\'etion de  $\A_{\inf} \left[ \frac{p}{[\alpha]}, \frac{[\beta]}{p} \right]$ et $\A^{[u]}$ est celle de $\A_{\inf} \left[ \frac{[\beta]}{p} \right]$, il suffit de montrer que $p^r \frac{p}{[ \alpha ]}$ est dans l'image : c'est le cas puisqu'on a (voir la preuve de la proposition \ref{FrAuv})
$$ \frac{p}{[\alpha]} = \left(1+ \frac{[p^{\flat}]-p}{p} \right)^{-1} [ \alpha' ] \text{ avec } v_p( \alpha')= 1 - \frac{1}{v}. $$  
\end{proof}

On peut maintenant montrer la version g\'eom\'etrique du lemme 3.6 de \cite{CN2017}.

\begin{Lem}
\label{Ru to Ruv}
L'inclusion $R^{[u]} \hookrightarrow R^{[u,v]}$ induit un $p^{2r}$-quasi-isomorphisme : 
\[ \tau_{\le r} C^{\psi}(R^{[u]},r) \xrightarrow{\sim} \tau_{\le r} C^{\psi}(R^{[u,v]},r). \]
\end{Lem}

\begin{proof}
L'application est induite par 

\[ \xymatrix{ 
     F^r \Omega_{R^{[u]}}^{\bullet} \ar[d] \ar[r]^{p^r \psi - p^{\bullet}} & \Omega_{R^{[pu]}}^{\bullet} \ar[d] \\
     F^r \Omega_{R^{[u,v]}}^{\bullet} \ar[r]^{p^r \psi - p^{\bullet} } & \Omega_{{R}^{[pu,v]}}^{\bullet}. }\]
     
Pour montrer qu'on a un $p^{2r}$-quasi-isomorphisme $ \tau_{\le r} C^{\psi}(R^{[u]}, r) \xrightarrow{\sim} \tau_{\le r} C^{\psi}(R^{[u,v]} , r)$, il suffit de voir qu'on a un $p^{2r}$-quasi-isomorphisme : 
\[ \tau_{\le r} (F^r \Omega_{R^{[u,v]}}^{\bullet} /   F^r \Omega_{R^{[u]}}^{\bullet}) \xrightarrow{p^r \psi - p^{\bullet}} \tau_{\le r} (   \Omega_{R^{[pu,v]}}^{\bullet}/\Omega_{R^{[pu]}}^{\bullet}). \]

Pour simplifier, on note 
\[ A^{\bullet}= (F^r \Omega_{R^{[u,v]}}^{\bullet} /   F^r \Omega_{R^{[u]}}^{\bullet}) \text{ et } B^{\bullet}= (   \Omega_{R^{[pu,v]}}^{\bullet}/\Omega_{R^{[pu]}}^{\bullet}). \]

On va prouver :
\begin{enumerate}

\item pour tout $i \le r$, $F^r \Omega_{R^{[u,v]}}^{i} /   F^r \Omega_{R^{[u]}}^{i} \xrightarrow{p^r \psi - p^{i}}   \Omega_{R^{[pu,v]}}^{i}/\Omega_{R^{[pu]}}^{i}$ est un $p^{r}$-isomorphisme ; 

\item pour $i=r+1$, le morphisme $p^r \psi -p^{r+1} : H^{r+1}(A^{\bullet}) \to H^{r+1}(B^{\bullet})$ est $p^{2r}$-injectif.
\end{enumerate} 
 
Prouvons le point 1). Soit $i \le r$. Comme on a  
$ \psi( \sum_{\textbf{j} \in J_i} f_{\textbf{j}} \omega_{\textbf{j}}) = \sum_{\textbf{j} \in J_i}  \psi(f_{\textbf{j}}) \omega_{\textbf{j}} $, 
on se ram\`ene \`a prouver que $p^r \psi - p^i : F^rR^{[u,v]}/ F^rR^{ [u] } \to R^{ [pu, v]}/ R^{[ pu ]}$ est un $p^{r}$-isomorphisme. Par le lemme pr\'ec\'edent, il suffit de montrer que $p^r \psi - p^i : R^{[u,v]}/ R^{ [u] } \to R^{ [pu, v]}/ R^{[ pu ]}$ est un $p^{r}$-isomorphisme.  

Pour $i < r$, $p^s \psi-1$ avec $s=r-i$ est inversible d'inverse $-(1+ p^s \psi +  p^{2s} \psi^{2} + \cdots) $ et donc $p^r \psi-p^i$ est un $p^r$-isomorphisme. 
Il reste \`a voir le cas $i=r$ : on va montrer que $(\psi -1): R^{[u,v]}/ R^{ [u] } \to R^{ [pu, v]}/ R^{[ pu ]} $ est un isomorphisme.

Comme dans la preuve du lemme \ref{SVSSS}, on peut \'ecrire 
\[ R^{[u,v]}/ R^{[u]} = \A^{[u,v]}/ \A^{[u]} \widehat{\otimes}_{\A_{\inf}} R_{\inf}^+ \cong  \A^{(0,v]+}/ \A_{\inf} \widehat{\otimes}_{\A_{\inf}} R_{\inf}^+. \] 
Si $x$ est un mon\^ome $aX_1^{\alpha_1}\cdots X_d^{\alpha_d}$ de $R_{\inf}^+$ avec $a \in \A_{\inf}$ et $\alpha=(\alpha_1, \dots, \alpha_d) \neq 0$ alors, par construction de $\psi$, $(\psi^k(x))_{k}$ tend vers $0$ et la s\'erie $x+ \psi(x)+ \psi^2(x) +\cdots$ converge. Il suffit donc de v\'erifier que $\psi-1$ est un isomorphisme de $\A^{(0,v]+}/ \A_{\inf}$ dans lui-m\^eme. 

L'injectivit\'e se d\'eduit des suites exactes : 
\[ 0 \to \Z_p \to \A_{\inf} \xrightarrow{\psi-1} \A_{\inf} \to 0 \text{ et } 0 \to \Z_p \to \A^{(0,\frac{v}{p}]+} \xrightarrow{\psi-1} \A^{(0,v]+} \to 0 \]
obtenues \`a partir des suites exactes \eqref{suite exacte Ar} et \eqref{suite exacte Av}. 

Montrons la surjectivit\'e. Soit $x$ dans $\A^{(0,v]+}$. Comme $(1-\psi)$ est surjective de $\A$ dans $\A$, il existe $y$ dans $\A$ tel que $x=(1-\psi)(y)$. On va montrer que $\psi(y)$ est dans $\A^{(0,v]+}$ (et on aura en particulier que $y=x+ \psi(y)$ est dans $\A^{(0,v]+}$). 

\'Ecrivons $ x = \sum_{n \in \N} [x_n]p^n \text{ et } y= \sum_{n \in \N} [y_n]p^n$, on a l'\'egalit\'e 
\[ \sum_{n \in \N} [x_n]p^n = \sum_{n \in \N} ([y_n]-[y_n^{\frac{1}{p}}])p^n. \]

 On veut monter que $v \frac{v_{\E}(y_n)}{p}+n$ tend vers l'infini et que $v_{\E}(y_n) \ge -\frac{pn}{v}$. Il suffit de v\'erifier que $v_{\E}(y_n) \ge -\frac{n}{v}$ pour tout $n \in \N$ : on aura alors 
\[ v_{\E}(y_n) \ge \frac{-n}{v} > \frac{-pn}{v} \text{ et } v \frac{v_{\E}(y_n)}{p}+n \ge \frac{p-1}{p} n \xrightarrow[n \to \infty]{} \infty. \]

Pour $n=0$, on a $x_0=y_0-y_0^{\frac{1}{p}}$. Si $v_{\E}(y_0)<0$ alors $v_{\E}(y_0) < v_{\E}(y_0^{\frac{1}{p}})$ et $v_{\E}(y_0)=v_{\E}(x_0) \ge 0$, contradiction. Donc $v_{\E}(y_0) \ge 0$.  

Soit $n$ dans $\N$. Supposons que pour tout $i \le n$, $v_{\E}(y_n) \ge \frac{-n}{v}$. Dans $\A$, on a 
\[ \hspace{-1cm}-([x_{n+1}]-[y_{n+1}]+[y_{n+1}^\frac{1}{p}])p^{n+1}=\left(\sum_{i \le n} ([x_i]-[y_i]+[y_i]^{\frac{1}{p}})p^i\right)+ p^{n+2} \left(\sum_{i \ge n+2} ([x_i]-[y_i]+[y_i^{\frac{1}{p}}])p^{i-(n+2)}\right). \]  
En divisant par $p^{n+1}$ et en projetant dans $C^{\flat}$, on obtient que $x_{n+1}-y_{n+1}+y_{n+1}^{\frac{1}{p}}$ est l'image de 
\[ \frac{1}{p^{n+1}} \sum_{i \le n} ([x_i]-[y_i]+[y_i]^{\frac{1}{p}})p^i \]
dans $C^{\flat}$. Mais par hypoth\`ese de r\'ecurrence $[\alpha^n]( \sum_{i \le n} ([x_i]-[y_i]+[y_i^{\frac{1}{p}}]p^i) \in \A^+ \cap p^{n+1}\A=p^{n+1} \A^+$ donc $\alpha^{n}(x_{n+1}-y_{n+1}+y_{n+1}^{\frac{1}{p}})$ est dans $\OC^{\flat}$. On obtient 
\[ v_{\E}( x_{n+1}-y_{n+1}+y_{n+1}^{\frac{1}{p}}) \ge \frac{-n}{v}. \] 
Si $v_{\E}(y_{n+1}) \ge v_{\E}(x_{n+1})$ ou $v_{\E}(y_{n+1}) \ge 0$, on a fini. Si $v_{\E}(y_{n+1}) < v_{\E}(x_{n+1})$ et $v_{\E}(y_{n+1}) < 0$ alors \[v_{\E}(y_{n+1})= v_{\E}( x_{n+1}-y_{n+1}+y_{n+1}^{\frac{1}{p}}) \ge \frac{-n}{v} \ge \frac{-(n+1)}{v}. \] 

On obtient finalement que $\psi-1$ est un isomorphisme.

Montrons maintenant le point 2). On veut montrer que $\psi-p : H^{r+1}(A^{\bullet}) \to H^{r+1}(B^{\bullet})$ est $p^r$-injectif. Par le quasi-isomorphisme prouv\'e en \ref{phi-psi Ru}, il suffit de voir que le morphisme $1-p \varphi : H^{r+1}(A^{\bullet}) \to H^{r+1}(\widetilde{B}^{\bullet})$ avec 
\[ \widetilde{B}^{\bullet} = (\Omega_{R^{[u,\frac{v}{p}]}}^{\bullet}/\Omega_{R^{[u]}}^{\bullet}) \]
 est injectif. Ce morphisme est induit par le diagramme commutatif : 
\[ \xymatrix{ 
\cdots \ar[r] & A^r \ar[r] \ar[d]^{1-\varphi} & A^{r+1} \ar[r] \ar[d]^{1-p\varphi} & A^{r+2} \ar[r] \ar[d]^{1-p^2 \varphi}  \ar[r] & \cdots \\ 
\cdots \ar[r] & \widetilde{B}^r \ar[r] & \widetilde{B}^{r+1} \ar[r] & \widetilde{B}^{r+2} \ar[r] & \cdots
} \] 
Les fl\`eches $1 - p \varphi: A^{r+1} \to \widetilde{B}^{r+1}$ et $1 - p^2 \varphi : A^{r+2} \to \widetilde{B}^{r+2} $ sont bijectives (d'inverse $1+p \varphi + p^2 \varphi^2 + \cdots$ et $1+p^2 \varphi + p^4 \varphi^2 + \cdots$) donc il suffit de montrer que $1- \varphi : A^r \to \widetilde{B}^r$ est surjective. 

De la m\^eme fa\c{c}on que pr\'ec\'edemment, cela revient \`a montrer que 
\[ 1- \varphi : \A^{(0, v]^+}/ \A_{\inf} \to \A^{(0, \frac{v}{p}]^+}/ \A_{\inf}  \]
est surjective et c'est le r\'esultat de \eqref{suite exacte Av}. Ceci termine la d\'emonstration.   
\end{proof}

Le r\'esultat suivant se d\'eduit des quasi-isomorphismes des lemmes \ref{phi-psi Ru}, \ref{Ru to Ruv} et de la remarque \ref{phi-psi Ruv}.

\begin{Cor}
Si $pu \le v$, on a un $p^{2r}$-quasi-isomorphisme : 
$$ \tau_{\le r} C(R^{[u]}, r) \xrightarrow{\sim} \tau_{\le r} C(R^{[u,v]} , r).$$
\end{Cor}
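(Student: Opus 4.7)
La preuve consiste essentiellement \`a composer les trois \'enonc\'es pr\'ec\'edents pour produire un zig-zag de quasi-isomorphismes passant par les complexes $C^{\psi}$. L'hypoth\`ese $pu \le v$ sert uniquement \`a assurer que les anneaux $R^{[pu,v]}$ et $R^{[u,v/p]}$, qui interviennent dans les d\'efinitions de $C^{\psi}(R^{[u,v]},r)$ et dans le diagramme de la remarque \ref{phi-psi Ruv}, soient bien d\'efinis (les intervalles $[pu,v]$ et $[u,v/p]$ doivent \^etre non vides).

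Je proc\'ederais donc ainsi. Par le lemme \ref{phi-psi Ru}, on dispose d'un quasi-isomorphisme $C(R^{[u]},r) \xrightarrow{\sim} C^{\psi}(R^{[u]},r)$, qui se tronque en $\tau_{\le r} C(R^{[u]},r) \xrightarrow{\sim} \tau_{\le r} C^{\psi}(R^{[u]},r)$ puisque la troncation pr\'eserve les quasi-isomorphismes. Le lemme \ref{Ru to Ruv} fournit ensuite le $p^{2r}$-quasi-isomorphisme
\[ \tau_{\le r} C^{\psi}(R^{[u]},r) \xrightarrow{\sim} \tau_{\le r} C^{\psi}(R^{[u,v]},r), \]
induit par l'inclusion naturelle $R^{[u]} \hookrightarrow R^{[u,v]}$. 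Enfin, la remarque \ref{phi-psi Ruv} donne (par le m\^eme argument que le lemme \ref{phi-psi Ru}) un quasi-isomorphisme $C(R^{[u,v]},r) \xrightarrow{\sim} C^{\psi}(R^{[u,v]},r)$, dont on d\'eduit $\tau_{\le r} C(R^{[u,v]},r) \xrightarrow{\sim} \tau_{\le r} C^{\psi}(R^{[u,v]},r)$.

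La composition du zig-zag
\[ \tau_{\le r} C(R^{[u]},r) \xrightarrow{\sim} \tau_{\le r} C^{\psi}(R^{[u]},r) \xrightarrow{\sim} \tau_{\le r} C^{\psi}(R^{[u,v]},r) \xleftarrow{\sim} \tau_{\le r} C(R^{[u,v]},r) \]
fournit alors un $p^{2r}$-quasi-isomorphisme entre $\tau_{\le r} C(R^{[u]},r)$ et $\tau_{\le r} C(R^{[u,v]},r)$ dans la cat\'egorie d\'eriv\'ee. Il reste \`a v\'erifier que le morphisme ainsi obtenu co\"incide avec celui induit par l'inclusion $R^{[u]} \hookrightarrow R^{[u,v]}$, ce qui r\'esulte de la fonctorialit\'e des diagrammes des lemmes \ref{phi-psi Ru} et \ref{phi-psi Ruv} (les carr\'es commutatifs qui d\'efinissent les quasi-isomorphismes $C \to C^{\psi}$ sont compatibles avec l'inclusion $R^{[u]} \hookrightarrow R^{[u,v]}$, les colonnes \'etant donn\'ees par $\operatorname{Id}$ et $\psi$, tous deux fonctoriels). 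Aucun obstacle s\'erieux n'est attendu ici : toutes les difficult\'es techniques (construction de $\psi$, surjectivit\'e de $\psi-1$ sur $\A^{(0,v]+}$, contr\^ole de la filtration et troncation en degr\'e $r$) ont d\'ej\`a \'et\'e trait\'ees dans les lemmes cit\'es, et la preuve du corollaire est essentiellement formelle.
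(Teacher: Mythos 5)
Votre preuve est correcte et suit exactement la d\'emarche du texte : le corollaire se d\'eduit pr\'ecis\'ement du zig-zag form\'e par le lemme \ref{phi-psi Ru}, le lemme \ref{Ru to Ruv} et la remarque \ref{phi-psi Ruv}, la constante $p^{2r}$ provenant du seul lemme \ref{Ru to Ruv} puisque les deux autres fl\`eches sont de vrais quasi-isomorphismes. Votre observation sur le r\^ole de l'hypoth\`ese $pu \le v$ (assurer que $R^{[pu,v]}$ est bien d\'efini) est \'egalement juste.
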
 

En combinant les r\'esultats de ces deux sections, on obtient un $p^{12r}$-quasi-isomorphisme : 
\begin{equation}
\label{Rcris-Ruv}
 \tau_{\le r} C(R_{\cris}^+,r) \to \tau_{\le r} C(R^{[u,v]}, r).
\end{equation}

\section{Utilisation des $(\varphi, \Gamma)$-modules}

On suppose toujours $R$ comme en \textsection 2.2.2, avec $\Spf(R)$ connexe. Dans cette section, on d\'efinit un isomorphisme $R^{[u,v]} \cong \A_R^{[u,v]}$ o\`u $\A_R^{[u,v]}$ est un anneau muni d'une action du groupe de Galois $G_R$. Cela va permettre de construire un quasi-isomorphisme entre le complexe $C(R^{[u,v]},r)$ de la section pr\'ec\'edente et un complexe de $(\varphi, \Gamma)$-modules $\Kos(\varphi, \Gamma_R, \A_R^{[u,v]})$ qui intervient dans le calcul de la cohomologie de Galois.

Comme dans \cite[\textsection 4]{CN2017}, la preuve se fait en deux \'etapes. Premi\`erement, en divisant par des puissances de $t$, on transforme l'action des diff\'erentielles $\partial_i$ en une action d'une alg\`ebre de Lie, $\Lie \Gamma_R$ : cela permet de se d\'ebarrasser de la filtration (c'est possible par le lemme \ref{filtration [u,v]} ci-dessous). On utilise ensuite que les op\'erateurs $\tau_i$ qui sont d\'efinis dans le paragraphe suivant et qui traduisent l'action du groupe $\Gamma_R$ sur l'anneau $\A_R^{[u,v]}$ sont topologiquement nilpotents (pour la topologie $\mu$-adique) pour passer de $\Lie \Gamma_R$ \`a $\Gamma_R$. 

\begin{Rem}
\begin{enumerate}
\item Dans \cite{CN2017}, Colmez-Nizio{\l} ne travaillent pas ici avec leur complexe original $\operatorname{Kum}(R_{\varpi}^{[u,v]},r)$ mais avec un complexe quasi-isomorphe $\operatorname{Cycl}(R_{\varpi}^{[u,v]},r)$. Ce changement de complexe correspond \`a un changement de la variable \emph{arithm\'etique} $X_0$ par une variable \emph{cyclotomique} $T$ sur laquelle on peut d\'efinir une action de $\Gamma_R$. Dans notre cas, les deux complexes sont confondus. 
\item Dans \cite{CN2017}, du fait de la variable suppl\'ementaire $T$, l'alg\`ebre de Lie obtenue n'est pas commutative. Ce n'est pas le cas ici.    
\end{enumerate}
\end{Rem}  
 
\subsection{Plongement dans les anneaux de p\'eriodes}

Pour chaque $i$ de $\{ 1, \dots , d \}$, on choisit un \'el\'ement $X_i^{\flat}=(X_i, X_i^{\frac{1}{p}}, \dots)$ dans $\E_{\overline{R}}$ et on d\'efinit un plongement de $R^{+}_{\inf, \square}$ dans $\A^+_{\overline{R}}$ en envoyant $X_i$ sur $[X_i^{\flat}]$. On \'etend le plongement \`a 
\[R_{\inf}^+ \to \A^+_{\overline{R}}, \; R_{\cris}^{+} \to  \AAC(\overline{R}), \; R^{[u]} \to  \A_{\overline{R}}^{[u]}, \; R^{[u,v]} \to  \A_{\overline{R}}^{[u,v]} \text{ et } R^{(0,v]+} \to \A_{\overline{R}}^{(0,v]+}. \]

 On note $\A_R$ (respectivement $\A_R^+$, $\AAC(R)$, $\A_R^{\star}$) l'image de $R_{\inf}$ (respectivement $R_{\inf}^+$, $R^+_{\cris}$, $R^{\star}$ pour $\star \in \{[u], [u,v], (0, v]+ \}$) par ce plongement. On peut alors d\'efinir une action du groupe $G_R$ sur ces anneaux. 

On consid\`ere :
\[R_m^{\square}:= \OC_C \{ X^{\frac{1}{p^m}}, \frac{1}{(X_1  \cdots X_a)^{\frac{1}{p^m}}}, \frac{ \varpi^{\frac{1}{p^m}}}{(X_{a+1} \cdots X_{a+b})^{\frac{1}{p^m}}} \} \]
et on note $R_{\infty}^{\square}$ la compl\'etion $p$-adique de $\varinjlim R_m^{\square}$. Soient $R_m$ et $R_{\infty}$ les compl\'et\'es $p$-adiques $R_m^{\square} \widehat{\otimes}_{\OC_C} R$ et $R_{\infty}^{\square} \widehat{\otimes}_{\OC_C} R$. 

On rappelle que $G_R:= \Gal(\overline{R}[\frac{1}{p}]/ R [ \frac{1}{p}])$. On note \[\Gamma_R := \mathrm{Gal}( R_{\infty}^{\square}[ \frac{1}{p} ] / R^{\square}[\frac{1}{p}])= \mathrm{Gal}(R_{\infty}[ \frac{1}{p} ] / R[\frac{1}{p}])\] le groupe des automorphismes de $R_{\infty}[ \frac{1}{p} ]$ qui fixent $R[\frac{1}{p}]$. On a $\Gamma_R \cong \Z_p^{d}$. 

De plus, comme $G_R$ agit sur $\A_{\overline{R}}$ (resp. $\AAC(\overline{R})$, $\A_{\overline{R}}^{\star}$), il agit sur $\A_R$ (resp. $\AAC(R), \A_R^{\star}$) via $\Gamma_R$. Si on choisit des g\'en\'erateurs topologiques $\gamma_1, \dots , \gamma_d$ de $\Gamma_R$, cette action est donn\'ee par 
\[ \gamma_k([X_k^{\flat}]) = [\varepsilon][X_k^{\flat}] \text{ et } \gamma_j([X_k^{\flat}])=[X_k^{\flat}] \text{ si } j \neq k. \]     

\begin{Rem}
\label{tauAuv}
On note $\tau_j:= \gamma_j -1$. Pr\'ecisons l'action des $\tau_j$ sur $\A_R^{+}$. Comme les $\gamma_j$ agissent trivialement sur $\AAinf$, on obtient que $\tau_j(R^+_{\inf, \square}) \subseteq \mu R^+_{\inf, \square}$. En utilisant la proposition \ref{fonctions implicites} pour $\lambda= \gamma_j$, $I= ( \mu)$ et $Z_{\lambda}=Z$, on obtient que $\gamma_j(Z)$ est dans $Z+ (\mu)$ et en utilisant l'isomorphisme $\A_R^{+} \cong R_{\inf}^+$, on en d\'eduit que $\tau_j(\A_R^{+}) \subseteq \mu \A_R^{+}$. 

Comme on a $R^{[u,v]}= \A^{[u,v]} \widehat{\otimes}_{\AAinf} R_{\inf}^+$, on a le m\^eme r\'esultat pour $\A_R^{[u,v]}$.   
\end{Rem}

\subsection{Passage des $(\varphi, \partial)$-modules aux $(\varphi, \Gamma)$-modules}

On commence par montrer le lemme suivant. Comme pour les preuves pr\'ec\'edentes, on ne dispose pas ici de l'interpr\'etation des anneaux $R^{[u]}$ et $R^{[u,v]}$ en anneaux de s\'eries de Laurent. La d\'emonstration se fait en travaillant directement avec les anneaux de p\'eriodes et en utilisant la description de la filtration $F^r\A^{[u,v]}$ donn\'ee en \ref{FrAuv}. 

\begin{Lem}
\label{filtration [u,v]}
Soit $\frac{v}{p} < 1 < v$ et $u\ge \frac{1}{p-1}$. Alors, l'application\footnote{bien d\'efinie car on a $t \in \AAC(\overline{R}) \subset \A_{\overline{R}}^{[u]}$.} $f \mapsto t^r f$ induit des $p^{3r}$-isomorphismes $\A_{\overline{R}}^{[u,v]} \to F^r \A_{\overline{R}}^{[u,v]}$ et $\A_{\overline{R}}^{[u,\frac{v}{p}]} \to \A_{\overline{R}}^{[u,\frac{v}{p}]}$.
\end{Lem}

\begin{proof}
Montrons d'abord la $p^{3r}$-surjectivit\'e de $t^r : \A_{\overline{R}}^{[u,v]} \to F^r \A_{\overline{R}}^{[u,v]}$. D'apr\`es la proposition \ref{FrAuv}, si $y$ est un \'el\'ement de $F^r \A_{\overline{R}}^{[u,v]}$, $p^ry$ s'\'ecrit $\xi^r x$ avec $x$ dans $
\A_{\overline{R}}^{[u,v]}$ : il suffit donc de voir que $t$ divise $p^2 \xi$ dans $\A_{\overline{R}}^{[u,v]}$. 
On \'ecrit 
$$ \mu = \widetilde{u_0} t \qquad \text{ avec } \qquad \widetilde{u_0}=  \sum_{n \ge 1} \frac{a(n)! p^{\tilde{a}(n)}}{n!} t^{\{n-1\}} \text{ o\`u } \begin{cases} \tilde{a}(n)=a(n) \text{ si } b(n) \neq 0 \\ \tilde{a}(n)=a(n)-1 \text{ si } b(n)=0\end{cases} $$
et on en d\'eduit que $\mu$ et $t$ engendrent les m\^emes id\'eaux dans $\AAC$ (voir \cite[5.2.4]{Fontaine94}). 
Il suffit alors de v\'erifier que $\mu$ divise $p^2 \xi$. Mais, par d\'efinition, $\xi = \frac{\mu}{\mu_1}$ avec $\mu_1= \varphi^{-1}(\mu)$ : on va montrer que $\frac{p^2}{\mu_1}$ est dans $\A^{[u,v]}$. 

Pour cela, on montre d'abord que $\mu = u_0 [ \bar{\mu} ]$ avec $u_0$ unit\'e de $\A_{\overline{R}}^{(0,\frac{v}{p}]+}$. On suit la preuve de Andreatta-Brinon dans \cite[4.3(d)]{AB08}.  
En utilisant que $\varphi(\mu)=(\mu+1)^p-1$, on v\'erifie qu'on a  : 
$$ \mu = [ \varepsilon ] -1 = [ \bar{\mu} ] + p [ \alpha_1 ] + p^2 [ \alpha_2 ] + \dots \text{ avec } v_{\E}( \alpha_n) \ge v_{\E}( \varepsilon^{\frac{1}{p^n}}-1) = \frac{1}{p^{n-1}(p-1)}. $$
Si on \'ecrit $\alpha_n = \bar{\mu} a_n$ avec $a_n$ dans $\E$, on a 
$$v_{\E}(a_n) \ge \frac{1-p^n}{p^{n-1}(p-1)}= - \sum_{k=0}^{n-1} p^{k-n+1}. $$
Donc pour tout $n \ge 1$, $\frac{v}{p} v_{\E}(a_n)+n \ge 0$. En notant $u_0= 1+p[a_1]+ p^2[a_2] + \dots$, on obtient $w_{\frac{v}{p}}(u_0-1) > 0$ donc $w_{\frac{v}{p}}(u_0)=0$ et $u_0$ est une unit\'e de $\A_{\overline{R}}^{(0,\frac{v}{p}]+}$. 

On montre de m\^eme que $\mu_1=u_1 [ \bar{\mu_1} ]$ avec $u_1$ unit\'e de $\A_{\overline{R}}^{(0,v]+}$ (et donc aussi de $\A_{\overline{R}}^{(0,\frac{v}{p}]+}$). On en d\'eduit
$$ \frac{p^2}{\mu_1} = \frac{p^2}{[\bar{\mu_1}]} u_1^{-1} \text{ avec } w_v(\frac{p^2}{[\bar{\mu_1}]})= \frac{- v}{(p-1)}+2 > \frac{-p}{p-1} +2 \ge 0 $$
et on obtient le r\'esultat voulu. 

La $p^{3r}$-surjectivit\'e de $\A_{\overline{R}}^{[u,\frac{v}{p}]} \to \A_{\overline{R}}^{[u,\frac{v}{p}]}$ s'obtient de la m\^eme fa\c con.

On v\'erifie ensuite $p^{3r}$-injectivit\'e. Soit $f$ dans $\A_{\overline{R}}^{[u,v]}$ tel que $t^r f =0$, on veut montrer $p^{3r} f=0$. Par ce qu'on a vu plus haut, $\mu^r f =0 = [\bar{\mu}]^r u_0^r f$ avec $u_0$ unit\'e de $\A_{\overline{R}}^{(0,\frac{v}{p}]+}$ donc $[\bar{\mu}]^rf=0$.
Enfin, on peut \'ecrire $p^2= [ \bar{\mu} ] \cdot x$ avec $x$ dans $\A_{\overline{R}}^{(0, \frac{v}{p}]+}$ : on a donc 
$$ p^{3r} f= (px [\bar{\mu}])^r f=0. $$

\end{proof} 

\begin{Rem} On suppose $\frac{v}{p} < 1 < v$ et $u\ge \frac{1}{p-1}$.
\label{injectivite r+1}
\begin{enumerate}
\item On en d\'eduit qu'on a un $p^{3r}$-isomorphisme $\A_R^{[u,v]} \xrightarrow{\sim} F^r \A_R^{[u,v]}$ (et $\A_{R}^{[u,\frac{v}{p}]} \xrightarrow{\sim}  \A_{R}^{[u,\frac{v}{p}]}$). 
\item La m\^eme preuve montre que le morphisme $f \mapsto t^{r+1} f$ de $\A_{\overline{R}}^{[u,v]} \to F^r \A_{\overline{R}}^{[u,v]}$ et $\A_{\overline{R}}^{[u,\frac{v}{p}]} \to \A_{\overline{R}}^{[u,\frac{v}{p}]}$ est $p^{2(r+1)}$-injectif.
\item On va montrer que les applications pr\'ec\'edentes induisent des $p^{6r}$-isomorphismes :
\[ \A_{\overline{R}}^{[u,v]}/p^n \to F^r \A_{\overline{R}}^{[u,v]}/p^n \text{ et } \A_{\overline{R}}^{[u, \frac{v}{p}]}/ p^n \to \A_{\overline{R}}^{[u, \frac{v}{p}]}/ p^n. \]
Pour cela, notons $A= \A_{\overline{R}}^{[u,v]}$ (respectivement $\A_{\overline{R}}^{[u, \frac{v}{p}]}$) et $B= F^r \A_{\overline{R}}^{[u,v]}$ (respectivement $\A_{\overline{R}}^{[u, \frac{v}{p}]}$) et montrons que $x \mapsto t^r x$ induit un $p^{6r}$-isomorphisme de $A/ p^n$ dans $B/p^n$. La surjectivit\'e d\'ecoule du lemme pr\'ec\'edent de mani\`ere \'evidente, montrons la $p^{6r}$-injectivit\'e : soit $x$ dans $A$ tel que $t^r x= p^n y$ pour $y$ dans $B$. On a ensuite $p^{3r} t^r x = p^n (p^{3r} y)= p^n (t^r z)$ avec $z$ dans $A$ (on utilise ici que la $p^{3r}$-surjectivit\'e). Ainsi $t^r(p^{3r}x - p^n z)$ est nul et on d\'eduit que $p^{3r}(p^{3r}x - p^n z)=0$. On obtient que $p^{6r}x$ est nul modulo $p^n A$ et donc que $t^r$ est $p^{6r}$-injective modulo $p^n$.     
\end{enumerate} 
\end{Rem}

Dans la suite, pour simplifier, on note $S=\A_R^{[u,v]}$ et $S'= \A_R^{[u, \frac{v}{p}]}$. On rappelle qu'on a choisi $(\gamma_j)_{1 \le j \le d}$ des g\'en\'erateurs topologiques de $\Gamma_R \cong \Z_p^d$ et, pour $1 \le i \le d$, on d\'efinit 
$\partial_i:= X_i \frac{\partial}{\partial X_i} \text{ et } J_i := \{ (j_1, \dots, j_i) \; | \; 1 \le j_1 \le \dots \le j_i \le d \}.$

On note $\chi : G_K \to \Z_p^{\times}$ le caract\`ere cyclotomique. Pour tout $g$ de $G_K$, on a 
$ g \cdot t = \chi(g) t$. Si on note $S(r)$ l'anneau $S$ muni de l'action de $G_K$ tordue par $\chi^r$, on obtient que la multiplication par $t^r$ induit une application Galois-invariante $S(r) \to S$. 
 
On d\'efinit les complexes : 
\[ \Kos( \Gamma_R, S(r) ):= S(r) \xrightarrow{(\gamma_j-1)} S(r)^{J_1} \to \dots \to S(r)^{J_d}, \] 
\[ \Kos( \varphi, \Gamma_R, S(r)):= [ \mathrm{Kos}( \Gamma_R, S(r) ) \xrightarrow{1 - \varphi} \mathrm{Kos}( \Gamma_R , S'(r) ) ], \]   
\[ \Kos( \partial, F^rS ):= F^rS \xrightarrow{(\partial_j)} (F^{r-1}S)^{J_1} \to \dots \to (F^{r-d}S)^{J_d}, \] 
\[ \Kos( \varphi, \partial, F^rS):= [ \mathrm{Kos}( \partial, F^rS ) \xrightarrow{p^r - p^{\bullet}\varphi} \mathrm{Kos}( \partial , S') ]. \] 

On a $C(S,  r) \xrightarrow{\sim} \Kos(\varphi, \partial, S)$ et en utilisant l'isomorphisme $R^{[u,v]} \xrightarrow{\sim} A_R^{[u,v]}$, on obtient 
$ C(S,r) \xrightarrow{\sim} \Kos(\varphi, \partial, F^rS).$

Le but est maintenant de prouver la proposition suivante (voir \cite[Prop. 4.2]{CN2017} pour l'analogue arithm\'etique) : 

\begin{Pro}
\label{(phi,gamma) et (phi,delta)}
\begin{enumerate}
\item Il existe un $p^{30r}$-quasi-isomorphisme \[\tau_{\le r} \Kos(\varphi, \Gamma_R, S(r)) \xrightarrow{\sim} \tau_{\le r} \Kos(\varphi, \partial, F^rS).\]
\item Il existe un $p^{58r}$-quasi-isomorphisme \[\tau_{\le r} \Kos(\varphi, \Gamma_R, S(r))_n \xrightarrow{\sim} \tau_{\le r} \Kos(\varphi, \partial, F^rS)_n\] o\`u $(.)_n$ d\'esigne la r\'eduction modulo $p^n$. 
\end{enumerate}
\end{Pro}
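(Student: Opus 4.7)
The plan is to factor the quasi-isomorphism through the intermediate Koszul complex attached to the Lie algebra $\Lie \Gamma_R$:
\[ \tau_{\le r}\Kos(\varphi, \Gamma_R, S(r)) \xrightarrow{\sim} \tau_{\le r}\Kos(\varphi, \Lie \Gamma_R, S(r)) \xrightarrow{\sim} \tau_{\le r}\Kos(\varphi, \partial, F^rS), \]
where $\Lie \Gamma_R$ acts on $S(r)$ via infinitesimal generators $\nabla_j$ corresponding to the topological generators $\gamma_j$. Each arrow will be an explicit $p^{N}$-quasi-isomorphism with controlled $N$, whose composition yields point (1); point (2) will then follow by careful reduction modulo $p^n$.

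For the right-hand arrow (passage from $\partial$ to $\Lie \Gamma_R$), I define the map on the degree-$i$ piece by multiplying every $J_i$-coordinate by $t^{r-i}$. Since $\gamma \cdot t = \chi(\gamma) t$, the Tate twist in $S(r)$ exactly compensates these weights and the Koszul differentials match after identifying $\nabla_j$ with $t \partial_j$. By Remarque \ref{injectivite r+1}(1), each multiplication $t^{r-i} : S \to F^{r-i} S$ is a $p^{3r}$-isomorphism for $0 \le i \le r$, and Remarque \ref{injectivite r+1}(2) provides the $p^{2(r+1)}$-injectivity needed to handle the kernel of the boundary into degree $r+1$ when one truncates. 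For the left-hand arrow, the crucial identity is that on $S(r)$ one has
\[ \tau_j \;=\; \gamma_j - 1 \;=\; \exp(t\partial_j) - 1 \;=\; t\partial_j \cdot u_j, \qquad u_j \;=\; \sum_{n \ge 0} \frac{(t\partial_j)^n}{(n+1)!}, \]
since $\gamma_j$ fixes $\A_{\inf}$ and sends $[X_j^{\flat}]$ to $[\varepsilon][X_j^{\flat}]$ (Remarque \ref{tauAuv}), so $\gamma_j$ coincides with the formal exponential of the derivation $t\partial_j$ on $R_{\inf,\square}^+$, the extension to $R_{\inf}^+$ being covered by Proposition \ref{fonctions implicites}. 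The denominators $n!$ are absorbed into controlled powers of $p$ via the normalization $t^{\{n\}} = t^n/(a(n)! p^{a(n)})$, so $u_j$ is invertible up to a bounded $p$-power. The resulting triangular change of basis gives the quasi-isomorphism of Koszul complexes; compatibility with $\varphi$ uses $\varphi(t) = pt$ and $\varphi \gamma_j = \gamma_j \varphi$.

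Point (2) is then obtained by reducing the whole construction modulo $p^n$: the integral multiplication-by-$t^r$ step is replaced by its mod-$p^n$ analogue, Remarque \ref{injectivite r+1}(3), which is a $p^{6r}$-isomorphism $\A_{\overline R}^{[u,v]}/p^n \to F^r\A_{\overline R}^{[u,v]}/p^n$, and one invokes the general principle that a $p^N$-quasi-isomorphism of $\Z_p$-complexes becomes at worst a $p^{2N}$-quasi-isomorphism after $\otimes^L \Z/p^n\Z$; this accounts for the jump from $30r$ to $58r$. The main obstacle will be the bookkeeping of $p$-power losses: tracking the denominators in $u_j$ to ensure invertibility with a prescribed $p^{?}$-loss, combining it with the $p^{3r}$-loss from the $t^r$-twist and with the loss induced by truncation in degree $r$, and finally showing that the doubling incurred when reducing modulo $p^n$ is indeed enough to control each step and yield the explicit constants $30r$ and $58r$.
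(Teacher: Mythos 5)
Your overall architecture --- factoring through $\Kos(\varphi,\Lie\Gamma_R,S(r))$, using multiplication by powers of $t$ and the Remarque \ref{injectivite r+1} to relate $\partial$ to $\Lie\Gamma_R$ on the truncated complexes --- is exactly the paper's, and that half of your argument is sound. The genuine gap is in the passage from $\Gamma_R$ to $\Lie\Gamma_R$. You write $\gamma_j=\exp(t\partial_j)$ and set $\tau_j=\nabla_j\cdot u_j$ with $u_j=\sum_{n\ge 0}\nabla_j^{\,n}/(n+1)!$, then assert that ``$u_j$ is invertible up to a bounded $p$-power'' because the $n!$ are absorbed via $t^{\{n\}}$. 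This is not justified: the inverse series is $\nabla_j/(\exp(\nabla_j)-1)=\sum_n B_n\nabla_j^{\,n}/n!$, whose Bernoulli-number coefficients have $p$-adic denominators that are \emph{not} obviously compensated by the divisibility of $t^n$ in $\A^{[u,v]}$ (the gain $v_p(t^n)\approx n/(p-1)$ is exactly eaten by $v_p(n!)$, and the $B_n$ contribute further negative valuation). Even granting some bounded loss $p^{c}$ per degree, it would accumulate to $p^{cd}$ or worse across the Koszul complex and would destroy the claimed constant $30r$, which in the paper comes \emph{entirely} from the Lie-to-$\partial$ step. The paper avoids this by going in the other direction with the logarithm written as a series in $\tau_j=\gamma_j-1$: it sets $s_j=\frac{\log(1+\tau_j)}{\tau_j}=1+a_1\tau_j+\cdots$, which converges because $\tau_j(\A_R^{[u,v]})\subseteq\mu\A_R^{[u,v]}$ (Remarque \ref{tauAuv}) and whose inverse is the honest reciprocal power series $\frac{\tau_j}{\log(1+\tau_j)}$, also convergent for the same reason; the identity $s_j\tau_j=\nabla_j$ then gives a quasi-isomorphism $\Kos(\varphi,\Gamma_R,S(r))\to\Kos(\varphi,\Lie\Gamma_R,S(r))$ with \emph{no} $p$-power loss (Lemme \ref{Gamma to Lie}). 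If you want to keep the exponential formulation you must prove the convergence and integrality of the Bernoulli series on $\A_R^{[u,v]}$, which is precisely the delicate point the logarithmic normalization is designed to sidestep.

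A secondary inaccuracy: for point (2) you invoke a generic doubling principle ``$p^N\Rightarrow p^{2N}$ after $\otimes^L\Z/p^n$'', which would give $60r$, not $58r$. The paper instead reruns the same diagrams modulo $p^n$, replacing the integral $p^{3r}$-isomorphism $t^r:\A_{\overline R}^{[u,v]}\to F^r\A_{\overline R}^{[u,v]}$ by its mod-$p^n$ avatar, the $p^{6r}$-isomorphism of Remarque \ref{injectivite r+1}(3), and tracks the losses term by term; that is where $58r$ comes from.
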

 
Pour montrer cela, on d\'efinit $\nabla_j := t \partial_j$ et on consid\`ere l'alg\`ebre de Lie associ\'ee au groupe $\Gamma_R$, $\Lie \Gamma_R$. Alors $\Lie \Gamma_R$ est un $\Z_p$-module libre de rang $d$, engendr\'ee par les $\nabla_j$ pour $1 \le j \le d$. On note : 
   
$$\Kos( \Lie \Gamma_R, S(r) ):= S(r) \xrightarrow{(\nabla_j)} S(r)^{J_1} \to \dots \to S(r)^{J_d},$$
$$ \Kos( \varphi, \Lie\Gamma_R, S(r)):= [ \Kos( \Lie\Gamma_R, S(r) ) \xrightarrow{1 - \varphi} \Kos( \Lie \Gamma_R , S'(r) ) ].$$

\begin{Lem}
\label{Lie to Delta}
\begin{enumerate}
\item Il existe un $p^{30r}$-quasi-isomorphisme \[ \tau_{\le r} \Kos( \varphi, \Lie \Gamma_R, S(r) ) \xrightarrow{\sim}  \tau_{\le r} \Kos(\varphi, \partial, F^rS). \] 
\item De m\^eme, il existe un $p^{58r}$-quasi-isomorphisme 
\[ \tau_{\le r} \Kos( \varphi, \Lie \Gamma_R, S(r) )_n \xrightarrow{\sim}  \tau_{\le r} \Kos(\varphi, \partial, F^rS)_n. \]
\end{enumerate}
\end{Lem}

\begin{proof}
On rappelle qu'on note $S= \A_R^{[u,v]}$ et $S'= \A_R^{[u, \frac{v}{p}]}$. Comme dans \cite[Lem. 4.4]{CN2017}, on d\'eduit du lemme \ref{filtration [u,v]} et des diagrammes :
\[
\xymatrix{
S(r) \ar[d]_{t^r}  \ar[r]^{(\nabla_j)} & S(r)^{J_1} \ar[d]_{t^r} \ar[r] & \dots  \ar[r] &  S(r)^{J_r} \ar[d]_{t^r} \ar[r] & S(r)^{J_{r+1}} \ar[d]_{t^r} \ar[r] & \dots \\
F^rS \ar[r]^{(\nabla_j)} & (F^{r}S)^{J_1} \ar[r] & \dots \ar[r] & (F^{r}S)^{J_r} \ar[r] & (F^r S)^{J_{r+1}} \ar[r] & \dots \\
F^rS \ar[u]_{t^0} \ar[r]^{(\partial_j)} & (F^{r-1}S)^{J_1} \ar[u]_{t^1} \ar[r] & \dots \ar[r] & S^{J_r} \ar[u]_{t^r} \ar[r] & S^{J_{r+1}} \ar[u]_{t^{r+1}} \ar[r] & \dots  
}
\]
et
\[
\xymatrix{
S'(r) \ar[d]_{t^r}  \ar[r]^{(\nabla_j)} & S'(r)^{J_1} \ar[d]_{t^r} \ar[r] & \dots  \ar[r] &  S'(r)^{J_r} \ar[d]_{t^r} \ar[r] & S'(r)^{J_{r+1}} \ar[d]_{t^r} \ar[r] & \dots \\
S' \ar[r]^{(\nabla_j)} & (S')^{J_1} \ar[r] & \dots \ar[r] & (S')^{J_r} \ar[r] & (S')^{J_{r+1}} \ar[r] & \dots \\
S' \ar[u]_{t^0} \ar[r]^{(\partial_j)} & (S')^{J_1} \ar[u]_{t^1} \ar[r] & \dots \ar[r] & (S')^{J_r} \ar[u]_{t^r} \ar[r] & (S')^{J_{r+1}} \ar[u]_{t^{r+1}} \ar[r] & \dots  
}
\] 
qu'on a des $p^{14r}$-quasi-isomorphismes  
\[ \tau_{\le r} \mathrm{Kos}( \Lie \Gamma_R, S(r) ) \xrightarrow{\sim} \tau_{\le r} \mathrm{Kos}(\partial, F^rS) \text{ et } \tau_{\le r} \mathrm{Kos}( \Lie \Gamma_R, S'(r) ) \xrightarrow{\sim}  \tau_{\le r} \mathrm{Kos}(\partial, S'). \] 
 
 En effet, par le lemme, les fl\`eches verticales du haut sont des $p^{3r}$-isomorphismes et on obtient donc un $p^{6r}$-quasi-isomorphisme entre les deux premiers complexes. De la m\^eme fa\c con, les fl\`eches du bas sont des $p^{3r}$-isomorphismes en degr\'e inf\'erieur \`a $r$. En degr\'e $(r+1)$, d'apr\`es la remarque \ref{injectivite r+1}, le morphisme $S \xrightarrow{t^{r+1}} F^rS$ est $p^{4r}$-injectif et on en d\'eduit le $p^{8r}$-quasi-isomorphisme entre les complexes tronqu\'es. 
Comme on a : 
\[ \Kos( \varphi, \Lie\Gamma_R, S(r)):= [ \Kos( \Lie\Gamma_R, S(r) ) \xrightarrow{1 - \varphi} \Kos( \Lie \Gamma_R , S'(r) ) ] \]
\[ \Kos( \varphi, \partial, F^rS):= [ \Kos( \partial, F^rS ) \xrightarrow{p^r - p^{\bullet} \varphi} \Kos( \partial , S') ] \]  
on obtient un $p^{30r}$-quasi-isomorphisme 
\[ \tau_{\le r} \Kos( \varphi, \Lie \Gamma_R, S(r) ) \xrightarrow{\sim}  \tau_{\le r} \Kos(\varphi, \partial, F^rS) \]
induit par le diagramme commutatif : 
\begin{equation}
\label{DNote}
 \xymatrix{
 \Kos( \Lie \Gamma_R, S(r)) \ar[r]^{1-\varphi} \ar[d] & \Kos(\Lie \Gamma_R, S'(r)) \ar[d]^{p^r} \\
 \Kos( \Lie \Gamma_R, S(r)) \ar[r]^{p^r(1-\varphi)} \ar[d]^{t^r}  & \Kos(\Lie \Gamma_R, S'(r)) \ar[d]^{t^r}  \\
 \Kos( \Lie \Gamma_R, F^rS)) \ar[r]^{p^r-\varphi}  & \Kos(\Lie \Gamma_R, S') \\
 \Kos(\partial, F^rS) \ar[u]^{t^{\bullet}} \ar[r]^{p^r-p^{\bullet}\varphi} & \ar[u]^{t^{\bullet}} \Kos(\partial, S').
 }
 \end{equation}    

Le r\'esultat modulo $p^n$ s'obtient de la m\^eme fa\c{c}on, en utilisant le dernier point de la remarque \ref{injectivite r+1}. 
\end{proof}

\begin{Rem}
\label{ExCps}
En degr\'e sup\'erieur \`a $r+1$, les fl\`eches du bas dans le diagramme ci-dessus ne sont plus surjectives et, sans la troncation, on perd le quasi-isomorphisme entre la deuxi\`eme et la troisi\`eme ligne.
\end{Rem}

\begin{Lem}
\label{Gamma to Lie}
Il existe un quasi-isomorphisme : \[ \Kos(\varphi, \Gamma_R, S(r)) \xrightarrow{\sim} \Kos( \varphi, \Lie \Gamma_R, S(r) ). \]
\end{Lem}

\begin{proof}
La preuve est semblable \`a celle de \cite[Prop. 4.5]{CN2017}. On va construire une application $\beta :  \mathrm{Kos}(\Gamma_R, S(r)) \to \mathrm{Kos}( \Lie \Gamma_R, S(r) )$ telle que le diagramme suivant soit commutatif : 
 $$ 
 \xymatrix{ 
 S(r) \ar[r]^{(\gamma_j-1)} \ar[d]_{\mathrm{Id}} & S(r)^{J_1} \ar[r] \ar[d]_{ \beta_1} & S(r)^{J_2} \ar[r] \ar[d]_{ \beta_2} & \dots \\
S(r) \ar[r]^{(\nabla_j)}  & S(r)^{J_1} \ar[r]  & S(r)^{J_2} \ar[r]& \dots
}
$$
et qui induit le quasi-isomorphisme voulu. 

On rappelle qu'on a not\'e $\tau_j:= \gamma_j-1$. 

Soient $(a_n)_{n \ge 1}$ et $(b_n)_{n \ge 1}$ les coefficients des s\'eries formelles : 
\[ \frac{\log(1+X)}{X}= 1+ a_1 X+ a_2 X^2+ \cdots  \text{ et }  \frac{X}{\log(1+X)}= 1 + b_1 X+ b_2 X^2 + \cdots \]
Pour tout $j$ de $\{ 1, \dots,  d \}$, on pose :  
\[ s_j := 1+ a_1 \tau_j + a_2 \tau_j^2 + \cdots \text{ et } s_j^{-1} := 1+ b_1 \tau_j + b_2 \tau_j^2 + \cdots \]
En utilisant la remarque \ref{tauAuv}, on voit que pour $x$ dans $\A_R^{[u,v]}$, les s\'eries $s_j(x)$ et $s_j^{-1}(x)$ convergent dans $\A_R^{[u,v]}$.    

Consid\'erons maintenant les applications $\beta_i : S(r)^{J_i} \to S(r)^{J_i}$ avec $\beta_i( (a_{\textbf{j}})_{ \textbf{j} \in J_i} ) = ( s_{j_1} \cdots s_{j_i} ( a_{ \textbf{j}}))_{\textbf j \in J_i} $. Par ce qu'on vient de voir, les $\beta_i$ sont bien d\'efinies et sont des isomorphismes. 

Il reste \`a voir que le diagramme commute. Mais on a 
$$ (s_{j_1} \cdots s_{j_i} \tau_{j_i} \cdots \tau_{j_1})= (s_{j_1} \cdots s_{j_{i-1}} \nabla_{j_i} \tau_{j_{i-1}} \cdots \tau_{j_1})= \dots = (\nabla_{j_1} \cdots \nabla_{j_i})$$
car $\nabla_j$ et $\tau_j$ commutent (comme on a tordu l'action par $\chi$). 

Enfin, en remarquant que $\beta \circ \varphi = \varphi \circ \beta$, on obtient que l'application $\beta$ nous donne bien le quasi-isomorphisme cherch\'e.
\end{proof}

\begin{Rem}
La convergence des s\'eries $s_j$ et $s_j^{-1}$ se d\'eduit ici directement de la d\'efinition de l'action des $\tau_j$ sur $\A_R^{[u,v]}$. Dans \cite{CN2017}, du fait de la variable suppl\'ementaire $T$, la preuve de cette convergence est plus technique et s'obtient, l\`a encore, par des consid\'erations sur des s\'eries de Laurent (voir \cite[ \textsection 2.5.3]{CN2017}). 
\end{Rem}

La proposition \ref{(phi,gamma) et (phi,delta)} se d\'eduit ensuite des deux pr\'ec\'edents lemmes.

\subsection{Changement d'anneau}

Pour terminer, on passe de l'anneau $\A_R^{[u,v]}$ \`a $\A_R$. 

\begin{Lem}
\label{Av to Auv1}
L'inclusion $\A_R^{(0,v]+} \hookrightarrow \A_R^{[u,v]}$ induit un quasi-isomorphisme : 
\[  \Kos(\varphi, \Gamma_R, \A_{R}^{(0,v]+}(r)) \xrightarrow{\sim} \Kos( \varphi,  \Gamma_R, \A_R^{[u,v]}(r) ).\]
\end{Lem}

\begin{proof}
L'id\'ee est la m\^eme que dans \cite[Lem. 4.8]{CN2017} : on v\'erifie que $1- \varphi : \A_R^{[u,v]}/ \A_R^{(0,v]+} \to \A_R^{[u,\frac{v}{p}]}/ \A_R^{(0,\frac{v}{p}]+}$ est un isomorphisme. Comme on a un isomorphisme entre $\A_R^{[u,v]}/ \A_R^{(0,v]+}$ et  $\A_R^{[u,\frac{v}{p}]}/ \A_R^{(0,\frac{v}{p}]+}$, on peut voir $1- \varphi$ comme un endomorphisme de $\A_R^{[u,v]}/ \A_R^{(0,v]+}$. 

On a de plus :  
\[ \varphi \left( \frac{[\beta]^n}{p^n} \right) = \frac{[\beta]^{pn}}{p^n} =p^{(p-1)n} \frac{[\beta]^{pn}}{p^{pn}}. \] 
On en d\'eduit que $\varphi( \A_R^{[u,v]}/ \A_R^{(0,v]+}) \subseteq p \cdot (\A_R^{[u,v]}/ \A_R^{(0,v]+})$ et par it\'eration,  
\[ \varphi^k( \A_R^{[u,v]}/ \A_R^{(0,v]+}) \subseteq p^k \cdot (\A_R^{[u,v]}/ \A_R^{(0,v]+}). \]
 On obtient que $\varphi$ est topologiquement nilpotent et $(1- \varphi)$ inversible. 
\end{proof}

Pour $S= \A_R, \A_R^{(0,v]+}$, on note
 $ \Kos(\psi, \Gamma_R, S):= [ \Kos(\Gamma_R, S) \xrightarrow{\psi-1} \Kos(\Gamma_R, S)].$
 
 \begin{Rem}
 Dans \cite{CN2017}, l'anneau $\A_R^{(0,v]+}$ n'est pas stable par $\psi$ et il est n\'ecessaire de multiplier $\A_R^{(0,v]+}$ par une constante $\pi_i^{-l}$ dans la d\'efinition de $ \Kos(\psi, \Gamma_R, \A_R^{(0,v]+})$. 
 \end{Rem}

\begin{Lem}
\label{phi-psi Av}
L'application 
\[ \xymatrix{
\Kos(\Gamma_R, \A_R^{(0,v]+}) \ar[r]^{1- \varphi} \ar[d]^{\mathrm{Id}} & \Kos(\Gamma_R, \A_R^{(0, \frac{v}{p}]+}) \ar[d]^{\psi} \\
\Kos(\Gamma_R, \A_R^{(0,v]+}) \ar[r]^{\psi-1} & \Kos(\Gamma_R, \A_R^{(0, v]+}) 
} \] 
induit un quasi-isomorphisme $\Kos( \varphi, \Gamma_R, S) \xrightarrow{\sim} \Kos( \psi, \Gamma_R, S)$.
\end{Lem}

\begin{proof}
Le raisonnement est semblable \`a celui de la preuve du lemme \ref{phi-psi Ru} : comme $\psi$ est surjective, il suffit de v\'erifier que $\Kos(\Gamma_R, \A_R^{(0, \frac{v}{p}]+})^{\psi=0}$ est acyclique. On utilise la d\'ecomposition de la remarque \ref{psi-decompo} et on est ramen\'e \`a prouver que $\Kos(\Gamma_R, \varphi(\A_R^{(0, \frac{v}{p}]+})u_{\alpha})$ est acyclique pour tout $\alpha=(\alpha_1, \dots, \alpha_d) \neq 0$. 

Soit alors $k$ tel que $\alpha_k \neq 0$. On peut supposer que $k=d$. Pour simplifier, on pose $M_{\alpha}= \varphi( \A_R^{(0,v]+}) u_{\alpha}$. On note $J'_i:= \{ (j_1, \dots, j_i) \; | \; 1 \le j_1 < \dots < j_i \le d-1  \}$ et on \'ecrit $\Kos(\Gamma_R, M_{\alpha})$ comme le complexe : 
\[ \xymatrix{ 
M_{\alpha} \ar[r]^-{(\tau_j)_{j \neq d}} \ar[d]^{\tau_d} & (M_{\alpha})^{J'_1} \ar[r] \ar[d]^{\tau_d} & (M_{\alpha})^{J'_2} \ar[r] \ar[d]^{\tau_d} & \dots \\
M_{\alpha} \ar[r]^-{(\tau_j)_{j \neq d}}  & (M_{\alpha})^{J'_1} \ar[r]  & (M_{\alpha})^{J'_2} \ar[r]  & \dots 
} \]
On va montrer que $\tau_d$ est bijective sur $M_{\alpha}$. Comme $M_{\alpha}$ est $p$-adiquement complet, il suffit de montrer la surjectivit\'e modulo $p$. 

On a $ \gamma_d \cdot u_{\alpha}= (\bar{\mu}+1)^{\alpha_d} u_{\alpha}$ et donc, pour $y$ dans $(\A_R^{(0,v]+}/p)$ : 
\[ (\gamma_d -1) \cdot ( \varphi(y)u_{\alpha}) = \varphi( \bar{\mu}_1 G(y) )u_{\alpha} \]
avec $\bar{\mu}_1= \varphi^{-1}(\bar{\mu})$ et $G(y) = (1+ \bar{\mu}_1)^{\alpha_d} \bar{\mu}_1^{-1} ( \gamma_d-1)y + \bar{\mu}_1^{-1}( ( 1+ \bar{\mu}_1)^{\alpha_d} -1)y$. Mais l'action de $(\gamma_d-1)$ est triviale modulo $\bar{\mu}$ (car, modulo $\bar{\mu}$, $\varepsilon =1$) et comme $\bar{\mu}= \bar{\mu}_1^p$, on obtient que, modulo $\bar{\mu}_1$, $G(y)= \alpha_dy$. On a alors que l'application $\varphi \circ G: (\A_R^{(0,v]+}/p) \to (\A_R^{(0,\frac{v}{p}]+}/p)$ est surjective modulo $\bar{\mu}$ et comme $(\A_R^{(0,v]+}/p)$ est $\bar{\mu}$-complet, on en d\'eduit que $\varphi \circ G$ est surjective et que $(\gamma_d-1)$ est surjective sur $M_{\alpha}$.
 
\end{proof}

\begin{Lem}
L'inclusion $\A_R^{(0,v]+} \hookrightarrow \A_R$ induit un quasi-isomorphisme 
\[  \Kos(\psi, \Gamma_R, \A_{R}^{(0,v]+}) \xrightarrow{\sim} \Kos( \psi,  \Gamma_R, \A_R ). \]
\end{Lem}

\begin{proof}
Comme dans \cite[Lem. 4.12]{CN2017}, il suffit de voir que $1- \psi : \A_R / \A_R^{(0,v]+} \to  \A_R / \A_R^{(0,v]+}$ est un isomorphisme. De la m\^eme fa\c{c}on que dans la preuve du lemme \ref{Ru to Ruv}, en remarquant que $1+ \psi + \psi^2+ \cdots$ converge sur les mon\^omes, on se ram\`ene \`a montrer le r\'esultat pour l'anneau $\A / \A^{(0,v]+}$. Mais on a montr\'e en \ref{Ru to Ruv} que $1-\psi$ \'etait surjective de $\A^{(0,v]+}$ dans lui-m\^eme et on obtient donc que $1-\psi$ est un isomorphisme sur le quotient.  
\end{proof}

On en d\'eduit la proposition suivante : 

\begin{Pro}
\label{Ar to Av}
On a un quasi-isomorphisme 
\[  \Kos(\varphi, \Gamma_R, \A_{R}^{(0,v]+}) \xrightarrow{\sim} \Kos( \varphi,  \Gamma_R, \A_R ). \]
\end{Pro}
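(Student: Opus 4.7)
Le plan consiste \`a d\'eduire la proposition des trois lemmes qui la pr\'ec\`edent par un zigzag. Je construirais le diagramme commutatif
\[ \xymatrix{
\Kos(\varphi, \Gamma_R, \A_R^{(0,v]+}) \ar[r] \ar[d]_{\sim} & \Kos(\varphi, \Gamma_R, \A_R) \ar[d]^{\sim} \\
\Kos(\psi, \Gamma_R, \A_R^{(0,v]+}) \ar[r]^{\sim} & \Kos(\psi, \Gamma_R, \A_R)
} \]
dans lequel la fl\`eche verticale gauche est le quasi-isomorphisme du lemme \ref{phi-psi Av}, la fl\`eche horizontale inf\'erieure celui du lemme qui pr\'ec\`ede imm\'ediatement la proposition, et la fl\`eche verticale droite un analogue de \ref{phi-psi Av} pour l'anneau $\A_R$ (encore \`a \'etablir). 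La commutativit\'e est assur\'ee par la fonctorialit\'e des deux constructions vis-\`a-vis de l'inclusion $\A_R^{(0,v]+} \hookrightarrow \A_R$, de sorte que le morphisme horizontal sup\'erieur (induit par cette inclusion) est lui aussi un quasi-isomorphisme, ce qui est pr\'ecis\'ement l'\'enonc\'e voulu.

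Pour la fl\`eche verticale droite, la strat\'egie est celle de la preuve de \ref{phi-psi Av}, transpos\'ee \`a $\A_R$. Puisque $\psi \circ \varphi = \mathrm{Id}$ et que $\psi : \A_R \to \A_R$ est surjective, le diagramme
\[ \xymatrix{
\Kos(\Gamma_R, \A_R) \ar[r]^{1- \varphi} \ar[d]^{\mathrm{Id}} & \Kos(\Gamma_R, \A_R) \ar[d]^{\psi} \\
\Kos(\Gamma_R, \A_R) \ar[r]^{\psi-1} & \Kos(\Gamma_R, \A_R)
} \]
d\'efinit la fl\`eche cherch\'ee, et il suffit de v\'erifier que son c\^one est acyclique, ce qui se ram\`ene \`a l'acyclicit\'e de $\Kos(\Gamma_R, \A_R^{\psi=0})$. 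En utilisant la d\'ecomposition $\A_R^{\psi=0} = \bigoplus_{\alpha \neq 0} \varphi(\A_R) u_{\alpha}$ de la remarque \ref{psi-decompo}, la question se ram\`ene \`a l'acyclicit\'e de $\Kos(\Gamma_R, \varphi(\A_R) u_{\alpha})$ pour $\alpha = (\alpha_1, \dots, \alpha_d) \neq 0$. En choisissant $k$ avec $\alpha_k \neq 0$, le calcul explicite de \ref{phi-psi Av} donne $(\gamma_k-1)(\varphi(y) u_{\alpha}) = \varphi(\bar{\mu}_1 G(y)) u_{\alpha}$ avec $G \equiv \alpha_k \pmod{\bar{\mu}_1}$ ; comme $\alpha_k \in \Z_p^{\times}$, $G$ est bijective, et donc $\gamma_k - 1$ l'est aussi sur $\varphi(\A_R) u_{\alpha}$.

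Le principal point d\'elicat sera de v\'erifier que les arguments de compl\'etude utilis\'es dans \ref{phi-psi Av} pour $\A_R^{(0,v]+}$ (notamment la compl\'etude de $\A_R^{(0,v]+}/p$ pour la topologie $\bar{\mu}$-adique, cl\'e pour passer de la bijectivit\'e modulo $\bar{\mu}_1$ \`a une bijectivit\'e exacte) s'adaptent \`a $\A_R$. Ce dernier \'etant la compl\'etion $p$-adique de l'image de $R_{\inf}$ dans $\A_{\overline{R}}$, il est $p$-adiquement complet, et la remarque \ref{tauAuv} garantit que $\tau_j(\A_R) \subset \mu \A_R$ ; combin\'e avec la d\'ecomposition ci-dessus, ceci devrait suffire pour reproduire l'argument de \ref{phi-psi Av}. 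L'obstacle principal est donc de nature technique plut\^ot que conceptuelle : il s'agit de s'assurer que chaque \'etape de la preuve de \ref{phi-psi Av} reste valable, sans modification significative, dans le contexte de $\A_R$.
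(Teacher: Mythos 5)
Votre preuve est correcte et suit essentiellement la même démarche que le papier : la proposition s'y déduit bien du zigzag $\Kos(\varphi,\Gamma_R,-)\to\Kos(\psi,\Gamma_R,-)$ combiné au lemme sur l'inclusion $\A_R^{(0,v]+}\hookrightarrow\A_R$ au niveau des complexes en $\psi$. La seule nuance est que la flèche verticale droite que vous annoncez « encore à établir » est déjà couverte par l'énoncé du lemme \ref{phi-psi Av}, formulé pour $S=\A_R$ aussi bien que pour $S=\A_R^{(0,v]+}$, et votre esquisse de sa preuve (décomposition de la remarque \ref{psi-decompo}, bijectivité de $\gamma_k-1$ sur $\varphi(\A_R)u_\alpha$) coïncide avec l'argument du papier.
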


Enfin, en combinant tous les r\'esultats de cette section, on obtient :

\begin{Pro}
\label{Auv to Ar}
Il existe un quasi-isomorphisme : 
\[  \Kos(\varphi, \Gamma_R, \A_{R}^{[u,v]}(r)) \xrightarrow{\sim} \Kos( \varphi,  \Gamma_R, \A_R(r) ). \]
\end{Pro}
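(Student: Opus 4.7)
The plan is to deduce this assertion by concatenating the two intermediate comparison results already established in Section~4.3. More precisely, Lemma~\ref{Av to Auv1} provides the quasi-isomorphism
\[ \Kos(\varphi, \Gamma_R, \A_R^{(0,v]+}(r)) \xrightarrow{\sim} \Kos(\varphi, \Gamma_R, \A_R^{[u,v]}(r)), \]
obtained by showing that $1-\varphi$ is bijective on the quotient $\A_R^{[u,v]}/\A_R^{(0,v]+}$ (which in turn rests on the fact that $\varphi$ sends $\frac{[\beta]^n}{p^n}$ to $p^{(p-1)n}\frac{[\beta]^{pn}}{p^{pn}}$ and is therefore topologically nilpotent on this quotient). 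On the other side, Proposition~\ref{Ar to Av} gives
\[ \Kos(\varphi, \Gamma_R, \A_R^{(0,v]+}) \xrightarrow{\sim} \Kos(\varphi, \Gamma_R, \A_R), \]
obtained by replacing $\varphi$ by $\psi$ via Lemma~\ref{phi-psi Av} and then using the surjectivity of $\psi-1$ on $\A^{(0,v]+}$ already proved in the course of Lemma~\ref{Ru to Ruv}.

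The only remaining point is that Proposition~\ref{Ar to Av} is stated without the Tate twist. To transfer it to the twisted situation I would observe that, since $\Gamma_R = \Gal(R_\infty[\tfrac{1}{p}]/R[\tfrac{1}{p}])$ is the \emph{geometric} Galois group (all $p$-power roots of unity already lie in $\OC_C$), the cyclotomic character is trivial on $\Gamma_R$, so the twist $(r)$ does not affect the $\Gamma_R$-module structure nor the Frobenius $\varphi$ appearing in the Koszul complex; the exact same argument as in Proposition~\ref{Ar to Av} therefore produces a quasi-isomorphism after twisting by $(r)$.

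Composing these two quasi-isomorphisms (inverting the first one in the derived category) yields the sought-after
\[ \Kos(\varphi, \Gamma_R, \A_R^{[u,v]}(r)) \xleftarrow{\sim} \Kos(\varphi, \Gamma_R, \A_R^{(0,v]+}(r)) \xrightarrow{\sim} \Kos(\varphi, \Gamma_R, \A_R(r)), \]
so the statement follows formally. The real work has already been absorbed into the three technical ingredients: topological nilpotency of $\varphi$ on $\A_R^{[u,v]}/\A_R^{(0,v]+}$, the $\varphi$-to-$\psi$ switch via $\psi \circ \varphi = \mathrm{Id}$, and the surjectivity of $\psi-1$ modulo $\A_{\mathrm{inf}}$; I expect no further obstacle at this final assembly step beyond sanity-checking that the twist is indeed invisible to $\Gamma_R$.
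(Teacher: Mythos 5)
Your proposal is correct and is essentially identical to the paper's own proof: the paper obtains Proposition~\ref{Auv to Ar} precisely by concatenating Lemma~\ref{Av to Auv1} with Proposition~\ref{Ar to Av} (itself the combination of Lemma~\ref{phi-psi Av} and the subsequent lemma resting on the surjectivity of $\psi-1$ from Lemma~\ref{Ru to Ruv}). Your sanity check on the Tate twist is sound — $\chi^r$ is trivial on the geometric group $\Gamma_R$ and the Frobenius in $\Kos(\varphi,\Gamma_R,S(r))$ is the untwisted $1-\varphi$ — which is why the paper passes over this point in silence.
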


\begin{Rem}
\label{Ar+ to Auv}
On peut montrer \'egalement que l'inclusion $\A_R^+ \hookrightarrow \A_R^{[u,v]}$ induit un quasi-isomorphisme $\Kos(\varphi,\Gamma_R, \A_{R}^{+}(r)) \xrightarrow{\sim} \Kos( \varphi,  \Gamma_R, \A_R^{[u,v]}(r) )$. En effet, par ce qu'on vient de voir, il reste \`a v\'erifier que l'inclusion $\A_R^+ \hookrightarrow \A_R^{(0,v]+}$ donne un quasi-isomorphisme 
$ \Kos(\psi, \Gamma_R, \A_{R}^{+}) \xrightarrow{\sim} \Kos( \psi,  \Gamma_R, \A_R^{(0,v]+} )$. 
Pour cela, il suffit de prouver que $1- \psi : \A_R^{(0,v]+} / \A^+_R \to  \A_R^{(0,v]+} / \A^+_R$ est un isomorphisme, ce qui est montr\'e dans la preuve du lemme \ref{Ru to Ruv}.
\end{Rem}

\section{Cohomologie de Galois}

Soit $R$ comme dans les sections pr\'ec\'edentes. Pour terminer la preuve du th\'eor\`eme de comparaison locale \ref{syn/galois}, il reste \`a voir qu'on a un quasi-isomorphisme $R\Gamma(\Gamma_R, \A_R) \cong R\Gamma(G_R, \A_{\overline{R}})$. Ce r\'esultat s'obtient via des arguments classiques de descente presque \'etale et de d\'ecompl\'etion.  

\subsection{Calcul de la cohomologie de Galois}

Le but de cette partie est de montrer qu'on a un quasi-isomorphisme : 
\begin{equation}
\label{qis galois}
[R \Gamma(\Gamma_R, \A_R(r)) \xrightarrow{1- \varphi} R \Gamma(\Gamma_R, \A_R(r))] \xleftarrow{\sim} R \Gamma(G_R, \Z_p(r)).
\end{equation} 

Comme la suite exacte 
\[ 0 \to \Z_p \to \A_{\overline{R}} \xrightarrow{1- \varphi} \A_{\overline{R}} \to 0 \]
donne un quasi-isomorphisme 
\[ R \Gamma(G_R, \Z_p(r)) \xrightarrow{\sim} [R \Gamma( G_R, \A_{\overline{R}}(r) ) \xrightarrow{1 - \varphi} R \Gamma( G_R, \A_{\overline{R}}(r) )], \]
il suffit de v\'erifier qu'on a un quasi-isomorphisme : 
\[ R \Gamma( \Gamma_R, \A_R(r)) \xrightarrow{\sim} R \Gamma(G_R, \A_{\overline{R}}(r)). \] 

Soit $\widetilde{R}_{\infty}$ la compl\'etion de la cl\^oture int\'egrale de $R_{\infty}$ dans la sous-$R_{\infty}[\frac{1}{p}]$-alg\`ebre de $\widehat{\overline{R}}[\frac{1}{p}]$ engendr\'ee par les $X^{\frac{1}{m}}_{a+b+1}, \dots, X_d^{\frac{1}{m}}$ pour $m \ge 1$. La m\^eme preuve que dans \cite[Lem. 5.8]{CN2017} donne :  

\begin{Pro}
\label{max etale}
$\overline{R}$ est l'extension maximale de $\widetilde{R}_{\infty}$ telle que $\overline{R}[\frac{1}{p}]/\widetilde{R}_{\infty}[\frac{1}{p}]$ est \'etale.
\end{Pro}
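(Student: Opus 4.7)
L'id\'ee centrale est d'appliquer le lemme d'Abhyankar pour contr\^oler la ramification des extensions finies de $R[\frac{1}{p}]$ le long du diviseur horizontal $D$ dans la fibre g\'en\'erique. Les composantes de $D$ \'etant donn\'ees par $\{X_i = 0\}$ pour $a+b+1 \le i \le d$, la caract\'eristique r\'esiduelle aux points g\'en\'eriques correspondants de $\Spec R[\frac{1}{p}]$ est nulle, donc toute ramification le long de $D$ d'une sous-extension finie de $\overline{R}[\frac{1}{p}]/R[\frac{1}{p}]$ est n\'ecessairement mod\'er\'ee. L'adjonction de toutes les racines $m$-i\`emes de $X_{a+b+1}, \dots, X_d$, effectu\'ee pr\'ecis\'ement lors du passage \`a $\widetilde{R}_{\infty}$, suffit donc \`a annihiler cette ramification, ce qui fournit l'heuristique derri\`ere l'\'enonc\'e.

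Je montrerais d'abord que $\overline{R}[\frac{1}{p}]/\widetilde{R}_{\infty}[\frac{1}{p}]$ est ind-\'etale. Soit $S \subseteq \overline{R}[\frac{1}{p}]$ une sous-extension finie normale de $R[\frac{1}{p}]$ : par d\'efinition, $S$ est non ramifi\'ee en dehors de $D$, et en chaque point g\'en\'erique d'une composante de $D$, l'anneau local est un anneau de valuation discr\`ete de corps r\'esiduel de caract\'eristique nulle, de sorte que la ramification de $S$ y est mod\'er\'ee. Par le lemme d'Abhyankar, il existe un entier $m$ tel que $S[X_{a+b+1}^{1/m}, \dots, X_d^{1/m}]$ soit non ramifi\'ee en codimension $1$ sur $R[X_{a+b+1}^{1/m}, \dots, X_d^{1/m}][\frac{1}{p}]$, puis \'etale par puret\'e (la base \'etant r\'eguli\`ere apr\`es inversion de $p$). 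Cette derni\`ere alg\`ebre \'etant contenue dans $\widetilde{R}_{\infty}[\frac{1}{p}]$, on en d\'eduit que $S \cdot \widetilde{R}_{\infty}[\frac{1}{p}]$ est \'etale sur $\widetilde{R}_{\infty}[\frac{1}{p}]$; il reste \`a passer \`a la colimite sur toutes les sous-extensions finies $S$.

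Pour la maximalit\'e, toute extension finie \'etale $T$ de $\widetilde{R}_{\infty}[\frac{1}{p}]$ descend, par un argument standard de pr\'esentation finie (l'alg\`ebre $\widetilde{R}_{\infty}[\frac{1}{p}]$ \'etant une colimite filtr\'ee de $R[\frac{1}{p}]$-alg\`ebres finies $S_\alpha[\frac{1}{p}]$ non ramifi\'ees en dehors de $D$), en une extension finie \'etale $T_0$ d'un \'etage $S_{\alpha_0}[\frac{1}{p}]$. Comme $S_{\alpha_0}[\frac{1}{p}]/R[\frac{1}{p}]$ est non ramifi\'ee en dehors de $D$ et $T_0/S_{\alpha_0}[\frac{1}{p}]$ est \'etale, la compos\'ee $T_0/R[\frac{1}{p}]$ est \'egalement non ramifi\'ee en dehors de $D$, donc se plonge dans $\overline{R}[\frac{1}{p}]$ (quitte \`a prendre la cl\^oture normale), et l'on obtient $T = T_0 \cdot \widetilde{R}_{\infty}[\frac{1}{p}] \subseteq \overline{R}[\frac{1}{p}]$.

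Le principal obstacle technique sera de justifier rigoureusement l'application du lemme d'Abhyankar et de la puret\'e de Zariski-Nagata dans le cadre des anneaux affino\"ides (ou compl\'et\'es $p$-adiquement) consid\'er\'es ici, et en particulier de v\'erifier qu'un entier $m$ commun peut \^etre choisi pour neutraliser simultan\'ement la ramification de $S$ le long de toutes les composantes irr\'eductibles de $D$; ce dernier point r\'esulte de la finitude de $S/R[\frac{1}{p}]$, qui borne uniform\'ement les indices de ramification.
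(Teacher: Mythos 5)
Your proposal is correct and is essentially the proof the paper relies on: the paper simply invokes \cite[Lem. 5.8]{CN2017}, whose argument is exactly this combination of tameness along the horizontal divisor (residue characteristic $0$ at the generic points of $D$ in $\Spec R[\frac{1}{p}]$), Abhyankar's lemma after adjoining $m$-th roots of $X_{a+b+1},\dots,X_d$, Zariski--Nagata purity over the regular base, and descent of finite \'etale algebras along the filtered colimit for the maximality statement. The technical points you flag (a uniform choice of $m$ bounded by the degree of $S$, and the validity of purity for the noetherian regular affinoid algebras in play) are dealt with there in the same way.
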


On note $\E_{\widetilde{R}_{\infty}}$ le tilt de $\widetilde{R}_{\infty}$, $\A_{\widetilde{R}_{\infty}}= W(\E_{\widetilde{R}_{\infty}})$ et $\widetilde{\Gamma}_R:= \Gal( \widetilde{R}_{\infty}[ \frac{1}{p}] / R [ \frac{1}{p}])$. Enfin, soit $H_R:= \ker( G_R \to \widetilde{\Gamma}_R)$. On d\'eduit de la proposition pr\'ec\'edente le r\'esultat suivant (le raisonnement est identique \`a celui de \cite[\textsection 2]{AB08}, \cite[\textsection 7]{AI08}, \cite{Colmez2003}) : 

\begin{Pro} \cite[4.13]{CN2017}
\label{muH}
\begin{enumerate}
\item Pour tout $i \ge 1$, on a $H^i( H_R, \A_{\overline{R}})=0$. En particulier, on a un quasi-isomorphisme : 
\[ R \Gamma( \widetilde{\Gamma}_R , \A_{\widetilde{R}_{\infty}}(r)) \xrightarrow{\sim} R \Gamma(G_R, \A_{\overline{R}}(r)). \]
\item Il existe un quasi-isomorphisme : 
\[ R \Gamma ( \Gamma_R , \A_{R_{\infty}}(r)) \xrightarrow{\sim} R \Gamma( \widetilde{\Gamma}_R , \A_{\widetilde{R}_{\infty}}(r)) .\] 
\end{enumerate}
\end{Pro}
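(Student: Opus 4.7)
Ma strat\'egie suit de pr\`es la preuve du r\'esultat arithm\'etique analogue \cite[4.13]{CN2017}, adapt\'ee au cadre g\'eom\'etrique ; les deux assertions s'\'etablissent ind\'ependamment, puis se combinent via des suites spectrales de Hochschild-Serre. Pour le point (1), j'exploiterais la Proposition \ref{max etale} (qui assure que $\overline{R}[\frac{1}{p}]/\widetilde{R}_{\infty}[\frac{1}{p}]$ est \'etale) et le caract\`ere perfecto\"ide de $\widetilde{R}_{\infty}$. Par la puret\'e presque \'etale de Faltings, l'extension des compl\'et\'es $p$-adiques $\widehat{\overline{R}}/\widehat{\widetilde{R}_{\infty}}$ est alors presque fid\`element plate et presque \'etale de groupe $H_R$, ce qui donne l'annulation presque $H^i(H_R, \widehat{\overline{R}})^a = 0$ pour $i \ge 1$. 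Le passage au tilt livre $H^i(H_R, \E_{\overline{R}})^a = 0$, puis par d\'evissage le long des suites exactes courtes $0 \to \E_{\overline{R}} \to \A_{\overline{R}}/p^{n+1} \to \A_{\overline{R}}/p^n \to 0$ et passage \`a la limite $p$-adique (en utilisant la $p$-compl\'etude de $\A_{\overline{R}}$ et l'absence de $p$-torsion), on obtient l'annulation stricte $H^i(H_R, \A_{\overline{R}}) = 0$. La suite spectrale de Hochschild-Serre associ\'ee \`a $1 \to H_R \to G_R \to \widetilde{\Gamma}_R \to 1$ fournit alors le quasi-isomorphisme cherch\'e.

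Pour le point (2), j'utiliserais une m\'ethode de d\'ecompl\'etion \`a la Tate-Sen, en suivant \cite[\textsection 7]{AI08}. Le quotient $\widetilde{\Gamma}_R/\Gamma_R$ s'identifie au facteur $\Z_p^{c}$ (avec $c=d-a-b$) correspondant aux racines $p^m$-i\`emes des variables horizontales $X_{a+b+1}, \ldots, X_d$. La construction d'une trace normalis\'ee de Tate fournit une scission $\widetilde{\Gamma}_R$-\'equivariante $\A_{\widetilde{R}_{\infty}} = \A_{R_{\infty}} \oplus M$, o\`u les op\'erateurs $\gamma_j-1$ pour $j > a+b$ sont inversibles sur $M$ apr\`es normalisation convenable par des puissances de $\mu$. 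Il en r\'esulte que $R\Gamma(\widetilde{\Gamma}_R/\Gamma_R, M) = 0$, d'o\`u $R\Gamma(\widetilde{\Gamma}_R/\Gamma_R, \A_{\widetilde{R}_{\infty}}) \cong \A_{R_{\infty}}$. Une application finale de Hochschild-Serre pour $1 \to \widetilde{\Gamma}_R/\Gamma_R \to \widetilde{\Gamma}_R \to \Gamma_R \to 1$ conclut.

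L'obstacle principal sera la construction rigoureuse de la trace normalis\'ee de Tate et la v\'erification de la convergence des s\'eries d\'efinissant les inverses des $\gamma_j - 1$ sur le facteur compl\'ementaire $M$ (en particulier la gestion des poids de Tate et des puissances de $\mu$ qui apparaissent), ainsi que le contr\^ole fin du passage de la presque-nullit\'e \`a la nullit\'e stricte dans le point (1), qui doit tenir compte des torsions induites par $\mu$ et exploiter la $p$-compl\'etude de $\A_{\overline{R}}$.
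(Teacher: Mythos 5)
Le texte ne donne pas de d\'emonstration de cette proposition : il renvoie \`a \cite{CN2017}, \cite{AB08}, \cite{AI08} et \cite{Colmez2003}, et c'est bien cette route standard que suit votre point (1) (puret\'e presque \'etale de Faltings pour l'extension $\overline{R}[\frac{1}{p}]/\widetilde{R}_{\infty}[\frac{1}{p}]$ via la proposition \ref{max etale}, basculement, d\'evissage des vecteurs de Witt, puis Hochschild--Serre pour $1 \to H_R \to G_R \to \widetilde{\Gamma}_R \to 1$). L'obstacle que vous signalez pour passer de la presque-nullit\'e \`a la nullit\'e stricte se r\'esout par une observation que vous ne faites pas : $\A_{\overline{R}} = W(\E_{\overline{R}})$ avec $\E_{\overline{R}} = \E^+_{\overline{R}}[\frac{1}{[p^{\flat}]}]$, de sorte que l'id\'eal de presque-math\'ematiques devient inversible dans les coefficients ; un module presque nul sur lequel $p^{\flat}$ agit de fa\c{c}on inversible est nul, et la nullit\'e stricte de $H^i(H_R, \A_{\overline{R}})$ en d\'ecoule.

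En revanche, votre point (2) repose sur une identification erron\'ee du groupe en jeu et l'argument propos\'e ne s'applique pas. Le noyau de la surjection $\widetilde{\Gamma}_R \twoheadrightarrow \Gamma_R$ (que vous notez abusivement $\widetilde{\Gamma}_R/\Gamma_R$) est $\Gal(\widetilde{R}_{\infty}[\frac{1}{p}]/R_{\infty}[\frac{1}{p}])$ ; or les racines $p^m$-i\`emes de \emph{toutes} les variables $X_1, \dots, X_d$, y compris les horizontales, sont d\'ej\`a dans $R_{\infty}$ par d\'efinition de $R_m^{\square}$. Ce que $\widetilde{R}_{\infty}$ ajoute, ce sont les racines $m$-i\`emes pour $m$ premier \`a $p$ des $X_j$, $j > a+b$ : ce noyau est donc isomorphe \`a $\bigl(\prod_{\ell \neq p} \Z_{\ell}(1)\bigr)^c$, groupe profini d'ordre premier \`a $p$, et non \`a $\Z_p^c$. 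Il n'y a ici ni trace normalis\'ee de Tate ni s\'erie en les $\gamma_j - 1$ \`a faire converger : la cohomologie continue d'un tel groupe \`a valeurs dans un module $p$-adiquement complet sans $p$-torsion s'annule en degr\'es positifs, et l'identification des invariants avec $\A_{R_{\infty}}$ provient de la descente (presque) \'etale pour ces rev\^etements de Kummer mod\'er\'es le long de $D$. La d\'ecompl\'etion \`a la Tate--Sen que vous d\'ecrivez est bien n\'ecessaire dans la preuve globale, mais elle intervient \`a l'\'etape suivante, la proposition \ref{muinfini} (passage de $\A_R$ \`a $\A_{R_{\infty}}$ pour le vrai groupe $\Gamma_R \cong \Z_p^d$), que vous avez confondue avec celle-ci.
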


Dans la suite on note $\mu_H$ le morphisme $R \Gamma( \Gamma_R , \A_{R_{\infty}}(r)) \to R \Gamma(G_R, \A_{\overline{R}}(r)). $
On passe de l'anneau $\A_{R_{\infty}}$ \`a $\A_R$ via un argument de d\'ecompl\'etion (voir \cite[7.16]{AI08}, \cite{KL16}) : 

\begin{Pro}\cite[4.13]{CN2017}
\label{muinfini}
On a un quasi-isomorphisme : 
$$ \mu_{\infty} : R\Gamma( \Gamma_R, \A_R(r) ) \xrightarrow{\sim} R\Gamma( \Gamma_R, \A_{R_{\infty}}(r) ). $$
\end{Pro}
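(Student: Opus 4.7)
La strat\'egie est celle, d\'esormais standard, de d\'ecompl\'etion \`a la Tate-Sen, telle qu'elle est mise en \oe uvre dans \cite[Prop. 7.16]{AI08} et adapt\'ee au cadre g\'eom\'etrique par Colmez et Nizio{\l} dans \cite[\S 4]{CN2017}. Puisque $\A_R$ et $\A_{R_{\infty}}$ sont $p$-adiquement complets et sans $p$-torsion, je commencerais par r\'eduire l'\'enonc\'e au niveau fini en v\'erifiant qu'il suffit de prouver un $p^N$-quasi-isomorphisme modulo $p^n$ avec une constante $N$ ind\'ependante de $n$; puisque la cohomologie continue commute \`a la limite projective dans ce contexte (la tour satisfait Mittag-Leffler), on pourra ensuite repasser \`a la limite pour obtenir le quasi-isomorphisme cherch\'e.

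L'ingr\'edient structurel principal est une d\'ecomposition $\Gamma_R$-\'equivariante $\A_{R_{\infty}} = \A_R \oplus N$, qui joue le r\^ole de la trace normalis\'ee de Tate. En utilisant la description explicite $R_{\infty}= (\varinjlim_m R_m)^{\wedge_p}$ avec $R_m$ obtenu en adjoignant les racines $p^m$-i\`emes de $X_1, \dots, X_{a+b}$ et de $\varpi/(X_{a+1}\cdots X_{a+b})$, on obtient, au niveau du basculement et donc apr\`es passage aux vecteurs de Witt, une d\'ecomposition de $\A_{R_\infty}$ en somme directe indic\'ee par les multi-indices $\alpha \in (\Z[1/p]/\Z)^d$, la pi\`ece $\alpha=0$ \'etant exactement $\A_R$. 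Le facteur compl\'ementaire $N$ se d\'ecompose alors en composantes $N_\alpha= \A_R \cdot [X^{\flat}]^{\alpha}$ sur lesquelles $\gamma_j$ agit par multiplication par le scalaire $[\varepsilon]^{\alpha_j}$ pour un indice $j$ avec $\alpha_j \notin \Z_p$.

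Le c\oe ur de la preuve et l'obstacle principal consiste \`a prouver $R\Gamma(\Gamma_R, N(r)) = 0$. Sur chaque composante $N_\alpha$, l'op\'erateur $\gamma_j -1$ agit par $[\varepsilon]^{\alpha_j}-1$, qui est presque une unit\'e (\`a une puissance contr\^ol\'ee de $\mu$ pr\`es), ce qui donne un inverse explicite sur $N_\alpha[1/\mu]$. Le point d\'elicat est d'obtenir une borne uniforme en $\alpha$ sur les d\'enominateurs apparaissant dans cet inverse afin d'assurer la convergence globale; c'est pr\'ecis\'ement ce que fournit le formalisme de Tate-Sen, et le travail technique revient \`a v\'erifier que les axiomes de Tate-Sen (TS1)-(TS3) sont satisfaits ici comme dans \cite[\S 2]{AB08} et \cite[\S 7]{AI08}. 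Un argument de Koszul, utilisant la commutativit\'e de $\Gamma_R \cong \Z_p^d$ et la bijectivit\'e de $\gamma_j-1$ sur chaque composante indic\'ee par $\alpha \neq 0$, conclut alors la d\'emonstration de l'acyclicit\'e.

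Finalement, la suite exacte scind\'ee de $\Gamma_R$-modules $0 \to \A_R(r) \to \A_{R_{\infty}}(r) \to N(r) \to 0$ reste exacte au niveau des complexes de cocha\^ines continues; en prenant $R\Gamma(\Gamma_R,-)$ et en utilisant l'acyclicit\'e de $N(r)$, on obtient que l'application naturelle $\mu_{\infty}$ induite par l'inclusion $\A_R \hookrightarrow \A_{R_\infty}$ est un quasi-isomorphisme.
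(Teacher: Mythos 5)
Votre proposition est correcte et suit essentiellement la m\^eme voie que l'article, qui renvoie pour ce point \`a l'argument de d\'ecompl\'etion de \cite[7.16]{AI08} et \cite{KL16} : d\'ecomposition $\Gamma_R$-\'equivariante $\A_{R_{\infty}}=\A_R\oplus N$ avec $N$ somme directe compl\'et\'ee des composantes $\A_R\cdot[X^{\flat}]^{\alpha}$ pour $\alpha\neq 0$, acyclicit\'e de $N(r)$ composante par composante, puis argument de Koszul. Notez seulement que $\gamma_j$ n'agit pas sur $N_{\alpha}$ par le scalaire $[\varepsilon]^{\alpha_j}$ mais de fa\c{c}on semi-lin\'eaire : l'inversibilit\'e de $\gamma_j-1$ s'obtient en \'ecrivant $[\varepsilon]^{\alpha_j}\gamma_j-1=([\varepsilon]^{\alpha_j}-1)\bigl(1+([\varepsilon]^{\alpha_j}-1)^{-1}[\varepsilon]^{\alpha_j}(\gamma_j-1)\bigr)$, o\`u $[\varepsilon]^{\alpha_j}-1$ est une vraie unit\'e de $\A_R$ (qui contient $W(C^{\flat})$) et $(\gamma_j-1)(\A_R)\subseteq\mu\A_R$ assure la convergence de la s\'erie g\'eom\'etrique uniform\'ement en $\alpha$ ; cela donne directement l'\'enonc\'e entier et rend superflue votre r\'eduction initiale \`a des $p^N$-quasi-isomorphismes modulo $p^n$.
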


\subsection{Comparaison avec le complexe syntomique}

On peut maintenant montrer le th\'eor\`eme :

\begin{The}
\label{Qis syn Galois}
Il existe une constante $N$ qui ne d\'epend pas de $R$ telle qu'on a des $p^{Nr}$-quasi-isomorphismes :
$$ \alpha_r : \tau_{\le r} \mathrm{Syn}(R,r) \to \tau_{\le r} R \Gamma(G_R, \Z_p(r)) $$
$$ \alpha_{r,n} : \tau_{\le r} \mathrm{Syn}(R,r)_n \to \tau_{\le r} R \Gamma(G_R, \Z/p^n(r)). $$
\end{The}

\begin{proof}
On vient de voir qu'on avait un quasi-isomorphisme 
$$[R \Gamma(\Gamma_R, \A_R(r)) \xrightarrow{1- \varphi} R \Gamma(\Gamma_R, \A_R(r))] \xleftarrow{\sim} R \Gamma(G_R, \Z_p(r)).$$
Mais $R \Gamma(\Gamma_R, \A_R(r))$ est calcul\'e par le complexe $\Kos(\Gamma_R, \A_R(r))$ et on a un quasi-isomorphisme naturel : 
$$R \Gamma(G_R, \Z_p(r)) \xrightarrow{\sim} \Kos(\varphi, \Gamma_R, \A_R(r)). $$
D'apr\`es la proposition \ref{(phi,gamma) et (phi,delta)}, on a un $p^{30r}$-quasi-isomorphisme :  
$$\tau_{\le r} \Kos(\varphi, \Gamma_R, \A_R^{[u,v]}(r)) \xrightarrow{\sim} \tau_{\le r} \Kos(\varphi, \partial, F^r\A_R^{[u,v]})$$
et en utilisant le quasi-isomorphisme du lemme \ref{Auv to Ar} :
$$  \Kos(\varphi, \Gamma_R, \A_R^{[u,v]}(r)) \xrightarrow{\sim} \Kos( \varphi,  \Gamma_R, \A_R(r) ), $$
on en d\'eduit un $p^{30r}$-quasi-isomorphisme : 
$$\tau_{\le r} R \Gamma(G_R, \Z_p(r)) \xrightarrow{\sim} \tau_{\le r} \Kos(\varphi, \partial, F^r\A_R^{[u,v]}). $$
Enfin, en utilisant l'isomorphisme naturel $\A_R^{[u,v]} \cong R^{[u,v]}$ et le $p^{12r}$-quasi-isomorphisme $\Syn(R_{\cris}^+,r) \xrightarrow{\sim} C(R^{[u,v]},r)$ \'etabli dans la section \textsection4, on obtient le $p^{Nr}$-quasi-isomorphisme $\alpha_r$ (avec $N=42$). La m\^eme d\'emonstration donne un $p^{Nr}$-quasi-isomorphisme $\alpha_{r,n}$ pour $N= 70$.   

\end{proof}

\section{Comparaison locale avec l'application de Fontaine-Messing} 

Dans cette partie, on montre que le morphisme de p\'eriode local construit dans le chapitre pr\'ec\'edent est \'egal \`a l'application de Fontaine-Messing modulo une certaine puissance de $p$. Une fois la construction du morphisme \'etablie, la preuve est tr\`es similaire \`a celle du cas arithm\'etique (\cite[\textsection 4.7]{CN2017}). 
On suppose toujours $\XF= \Spf(R)$ connexe.  

\subsection{Les anneaux $\E_{\overline{R}}^{[u,v]}$ et $\E_{\overline{R}}^{PD}$ et lemmes de Poincar\'e}

On rappelle ici la d\'efinition de $\E_{\overline{R}}^{[u,v]}$ et $\E_{\overline{R}}^{PD}$ donn\'ee dans \cite[\textsection 2.6]{CN2017} et les lemmes de Poincar\'e v\'erifi\'es par ces anneaux. 

\subsubsection{D\'efinition des anneaux $SA$}

Dans cette section, $S$ d\'esignera l'anneau $R_{\text{cris}}^+$ (respectivement $R^{[u,v]}$) et $A$ d\'esignera $\AAC(\overline{R})$ ou $\AAC(R)$ (respectivement $\A_{\overline{R}}^{[u,v]}$ ou $\A_R^{[u,v]}$). On a donc un morphisme injectif continu $\iota : S \to A$ et, si $B$ d\'esigne l'anneau $\AAC$ (respectivement $\A^{[u,v]}$), on note $f: S \otimes_B A \to A$ le morphisme tel que $f(x \otimes y)= \iota(x) y$.

On note $S A$ la log-PD-enveloppe compl\'et\'ee de $S \otimes_B A \to A$ par rapport \`a $\ker(f)$.
Plus explicitement, si $V_j = \frac{ X_j \otimes 1}{1 \otimes \iota(X_j)}$ pour $1 \le j \le d$, alors $S A$ est la compl\'etion $p$-adique de $S \otimes_B A$ auquel on ajoute tous les $(x \otimes 1 - 1 \otimes \iota(x))^{[k]}$ pour $x$ dans $S$ et les $(V_j-1)^{[k]}$.
On munit $S A$ d'une filtration en d\'efinissant $F^r S A$ comme l'adh\'erence de l'id\'eal engendr\'e par les \'el\'ements de la forme : 
$$ x_1 x_2 \prod_{1 \le j \le d} (V_j-1)^{[k_j]}$$
o\`u $x_1$ est dans $F^{r_1}S$, $x_2$ est dans $F^{r_2} A$ et $r_1+r_2+ \sum_{j=1}^d k_j \le r$. 

\begin{Pro}\cite[Lem. 2.36]{CN2017}
\label{structure SLambda}
\begin{enumerate}
\item Tout $x$ de $SA$ s'\'ecrit de mani\`ere unique sous la forme : 
\begin{equation}
\label{structure SLambda1}
 x= \sum_{ \mathbf{k}=(k_1, \dots, k_d) \in \N^{d}} x_{\mathbf{k}} \prod_{j=1}^d (1-V_j)^{[k_j]}
 \end{equation}
avec $x_{\mathbf{k}}$ dans $A$ et $x_{\mathbf{k}}$ tend vers $0$ quand $\mathbf{k}$ tend vers l'infini. 
\item De plus, 
$$ x= \sum_{ \mathbf{k} \in \N^{d}} x_{\mathbf{k}} \prod_{j=1}^d (1-V_j)^{[k_j]} \; \in \; F^r S A \quad \text{si et seulement si} \quad x_{\mathbf{k}} \; \in \; F^{r - | \mathbf{k} |} A \text{ pour tout } \mathbf{k}. $$
\end{enumerate}
\end{Pro}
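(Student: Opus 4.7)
L'approche consiste à identifier $SA$, après complétion $p$-adique, à la $A$-algèbre à puissances divisées libre sur les $d$ symboles $V_1 - 1, \dots, V_d - 1$ ; les énoncés (1) et (2) s'en déduisent par inspection. L'observation clé est l'identité
\[ X_j \otimes 1 - 1 \otimes \iota(X_j) = (1 \otimes \iota(X_j))(V_j - 1), \]
qui, compte tenu de ce que les $X_j$ sont des générateurs monoïdaux de la log-structure de $S$ (et que leurs images $\iota(X_j)=[X_j^{\flat}]$ ne posent pas de problème dans la log-PD-enveloppe), montre que $\ker(f)$ est, logarithmiquement, engendré par les $V_j-1$.

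Je commencerais par me ramener au cas carré $S = S^{\square}$ (c'est-à-dire $R^+_{\cris,\square}$ ou $R^{[u,v]}_{\square}$) grâce à l'étaleté : comme $S$ est la complétion $p$-adique d'une $S^{\square}$-algèbre étale et que les morphismes étales sont formellement étales, ils commutent à la formation des (log-)PD-enveloppes, de sorte que $SA \cong S \widehat{\otimes}_{S^{\square}} (S^{\square} A)$ après complétion $p$-adique. Dans le cas carré, l'anneau $S^{\square} \otimes_B A$ est un complété de $A$-algèbre de Laurent en les $V_j$, l'idéal $\ker(f)$ s'identifie logarithmiquement à $(V_1 - 1, \dots, V_d - 1)$, et les $V_j - 1$ y forment une suite régulière au sens PD ; la log-PD-enveloppe s'identifie alors à $A\langle V_1 - 1, \dots, V_d - 1\rangle$, dont la complétion $p$-adique fournit exactement la décomposition (1) avec des coefficients $x_{\mathbf{k}} \in A$ qui tendent vers $0$.

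Pour (2), l'inclusion \emph{si} est immédiate à partir de la définition de $F^r SA$ en prenant $r_1 = 0$, $x_1 = 1$ et $x_2 = x_{\mathbf{k}}$. Pour la réciproque, j'utiliserais un développement de type Taylor : tout $x_1 \in F^{r_1} S$, vu dans $SA$, se développe sur la base PD $\{(1-V)^{[\mathbf{k}]}\}$ avec des coefficients dans $F^{r_1 - |\mathbf{k}|} A$ (grâce à la compatibilité du plongement $\iota : S \to A$ avec les filtrations induites par les puissances divisées de $\xi$), ce qui, combiné à la règle multiplicative $(1-V_j)^{[k]}(1-V_j)^{[l]} = \binom{k+l}{k}(1-V_j)^{[k+l]}$ et à l'unicité de (1), permet de propager la condition sur la filtration des coefficients $x_{\mathbf{k}}$ lorsqu'on développe un générateur $x_1 x_2 \prod(V_j-1)^{[k_j]}$ de $F^r SA$. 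L'obstacle principal se situe dans l'étape "carrée" : il faudra vérifier avec soin que les variables apparaissant au dénominateur dans $S^{\square}$ (à savoir $\tfrac{1}{X_1 \cdots X_a}$ et $\tfrac{[\varpi^{\flat}]}{X_{a+1}\cdots X_{a+b}}$) ne contribuent qu'à travers des unités logarithmiques en les $V_j$, de sorte que la log-PD-enveloppe soit effectivement libre à $d$ variables sans relations supplémentaires, ainsi que contrôler l'interaction entre la convergence $p$-adique et les puissances divisées dans la définition de la filtration.
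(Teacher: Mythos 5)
Votre proposition est correcte et suit la m\^eme route que la preuve de \cite[Lem. 2.36]{CN2017}, que le pr\'esent article cite sans la reproduire : changement de base \'etale (plat) pour se ramener \`a l'alg\`ebre encadr\'ee $S^{\square}$, exactification rendant les $V_j$ inversibles de sorte que $\ker(f)$ devienne la suite r\'eguli\`ere $(V_j-1)_j$ et que la log-PD-enveloppe s'identifie \`a l'alg\`ebre PD-polynomiale libre $p$-adiquement compl\'et\'ee $A\langle V_1-1, \dots, V_d-1\rangle$, puis argument de Taylor et multiplicativit\'e des puissances divis\'ees pour la filtration. Les deux points que vous signalez comme d\'elicats (le comportement des variables invers\'ees sous exactification, et l'interaction entre convergence $p$-adique et filtration) sont pr\'ecis\'ement ceux trait\'es dans la r\'ef\'erence et se r\`eglent comme vous l'anticipez.
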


On d\'efinit les anneaux suivants : 
\begin{itemize}
\item $\E_R^{PD}= S A$ (resp. $\E_R^{[u,v]}$) pour $S=R_{\text{cris}}^+$ (resp. $R^{[u,v]}$) et $A=\AAC(R)$ (resp. $\A_R^{[u,v]}$),
\item $\E_{R_{\infty}}^{PD}= S A$ (resp. $\E_{R_{\infty}}^{[u,v]}$) pour $S=R_{\text{cris}}^+$ (resp. $R^{[u,v]}$) et $A=\AAC(R_{\infty})$ (resp. $\A_{R_{\infty}}^{[u,v]}$),
\item $\E_{\overline{R}}^{PD}= S A$ (resp. $\E_{\overline{R}}^{[u,v]}$) pour $S=R_{\text{cris}}^+$  (resp. $R^{[u,v]}$) et $A= \AAC( \overline{R})$ (resp. $\A_{\overline{R}}^{[u,v]}$). 
\end{itemize}

On d\'eduit de la proposition \ref{structure SLambda} les inclusions : 
$  \E_R^{PD} \subseteq \E_{R_{\infty}}^{PD} \subseteq \E_{\overline{R}}^{PD}$ (et de m\^eme avec l'exposant $[u,v]$).

On peut maintenant \'etendre les actions de $G_R$ et de $\varphi$ \`a ces anneaux :

\begin{Pro}\cite[Lem. 2.40]{CN2017}
Si $SA= \E_M^{PD}$ (respectivement $\E_M^{[u,v]}$) pour $M \in \{R, R_{\infty}, \overline{R} \}$, on note $S A'= \E_M^{PD}$ (respectivement $\E_M^{[u,\frac{v}{p}]}$). Alors :

\begin{enumerate}
\item $\varphi$ s'\'etend de mani\`ere unique en un morphisme continu $S A \to S A'$.
\item L'action de $G_R$ s'\'etend de mani\`ere unique \`a un action continue sur $S A$, qui commute avec $\varphi$ et $\E_{R_{\infty}}^{\star}$ (pour $\star \in \{ PD, [u,v] \}$) est l'ensemble des points fixes de $\E_{\overline{R}}^{\star}$ par le sous-groupe $\Gamma_R \subseteq G_R$. 
\end{enumerate}
\end{Pro}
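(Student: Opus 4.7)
The plan is to deduce both assertions from the universal property of the completed log-PD-envelope, applied to the Frobenius and Galois actions already present on $S$ and on $A$, and to read off the fixed-point identification from the canonical decomposition of Proposition~\ref{structure SLambda}; the argument is a direct transposition to the geometric setting of Lemma~2.40 of \cite{CN2017}. For part~(1), the Frobenius is already constructed on $S$ (via Proposition~\ref{fonctions implicites} and the formula $\varphi(X_j) = X_j^p$) and on $A$ (the usual Frobenius of $\AAC(\overline{R})$ or $\A_{\overline{R}}^{[u,v]}$), both extending the Frobenius of $B$ and both compatible with $\iota$ via $\iota \circ \varphi(X_j) = [X_j^{\flat}]^p = \varphi \circ \iota(X_j)$. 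Hence $\varphi \otimes \varphi$ descends to a $B$-algebra morphism $S \otimes_B A \to S' \otimes_B A'$ sending $\ker(f)$ into $\ker(f')$: a generator $x \otimes 1 - 1 \otimes \iota(x)$ maps to one of the same form, while $V_j - 1$ is sent to $V_j^p - 1 = (V_j - 1)(1 + V_j + \cdots + V_j^{p-1})$. Since the target log-PD-envelope carries divided powers on this ideal, the universal property yields a unique PD-morphism on the uncompleted envelopes, which extends uniquely to $\varphi \colon SA \to SA'$ by $p$-adic continuity and the density of $S \otimes_B A$ in $SA$.

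For part~(2), $G_R$ acts canonically on $A$ and on $S$ through the inclusion $\iota\colon S \hookrightarrow A$ (via $\Gamma_R$), with both actions fixing $B$; this gives an action on $S \otimes_B A$ for which $f$ is equivariant, so $\ker(f)$ is preserved. The same universal property and $p$-adic continuity argument yields a unique continuous $G_R$-action on $SA$ by log-PD-automorphisms. Commutation with $\varphi$ is checked factorwise on $S \otimes_B A$ (where it follows from the standard Frobenius-Galois compatibility on $\A_{\overline{R}}$) and then propagates to $SA$ by density.

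For the fixed-point description, let $H_{\infty} := \ker(G_R \twoheadrightarrow \Gamma_R)$ denote the subgroup fixing $R_{\infty}[\tfrac{1}{p}]$ (identified with $\Gamma_R$ in the statement via this short exact sequence). Because the compatible tower $(X_j^{1/p^n})_n$ defining $X_j^{\flat}$ lies in $R_{\infty}$, the group $H_{\infty}$ fixes each $X_j^{\flat}$, hence each $V_j$, and acts trivially on $\iota(S)$. By the uniqueness in Proposition~\ref{structure SLambda}, an element $x = \sum_{\mathbf{k}} x_{\mathbf{k}} \prod_j (1 - V_j)^{[k_j]} \in \E_{\overline{R}}^{\star}$ is $H_{\infty}$-invariant if and only if each $x_{\mathbf{k}} \in A^{H_{\infty}}$; combined with the almost-\'etale identifications $\AAC(\overline{R})^{H_{\infty}} = \AAC(R_{\infty})$ and $(\A_{\overline{R}}^{[u,v]})^{H_{\infty}} = \A_{R_{\infty}}^{[u,v]}$, this yields $(\E_{\overline{R}}^{\star})^{H_{\infty}} = \E_{R_{\infty}}^{\star}$. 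The only real subtlety is topological rather than algebraic---ensuring that both extensions remain continuous for the $p$-adic topology on the \emph{completed} log-PD-envelope---but this is automatic since Proposition~\ref{structure SLambda} describes that topology entirely in terms of the $p$-adic topology on $A$ (where continuity of $\varphi$ and $G_R$ is already known) and of the PD-filtration on the $V_j - 1$, on which both operators act explicitly.
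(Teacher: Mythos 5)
Your part (1) and your reduction of the fixed-point statement to the coefficients $x_{\mathbf{k}}$ via Proposition \ref{structure SLambda} are both correct and essentially follow \cite{CN2017}. The problem is the $G_R$-action you construct in part (2): it is not the one the paper needs. You let $G_R$ act on $S$ through $\iota$ and take the diagonal action $g\otimes g$ on $S\otimes_B A$, precisely so that $f$ becomes equivariant and the universal property applies verbatim. But the action used downstream is the one for which $G_R$ acts trivially on $S\otimes 1$ and through the given action on $1\otimes A$: this is forced by the very first arrow $K_{\varphi,\partial}(F^rR_{\cris}^+)\to C_G(K_{\varphi,\partial}(F^r\E_{\overline{R}}^{PD}))$ in the diagram of Theorem \ref{comparaison FM}, which is a map of complexes only if the image of $R_{\cris}^+$ consists of $G_R$-invariant (constant) cochains. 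The two actions genuinely differ: under yours $\gamma_j$ fixes $V_j$ and sends $X_j\otimes 1$ to $[\varepsilon]\,X_j\otimes 1$, whereas under the intended one $X_j\otimes 1$ is fixed and $\gamma_j(V_j)=[\varepsilon]^{-1}V_j$. Incidentally, the coexistence of these two continuous actions shows that the uniqueness assertion is vacuous until one specifies the action on $S\otimes_B A$ that is being extended.

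With the correct normalization, $f$ is no longer equivariant and $\ker(f)$ is not stable under $G_R$, so your appeal to functoriality of the PD-envelope of $\ker(f)$ breaks down. The missing observation is that for $x\in S$ one has $g(\iota(x))-\iota(x)\in F^1A$, because $\theta(\iota(S))\subseteq\widehat{R}$ is fixed by $G_R$; hence $g(x\otimes 1-1\otimes\iota(x))$ lies in $\ker(f)+F^1A\cdot(S\otimes_B A)$, which is contained in the PD-ideal of $SA$ since the log-PD-envelope is taken compatibly with the divided powers on $F^1\AAC$. One then invokes the universal property of the log-PD-envelope for this larger ideal, and concludes by the same continuity and $p$-torsion-freeness argument as in your part (1) (note also that it is the uncompleted PD-envelope, not $S\otimes_B A$ itself, that is dense in $SA$; the values on the $(V_j-1)^{[k]}$ are then pinned down because $k!$ is a non-zero-divisor). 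Your fixed-point computation survives unchanged: the kernel $H$ of $G_R\to\Gamma_R$ fixes $\iota(S)$ and the $V_j$, so the two actions coincide on $H$ and the decomposition of Proposition \ref{structure SLambda} is $H$-equivariant either way; your reading of the statement's \og$\Gamma_R\subseteq G_R$\fg{} as this kernel is also the right one.
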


\subsubsection{Lemmes de Poincar\'e}

On consid\`ere maintenant le complexe 
\[F^r \Omega_{S A/ A}^{\bullet} := F^r S A \to F^{r-1} SA \otimes_{SA} \Omega^1_{S A/ A} \to F^{r-2} SA \otimes_{SA} \Omega^2_{S A/ A} \to \dots. \]

\begin{Pro}\cite[Lem. 2.37]{CN2017}
\label{Poincare1}
Le morphisme 
$ F^r A \to F^r \Omega_{S A / A}^{\bullet}$
est un quasi-isomorphisme. 
\end{Pro}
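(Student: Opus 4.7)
La stratégie est de construire explicitement une homotopie contractante sur $F^r \Omega_{SA/A}^{\bullet}$, adaptant au cadre filtré le lemme de Poincaré classique pour les complexes de de Rham à puissances divisées.

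Je commence par décrire la structure du complexe. Comme $S$ est log-lisse sur $B$ (où $B = \AAC$ ou $\A^{[u,v]}$) avec $\Omega^1_{S/B}$ libre de base $(d\log X_j)_{1\le j \le d}$, cette base reste valable pour le $SA$-module libre $\Omega^1_{SA/A}$. En posant $W_j := 1 - V_j$, on a la relation $dW_j = -V_j \, d\log X_j$. La proposition \ref{structure SLambda} fournit une écriture unique de tout $\omega \in F^r \Omega^p_{SA/A}$ sous la forme
$$
\omega = \sum_{\mathbf{j} \in J_p} \sum_{\mathbf{k} \in \N^d} a_{\mathbf{k},\mathbf{j}} \, W^{[\mathbf{k}]} \, d\log X_{j_1} \wedge \cdots \wedge d\log X_{j_p},
$$
avec $a_{\mathbf{k},\mathbf{j}} \in F^{r-p-|\mathbf{k}|} A$ tendant vers $0$ $p$-adiquement. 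Puisque $v_p(k!) \to \infty$ et que les $W_j^{[k]}$ sont bornés dans $SA$ complété $p$-adiquement, la série $V_j^{-1} = \sum_{k \ge 0} k! \, W_j^{[k]}$ converge, de sorte que la famille $(dW_j)_j$ forme également une base libre de $\Omega^1_{SA/A}$.

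Dans la base $(dW_j)_j$, le complexe $\Omega^{\bullet}_{SA/A}$ s'identifie essentiellement au complexe de de Rham PD standard de l'algèbre $A\langle W_1, \ldots, W_d \rangle$ (complétée $p$-adiquement). L'homotopie contractante $h : \Omega^p_{SA/A} \to \Omega^{p-1}_{SA/A}$ est définie, pour $p \ge 1$, sur un monôme ordonné $j_1 < \cdots < j_p$ par
$$
h\bigl(a \, W^{[\mathbf{k}]} \, dW_{j_1} \wedge \cdots \wedge dW_{j_p}\bigr) := a \, W^{[\mathbf{k}+e_{j_1}]} \, dW_{j_2} \wedge \cdots \wedge dW_{j_p},
$$
prolongée par linéarité et continuité $p$-adique, et on pose $h = 0$ en degré $0$. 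Un calcul direct, tenant compte des signes de réordonnancement, donne $dh + hd = \mathrm{Id}$ pour $p \ge 1$, et $hd(\omega) = \omega - \epsilon(\omega)$ en degré $0$, où $\epsilon : SA \to A$ est l'augmentation envoyant $W_j$ sur $0$. L'unicité de la décomposition montre alors que le noyau de $d$ restreint à $F^r SA$ est exactement $F^r A$.

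Il reste à vérifier la compatibilité de $h$ avec la filtration : remplacer $dW_{j_1}$ par $W^{[\mathbf{k}+e_{j_1}]}/W^{[\mathbf{k}]}$ transfère un degré de forme vers un degré de filtration dans $SA$, préservant le degré total $r - p - |\mathbf{k}|$ ; la traduction entre les bases $(d\log X_j)_j$ et $(dW_j)_j$ s'effectue via l'inversibilité de $V_j$ montrée ci-dessus et n'affecte pas la filtration puisque $V_j \in 1 + \ker(\epsilon)$. La principale vérification technique se concentre sur la continuité $p$-adique de $h$ après complétion et sur le calcul explicite, délicat en termes de signes, de $dh + hd$.
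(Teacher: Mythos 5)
Votre stratégie générale --- utiliser la décomposition de la proposition \ref{structure SLambda} pour identifier $\Omega^{\bullet}_{SA/A}$ au complexe de de Rham à puissances divisées (complété) de $A\langle W_1,\dots,W_d\rangle$ dans la base $(dW_j)$, puis exhiber une homotopie contractante compatible à la filtration --- est bien celle de la preuve de \cite[Lem. 2.37]{CN2017} à laquelle le texte renvoie, et vos vérifications préliminaires (convergence de $V_j^{-1}$, description de la filtration via \ref{structure SLambda}(2)) sont correctes. En revanche, la formule d'homotopie que vous proposez est fausse : l'opérateur $h$ défini par $h(a\,W^{[\mathbf{k}]}\,dW_{j_1}\wedge\cdots\wedge dW_{j_p})=a\,W^{[\mathbf{k}+e_{j_1}]}\,dW_{j_2}\wedge\cdots\wedge dW_{j_p}$ ne vérifie pas $dh+hd=\mathrm{Id}$ dès que $d\ge 2$. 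Prenez $d=2$ et $\omega=a\,W_1^{[k_1]}W_2^{[k_2]}\,dW_2$ avec $k_1\ge 1$ : on a $h\omega=a\,W_1^{[k_1]}W_2^{[k_2+1]}$, donc $dh\omega=a\,W_1^{[k_1-1]}W_2^{[k_2+1]}\,dW_1+\omega$ ; par ailleurs $d\omega=a\,W_1^{[k_1-1]}W_2^{[k_2]}\,dW_1\wedge dW_2$ et $hd\omega=a\,W_1^{[k_1]}W_2^{[k_2]}\,dW_2=\omega$, d'où $(dh+hd)\omega=2\omega+a\,W_1^{[k_1-1]}W_2^{[k_2+1]}\,dW_1\neq\omega$. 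Le calcul direct que vous annoncez ne peut donc pas aboutir ; le problème n'est pas une question de signes mais le fait que l'intégration naïve par rapport à la première variable apparaissant dans le monôme ne contracte pas le complexe multivariable.

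La correction est standard. Notez $h_j$ l'homotopie à une variable ($h_j(W_j^{[k]}\,dW_j)=W_j^{[k+1]}$, et $h_j=0$ sur les termes ne contenant pas $dW_j$) et $\epsilon_j$ la projection qui annule les termes avec $k_j\ge 1$ ; alors l'opérateur $H:=\sum_{j=1}^{d}\epsilon_1\cdots\epsilon_{j-1}h_j$ vérifie $dH+Hd=\mathrm{Id}-\epsilon_1\cdots\epsilon_d$, où $\epsilon_1\cdots\epsilon_d$ est l'augmentation $SA\to A$. C'est l'homotopie du produit tensoriel complété des $d$ complexes à une variable (ou, de façon équivalente, une récurrence sur le nombre de variables) : il faut évaluer les variables précédentes en $W_i=0$ avant d'intégrer en $W_j$. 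Cette homotopie corrigée est continue et préserve $F^r\Omega^{\bullet}_{SA/A}$ : chaque $h_j$ augmente $|\mathbf{k}|$ de $1$ tout en diminuant le degré de forme de $1$, ce qui laisse invariante la condition $a_{\mathbf{k},\mathbf{j}}\in F^{\,r-p-|\mathbf{k}|}A$, et les $\epsilon_j$ ne font que supprimer des termes ; le changement de base entre $(d\log X_j)$ et $(dW_j)$ n'affecte pas la filtration puisque $V_j^{\pm 1}\in SA$. On conclut alors comme vous le faites que le complexe $F^r\Omega^{\bullet}_{SA/A}$ se rétracte sur $F^rA$ placé en degré $0$.
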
   

\begin{proof}
La preuve est identique \`a celle de Colmez-Nizio{\l}. 
\end{proof}

Pour simplifier les notations, on note $R_1:=R^{[u,v]}$ et $R_2:= \A_R^{[u,v]}$. On a des plongements $\iota_i : R^{[u,v]} \to R_i$ pour $i \in \{1, 2 \}$. On note ensuite $R_3$ la log-PD-enveloppe compl\'et\'ee de $R_1 \otimes_{\A^{[u,v]}} R_2$ pour $\iota_2 \circ \iota_1^{-1} : R_1 \to R_2$ (et donc $R_3= \E_R^{[u,v]}$). On pose $X_{ij}:= \iota_i(X_j)$ pour $i \in \{1,2 \}$ et $j \in \{ 1, \dots , d \}$. Soient 
$$ \Omega_i^1= \bigoplus_{j=1}^d \Z \frac{ d X_{ij}}{X_{ij}}, \quad \Omega_3^1= \Omega_1^1 \oplus \Omega_2^1, \quad \Omega_i^n = \bigwedge^n \Omega_i^1 $$
$$ \text{ et } F^r \Omega_{R_i}^{\bullet} := F^r R_i \to F^{r-1} R_i \otimes_{\Z} \Omega_i^1 \to F^{r-2} R_i \otimes_{\Z} \Omega_i^2 \to \dots $$

\begin{Lem}\cite[Lem. 3.11]{CN2017}
\label{Poincare2}
Les applications $F^r \Omega_{R_1}^{\bullet} \to F^r \Omega_{R_3}^{\bullet}$ et $F^r \Omega_{R_2}^{\bullet} \to F^r \Omega_{R_3}^{\bullet}$ sont des quasi-isomorphismes.
\end{Lem}

\subsection{Application de Fontaine-Messing}

On note $\E_{\overline{R},n}^{PD}$ la r\'eduction modulo $p^n$ de $\E_{\overline{R}}^{PD}$ (alors $\E_{\overline{R},n}^{PD}$ est la log-PD-enveloppe de $\Spec(\overline{R}_n) \to \Spec(\AAC(\overline{R})_n \otimes_{\AAC_{,n}} R_{\text{cris}}^+)$). 

On a le diagramme commutatif suivant : 
\[
   \xymatrix{
   & \mathrm{Spf}(\E_{\overline{R},n}^{PD}) \ar[rd] & \\
    \mathrm{Spf}(\overline{R}_n) \ar@{^{(}->}[rr] \ar@{^{(}->}[ru]  \ar[d] & & \mathrm{Spf}(\AAC(\overline{R})_n \otimes_{\AAC} R_{\text{cris}}^+) \ar[d] \\
    \mathrm{Spf}(R_n) \ar@{^{(}->}[rr] \ar[d] & & \mathrm{Spf}(R_{\text{cris},n}^+) \ar[d] \\
    \mathrm{Spf}(\OC_{C,n}) \ar@{^{(}->}[rr] & &\mathrm{Spf}(A_{\rm{cris},n}) 
  }
\]
  On d\'efinit le complexe syntomique : 
  $ \mathrm{Syn}(\overline{R},r)_n := [ F^r \Omega_{\E_{\overline{R},n}^{PD}}^{\bullet} \xrightarrow{p^r- \varphi} \Omega_{\E_{\overline{R},n}^{PD}}^{\bullet} ]. $
   
 La suite $p^r$-exacte 
  \[0 \to \Z/p^n(r)' \to F^r\AAC( \overline{R})_n \xrightarrow{p^r- \varphi} \AAC( \overline{R})_n \to 0 \]
  donne un $p^r$-quasi-isomorphisme
\[  R \Gamma(G_R, \Z/p^n(r)') \xrightarrow{\sim} R \Gamma(G_R, [ F^r\AAC( \overline{R})_n \xrightarrow{p^r- \varphi} \AAC( \overline{R})_n ] ) \]
 et, par la proposition \ref{Poincare1}, on a : 
 \[ R \Gamma(G_R, [ F^r\AAC( \overline{R})_n \xrightarrow{p^r- \varphi} \AAC( \overline{R})_n ] ) \xrightarrow{\sim} R \Gamma(G_R, [ F^r \Omega_{E_{\overline{R},n}^{PD}}^{\bullet} \xrightarrow{p^r- \varphi} \Omega_{E_{\overline{R},n}^{PD}}^{\bullet} ] ). \]   
 On peut alors d\'efinir l'application de Fontaine-Messing :    
\[\alpha^{\rm FM}_{r,n} :  \mathrm{Syn}(R,r)_n \to R \Gamma(G_R, \Z/p^n(r)')\]

On dit que deux morphismes $f$ et $g$ sont $p^N$-\'egaux (avec $N$ une constante) si le morphisme induit par $f-g$ sur les groupes de cohomologie est tu\'e par $p^N$. On obtient la version g\'eom\'etrique du th\'eor\`eme 4.16 de \cite{CN2017}.
  
\begin{The}
\label{comparaison FM}
Il existe une constante $N$ qui ne d\'epend que de $p$ et de $r$ telle que l'application de Fontaine-Messing $\alpha_r^{\rm FM}$ (respectivement $\alpha_{r,n}^{\rm FM}$) est $p^{N}$-\'egale \`a l'application $\alpha^0_r$ (respectivement $\alpha^0_{r,n}$) donn\'ee par le th\'eor\`eme \ref{Qis syn Galois}.
\end{The}

\begin{proof}
On reprend les m\^emes notations que dans \cite[Th. 4.16]{CN2017}, c'est-\`a-dire : 
\begin{itemize}
\item[$\bullet$] $C_G$ (resp. $C_{\Gamma}$) d\'esignent les complexes de cocha\^ines continues de $G$ (respectivement $\Gamma$) 
\item[$\bullet$] $K_{\varphi}(F^rS):=[ F^rS \xrightarrow{p^r - \varphi} S' ]$ (et donc $K_{\varphi}(S):=[S \xrightarrow{1- \varphi} S']$) ;
\item[$\bullet$] $K_{\varphi, \partial}(F^rS):= \mathrm{Kos}( \varphi, \partial, F^rS)= [ \Kos( \partial, F^rS ) \xrightarrow{p^r - p^{\bullet} \varphi} \Kos( \partial , S') ]$, 
\item[$\bullet$] $K_{\varphi, \Gamma}(F^rS):=\Kos(\varphi, \Gamma, F^rS)= [ \Kos( \Gamma, F^rS ) \xrightarrow{p^r - \varphi} \Kos( \Gamma , S') ]$,
\item[$\bullet$] $K_{\varphi, \Gamma, \partial}(F^rS):=[\Kos(\Gamma, F^r\Omega^{\bullet}_S) \xrightarrow{p^r - \varphi} \mathrm{Kos}( \Gamma , \Omega^{\bullet}_{S'}) ]$,
\item[$\bullet$] $K_{\varphi, \Lie\Gamma}(F^rS):=[\Kos(\Lie\Gamma, F^rS) \xrightarrow{p^r- \varphi} \Kos(\Lie\Gamma, S')]$ et,
\item[$\bullet$] $K_{\varphi, \Lie\Gamma, \partial}(F^rS):=[\Kos(\Lie\Gamma, F^r\Omega^{\bullet}_S) \xrightarrow{p^r - \varphi} \Kos(\Lie\Gamma , \Omega^{\bullet}_{S'}) ].$
\end{itemize}

Le th\'eor\`eme se d\'eduit alors du diagramme commutatif :

\begin{footnotesize}
\hspace{-1cm}\[
\xymatrix @C=0.45cm @R=1cm{
K_{\varphi, \partial}(F^rR_{\text{cris}}^+) \ar[d]^{\rotatebox{90}{$\sim$}}_{\tau_{\le r}} \ar[r] & C_G(K_{\varphi,\partial }(F^r\E_{\overline{R}}^{PD})) \ar[d] & \ar[l]_{\sim}^{\ref{Poincare1}} C_G(K_{\varphi}(F^r \AAC(\overline{R}))) \ar[d] & \ar[l]_-{\sim}^-{\eqref{suite exacte Acris}} C_G( \Z_p(r))  \ar[d]^{\rotatebox{90}{$\sim$}}_{\eqref{suite exacte Av}} \ar[dr]_{\sim}^{\eqref{suite exacte Ar}} \ar[ld]^{\sim}_{\eqref{suite exacte Auv 3}} &  \\
K_{\varphi, \partial}(F^rR^{[u,v]}) \ar@/_2cm/[rddddd]^{\sim}_{\text{prop. \ref{Poincare2}}} \ar[r] & C_G(K_{\varphi, \partial}(F^r\E_{\overline{R}}^{[u,v]})) & \ar[l]_{\sim}^{\ref{Poincare1}} C_G(K_{\varphi}(F^r \A^{[u,v]}_{\overline{R}})) & \ar[l]^-{t^r} C_G( K_{\varphi}(\A^{(0,v]+}_{\overline{R}}(r))) \ar[r] & C_G(K_{\varphi}(\A_{\overline{R}}(r))) \\
& C_{\Gamma}(K_{\varphi, \partial}(F^r\E_{R_{\infty}}^{[u,v]}) ) \ar[u]^{\mu_H} & \ar[l]_{\sim}^{\ref{Poincare1}} C_{\Gamma}(K_{\varphi}(F^r \A_{R_{\infty}}^{[u,v]})) \ar[u]^{\mu_H} & \ar[l]^-{t^r} C_{\Gamma}(K_{\varphi}(\A_{R_{\infty}}^{(0,v]+}(r)))\ar[r] \ar[u]^{\mu_H} & C_{\Gamma}(K_{\varphi}(\A_{R_{\infty}}(r))) \ar[u]^{\text{prop. \ref{muH}}}_{\rotatebox{90}{$\sim$}} \\ 
& C_{\Gamma}(K_{\varphi, \partial}(F^r\E_{R}^{[u,v]}) ) \ar[u]^{\mu_{\infty}} & \ar[l]_{\sim}^{\ref{Poincare1}} C_{\Gamma}(K_{\varphi}(F^r \A_R^{[u,v]})) \ar[u]^{\mu_{\infty}} & \ar[l]^-{t^r} C_{\Gamma}(K_{\varphi}(\A_{R}^{(0,v]+}(r)))\ar[r] \ar[u]^{\mu_{\infty}}& C_{\Gamma}(K_{\varphi}(\A_{R}(r))) \ar[u]^{\text{prop. \ref{muinfini}}}_{\rotatebox{90}{$\sim$}} \\ 
& K_{\varphi, \Gamma, \partial}(F^r\E_R^{[u,v]}) \ar[u]_{\rotatebox{90}{$\sim$}} \ar[d]^{\rotatebox{90}{$\sim$}} & \ar[l]_{\sim}^{\ref{Poincare1}} K_{\varphi, \Gamma}(F^r \A_R^{[u,v]}) \ar[u]_{\rotatebox{90}{$\sim$}} \ar[d]^{\rotatebox{90}{$\sim$}} & \ar[l]^-{t^r} K_{\varphi, \Gamma}(\A_R^{(0,v]+}(r)) \ar[u]_{\rotatebox{90}{$\sim$}} \ar[r]^{\sim}_{\ref{Ar to Av}} \ar[d]^{\rotatebox{90}{$\sim$}}_{\ref{Gamma to Lie}} & K_{\varphi, \Gamma}(\A_R(r)) \ar[u]_{\rotatebox{90}{$\sim$}} \\ 
& K_{\varphi, \Lie\Gamma, \partial}(F^r\E_R^{[u,v]})  & \ar[l]_{\sim}^{\ref{Poincare1}} K_{\varphi,\Lie\Gamma}(F^r \A_R^{[u,v]})  & \ar[l]^-{\ref{filtration [u,v]}}_{\sim} K_{\varphi, \Lie\Gamma}(\A_R^{[u,v]}(r)) & \\
& K_{\varphi, \partial}(F^r\E_R^{[u,v]}) \ar[u]^{t^{\bullet}} & \ar[l]_{\sim}^{\ref{Poincare2}} K_{\varphi, \partial}(F^r \A_R^{[u,v]}) \ar[u]_{\rotatebox{90}{$\sim$}}^{t^{\bullet}, \tau_{\le r}, (\ref{Lie to Delta})} & & 
}
\]
\end{footnotesize}

Les fl\`eches horizontales de la quatri\`eme colonne vers la troisi\`eme sont donn\'ees par les compositions d'applications :
\[
 \xymatrix{
 \Kos(\Gamma_R, S(r)) \ar[r]^{1-\varphi} \ar[d] & \Kos(\Gamma_R, S'(r)) \ar[d]^{p^r} \\
 \Kos(\Gamma_R, S(r)) \ar[r]^{p^r(1-\varphi)} \ar[d]^{t^r}  & \Kos(\Gamma_R, S'(r)) \ar[d]^{t^r}  \\
 \Kos(\Gamma_R, F^rS)) \ar[r]^{p^r-\varphi}  & \Kos(\Gamma_R, S') \\
 } \text{ et }
 \xymatrix{
 \Kos( \Lie \Gamma_R, S(r)) \ar[r]^{1-\varphi} \ar[d] & \Kos(\Lie \Gamma_R, S'(r)) \ar[d]^{p^r} \\
 \Kos( \Lie \Gamma_R, S(r)) \ar[r]^{p^r(1-\varphi)} \ar[d]^{t^r}  & \Kos(\Lie \Gamma_R, S'(r)) \ar[d]^{t^r}  \\
 \Kos( \Lie \Gamma_R, F^rS)) \ar[r]^{p^r-\varphi}  & \Kos(\Lie \Gamma_R, S'). \\
 } 
\] 
Le passage de la troisi\`eme \`a la quatri\`eme ligne se fait en utilisant le quasi-isomorphisme entre les complexes de Koszul et la cohomologie de groupe.

\end{proof}

\section{R\'esultat global}

\`A partir de maintenant, on se place dans le cas o\`u il n'y a plus de diviseur \`a l'infini. Plus pr\'ecis\'ement, on suppose que localement $\XF$ s'\'ecrit $\Spf(R_0)$ avec $R_0^{\square} \to R_0$ la compl\'etion d'un morphisme \'etale et $R_0^{\square}:= \OC_K \{ X_1, \dots, X_d, \frac{1}{X_1 \cdots X_a}, \frac{\varpi}{X_{a+1} \cdots X_d} \}$ (pour $a$, $d$ dans $\N$). On suppose $\Spf(R_0)$ connexe. On rappelle que $\Spf(\OC_K)$ est muni de la log-structure donn\'ee par son point ferm\'e et $\XF$ de celle donn\'ee par $\OC_{\XF} \cap (\OC_{\XF} [ \frac{1}{p} ])^{\times} \hookrightarrow \OC_{\XF}$.

Le but de cette section est de montrer le th\'eor\`eme : 

\begin{The}
\label{comparaison globale}
Les applications locales du th\'eor\`eme \ref{Qis syn Galois} se globalisent en des morphismes : 
\[ \alpha^0_{r} : \tau_{\le r} R \Gamma_{ {\rm syn}}( \XF_{\OC_{C}},r) \to \tau_{\le r} R \Gamma_{\emph{\text{\'et}}}(\XF_C, \Z_p(r)'). \]
\[ \alpha^0_{r,n} : \tau_{\le r} R \Gamma_{ {\rm syn}}( \XF_{\OC_{C}},r)_n \to \tau_{\le r} R \Gamma_{\emph{\text{\'et}}}(\XF_C, \Z/p^n\Z(r)'). \]
De plus, ces morphismes sont des $p^{Nr}$-isomorphismes pour une constante $N$ universelle. 
\end{The}

En particulier, on a le r\'esultat (\cite[Cor. 5.12]{CN2017}) : 

\begin{Cor}
\label{alpha0}
Si $\XF$ est un log-sch\'ema formel propre \`a r\'eduction semi-stable sur $\OC_K$ alors pour $k \le r$, le $p^{Nr}$-isomorphisme ci-dessus induit un isomorphisme : 
\[ \alpha^0_{r,k} : H^k_{\mathrm{syn}}(\XF_{\OC_{C}},r)_{\Q} \xrightarrow{\sim} H_{\emph{\text{\'et}}}^k( \XF_{C}, \Q_p(r)). \] 
\end{Cor}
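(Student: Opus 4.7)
The plan is to follow the \emph{coordonn\'ees plus g\'en\'erales} strategy of Bhatt--Morrow--Scholze (\cite{BMS16}) and \v{C}esnavi\v{c}ius--Koshikawa (\cite[\textsection 5]{CK17}), replacing the single local chart used in Sections 2--5 by an auxiliary ambient $\Spf(R_{\Sigma,\Lambda}^{\square})$ built from a finite set $\Sigma$ (giving a closed immersion into a torus) and a finite set $\Lambda$ of semi-stable \'etale charts, as recalled in the introduction. For $\XF= \Spf(R)$ sufficiently small in the \'etale topology of $\XF_{\OC_C}$, I would mimic Section 2.2.2 to lift $R_{\Sigma,\Lambda}^{\square}$ to $\AAinf$-algebras $R_{\Sigma,\Lambda,{\rm inf}}^{+}$ and then construct $R_{\Sigma,\Lambda}^{[u,v]}$, $R_{\Sigma,\Lambda}^{(0,v]+}$ and the PD-enveloppe $R_{\Sigma,\Lambda}^{PD}$ over $\AAC(R_{\Sigma,\Lambda}^{\square})$, together with a Frobenius lift and a left inverse $\psi$ defined exactly as in \textsection 3.3 using the decomposition into monomials $u_{\alpha}$.

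With these rings at hand, the construction of \eqref{qisconstruction} repeats verbatim: I would define
\[
\alpha_{r,\Sigma,\Lambda}\colon \tau_{\le r}\Kos(\varphi,\partial,F^{r} R_{\Sigma,\Lambda}^{PD}) \to \tau_{\le r} R\Gamma(G_{R},\Z_{p}(r))
\]
as the composite running through $\Kos(\varphi,\partial,F^{r}\A_{R,\Sigma,\Lambda}^{[u,v]})$, $\Kos(\varphi,\Gamma_{R,\Sigma,\Lambda},\A_{R,\Sigma,\Lambda}^{[u,v]}(r))$ and $\Kos(\varphi,\Gamma_{R,\Sigma,\Lambda},\A_{R,\Sigma,\Lambda}(r))$. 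Every lemma from Sections 3--5 (\ref{OmegaRcrisRu}, \ref{Ru to Ruv}, \ref{filtration [u,v]}, \ref{Lie to Delta}, \ref{Gamma to Lie}, \ref{Ar to Av}, \ref{Auv to Ar}, \ref{muH}, \ref{muinfini}) only uses formal properties of the period rings (the fundamental exact sequences, surjectivity of $\psi-1$, topological nilpotency of $\varphi$ on the relevant quotients) and the fact that the ambient $\square$-algebra is a completed polynomial algebra carrying a commuting Frobenius lift; all of these are preserved by enlarging $\Sigma$ and $\Lambda$. Hence Theorem \ref{Qis syn Galois} applies uniformly, showing that each $\alpha_{r,\Sigma,\Lambda}$ is a $p^{Nr}$-quasi-isomorphism with the same $N$.

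For the globalization, the pairs $(\Sigma,\Lambda)$ form a filtered category; taking the homotopy colimit (which is cofinal and whose transition maps are quasi-isomorphisms) yields, for each admissible $\Spf(R)\in\XF_{\OC_{C},\et}$, a complex functorial in $R$ and a morphism to $\tau_{\le r}R\Gamma(G_{R},\Z_{p}(r))$. Sheafifying and applying the cohomological descent formalism recalled in Remark \ref{syntomique diff} along an \'etale hypercovering $\mathfrak{U}^{\bullet}\to \XF$ by such affines produces the source complex $\tau_{\le r}R\Gamma_{\rm syn}(\XF_{\OC_{C}},r)$. On the target side, Remark \ref{K(pi,1)} (absence of horizontal divisor here) identifies the sheafified cohomology of $G_{R}$ with $\tau_{\le r}R\Gamma_{\et}(\XF_{C},\Z_{p}(r)')$. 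Combining these identifications yields $\alpha^{0}_{r}$ and the $p^{Nr}$-isomorphism property; the mod $p^{n}$ statement is obtained by reducing all complexes modulo $p^{n}$ (the constant $N$ is then adjusted as in the proof of Theorem \ref{Qis syn Galois}).

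The main technical obstacle is verifying that the auxiliary functorial complexes really do sheafify to $R\Gamma_{\rm syn}(\XF_{\OC_{C}},r)$: one must check that the colimit $\hocolim_{(\Sigma,\Lambda)}R\Gamma_{\rm syn}((\mathfrak{U}^{\bullet},Z^{\bullet}_{\Sigma,\Lambda},\varphi^{\bullet}),r)_{n}$ agrees with $R\Gamma_{\rm syn}(\XF_{\OC_{C}},r)_{n}$ via the spectral sequence \eqref{hypercoho}, which amounts to comparing the embedding data in $\operatorname{HRF}(\XF)$; this is where the discussion of Remark \ref{HomeGo} on fine log-structures enters. For Corollary \ref{alpha0}, properness of $\XF$ forces $H^{k}_{\rm syn}(\XF_{\OC_{C}},r)$ and $H^{k}_{\et}(\XF_{C},\Z_{p})$ to be finitely generated $\Z_{p}$-modules, so the kernel and cokernel of $\alpha^{0}_{r,k}$ are finite of bounded $p$-exponent; inverting $p$ and observing $\Z_{p}(r)'\otimes\Q_{p}=\Q_{p}(r)$ gives the claimed isomorphism.
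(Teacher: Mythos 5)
The bulk of your proposal — lifting the $(\Sigma,\Lambda)$-charts, building $R^{PD}_{\Sigma,\Lambda}$, checking that the lemmas of Sections 3--5 apply uniformly, and globalizing by filtered colimits and cohomological descent — is a faithful reconstruction of the paper's proof of Theorem \ref{comparaison globale}, i.e.\ of the prerequisite that the corollary takes as given. The issue is with the last four lines, which is where the actual content of the corollary lives, and there your argument has a gap.

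What the globalization argument actually produces (both in your sketch and in the paper's \S 8.2) are the \emph{mod $p^n$} maps $\alpha^0_{r,n}$, each a $p^{Nr}$-quasi-isomorphism with $N$ independent of $n$; the integral complexes are then the homotopy limits $\holim_n$ of the mod $p^n$ ones. Passing from this to the statement that $H^k(\alpha^0_r)$ has kernel and cokernel killed by a bounded power of $p$ is not automatic, because $H^k$ does not commute with $\holim$: one has to control the $\varprojlim$ and ${\varprojlim}^1$ terms. The paper does exactly this, using the distinguished triangle $A^{\bullet}\to\prod_n A^{\bullet}_n\to\prod_n A^{\bullet}_n\to A^{\bullet}[1]$, the associated long exact sequences on both sides, and a five-lemma argument that works precisely because $N$ is uniform in $n$; only then does it tensor with $\Q_p$. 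You skip this step entirely and instead assert that properness makes $H^k_{\mathrm{syn}}(\XF_{\OC_C},r)$ and $H^k_{\et}(\XF_C,\Z_p)$ finitely generated, so that the kernel and cokernel of $\alpha^0_{r,k}$ are "finite of bounded $p$-exponent." That appeal is both unjustified and unnecessary: finite generation of the syntomic cohomology groups is not established anywhere independently of the comparison you are trying to prove (and for the \'etale side it is a deep theorem of Scholze that the paper never invokes here), and even granting it, finiteness alone does not bound the exponent of the torsion — the bound has to come from the uniform $p^{Nr}$-control mod $p^n$ fed through the $\varprojlim/{\varprojlim}^1$ sequence. Replace the finite-generation claim by the Milnor-triangle argument and the deduction closes: once $H^k(\alpha^0_r)$ is a $p^{cNr}$-isomorphism of $\Z_p$-modules for $k\le r$, tensoring with the flat module $\Q_p$ (and noting $\Z_p(r)'\otimes\Q_p=\Q_p(r)$) gives the corollary with no finiteness hypothesis at all.
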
 

\begin{proof}[Preuve du corollaire \ref{alpha0}]
Par d\'efinition de la limite homotopique, si $A^{\bullet}$ d\'esigne le complexe $R\Gamma_{\rm syn}(\XF_{\OC_C},r)$ (respectivement $R\Gamma_{\text{\'et}}(\XF_{C}, \Q_p(r))$) et $A^{\bullet}_n$ sa r\'eduction modulo $p^n$ (respectivement $R\Gamma_{\text{\'et}}(\XF_{C}, \Z/p^n\Z(r)')$) alors on a un triangle distingu\'e : 
\[ A^{\bullet} \to \prod_n A^{\bullet}_n \to \prod_n A^{\bullet}_n \to A^{\bullet}[1] \]
o\`u l'application $\prod_n A^{\bullet}_n \to \prod_n A^{\bullet}_n$ est donn\'ee par $(k_n) \mapsto (k_n-\psi_{n+1}(k_{n+1}))$ (avec $\psi_{n+1} : A^{\bullet}_{n+1} \to A^{\bullet}_n$ sont les morphismes associ\'es au syst\`eme projectif $(A^{\bullet}_n)$).  

Par le th\'eor\`eme \ref{comparaison globale}, pour tout $n$ et $k \le r$, on a un $p^{Nr}$-isomorphisme $H^kR\Gamma_{\syn}(\XF_{\OC_{C}},r)_n \xrightarrow{\sim} H^kR\Gamma_{\text{\'et}}(\XF_{C}, \Z/p^n\Z(r)')$. En \'ecrivant la suite exacte longue associ\'ee au triangle distingu\'e ci-dessus, on obtient un diagramme commutatif :
\[ \xymatrix{
\dots \ar[r] & \prod_n H^{k-1}_{\rm{syn}}(r)_n \ar[d]^{\rotatebox{90}{$\sim$}} \ar[r] & H^k_{\mathrm{syn}}(r) \ar[r] \ar[d] & \prod_n H^k_{\rm{syn}}(r)_n \ar[r] \ar[d]^{\rotatebox{90}{$\sim$}} & \prod_n H^k_{\rm{syn}}(r)_n \ar[r] \ar[d]^{\rotatebox{90}{$\sim$}} & \dots \\ 
\dots \ar[r] & \prod_n H^{k-1}_{\text{\'et}}(r)_n \ar[r] & H^k_{\text{\'et}}(r) \ar[r] & \prod_n H^k_{\text{\'et}}(r)_n \ar[r] & \prod_n H^k_{\text{\'et}}(r)_n \ar[r] & \dots } \]
o\`u pour simplifier on a not\'e $H^{k}_{\rm{syn}}(r)_n:= H^kR\Gamma_{\rm syn}(\XF_{\OC_C},r)_n$, $H^{k}_{\rm{syn}}(r):= H^kR\Gamma_{\rm syn}(\XF_{\OC_C},r)$ et de m\^eme pour la cohomologie \'etale. On tensorise ensuite par $\Q_p$ pour obtenir, pour tout $k \le r$, un isomorphisme :  
\[ \alpha^0_{r,k} : H^k_{\mathrm{syn}}(\XF_{\OC_{C}},r)_{\Q} \xrightarrow{\sim} H_{\text{\'et}}^k( \XF_{C}, \Q_p(r)). \] 
\end{proof}

\begin{Rem}
Dans \cite{Tsu99}, pour obtenir un morphisme compatible avec les applications symboles, une normalisation par un facteur $p^{-r}$ est n\'ecessaire (voir \cite[3.1.12]{Tsu99} et \cite[Proposition 3.2.4 (3)]{Tsu99} pour la compatibilit\'e avec les applications symboles). Cette normalisation n'apparait pas ici car le morphisme au niveau entier qu'on utilise n'est pas exactement \'egal \`a celui de \cite{Tsu99}, mais est obtenu en tordant celui-ci par un facteur $p^r$. 
\end{Rem}

Revenons \`a la preuve du th\'eor\'eme \ref{comparaison globale}. On a construit le morphisme $\alpha_{r}^0$ localement, il suffit de montrer qu'on peut le globaliser. Pour cela on utilise la m\'ethode de \v{C}esnavi\v{c}ius-Koshikawa dans \cite[\textsection 5]{CK17} (qui g\'en\'eralise celle de \cite{BMS16}).

On se place dans le cas o\`u $\XF_{\OC_C}= \mathrm{Spf}(R)$. On suppose que les intersections de deux composants irr\'eductibles de la fibre sp\'eciale $\mathrm{Spec}( R \otimes_{\OC_C} k)$ sont non vides (en particulier, $\Spf(R)$ est connexe)  et qu'il existe une immersion ferm\'ee
\begin{equation}
\label{im fermee}
 \XF_{\OC_C} \to \mathrm{Spf}(R_{\Sigma}^{\square}) \times_{\OC_C} \prod_{\lambda \in \Lambda} \mathrm{Spf} R_{\lambda}^{\square}
 \end{equation}
telle que  
\begin{enumerate}[label=(\roman*)]
\item $R_{\Sigma}^{\square}:= \OC_C \{ X_{\sigma}^{\pm 1} \; | \;  \sigma \in \Sigma \}$ avec $\Sigma$ un ensemble fini ; 
\item $R_{\lambda}^{\square}:= \OC_C \{ X_{\lambda, 1}, \dots , X_{\lambda, d}, \frac{1}{X_{\lambda, 1} \cdots X_{\lambda, a_{\lambda}}}, \frac{\varpi}{X_{\lambda,a_{\lambda}+1} \cdots X_{\lambda,d} }\}$ o\`u $\lambda \in \Lambda$ avec $\Lambda$ fini ;  
\item $\mathrm{Spf}(R) \to \mathrm{Spf}(R_{\Sigma}^{\square})$ est une immersion ferm\'ee ;
\item $\mathrm{Spf}(R) \to \mathrm{Spf}(R_{\lambda}^{\square})$ est \'etale pour tout $\lambda$ dans $\Lambda$.
\end{enumerate}
En particulier, pour tout $\lambda$ dans $\Lambda$, chaque composante irr\'eductible de la fibre sp\'eciale $\mathrm{Spec}( R \otimes k)$ est donn\'ee par un unique $(t_{\lambda, i})$ pour $a_{\lambda}+1 \le i \le d.$

On choisit des \'el\'ements $\{ u_{\sigma} \}_{ \sigma \in \Sigma}$ (respectivement $\{ u_{\lambda,i} \}_{1 \le i \le d}$ avec $\lambda \in \Lambda$) tels qu'on ait une application $X_{\sigma} \mapsto u_{\sigma}$ de $R^{\square}_{\Sigma}$ dans $R$ (respectivement $X_{\lambda, i} \mapsto u_{\lambda,i}$ de $R_{\lambda}^{\square}$ dans $R$).  

Dans la suite on note $\Spf(R_{\Sigma, \Lambda}^{\square})$ le produit $\Spf(R_{\Sigma}^{\square}) \times_{\OC_C} \prod_{\lambda \in \Lambda} \Spf( R_{\lambda}^{\square} )$.

Comme dans la section 2.2.2, on munit chaque $\Spf(R_{\lambda}^{\square})$ de la log-structure donn\'ee par la fibre sp\'eciale et on munit $\Spf(R_{\Sigma, \Lambda}^{\square})$ et $\Spf(R)$ des log-structures induites.   

\begin{Rem}
On peut toujours trouver une base de $\XF_{\text{\'et}}$ telle qu'on ait de telles immersions : soit $x$ un point de $\XF$. Si $x$ est dans le lieu lisse de $\XF$ alors localement $\XF_{\OC_C}$ s'\'ecrit $\Spf(\widehat{A})$ avec $A$ de type fini sur $\OC_C$ et donc il existe une immersion ferm\'ee $\Spec(A) \hookrightarrow \Spec( \OC_C [ X_1, \dots, X_n ])$. On recouvre ensuite $\Spec( \OC_C [ X_1, \dots, X_n ])$ par une union de tores et on obtient que localement $\XF_{\OC_C}$ s'\'ecrit $\Spf(R)$ tel qu'il existe une immersion ferm\'ee $\Spf(R) \hookrightarrow \Spf(R_{\Sigma}^{\square})$ avec $\Sigma$ un ensemble fini.   

Si maintenant $x$ n'est plus dans le lieu lisse de $\XF$, $\XF_{\OC_C}$ s'\'ecrit localement $\Spf(R_{\lambda})$ pour une $\OC_C$-alg\`ebre $R_{\lambda}$ telle qu'on ait une application \'etale $R_{\lambda}^{\square} \to R_{\lambda}$ avec
\[ R_{\lambda}^{\square}:= \OC_C \{ X_{\lambda, 1}, \dots , X_{\lambda, d}, \frac{1}{X_{\lambda, 1} \cdots X_{\lambda, a_{\lambda}}}, \frac{\varpi}{X_{\lambda,a_{\lambda}+1} \cdots X_{\lambda,d} }\} \]
et telle que $x$ est donn\'e par $(X_{\lambda,a_{\lambda}+1} \cdots X_{\lambda,d})$. Si on localise en $x$, on obtient une immersion ferm\'ee dans un tore formel $\Spf(R_{\Sigma}^{\square})$.  
\end{Rem} 

\begin{Rem}
\label{fonctoriel}
\begin{enumerate}
\item Pour un tel $R$, si $R_{\Sigma_1, \Lambda_1}^{\square}$ et $R_{\Sigma_2, \Lambda_2}^{\square}$ v\'erifient les conditions ci-dessus avec $\Sigma_1 \subseteq \Sigma_2$ et $\Lambda_1 \subseteq \Lambda_2$, on note $f_{1}^{2} : R_{\Sigma_1, \Lambda_1}^{\square} \to R_{\Sigma_2, \Lambda_2}^{\square}$ l'application induite par $X_{\sigma} \mapsto X_{\sigma}$ et $X_{\lambda,i} \mapsto X_{\lambda, i}$ pour $\sigma$ dans $\Sigma_1$, $\lambda$ dans $\Lambda_1$ et $i$ dans $\{1, \dots, d\}$. L'ensemble des $R_{\Sigma, \Lambda}^{\square}$ muni des applications ainsi d\'efinies forme alors un syst\`eme inductif filtrant. Il est inductif car si $\Sigma_1 \subseteq \Sigma_2 \subseteq \Sigma_3$ et $\Lambda_1 \subseteq \Lambda_2 \subseteq \Lambda_3$, on a 
$ f_{1}^{3} = f_{2}^{3} \circ f_{1}^{2}. $
V\'erifions qu'il est filtrant. Par d\'efinition du syst\`eme, pour $\Sigma_1 \subseteq \Sigma_2$ et $\Lambda_1 \subseteq \Lambda_2$, $f_{1}^{2}$ est la seule application de $R_{\Sigma_1, \Lambda_1}^{\square}$ dans $R_{\Sigma_2, \Lambda_2}^{\square}$. Consid\'erons \`a pr\'esent deux anneaux $R^{\square}_{\Sigma_1, \Lambda_1}$ et 
$R^{\square}_{\Sigma_2, \Lambda_2}$ v\'erifiant \eqref{im fermee} pour $R$. On note $\Sigma_3:= \Sigma_1 \sqcup \Sigma_2$ et $\Lambda_3= \Lambda_1 \sqcup \Lambda_2$. Alors $(\Sigma_3, \Lambda_3)$ v\'erifie les conditions \eqref{im fermee} pour $R$ et les deux morphismes $f_{1}^{3}$ et $f_2^3$ sont bien d\'efinis. 

\item Maintenant si $f: R \to R'$ est un morphisme \'etale tel qu'on ait une immersion ferm\'ee $\Spf(R') \to \Spf(R_{\Sigma', \Lambda'}^{\square})$, on note $\widetilde{\Sigma}:= \Sigma \sqcup \Sigma'$ et $\widetilde{\Lambda}= \Lambda \sqcup \Lambda'$. Alors $(\widetilde{\Sigma}, \widetilde{\Lambda})$ v\'erifie les conditions \eqref{im fermee} pour $R'$ : comme $R \to R'$ est \'etale, $R_{\lambda}^{\square} \to R'$ est \'etale pour tout $\lambda$ de $\widetilde{\Lambda}$. On a de plus une surjection $R_{\widetilde{\Sigma}}^{\square} \to R'$ donn\'ee par :
 \[ \begin{array}{lcll}
R_{\widetilde{\Sigma}}^{\square} & \to & R' & \\
 X_{\sigma} & \mapsto & f(u_{\sigma})& \text{ pour } \sigma \in \Sigma  \\ 
 X_{\sigma'} & \mapsto & u_{\sigma'} &\text{ pour } \sigma' \in \Sigma'   
\end{array} \]
et telle qu'on ait un diagramme commutatif : 
\[ \xymatrix{ R \ar[r]^{f} & R' \\ 
  R_{\Sigma'}^{\square} \ar[r] \ar[u] & R_{\widetilde{\Sigma}}^{\square} \ar[u] }.\]
  On obtient alors un morphisme de $R_{\Sigma, \Lambda}^{\square} \to R_{\widetilde{\Sigma}, \widetilde{\Lambda}}^{\square}$ et de $ \varinjlim_{(\Sigma, \Lambda)} R_{\Sigma, \Lambda}^{\square} \to \varinjlim_{(\Sigma', \Lambda')} R_{\Sigma', \Lambda'}^{\square}$ (o\`u la premi\`ere limite est prise sur l'ensemble des $(\Sigma, \Lambda)$ qui v\'erifient \eqref{im fermee} pour $R$ et la seconde limite sur l'ensemble des $(\Sigma', \Lambda')$ qui v\'erifient \eqref{im fermee} pour $R'$). 
\end{enumerate}
\end{Rem} 

\subsection{Construction du quasi-isomorphisme $\alpha_{r, \Sigma, \Lambda}$}

On travaille localement. Supposons que $\XF_{\OC_C}= \Spf(R)$ avec $R$ tel qu'il existe des immersions comme en \eqref{im fermee}.

On va d\'efinir un anneau $R_{\Sigma, \Lambda}^{\rm PD}$ en suivant la construction de \cite[\textsection 5.22]{CK17}. Notons $\Spf(\A_{\inf}(R_{\Sigma, \Lambda}^{\square}))$ le produit des 
\[ \Spf(\A_{\inf}(R_{\lambda}^{\square})):=\Spf(\A_{\inf} \{ X_{\lambda, 1}, \cdots , X_{\lambda, d}, \frac{1}{X_{\lambda, 1} \cdots X_{\lambda, a_{\lambda}}}, \frac{[\varpi^{\flat}]}{X_{\lambda,a_{\lambda}+1} \cdots X_{\lambda,d}} \})  \] et de $\Spf(\A_{\inf}(R_{\Sigma}^{\square})) :=\Spf(\A_{\inf}\{ X_{\sigma}^{\pm 1} \; | \;  \sigma \in \Sigma \}).$ 
Comme pr\'ec\'edemment, $\A_{\inf}$ est muni de la log-structure induite par $x \mapsto [x]$ de $\OC^{\flat} \setminus \{ 0 \} \to \A_{\inf}$. Les $\Spf(\A_{\inf}(R_{\lambda}^{\square}))$ sont munis de la log-structure associ\'ee \`a :
\[ \N^{d-a_{\lambda}} \sqcup_{\N} (\OC^{\flat} \setminus \{0 \}) \to \A_{\inf}(R_{\Sigma, \Lambda}^{\square}) \]
qui envoie $(n_i)$ sur $\prod_{a_{\lambda}+1 \le i \le d} X_i^{n_i}$ et $x \in \OC^{\flat}$ sur $[x] \in \A_{\inf}$, le morphisme $\N \to \N^{d-a_{\lambda}}$ est l'application diagonale et $\N \to \OC^{\flat} \setminus \{0 \}$ est donn\'ee par $m \mapsto [\varpi^{\flat}]^m$. Comme dans la remarque \ref{HomeGo}, on peut se ramener \`a des log-structures fines par changement de base (voir \cite[\textsection 5.9]{CK17}) et de cette fa\c{c}on $\A_{\inf}(R_{\Sigma, \Lambda}^{\square})$ est log-lisse sur $\A_{\inf}$. 

On a une immersion ferm\'ee : 
\[ \Spec(R/p) \hookrightarrow \Spf(\A_{\inf}(R_{\Sigma, \Lambda}^{\square})) \] 
et pour $m,n \ge 1$, on peut construire une log-PD-enveloppe (voir \cite[\textsection 5.22]{CK17} et \cite[1.3]{Bei2013}) : 
\[ \xymatrix{
& \Spec(R_{\Sigma, \Lambda,n,m}^{\rm PD}) \ar[rd] &  \\
\Spec(R/p) \ar @{^{(}->}[ru] \ar[rr] & & \Spec(\A_{\inf}(R_{\Sigma, \Lambda}^{\square})/(p^n, \xi^m)) 
}.
\]   
En fait, pour $m$ suffisamment grand (tel que $\xi^m \in p^n \AAC$), $R_{\Sigma, \Lambda,n,m}^{\rm PD}$ s'identifie \`a la log-PD-enveloppe de
\[ \Spec(R/p) \hookrightarrow \Spec(\A_{\cris}(R_{\Sigma, \Lambda}^{\square})/p^n) \text{ sur } \Spec(\OC_C/p) \hookrightarrow \Spec(\AAC/p^n). \]
Notons $R_{\Sigma, \Lambda, n}^{\rm PD}$ cette log-PD-enveloppe (ind\'ependante de $m$). On a $R_{\Sigma, \Lambda, n}^{\rm PD}/p^{n-1}=R_{\Sigma, \Lambda, n-1}^{\rm PD}$ et on obtient un sch\'ema formel $p$-adique $\Spf(R_{\Sigma, \Lambda}^{\rm PD})$ tel qu'on ait une factorisation :  
\[ \xymatrix{
& \Spf(R_{\Sigma, \Lambda}^{\rm PD}) \ar[rd] &  \\
\Spec(R/p) \ar @{^{(}->}[ru] \ar[rr] & & \Spf(\A_{\cris}(R_{\Sigma, \Lambda}^{\square})) 
}
\]
o\`u $\AAC(R_{\Sigma, \Lambda}^{\square}):= \AAC \widehat{\otimes}_{\AAinf} \AAinf(R_{\Sigma, \Lambda}^{\square})$ (le produit tensoriel est compl\'et\'e pour la topologie $p$-adique).

On a un morphisme de Frobenius sur $R_{\Sigma, \Lambda}^{\rm PD}$ venant de celui de $R/p$. On \'etend de m\^eme les applications diff\'erentielles $\partial_{\sigma}= \frac{d}{ d\log(X_{\sigma})}$ et $\partial_{\lambda,i}=  \frac{d}{ d \log(X_{\lambda,i})}$ \`a $R_{\Sigma, \Lambda}^{\rm PD}$. On a de plus une factorisation : 
\[ \Spec(R/p) \hookrightarrow \Spf(R) \hookrightarrow \Spf(R_{\Sigma, \Lambda}^{\rm PD}). \] 

\vspace{0.5cm}

Le but de cette partie est de construire un $p^{Nr}$-quasi-isomorphisme 
\[ \alpha_{r, \Sigma, \Lambda} : \tau_{\le r} \Kos( \varphi, \partial, F^r R_{\Sigma, \Lambda}^{PD}) \to \tau_{\le r} R\Gamma(G_R, \Z_p(r)) \]
 avec $N$ qui ne d\'epend ni de $\Sigma$, $\Lambda$, ni de $R$.

\vspace{0.5cm}

On consid\`ere les anneaux : 
\[ R^{[u]}_{\Sigma, \Lambda} = \A^{[u]} \widehat{\otimes}_{\AAC} R^{PD}_{\Sigma, \Lambda}, \qquad R^{[u,v]}_{\Sigma, \Lambda} = \A^{[u,v]} \widehat{\otimes}_{\AAC} R^{PD}_{\Sigma, \Lambda} \text{ et } R^{(0,v]+}_{\Sigma, \Lambda} = \A^{(0,v]+} \widehat{\otimes}_{\AAC} R^{PD}_{\Sigma, \Lambda} \]
o\`u les produits tensoriels sont compl\'et\'es pour la topologie $p$-adique.

L'application $R_{\Sigma, \Lambda}^{PD} \to R_{\Sigma, \Lambda}^{[u,v]}$ induit alors un morphisme sur les complexes de Koszul : 
\begin{equation}
\label{RPD to Ruv SigmaLambda}
 \Kos( \varphi, \partial, F^r R_{\Sigma, \Lambda}^{PD} ) \to \Kos( \varphi, \partial, F^r R_{\Sigma, \Lambda}^{[u,v]} ).
 \end{equation}
 
\vspace{0.5cm}

Soient $ R_{\Sigma, \infty}^{\square}$ et $R_{\lambda, \infty}^{\square}$ les compl\'etions $p$-adiques des anneaux
\[ \varinjlim_{n} \OC_C \{ X_{\sigma}^{\frac{\pm 1}{p^n}} \;  | \; \sigma \in \Sigma \} \text{ et }
\varinjlim_{n} \OC_C \{ X_{\lambda, 1}^{\frac{1}{p^n}}, \dots , X_{\lambda, d}^{\frac{1}{p^n}}, \frac{1}{(X_{\lambda, 1} \cdots X_{\lambda, a_{\lambda}})^{\frac{1}{p^n}}}, \frac{\varpi^{\frac{1}{p^n}}}{(X_{\lambda,a_{\lambda}+1} \cdots X_{\lambda,d})^{\frac{1}{p^n}} }\} \]
On note $R_{\Sigma, \Lambda, \infty}^{\square}$ le produit tensoriel des anneaux pr\'ec\'edents, $R_{\Sigma, \Lambda, \infty} := R_{\Sigma, \Lambda, \infty}^{\square} \widehat{\otimes}_{R_{\Sigma, \Lambda}^{\square}} R$ et $\AAC(R_{\Sigma, \Lambda, \infty})$, $\A^{[u,v]}_{R_{\Sigma, \Lambda, \infty}}$, $\A^{[u]}_{R_{\Sigma, \Lambda, \infty}}$ les anneaux associ\'es. 
Enfin, on consid\`ere les groupes $\Gamma_{\Sigma} := \Gal(R^{\square}_{\Sigma, \infty}[ \frac{1}{p}] / R^{\square}_{\Sigma}[ \frac{1}{p}]) \cong \Z_p^{\Sigma}$, $\Gamma_{\lambda}:= \Gal(R^{\square}_{\lambda, \infty}[ \frac{1}{p}] / R^{\square}_{\lambda}[ \frac{1}{p}]) \cong \Z_p^{d}$ et $\Gamma_{\Sigma, \Lambda}:= \Gamma_{\Sigma} \times \prod_{\lambda \in \Lambda} \Gamma_{\lambda}$.

Si $(\gamma_{\sigma})_{\sigma}$ et $(\gamma_{\lambda,i})_{1 \le i \le d}$ sont des g\'en\'erateurs topologiques de $\Gamma_{\Sigma}$ et de $\Gamma_{\lambda}$, les actions de $\Gamma_{\Sigma}$ et $\Gamma_{\lambda}$ sur $R_{\Sigma}^{\square}$ et $R_{\lambda}^{\square}$ sont donn\'ees par :
\[ \gamma_{\sigma}(X_{\sigma})= [ \varepsilon ]X_{\sigma} \text{ et }  \gamma_{\sigma}(X_{\sigma'})= X_{\sigma'} \text{ si } \sigma' \neq \sigma \] 
\[ \gamma_{\lambda,i}(X_{\lambda,i})= [ \varepsilon ]X_{\lambda,i} \text{ et }  \gamma_{\lambda,i}(X_{\lambda,j})= X_{\lambda,j} \text{ si } i \neq j. \] 
On en d\'eduit une action de $\Gamma_{\Sigma, \Lambda}$ sur $R^{PD}_{\Sigma, \Lambda}$, $\A^{[u]}_{\Sigma, \Lambda}$, $\A^{[u,v]}_{\Sigma, \Lambda}$. 

Enfin, pour $1 \le i \le | \Sigma | + d | \Lambda |$, on \'ecrit $J_i:= \{ (j_1, \dots, j_i) \; | \; 1 \le j_1 < \dots < j_i \le | \Sigma | + d | \Lambda | \}$. Les m\^emes preuves que celles des propositions \ref{Lie to Delta} et \ref{Gamma to Lie} donnent : 

\begin{Pro}
Il existe un $p^{30r}$-quasi-isomorphisme : 
\begin{equation}
\label{diff to lie Sigma}
\begin{split}
\tau_{\le r} \Kos( \varphi, \partial, F^r R_{\Sigma, \Lambda}^{[u,v]} )  \xrightarrow{t^{\bullet}}  \tau_{\le r} \Kos( \varphi, \Lie \Gamma_{\Sigma, \Lambda}, F^r R_{\Sigma, \Lambda}^{[u,v]} ) &  \xleftarrow{t^r} \tau_{\le r} \Kos(\varphi, \Lie \Gamma_{\Sigma, \Lambda}, R_{\Sigma, \Lambda}^{[u,v]}(r) )\\
 & \xleftarrow{\beta} \tau_{\le r} \Kos( \varphi, \Gamma_{\Sigma, \Lambda},R_{\Sigma, \Lambda}^{[u,v]}(r)).  
 \end{split}
 \end{equation}
\end{Pro}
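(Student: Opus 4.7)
On suit de pr\`es la strat\'egie des lemmes \ref{Lie to Delta} et \ref{Gamma to Lie}, en l'adaptant au contexte des coordonn\'ees $(\Sigma, \Lambda)$. L'ingr\'edient principal est de disposer, pour l'anneau $R_{\Sigma,\Lambda}^{[u,v]}$, des deux propri\'et\'es structurelles suivantes : (i) la multiplication par $t^r$ induit un $p^{3r}$-isomorphisme $R_{\Sigma,\Lambda}^{[u,v]} \xrightarrow{\sim} F^r R_{\Sigma,\Lambda}^{[u,v]}$ et un $p^{2(r+1)}$-isomorphisme injectif en degr\'e $r+1$; (ii) pour tout g\'en\'erateur topologique $\gamma$ de $\Gamma_{\Sigma,\Lambda}$, l'op\'erateur $\tau = \gamma-1$ v\'erifie $\tau(R_{\Sigma,\Lambda}^{[u,v]}) \subseteq \mu \, R_{\Sigma,\Lambda}^{[u,v]}$.

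Pour (i), on part de la d\'efinition $R_{\Sigma,\Lambda}^{[u,v]} = \A^{[u,v]} \widehat{\otimes}_{\AAC} R_{\Sigma,\Lambda}^{\rm PD}$, qui donne que $R_{\Sigma,\Lambda}^{[u,v]} = F^r \A^{[u,v]} \widehat{\otimes}_{\AAinf} (\AAinf \otimes_{\AAC} R_{\Sigma,\Lambda}^{\rm PD})$ et r\'eduit par platitude \`a l'\'enonc\'e correspondant pour $\A^{[u,v]}$. L'argument de la preuve du lemme \ref{filtration [u,v]} (utilisant la proposition \ref{FrAuv} et l'\'ecriture $\mu = u_0[\bar\mu]$) s'applique alors tel quel, comme dans la remarque \ref{injectivite r+1}. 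Pour (ii), on utilise la proposition \ref{fonctions implicites} comme dans la remarque \ref{tauAuv}: le relev\'e \'etale $R_{\inf,\square} \to R_{\inf}$ pour le morphisme $\gamma$ force $\gamma(X)-X \in \mu \, R_{\Sigma,\Lambda}^{[u,v]}$ sur chacune des variables $X_\sigma$, $X_{\lambda,i}$.

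Munis de ces deux propri\'et\'es, on construit le premier $p^{30r}$-quasi-isomorphisme en reprenant le diagramme \eqref{DNote}: le passage vertical $t^\bullet$ (resp. $t^r$) est un $p^{14r}$-quasi-isomorphisme apr\`es troncation en degr\'e $r$, gr\^ace \`a (i) et \`a la remarque \ref{injectivite r+1} pour contr\^oler le degr\'e $r+1$; la combinaison avec le morphisme $1-\varphi$ (devenant $p^r - p^\bullet \varphi$ apr\`es torsion) donne bien le facteur $p^{30r}$. Pour la fl\`eche $\beta$, on d\'efinit, pour chaque g\'en\'erateur $\gamma_k$ de $\Gamma_{\Sigma,\Lambda}$, les s\'eries $s_k = \sum a_n \tau_k^n$ et $s_k^{-1} = \sum b_n \tau_k^n$ avec les coefficients des d\'eveloppements $\log(1+X)/X$ et $X/\log(1+X)$; la propri\'et\'e (ii) entra\^\i ne la convergence de ces s\'eries sur $R_{\Sigma,\Lambda}^{[u,v]}$, et la commutativit\'e de $\Lie\Gamma_{\Sigma,\Lambda}$ permet de d\'efinir $\beta_i((a_{\mathbf j})) = (s_{j_1} \cdots s_{j_i}(a_{\mathbf j}))$ comme pr\'ec\'edemment, compatible au Frobenius.

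L'obstacle principal n'est pas conceptuel mais combinatoire: il faut v\'erifier que la construction du complexe de Koszul avec $|\Sigma| + d|\Lambda|$ g\'en\'erateurs (au lieu de $d$ dans le cas local) ne d\'egrade pas les constantes $p^N$ et que le diagramme analogue \`a \eqref{DNote} reste commutatif, en particulier la compatibilit\'e de $\beta$ avec $\varphi$ sur chaque variable $X_\sigma$ (pour laquelle le Frobenius est $X_\sigma \mapsto X_\sigma^p$) comme sur les $X_{\lambda,i}$. Ceci se fait formellement, variable par variable, en exploitant le fait que les actions des diff\'erentes composantes de $\Gamma_{\Sigma,\Lambda}$ commutent entre elles. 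La m\^eme d\'emarche, appliqu\'ee modulo $p^n$, donne, en utilisant le dernier point de la remarque \ref{injectivite r+1}, le $p^{58r}$-quasi-isomorphisme correspondant.
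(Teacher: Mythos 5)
Votre d\'emarche est exactement celle du texte, qui se contente d'affirmer que les preuves des lemmes \ref{Lie to Delta} et \ref{Gamma to Lie} s'appliquent telles quelles : vous en explicitez correctement les deux ingr\'edients (action de $t^r$ sur la filtration, nilpotence topologique $\mu$-adique des $\tau_j$), la construction de $\beta$ via les s\'eries $\log(1+X)/X$, et le fait que les constantes $p^{30r}$ et $p^{58r}$ ne d\'ependent pas du nombre $|\Sigma|+d|\Lambda|$ de variables.

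Un point de votre justification est toutefois \`a reprendre : vous traitez $R_{\Sigma,\Lambda}^{\rm PD}$ comme s'il s'agissait d'un relev\'e \'etale analogue \`a $R_{\inf}^+$, alors que c'est une log-PD-enveloppe compl\'et\'ee. La formule $R_{\Sigma,\Lambda}^{[u,v]} = F^r\A^{[u,v]}\widehat{\otimes}_{\AAinf}(\AAinf\otimes_{\AAC}R_{\Sigma,\Lambda}^{\rm PD})$ n'a pas de sens (il n'existe pas de morphisme $\AAC\to\AAinf$), et la proposition \ref{fonctions implicites} ne s'applique pas \`a une PD-enveloppe. Les propri\'et\'es (i) et (ii) restent vraies mais s'obtiennent autrement : pour (i), on utilise que $R_{\Sigma,\Lambda}^{\rm PD}/p^n$ est topologiquement libre sur $\AAC/p^n$ (sur les mon\^omes \`a puissances divis\'ees, comme dans la proposition \ref{structure SLambda}), ce qui permet de base-changer le lemme \ref{filtration [u,v]} et la remarque \ref{injectivite r+1}, la filtration \'etant ici celle induite par $F^{\bullet}\A^{[u,v]}$ ; pour (ii), l'action de $\gamma_j$ sur $\A_{\inf}(R_{\Sigma,\Lambda}^{\square})$ est donn\'ee explicitement par $X\mapsto[\varepsilon]X$ (il n'y a pas d'\'etape \'etale), et on l'\'etend aux g\'en\'erateurs \`a puissances divis\'ees en \'ecrivant $(\gamma_j-1)(y^{[k]})=\sum_{i\ge 1}y^{[k-i]}(\mu z)^{[i]}$ avec $\mu^{i}/i!\in\mu\AAC$. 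Avec ces ajustements, votre argument co\"incide avec celui du texte.
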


\begin{Rem}
Modulo $p^n$, on obtient de la m\^eme fa\c{c}on un $p^{58r}$-quasi-isomorphisme.
\end{Rem}

Le morphisme $R_{\Sigma, \Lambda}^{PD} \to \AAC(R_{\Sigma, \Lambda, \infty})$ (\cite[(5.38.1)]{CK17}) induit un morphisme : 
\begin{equation}
\label{galois 1}
 R \Gamma( \Gamma_{\Sigma, \Lambda}, R_{\Sigma, \Lambda}^{[u,v]}(r)) \to R \Gamma( \Gamma_{\Sigma, \Lambda}, A^{[u,v]}_{R_{\Sigma, \Lambda, \infty}}(r)). 
 \end{equation}
On consid\`ere ensuite la fl\`eche 
\begin{equation}
\label{galois 11}
R \Gamma( \Gamma_{\Sigma, \Lambda}, A^{[u,v]}_{R_{\Sigma, \Lambda, \infty}}(r)) \to R \Gamma_{\text{pro\'et}}(R, \A^{[u,v]}_{\overline{R}}(r))
 \end{equation}
induite par le morphisme de bord associ\'e au torseur (voir \cite[01GY]{StacksProject}, \cite[(3.15.1)]{CK17}) :
\[ \Spf(R_{\Sigma, \Lambda, \infty}) \to \Spf(R) \]
et par le lemme $K(\pi,1)$ de Scholze, le terme de droite dans \eqref{galois 11} est calcul\'e par la cohomologie de Galois $R \Gamma(G_R, \A^{[u,v]}_{\overline{R}}(r))$.

On rappelle qu'on a \'egalement un quasi-isomorphisme :
\begin{equation}
\label{galois 2}
 R \Gamma ( G_{R}, \Z_p(r)) \cong [ R \Gamma_{\text{cont}}( G_{R}, F^r \A^{[u,v]}_{\overline{R}}) \xrightarrow{p^r - \varphi} R \Gamma( G_{R}, \A^{[u,v]}_{\overline{R}})].
\end{equation}

En composant les morphismes pr\'ec\'edents, on obtient une application : 
\[ \alpha_{r, \Sigma, \Lambda} : \tau_{\le r} \Kos( \varphi, \partial, F^r R_{\Sigma, \Lambda}^{PD} ) \to \tau_{\le r} R \Gamma( G_{R}, \Z_p(r)).\]

\begin{Pro}
\label{isom sigma}
Il existe une constante $N$ ind\'ependante de $R$, $\Sigma$ et $\Lambda$ telle que l'application $\alpha_{r, \Sigma, \Lambda}$ ci-dessus est un $p^{Nr}$-quasi-isomorphisme. 
\end{Pro}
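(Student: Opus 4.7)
Le plan consiste \`a transposer la d\'emonstration du th\'eor\`eme \ref{Qis syn Galois} au cadre des coordonn\'ees g\'en\'eralis\'ees $(\Sigma, \Lambda)$. On d\'ecompose $\alpha_{r, \Sigma, \Lambda}$ comme la composition des trois \'etapes :
\begin{align*}
\tau_{\le r} \Kos(\varphi, \partial, F^r R_{\Sigma, \Lambda}^{PD}) & \xrightarrow{(1)} \tau_{\le r} \Kos(\varphi, \partial, F^r R_{\Sigma, \Lambda}^{[u,v]}) \\
 & \xrightarrow{(2)} \tau_{\le r} \Kos(\varphi, \Gamma_{\Sigma, \Lambda}, R_{\Sigma, \Lambda}^{[u,v]}(r)) \xrightarrow{(3)} \tau_{\le r} R\Gamma(G_R, \Z_p(r))
\end{align*}
et l'on montre que chaque \'etape est un $p^{Nr}$-quasi-isomorphisme avec une constante $N$ ind\'ependante de $R$ et de $(\Sigma, \Lambda)$.

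Pour l'\'etape (1), on adapte les arguments de la section \textsection4 : la construction des anneaux $R^{[u]}_{\Sigma, \Lambda}$, $R^{[u,v]}_{\Sigma, \Lambda}$ et $R^{(0,v]+}_{\Sigma, \Lambda}$ par produit tensoriel compl\'et\'e \`a partir de $R^{PD}_{\Sigma, \Lambda}$ est parall\`ele \`a celle de $R^{[u]}$, $R^{[u,v]}$, $R^{(0,v]+}$ \`a partir de $R_{\cris}^+$, et les arguments cl\'es (lemme \ref{OmegaRcrisRu} pour le passage $R^{PD}_{\Sigma, \Lambda} \to R^{[u]}_{\Sigma, \Lambda}$, et lemme \ref{Ru to Ruv} pour $R^{[u]}_{\Sigma, \Lambda} \to R^{[u,v]}_{\Sigma, \Lambda}$) se transposent sans modification : les constantes $p^{5r}$, $p^{2r}$, $p^{12r}$ proviennent exclusivement des suites exactes universelles pour $\A^{[u]}$ et $\A^{[u,v]}$ et de la surjectivit\'e de $\psi-1$ sur $\A^{(0,v]+}$, toutes ind\'ependantes des donn\'ees $(\Sigma, \Lambda, R)$. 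Pour l'\'etape (2), le r\'esultat est exactement le quasi-isomorphisme \eqref{diff to lie Sigma} d\'ej\`a \'enonc\'e, qui fournit un $p^{30r}$-quasi-isomorphisme via la composition des applications $t^{\bullet}$, $t^r$ et $\beta$ suivant le sch\'ema des diagrammes \eqref{DNote}.

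L'\'etape (3) se d\'ecompose \`a son tour en trois sous-\'etapes. D'abord (3a) : le plongement $R^{[u,v]}_{\Sigma, \Lambda} \hookrightarrow \A^{[u,v]}_{R_{\Sigma, \Lambda, \infty}}$ induit, via le morphisme \eqref{galois 1}, un quasi-isomorphisme $R\Gamma(\Gamma_{\Sigma, \Lambda}, R^{[u,v]}_{\Sigma, \Lambda}(r)) \xrightarrow{\sim} R\Gamma(\Gamma_{\Sigma, \Lambda}, \A^{[u,v]}_{R_{\Sigma, \Lambda, \infty}}(r))$ par adaptation des arguments de la proposition \ref{Ar to Av}, en utilisant que $\varphi$ est topologiquement nilpotent sur le conoyau. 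Ensuite (3b) : le morphisme de bord \eqref{galois 11} associ\'e au $\Gamma_{\Sigma, \Lambda}$-torseur pro-\'etale $\Spf(R_{\Sigma, \Lambda, \infty}) \to \Spf(R)$ combin\'e au lemme $K(\pi, 1)$ de Scholze donne un quasi-isomorphisme avec $R\Gamma(G_R, \A^{[u,v]}_{\overline{R}}(r))$. Enfin (3c) : la suite $p^{6r}$-exacte \eqref{suite exacte Auv 3} fournit le $p^{6r}$-quasi-isomorphisme final vers $R\Gamma(G_R, \Z_p(r))$.

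L'obstacle principal se situe dans la sous-\'etape (3b) : il faut g\'en\'eraliser la proposition \ref{muH} au contexte $(\Sigma, \Lambda)$, c'est-\`a-dire \'etablir l'acyclicit\'e sup\'erieure de $H^i(H_{R, \Sigma, \Lambda}, \A^{[u,v]}_{\overline{R}})$ pour $i \ge 1$, o\`u $H_{R, \Sigma, \Lambda} := \ker(G_R \to \Gamma_{\Sigma, \Lambda})$, et compl\'eter par un argument de d\'ecompl\'etion analogue \`a la proposition \ref{muinfini}. Cette acyclicit\'e repose sur le fait que $R_{\Sigma, \Lambda, \infty}$ contient suffisamment de racines $p^n$-i\`emes des variables $X_{\sigma}$ (pour $\sigma \in \Sigma$) et $X_{\lambda, i}$ (pour chaque carte $\lambda \in \Lambda$, avec $i \in \{1, \dots, d\}$) pour fournir, apr\`es compl\'etion, un recouvrement perfecto\"ide de $R$ dont l'extension maximale \'etale correspond \`a $\overline{R}$ -- c'est l'analogue de la proposition \ref{max etale} dans ce cadre \'elargi. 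Les constantes pertinentes provenant uniquement des suites exactes universelles associ\'ees \`a $\A_{\overline{R}}$, $\A^{[u,v]}_{\overline{R}}$ et $\A^{(0,v]+}_{\overline{R}}$ (ind\'ependantes de $R$ et de $(\Sigma, \Lambda)$), la constante finale $N$ est bien universelle.
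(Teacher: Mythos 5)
Votre strat\'egie --- refaire pas \`a pas, pour les anneaux $(\Sigma,\Lambda)$, toute la cha\^ine de quasi-isomorphismes du th\'eor\`eme \ref{Qis syn Galois} --- bute sur un point pr\'ecis : l'\'etape (3a). Dans le cas d'une seule carte, le passage de $R^{[u,v]}$ \`a $\A_R^{[u,v]}$ est un \emph{isomorphisme d'anneaux} (le plongement $X_i \mapsto [X_i^{\flat}]$), et c'est ce qui rend les propositions \ref{muH}, \ref{muinfini} et \ref{Ar to Av} applicables. Ici, $R^{[u,v]}_{\Sigma,\Lambda} = \A^{[u,v]}\widehat{\otimes}_{\AAC} R^{PD}_{\Sigma,\Lambda}$ est construit \`a partir d'une log-PD-enveloppe qui contient une multitude de variables \`a puissances divis\'ees suppl\'ementaires (provenant des coordonn\'ees de $\Sigma$ et des cartes de $\Lambda$ autres que celle qu'on fixe) ; le morphisme \eqref{galois 1} vers la cohomologie de $\A^{[u,v]}_{R_{\Sigma,\Lambda,\infty}}$ est loin de provenir d'un isomorphisme d'anneaux, et son c\^one n'est pas contr\^ol\'e par la nilpotence topologique de $\varphi$ : l'argument de la proposition \ref{Ar to Av} compare deux anneaux de convergence au-dessus du m\^eme anneau de base, pas une PD-enveloppe \`a un anneau de p\'eriodes perfecto\"ide. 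Tuer ces variables suppl\'ementaires en cohomologie de groupe exigerait un lemme de Poincar\'e du type \ref{Poincare1}, que vous n'invoquez pas. De fait, le texte souligne explicitement (preuve du lemme \ref{egalSigmaLambda}) que certaines fl\`eches interm\'ediaires de la compos\'ee $\alpha_{r,\Sigma,\Lambda}$ ne sont \emph{a priori plus} des quasi-isomorphismes dans ce cadre : seule la compos\'ee l'est. La m\^eme r\'eserve vaut pour votre \'etape (1), o\`u les lemmes de la section 4 utilisent la d\'ecomposition explicite des \'el\'ements de $R^{[u]}$ via le relev\'e \'etale $R_{\inf}^+$ et l'op\'erateur $\psi$ d\'efini sur les mon\^omes, structure que $R^{PD}_{\Sigma,\Lambda}$ ne poss\`ede pas telle quelle.

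La preuve du texte contourne enti\`erement ces difficult\'es par un argument de r\'eduction : on fixe $\lambda \in \Lambda$, on rel\`eve (par \'etalit\'e, l'id\'eal de $R_{\Sigma,\Lambda,n}^{\rm PD} \to R/p$ \'etant localement nilpotent) le morphisme $R^+_{\cris,\lambda,\square} \to R^{\rm PD}_{\Sigma,\Lambda}$ en un morphisme $R^+_{\cris,\lambda} \to R^{\rm PD}_{\Sigma,\Lambda}$ compatible avec les applications vers $R\Gamma(G_R,\Z_p(r))$. Comme $\Kos(\varphi,\partial,F^r R^+_{\cris,\lambda})$ et $\Kos(\varphi,\partial,F^r R^{\rm PD}_{\Sigma,\Lambda})$ calculent tous deux $R\Gamma_{\syn}(R,r)$ et que $\alpha_{r,\lambda}$ est un $p^{Nr}$-quasi-isomorphisme par le th\'eor\`eme \ref{Qis syn Galois}, la compos\'ee $\alpha_{r,\Sigma,\Lambda}$ l'est aussi, avec la m\^eme constante. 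Si vous tenez \`a l'approche directe, il faudrait au minimum \'etablir les analogues des lemmes de Poincar\'e pour les anneaux de type $\E^{PD}$ et $\E^{[u,v]}$ relatifs aux coordonn\'ees $(\Sigma,\Lambda)$ ; la r\'eduction \`a une carte est nettement plus \'economique.
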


\begin{proof}
On fixe un $\lambda$ dans $\Lambda$. On note \[R_{\cris, \lambda, \square}^+ := \AAC \{ X_{\lambda,1}, \dots, X_{\lambda,d}, \frac{1}{X_{\lambda,1} \cdots X_{\lambda, a_{\lambda}}}, \frac{[\varpi^{\flat}]}{X_{\lambda, a_{\lambda}+1} \cdots X_{\lambda , d}} \} \] et $R_{\cris, \lambda, \square}^+ \to R_{\cris, \lambda}^{+}$ un relev\'e \'etale de $R_{\lambda, \square} \to R$. On construit les anneaux $R_{\lambda}^{[u,v]}$ et $\A_{R_{\infty}, \lambda}^{[u,v]}$ associ\'es et on a alors un $p^{Nr}$-quasi-isomorphisme (par ce qui a \'et\'e fait plus haut) :  
\[ \alpha_{r, \lambda} : \tau_{\le r}  \Kos( \varphi, \partial, F^r R_{\cris, \lambda}^+) \xrightarrow{\sim} \tau_{\le r} R \Gamma(G_R, \Z_p(r)). \]
On a un diagramme commutatif :
\[ \xymatrix{
& \Spf(R_{\Sigma, \Lambda}^{\rm PD}) \ar[rd] &  \\
\Spec(R/p) \ar @{^{(}->}[ru] \ar[rr] \ar[d] & & \Spf(\A_{\cris}(R_{\Sigma, \Lambda}^{\square})) \ar[d] \\
\Spf(R_{\cris, \lambda}^+) \ar[rr] & & \Spf(R_{\cris, \lambda, \square}^+)  
}.
\]
Pour tout $n$, l'id\'eal associ\'e au morphisme $R_{\Sigma, \Lambda, n}^{\rm PD} \to R/p$ est localement nilpotent. Comme $R_{\cris,\lambda, \square}^+ \to R_{\cris, \lambda}^+ $ est \'etale, on peut relever la fl\`eche $R_{\cris, \lambda, \square}^+ \to R_{\Sigma, \Lambda,n}^{\rm PD}$ en une unique application $R_{\cris, \lambda}^+ \to R_{\Sigma, \Lambda,n}^{\rm PD}$ qui fait commuter le diagramme :
\[ \xymatrix{
R/p  & R_{\cris, \lambda}^+ \ar[l] \ar@{-->}[ld] \\
R_{\Sigma, \Lambda,n}^{\rm PD} \ar[u]  & R_{\cris, \lambda, \square}^+ \ar[l] \ar[u]. 
} \]
Comme ces applications sont compatibles avec les projections $R_{\Sigma, \Lambda, n}^{\rm PD} \to R_{\Sigma, \Lambda, n-1}^{\rm PD}$, on obtient un morphisme $R_{\cris, \lambda}^+ \to R_{\Sigma, \Lambda}^{\rm PD}$ tel que le diagramme 
\[ \xymatrix{
R/p  & R_{\cris, \lambda}^+ \ar[l] \ar@{-->}[ld] \\
R_{\Sigma, \Lambda}^{\rm PD} \ar[u]  & R_{\cris, \lambda, \square}^+ \ar[l] \ar[u]. 
} \]
est commutatif.  
On a alors un diagramme commutatif : 
\[ \xymatrix{ 
R_{\cris, \lambda}^+ \ar[r] \ar[d] &  R_{\lambda}^{[u,v]} \ar[r] \ar[d] & \A_{R_{\infty}, \lambda}^{[u,v]}  \ar[d]  \\ 
R_{\Sigma, \Lambda}^{\rm PD} \ar[r] &  R_{\Sigma, \Lambda}^{[u,v]} \ar[r] & \A_{R_{\Sigma, \Lambda, \infty}}^{[u,v]}  & 
} \]
et le r\'esultat se d\'eduit du diagramme commutatif suivant : 
\[
\xymatrix{
\tau_{\le r} R\Gamma_{\syn}(R,r) \ar@{=}[d] & \ar[l]_-{\sim} \tau_{\le r } \Kos(\varphi, \partial, F^r R_{\cris, \lambda}^+) \ar[r]^-{\alpha_{r, \lambda}}_-{\sim} \ar[d] &  \tau_{\le r}R \Gamma( G_R , \Z_p(r))  \ar@{=}[d] \\
\tau_{\le r} R\Gamma_{\syn}(R,r) & \ar[l]_-{\sim} \tau_{\le r} \Kos(\varphi, \partial, F^r R_{\Sigma, \Lambda}^{PD}) \ar[r]^-{\alpha_{r,\Sigma, \Lambda}} & \tau_{\le r} R \Gamma( G_{R} , \Z_p(r)). 
 }
\]

\end{proof}

\subsection{Preuve du th\'eor\`eme \ref{comparaison globale}}

On rappelle qu'on cherche \`a montrer le th\'eor\`eme : 

\begin{The}
Il existe un morphisme de p\'eriode : 
\[ \alpha^0_{r,n} : \tau_{\le r} R \Gamma_{ {\rm syn}}( \XF_{\OC_{C}},r)_n \to \tau_{\le r} R \Gamma_{\emph{\text{\'et}}}(\XF_C, \Z/p^n\Z(r)'). \]
De plus, ce morphisme est un $p^{Nr}$-isomorphisme pour une constante $N$ qui ne d\'epend que de $K$, $p$ et $r$. 
\end{The}

\begin{proof}
Soit $\mathfrak{U}^{\bullet} \to \XF$ un hyper-recouvrement affine. On note $R^k$ l'anneau tel que $\mathfrak{U}_{\OC_C}^k := \Spf(R^k)$.
 
Pour chaque $k$, on consid\`ere $(\Sigma_k, \Lambda_k)$ tel qu'on ait une immersion ferm\'ee 
\[ \mathfrak{U}_{\OC_C}^k \hookrightarrow \Spf(R_{\Sigma_k}^{\square}) \times_{\OC_C} \prod_{ \lambda_k \in \Lambda_k} \Spf(R_{\lambda_k}^{\square}) \]
 comme en \eqref{im fermee}. On note ensuite $\Spf(R_{\Sigma_k, \Lambda_k}^{PD})$ la log-PD-enveloppe compl\'et\'ee de $\Spf(R^k) \to \Spf(\AAC(R_{\Sigma_k, \Lambda_k}^{\square}))$. On note :
\[ \Syn(\mathfrak{U}_{\OC_C}^k, \AAC(R_{\Sigma_k, \Lambda_k}^{\square}),r)_n:= [ F^r \Omega^{\bullet}_{R_{\Sigma_k, \Lambda_k}^{PD}} \xrightarrow{p^r - \varphi}  \Omega^{\bullet}_{R_{\Sigma_k, \Lambda_k}^{PD}}]_n. \] 
L'application canonique $\Syn(\mathfrak{U}_{\OC_C}^k, \AAC(R_{\Sigma_k, \Lambda_k}^{\square}),r)_n \to R \Gamma_{\mathrm{syn}}(\mathfrak{U}_{\OC_C}^k, r)_n$ est alors un quasi-isomorphisme. 
De plus, on a vu que l'ensemble des $R^{\square}_{\Sigma_k, \Lambda_k}$ pour $(\Sigma_k, \Lambda_k)$ comme en \eqref{im fermee} forme un syst\`eme inductif filtrant. Le syst\`eme des $\Syn(\mathfrak{U}_{\OC_C}^k, \AAC(R_{\Sigma_k, \Lambda_k}^{\square}),r)_n$ est donc lui-m\^eme inductif filtrant. 
On en d\'eduit un quasi-isomorphisme : 
\begin{equation}
\label{qis Syn}
\varinjlim_{(\Sigma_k, \Lambda_k)}  \Syn(\mathfrak{U}_{\OC_C}^k, \AAC(R_{\Sigma_k, \Lambda_k}^{\square}),r)_n \xrightarrow{\sim} R \Gamma_{\syn}(\mathfrak{U}_{\OC_C}^{k}, r)_n.
\end{equation}

Mais pour chaque $(\Sigma_k, \Lambda_k)$, on a un quasi-isomorphisme $\Syn(\mathfrak{U}_{\OC_C}^k, \AAC(R_{\Sigma_k, \Lambda_k}^{\square}),r)_n \cong \Kos(\varphi, \partial, R_{\Sigma_k, \Lambda_k}^{PD})_n$. On v\'erifie que ce morphisme est fonctoriel. En effet, 
pour $R \to R'$ une application \'etale, soit $(\Sigma, \Lambda)$ associ\'e \`a $R$ et $(\Sigma', \Lambda')$ associ\'e \`a $R'$. Alors si on note $\widetilde{\Sigma}= \Sigma \cup \Sigma'$ et $\widetilde{\Lambda}= \Lambda \cup \Lambda'$, le morphisme $R_{\Sigma, \Lambda}^{\square} \to R_{\widetilde{\Sigma}, \widetilde{\Lambda}}^{\square}$ d\'ecrit plus haut induit un diagramme commutatif : 
\[ \xymatrix{ 
\Omega^{\bullet}_{R_{\Sigma, \Lambda}^{\rm PD}} \ar[d] \ar[r]^-{\sim} & \Kos(\partial, R_{\Sigma, \Lambda}^{PD}) \ar[d] \\
\Omega^{\bullet}_{R_{\widetilde{\Sigma}, \widetilde{\Lambda}}^{PD}} \ar[r]^-{\sim} & \Kos(\partial, R_{\widetilde{\Sigma}, \widetilde{\Lambda}}^{PD}) }. \]  
Comme les syst\`emes sont filtrants, on a pour tout $i$, 
\[ H^i(\varinjlim_{(\Sigma_k, \Lambda_k)} \Kos(\varphi, \partial, R_{\Sigma_k, \Lambda_k}^{PD})_n) \xrightarrow{\sim} \varinjlim_{(\Sigma_k, \Lambda_k)} H^i(\Kos(\varphi, \partial, R_{\Sigma_k, \Lambda_k}^{PD})_n) \]
 et de m\^eme pour les complexes $\Syn(\mathfrak{U}_{\OC_C}^k, \AAC(R_{\Sigma_k, \Lambda_k}^{\square}),r)_n$. On en d\'eduit un quasi-isomorphisme fonctoriel :  
\[ \varinjlim_{(\Sigma_k, \Lambda_k)} \Syn(\mathfrak{U}^k, \AAC(R_{\Sigma_k, \Lambda_k}^{\square}),r)_n \xrightarrow{\sim}  \varinjlim_{(\Sigma_k, \Lambda_k)} \Kos(\varphi, \partial, R_{\Sigma_k, \Lambda_k}^{PD})_n. \]
Par ailleurs, on a construit un $p^{Nr}$-quasi-isomorphisme $\alpha^k_{r, \Sigma, \Lambda}$ fonctoriel de $\tau_{\le r} \Kos(\varphi, \partial, R_{\Sigma_k, \Lambda_k}^{PD})_n$ dans $\tau_{\le r} R \Gamma(G_{R^k}, \Z/p^n\Z(r)')$, qui donne : 
\[  \varinjlim_{(\Sigma_k, \Lambda_k)} \tau_{\le r}\Kos(\varphi, \partial, R_{\Sigma_k, \Lambda_k}^{PD})_n \xrightarrow{\sim} \tau_{\le r}R \Gamma(G_{R^k}, \Z/p^n\Z(r)'). \]
On obtient un quasi-isomorphisme fonctoriel : 
\begin{equation}
\label{qis Cont}
\varinjlim_{(\Sigma_k, \Lambda_k)} \tau_{\le r} \Syn(\mathfrak{U}_{\OC_C}^k, \AAC(R_{\Sigma_k, \Lambda_k}^{\square}),r)_n \xrightarrow{\sim} \tau_{\le r} R \Gamma(G_{R^k}, \Z/p^n\Z(r)').
\end{equation}
En combinant les deux quasi-isomorphismes pr\'ec\'edents, on a :
\[\hspace{-0.5cm}
\xymatrix{
\tau_{\le r} R \Gamma_{\syn}(\mathfrak{U}_{\OC_C}^{k}, r)_n & \ar[l]_-{\sim}^-{\eqref{qis Syn}} \varinjlim_{(\Sigma_k, \Lambda_k)} \tau_{\le r} \Syn(\mathfrak{U}_{\OC_C}^k, \AAC(R_{\Sigma_k, \Lambda_k}^{\square}),r)_n \ar[r]^-{\sim}_-{\eqref{qis Cont}} &  \tau_{\le r} R \Gamma(G_{R^k}, \Z/p^n\Z(r)') 
}
\]
et ce quasi-isomorphisme est fonctoriel. 
On en d\'eduit un quasi-isomorphisme  
\[
\tau_{\le r} R \Gamma_{\syn}(\mathfrak{U}_{\OC_C}^{\bullet}, r)_n \xrightarrow{\sim}  \tau_{\le r} R \Gamma(G_{R^{\bullet}}, \Z/p^n\Z(r)'). 
\]
On d\'efinit
$\alpha^0_{r,n} : \tau_{\le r} R \Gamma_{\syn}(\XF_{\OC_{C}},r)_n \to \tau_{\le r} R \Gamma_{\text{\'et}}(\XF_{C}, \Z/p^n\Z(r)')$ comme la compos\'ee :
\begin{equation}
\begin{split}
 \tau_{\le r} R \Gamma_{\syn}(\XF_{\OC_{C}},r)_n \to \hocolim_{\rm HRF} \tau_{\le r} R\Gamma_{\syn}(\mathfrak{U}_{\OC_C}^{\bullet},r)_n \to \hocolim_{\rm HRF} \tau_{\le r} R \Gamma(G_{R^{\bullet}}, \Z/p^n\Z(r)')  \\ 
 \to \tau_{\le r} \hocolim_{\rm HRF} R\Gamma_{\text{\'et}}(\mathfrak{U}_C^{\bullet}, \Z/p^n \Z(r)') \leftarrow \tau_{\le r} R \Gamma_{\text{\'et}}(\XF_{C}, \Z/p^n\Z(r)').
 \end{split}
 \end{equation}
On va montrer que c'est un $p^{Nr}$-quasi-isomorphisme. La premi\`ere et derni\`ere fl\`eches sont des quasi-isomorphismes par descente cohomologique (voir \eqref{cohodescente} pour la cohomologie syntomique ; le m\^eme argument donne le r\'esultat pour la cohomologie \'etale).  
 La deuxi\`eme fl\`eche est un $p^{Nr}$-quasi-isomorphisme par ce qu'on vient de faire. Il reste \`a voir que, pour tout $k$, on a un quasi-isomorphisme :
 $ R\Gamma(G_{R^k}, \Z/p^n \Z(r)') \xrightarrow{\sim} R\Gamma_{\text{\'et}}(\Sp(R^k[\frac{1}{p}]), \Z/p^n\Z(r)')$. Mais cela d\'ecoule du fait que $\Sp(R^k[\frac{1}{p}])$ est $K( \pi, 1)$ pour $\Z/p^n\Z$ (voir remarque \ref{K(pi,1)}).
\end{proof}

On peut maintenant consid\'erer les faisceaux $\mathcal{H}^k(\mathcal{S}_n(r)_{\overline{\XF}})$ et $\overline{i}^{*}R^k \overline{j}_{*} \Z/p^n(r)'_{\XF_{C, \tr}}$ associ\'es aux pr\'efaisceaux $U \mapsto H^k(U, \mathcal{S}_n(r)_{\overline{\XF}})$ et $U \mapsto H^k(U, \overline{i}^{*}R \overline{j}_{*} \Z/p^n(r)'_{\XF_{C, \tr}} )$. En utilisant l'\'equivalence de cat\'egorie entre $\mathbf{Fsc}((\overline{Y})_{\text{\'et}}, \mathcal{D}(\Z/p^n))$ et $\mathcal{D}^+((\overline{Y})_{\text{\'et}}, \Z/p^n)$, le morphisme ci-dessus peut \^etre vu comme un morphisme dans $\mathcal{D}^+((\overline{Y})_{\text{\'et}}, \Z/p^n)$ et on en d\'eduit le th\'eor\`eme : 

\begin{The}
Pour tout $0 \le k \le r$, il existe un $p^{N}$-isomorphisme 
\[ \alpha^{0}_{r,n} : \mathcal{H}^k(\mathcal{S}_n(r)_{\overline{\XF}}) \to \overline{i}^{*}R^k \overline{j}_{*} \Z/p^n(r)'_{\XF_{C, \tr}} \]
o\`u $N$ est un entier qui d\'epend de $p$ et de $r$, mais pas de $\XF$ ni de $n$. 
\end{The}

  \subsection{Conjecture semi-stable}

On suppose toujours que $\XF$ est propre \`a r\'eduction semi-stable sur $\OC_K$, sans diviseur \`a l'infini. On rappelle qu'on a la cohomologie de Hyodo-Kato $R \Gamma_{\HK}(\XF):= R \Gamma_{\rm{cris}}( \XF_k / W(k)^0 )_{\Q}$ et 
si $D$ est un $(\varphi, N)$-module filtr\'e, $D\{ r\}$ d\'esigne le module $D$ muni du Frobenius $p^r \varphi$ et $D_K$ est muni de la filtration $(F^{\bullet+r}D_K)$.
Le but de cette partie est de prouver la conjecture semi-stable de Fontaine-Jannsen : 

\begin{The}
\label{Fontaine-Jannsen1}
Soit $\XF$ un sch\'ema formel propre semi-stable sur $\OC_K$. On a un $\B_{\rm{st}}$-lin\'eaire, Galois \'equivariant isomorphisme : 
\begin{equation}
\label{Fontaine-Jannsen}
\tilde{\alpha}^0 : H^i_{\emph{\text{\'et}}}( \XF_{C}, \Q_p) \otimes_{\Q_p} \B_{\mathrm{st}} \xrightarrow{\sim} H^i_{\HK}( \XF) \otimes_{F} \B_{\mathrm{st}}.
\end{equation}
De plus, cet isomorphisme pr\'eserve l'action du Frobenius et celle de l'op\'erateur de monodromie et apr\`es tensorisation par $\B_{\mathrm{dR}}$, il induit un isomorphisme filtr\'e : 
\begin{equation}
H^i_{\emph{\text{\'et}}}( \XF_{C}, \Q_p) \otimes_{\Q_p} \B_{\mathrm{dR}} \xrightarrow{\sim} H^i_{\rm dR}( \XF_K) \otimes_{K} \B_{\mathrm{dR}}.
\end{equation}
\end{The}

Pour cela commen\c{c}ons par rappeler les lemmes suivants (voir \cite[Prop. 5.22]{CN2017} et \cite[Prop. 5.20]{CN2017}) :

\begin{Lem}
\label{exactecohosyn}
Il existe un quasi-isomorphisme naturel : 
\[ R \Gamma_{\mathrm{syn}}(\XF_{\OC_{C}}, r)_{\Q} \xrightarrow{\sim}   [[ R\Gamma_{\mathrm{HK}}(\XF) \otimes_F \B_{\mathrm{st}}^+]^{\varphi=p^r, N=0} \xrightarrow{\iota_{\rm{HK}}} (R\Gamma_{\mathrm{dR}}(\XF_K) \otimes_K \B_{\mathrm{dR}}^+)/F^r] \]
o\`u $\iota_{\rm{HK}}$ est induit par le quasi-isomorphisme de Hyodo-Kato : 
$\iota_{\mathrm{HK}} : R \Gamma_{\mathrm{HK}}(\XF) \otimes_F K \xrightarrow{\sim} R \Gamma_{\mathrm{dR}}(\XF_K).$
De plus, pour tout $i$, $H^i[ R\Gamma_{\mathrm{HK}}(\XF) \otimes_F \B_{\mathrm{st}}^+]^{\varphi=p^r, N=0} \cong (H^i_{\mathrm{HK}}(\XF) \otimes_F \B_{\mathrm{st}}^+)^{\varphi=p^r, N=0}$ et si $i < r$, la suite exacte longue associ\'ee induit une suite exacte courte :
\[ 0 \to H^i_{\mathrm{syn}}( \XF_{\OC_{C}},r)_{\Q} \to (H^i_{\mathrm{HK}}(\XF) \otimes_F \B_{\mathrm{st}}^+)^{\varphi=p^r, N=0} \to (H^i_{\mathrm{dR}}(\XF_K) \otimes_K \B_{\mathrm{dR}}^+)/F^r \to 0. \]
\end{Lem}

\begin{Lem}
\label{BCadm}
Le $(\varphi, N)$-module filtr\'e $(H^i_{\rm HK}(\XF),H^i_{\rm dR}(\XF_K), \iota_{\rm HK})$ est faiblement admissible et pour $i \le r$ et on a un isomorphisme (Frobenius-\'equivariant) :
\[H^i_{\rm syn}(\XF_{\OC_C},r)_{\Q} \otimes_{\Q_p} \Bst \xrightarrow{\sim} H^i_{\rm HK}(\XF) \otimes_F \Bst \{-r\}. \] 
\end{Lem}

\begin{Rem}
Le r\'esultat du lemme ci-dessus reste vrai pour tout ensemble de triplets $D^i=(D^i_{\rm st}, D^i_{\rm dR}, \iota^i)$ o\`u, pour tout $i$, 
\begin{enumerate}
\item $D^i_{\rm st}$ est un $F$-espace vectoriel de dimension finie muni d'un endomorphisme de Frobenius $\varphi$ bijectif semi-lin\'eaire et d'un op\'erateur de monodromie $N$ tels que $N \varphi = p \varphi N$, 
\item $D^i_{\rm dR}$ est un $K$-espace vectoriel de dimension finie muni d'une filtration d\'ecroissante, s\'epar\'ee et exhaustive,
\item $\iota^i : D^i_{\rm st} \to D^i_{\rm dR}$ est $F$-lin\'eaire,
\end{enumerate}
tels que $F^{i+1} D_{\rm dR}^i=0$ et tels qu'on ait une suite exacte longue : 
\[ \dots \to H^i(r) \to X^r_{\rm st}(D^i) \to X^r_{\rm dR}(D^i) \to H^{i+1}(r) \to \dots \]
avec $X^r_{\rm st}(D^i)=(t^{-r} \B_{\rm st}^{+} \otimes_F D_{\rm st})^{\varphi=1, N=0}$, $X^r_{\rm dR}(D^i)=(t^{-r} \B_{\rm dR}^{+} \otimes_K D_{\rm dR})/F^0$ et $\dim_{\Q_p} H^i(r)$ est finie pour $i \le r$. La preuve utilise la th\'eorie des espaces de Banach-Colmez (voir \cite{Col2002} et \cite[\textsection 5.2.2]{CN2017}).
\end{Rem} 

\begin{proof}[Preuve du th\'eor\`eme \ref{Fontaine-Jannsen1}]
Par le corollaire \ref{alpha0}, pour $i\le r$, on a un isomorphisme $\alpha^0_{r} : H^i_{ {\rm syn}}( \XF_{\OC_{C}},r)_{\Q} \xrightarrow{\sim} H^i_{\text{\'et}}(\XF_C, \Q_p(r))$ et en utilisant le lemme \ref{exactecohosyn}, on obtient un morphisme :
\[ H^i_{\text{\'et}}(\XF_C, \Q_p(r))  \xleftarrow{\sim} H^i_{ {\rm syn}}( \XF_{\OC_{C}},r)_{\Q} \to (H^i_{\rm HK}(\XF) \otimes_F \Bst)^{\varphi=p^r, N=0}. \]
Le morphisme de p\'eriode $\tilde{\alpha}^0$ est alors donn\'e par : 
\begin{equation}
\label{tildealpha0}
\begin{split}
 H^i_{\text{\'et}}(\XF_C, \Q_p) \otimes_{\Q_p} \Bst  \xleftarrow[\sim]{t^r} H^i_{\text{\'et}}(\XF_C, \Q_p(r)) \otimes_{\Q_p} \Bst \{r \} \xleftarrow[\sim]{\alpha^0_{r}} H^i_{ {\rm syn}}( \XF_{\OC_{C}},r)_{\Q} \otimes_{\Q_p} \B_{\rm st} \{r \} \\   \to H^i_{\rm HK}(\XF) \otimes_F \Bst
  \end{split}
  \end{equation}
et la fl\`eche $H^i_{ {\rm syn}}( \XF_{\OC_{C}},r)_{\Q} \otimes_{\Q_p} \B_{\rm st} \{r\} \to H^i_{\rm HK}(\XF) \otimes_F \Bst$ est un isomorphisme par le lemme \ref{BCadm}. On obtient l'isomorphisme \eqref{Fontaine-Jannsen}. Enfin, par construction de $\alpha^0_r$, l'application ci-dessus est Galois-\'equivariante, elle est compatible avec l'action de $\varphi$ et de $N$ et avec la filtration apr\`es tensorisation par $\B_{\dR}$. 
\end{proof}

\section{Comparaison des morphismes de p\'eriodes}

On suppose que $\XF$ est un sch\'ema formel propre \`a r\'eduction semi-stable sur $\OC_K$, sans diviseur \`a l'infini. On a construit, dans la section pr\'ec\'edente, un isomorphisme : 
\begin{equation*}
\begin{split}
\tilde{\alpha}^0 & : H^i_{\text{\'et}}( \XF_{C}, \Q_p) \otimes_{\Q_p} \B_{\mathrm{st}} \xrightarrow{\sim} H^i_{\rm HK}( \XF) \otimes_{F} \B_{\mathrm{st}} \\
\tilde{\alpha}^0_{\rm dR} & : H^i_{\text{\'et}}( \XF_{C}, \Q_p) \otimes_{\Q_p} \B_{\mathrm{dR}} \xrightarrow{\sim} H^i_{\rm dR}( \XF_K) \otimes_{K} \B_{\mathrm{dR}}.
\end{split}
\end{equation*}
D'autres d\'emonstrations de la conjecture de Fontaine-Jannsen ont \'egalement \'et\'e donn\'ees par \v{C}esnavi\v{c}ius-Koshikawa (\cite{CK17}) et par Tsuji (\cite{Tsu99}). Le morphisme de p\'eriode utilis\'e par Tsuji est l'application de Fontaine-Messing, $\tilde{\alpha}^{\rm FM}$ d\'efinie ci-dessous. \v{C}esnavi\v{c}ius-Koshikawa d\'efinissent, eux, un morphisme de p\'eriode $\tilde{\alpha}^{\rm CK}$ en g\'en\'eralisant la construction de Bhatt-Morrow-Scholze (\cite{BMS16}) et dont on rappelle la construction plus loin.   
Le but de cette section est d'utiliser $\tilde{\alpha}^0$ pour montrer que ces deux morphismes de p\'eriode $\tilde{\alpha}^{\rm CK}$ et $\tilde{\alpha}^{\rm FM}$ co\"incident.

\subsection{Comparaison de $\tilde{\alpha}^{\rm{FM}}$ et $\tilde{\alpha}^{0}$ \label{section-compFM}}

On commence par comparer l'application de Fontaine-Messing avec le morphisme construit dans le chapitre pr\'ec\'edent. 

\begin{The}
\label{comparaison FM global}
Soit $\XF$ un sch\'ema formel propre semi-stable sur $\OC_K$. Les isomorphismes $\alpha^{\rm{FM}}_{r}$ et $\alpha^0_{r}$ de 
$H^i_{\text{\emph{{\'et}}}}( \XF_{C}, \Q_p(r))$ dans $H^i_{\mathrm{syn}}(\XF_{\OC_{C}}, r)_{\Q}$ sont \'egaux. 
En particulier, les morphismes de p\'eriodes 
\[ \tilde{\alpha}^{\rm{FM}}, \tilde{\alpha}^{0} : H^i_{\text{\emph{\'et}}}( \XF_{C}, \Q_p) \otimes_{\Q_p} \B_{\mathrm{st}} \to H^i_{\rm HK}( \XF) \otimes_{F} \B_{\mathrm{st}} \]
\[ \tilde{\alpha}_{\dR}^{\rm{FM}}, \tilde{\alpha}_{\dR}^{0} :H^i_{\text{\emph{\'et}}}( \XF_{C}, \Q_p) \otimes_{\Q_p} \B_{\dR} \to H^i_{\rm HK}( \XF) \otimes_{F} \B_{\dR} \]
sont \'egaux. 
\end{The}

Il suffit de montrer que les applications enti\`eres sont $p^{N}$-\'egales pour une certaine constante $N$. La proposition \ref{comparaison FM} donne l'\'egalit\'e locale. Pour obtenir l'\'egalit\'e entre les morphismes globaux, on va commencer par construire des applications $\alpha_{r, \Sigma, \Lambda}^{\rm FM}$ et les comparer au $\alpha^0_{r, \Sigma, \Lambda}$ d\'efinis plus haut.

On d\'efinit les anneaux suivants : 
\begin{itemize}
\item $\E_{R_{\Sigma, \Lambda}}^{PD}= S A$ (resp. $\E_{R_{\Sigma, \Lambda}}^{[u,v]}$) pour $S=A=R_{\Sigma, \Lambda}^{PD}$ (resp. $R_{\Sigma, \Lambda}^{[u,v]}$),
\item $\E_{R_{\Sigma,\Lambda, \infty}}^{PD}= S A$ (resp. $\E_{R_{\Sigma, \infty}}^{[u,v]}$) pour $S=R_{\Sigma, \Lambda}^{PD}$ (resp. $R_{\Sigma, \Lambda}^{[u,v]}$) et $A=\AAC(R_{\Sigma, \Lambda, \infty})$ (resp. $\A_{R_{\Sigma, \Lambda, \infty}}^{[u,v]}$),
\item $\E_{\overline{R}_{\Sigma, \Lambda}}^{PD}= S A$ (resp. $\E_{\overline{R}_{\Sigma, \Lambda}}^{[u,v]}$) pour $S=R_{\Sigma, \Lambda}^{PD}$  (resp. $R_{\Sigma, \Lambda}^{[u,v]}$) et $A= \AAC( \overline{R})$ (resp. $\A_{\overline{R}}^{[u,v]}$). 
\end{itemize}


On ne dispose plus ici des lemmes de Poincar\'e mais on peut montrer qu'on a un quasi-isomorphisme
\[ \tau_{\le r} \Kos(\varphi, \partial, F^r \E_{\overline{R}_{\Sigma, \Lambda}}^{PD}) \xleftarrow{\sim}  \tau_{\le r} \Kos(\varphi, \partial, F^r \AAC(\overline{R})) \]
en se ramenant au cas o\`u on ne travaille qu'avec une seule coordonn\'ees $\lambda \in \Lambda$. En effet, si on fixe un tel $\lambda$, l'application $R_{\cris, \lambda}^+ \to R_{\Sigma, \Lambda}^{PD}$ induit un diagramme commutatif : 
\[ \xymatrix{
\tau_{\le r} \Kos(\varphi, \partial, F^r \E_{\overline{R}_{\lambda}}^{PD})  \ar[d]^{\rotatebox{90}{$\sim$}} &\ar[l]^{\sim} \tau_{\le r} \Kos(\varphi, \partial, F^r \AAC(\overline{R})) \ar@{=}[d] \\
\tau_{\le r} \Kos(\varphi, \partial, F^r \E_{\overline{R}_{\Sigma, \Lambda}}^{PD})  & \ar[l] \tau_{\le r} \Kos(\varphi, \partial, F^r \AAC(\overline{R})) 
}
\] 
o\`u $\E_{\overline{R}_{\lambda}}^{PD}$ est l'anneau $SA$ pour $A=\AAC(\overline{R})$ et $S=R_{\cris, \lambda}^+$. La premi\`ere fl\`eche verticale est un quasi-isomorphisme car les complexes 
$\Kos(\varphi, \partial, F^r \E_{\overline{R}_{\lambda}}^{PD}) $ et $ \Kos(\varphi, \partial, F^r \E_{\overline{R}_{\Sigma, \Lambda}}^{PD})$ calculent tous les deux la cohomologie syntomique $\Syn(\overline{R},r)$. On en d\'eduit que la fl\`eche horizontale de la ligne du bas est un quasi-isomorphisme. 

On montre de la m\^eme fa\c{c}on qu'on a un quasi-isomorphisme : 
\[ \tau_{\le r} \Kos(\varphi, \partial, F^r \E_{\overline{R}_{\Sigma, \Lambda}}^{[u,v]}) \xleftarrow{\sim}  \tau_{\le r} \Kos(\varphi, \partial, F^r \A_{\overline{R}}^{[u,v]}).  \]

L'application de Fontaine-Messing $\alpha_{\Sigma, \Lambda}^{\rm FM}$ est donn\'ee par la compos\'ee : 
\begin{equation*}
\begin{split}
\tau_{\le r} \Kos(\varphi, \partial, F^rR_{\Sigma, \Lambda}^{PD}) \to \tau_{\le r} C(G_R, \Kos(\varphi, \partial, F^r \E_{\overline{R}_{\Sigma, \Lambda}}^{PD})) \xleftarrow{ \sim} \tau_{\le r}C(G_R, \Kos(\varphi, F^r \AAC(\overline{R}))) \\ \xleftarrow{\sim} \tau_{\le r} C(G_R, \Z_p(r)').
\end{split}
\end{equation*} 

On a le lemme suivant :  

\begin{Lem}
\label{egalSigmaLambda}
Il existe $N$ qui ne d\'epend ni de $R$, ni de $\Sigma$ et $\Lambda$ telle que l'application de Fontaine-Messing $\alpha_{r, \Sigma, \Lambda}^{\rm FM}$ est $p^{Nr}$-\'egale \`a l'application $\alpha^0_{r, \Sigma, \Lambda}$.
\end{Lem}

\begin{proof}
Le r\'esultat se d\'eduit d'un diagramme similaire \`a celui de la preuve de la proposition \ref{comparaison FM}. La diff\'erence principale est que les fl\`eches qui vont de la troisi\`eme vers la deuxi\`eme ligne sont maintenant des morphismes de bord. Il est aussi \`a noter que les fl\`eches horizontales qui vont de la troisi\`eme colonne vers la deuxi\`eme ne sont (a priori) plus des quasi-isomorphismes dans ce cas, \`a l'exception de celles de la premi\`ere, deuxi\`eme et derni\`ere ligne. 
\end{proof}

\begin{proof}[Preuve du th\'eor\`eme \ref{comparaison FM global}]
L'application globale \[\alpha_{r}^{\rm FM} : \tau_{\le r} R\Gamma_{\rm syn}(\XF_{\OC_C},r) \to \tau_{\le r} R \Gamma_{\text{\'et}}(\XF_C, \Z_p(r)') \] s'obtient de la m\^eme fa\c{c}on que dans la preuve du th\'eor\`eme \ref{comparaison globale} : elle est donn\'ee par la compos\'ee : 
\begin{equation}
\label{alphaFMtot}
\begin{split}
 \tau_{\le r} R \Gamma_{\text{syn}}(\XF_{\OC_{C}},r)_n \to \hocolim_{\rm HRF} \tau_{\le r} R\Gamma_{\text{syn}}(\mathfrak{U}_{\OC_C}^{\bullet},r) \to \hocolim_{\rm HRF} \tau_{\le r} R \Gamma(G_{R^{\bullet}}, \Z_p(r)')  \\ 
 \to \tau_{\le r} \hocolim_{\rm HRF} R\Gamma_{\text{\'et}}(\mathfrak{U}_C^{\bullet}, \Z_p(r)') \leftarrow \tau_{\le r} R \Gamma_{\text{\'et}}(\XF_{C}, \Z_p(r)').
 \end{split}
 \end{equation} 
o\`u le quasi-isomorphisme (fonctoriel) $\tau_{\le r} R\Gamma_{\rm{syn}}(\mathfrak{U}_{\OC_C}^{\bullet},r) \to \tau_{\le r} R \Gamma(G_{R^{\bullet}}, \Z_p(r)')$ pour $\mathfrak{U}^{\bullet}= \Spf(R^{\bullet}) \to \XF$ un hyper-recouvrement affine est induit par : 
\begin{equation}
\label{alphaFMKos}
\tau_{\le r} R \Gamma_{\rm{syn}}(\mathfrak{U}_{\OC_C}^{k}, r)  \xleftarrow[\eqref{qis Syn}]{\sim} \varinjlim_{(\Sigma_k, \Lambda_k)} \tau_{\le r} \Syn(\mathfrak{U}_{\OC_C}^k, \AAC(R_{\Sigma_k, \Lambda_k}^{\square}),r) \xrightarrow[\varinjlim \alpha_{r, \Sigma_k, \Lambda_k}^{\rm FM}]{\sim} \tau_{\le r} R \Gamma(G_{R^k}, \Z_p(r)')
\end{equation}
Le lemme \ref{egalSigmaLambda} donne la $p^{Nr}$-\'egalit\'e entre les morphismes $\varinjlim \alpha_{r, \Sigma_k, \Lambda_k}^{\rm FM}$ et $\varinjlim \alpha_{r, \Sigma_k, \Lambda_k}^{0}$ et donc entre les morphismes induits \eqref{alphaFMKos}. On en d\'eduit que $\alpha_{r}^{\rm FM} : \tau_{\le r} R\Gamma_{\rm syn}(\XF_{\OC_C},r) \to \tau_{\le r} R \Gamma_{\text{\'et}}(\XF_C, \Z_p(r)')$ et $\alpha_{r}^{0} : \tau_{\le r} R\Gamma_{\rm syn}(\XF_{\OC_C},r) \to \tau_{\le r} R \Gamma_{\text{\'et}}(\XF_C, \Z_p(r)')$ sont $p^{Nr}$-\'egales. En inversant $p$, on obtient l'\'egalit\'e entre les morphismes rationnels et on en d\'eduit finalement $\tilde{\alpha}^{\rm{FM}} = \tilde{\alpha}^{0}$.

\end {proof}

\subsection{Comparaison de $\tilde{\alpha}^{\rm{CK}}$ et $\tilde{\alpha}^{0}$}

On consid\`ere maintenant le morphisme de p\'eriode $\tilde{\alpha}^{\rm{CK}} : H^i_{\emph{\text{\'et}}}( \XF_{C}, \Q_p) \otimes_{\Q_p} \B_{\mathrm{st}} \xrightarrow{\sim} H^i_{\rm HK}( \XF) \otimes_{F} \B_{\mathrm{st}}$ d\'efini par Bhatt-Morrow-Scholze (\cite{BMS16}) dans le cas o\`u $\XF$ est \`a bonne r\'eduction et par \v{C}esnavi\v{c}ius-Koshikawa (\cite{CK17}) pour $\XF$ \`a r\'eduction semi-stable. On rappelle rapidement sa d\'efinition locale dans la section qui suit. On montre ensuite qu'il est \'egal \`a $\widetilde{\alpha}^{0}$.

\subsubsection{D\'efinition de l'application locale $\gamma^{\rm{CK}}$}

\begin{Def}\cite[\textsection 6]{BMS16}
Soit $A$ un anneau et $K$ un complexe de $A$-modules, sans $f$-torsion. On note $\eta_f(K)$ le sous-complexe de $K [\frac{1}{f}]$ dont le $i$-\`eme terme est donn\'e par : 
$$ (\eta_fK)^i:=\{x\in f^iK^i:dx\in f^{i+1}K^{i+1}\}. $$
\end{Def}

L'application $K^i \to (\eta_f K)^i, x \mapsto f^i x$ induit un isomorphisme $H^i(K)/H^i(K)[f]\cong H^i(\eta_fK)$ (o\`u $H^i(K)[f]$ d\'esigne la $f$-torsion de $H^i(K)$) et on en d\'eduit que si $K \to K'$ est un quasi-isomorphisme alors $\eta_fK \to \eta_fK'$ l'est aussi. On d\'efinit le foncteur $L \eta_f$ de $D(A)$ dans $D(A)$ de la fa\c{c}on suivante : pour $K$ dans $D(A)$ on choisit un complexe $C$ quasi-isomorphique \`a $K$ et dont les termes sont sans $f$-torsion et on pose 
$ L \eta_f K:= \eta_f C. $


Soit $\XF$ un sch\'ema formel sur $\OC_K$, \`a r\'eduction semi-stable et soit $X:= \XF_C$ la fibre g\'en\'erique de $\XF_{\OC_C}$, vue comme une vari\'et\'e analytique rigide. On note $\OC_X$ le faisceau structural du site pro-\'etale $X_{\text{pro\'et}}$, $\OC_X^+$ le faisceau int\'egral associ\'e et $\hat{O}_X^+$ sa compl\'etion. On d\'efinit le complexe de faisceaux $\A_{\rm{inf}, X}$ comme la compl\'etion $p$-adique d\'eriv\'ee\footnote{La compl\'etion $p$-adique d\'eriv\'ee $\widehat{K}$ d'un complexe $K \in D(X_{\proet}, \Z_p)$ est d\'efinie localement par $\widehat{K}|_U= \holim_n (K|_U \otimes_{\Z}^L \Z/p^n)$} de $W( \hat{\OC}_X^{+, \flat}).$ 
On consid\`ere alors le complexe de faisceaux suivant : 
\[ A \Omega_{\XF_{\OC_C}} := L \eta_{\mu}(R \nu_* (\A_{\rm{inf}, X})) \in \mathcal{D}(\XF_{\OC_C,\text{\'et}}, \A_{\text{inf}}) \]
o\`u $\nu : X_{\text{pro\'et}} \to \XF_{\OC_C,\text{\'et}}$ est la projection. 
Dans \cite[5.6]{BMS16}, il est montr\'e que l'application naturelle $\A_{\text{inf}} \to \A_{\text{inf},X}$ induit un morphisme 
\[ R \Gamma( X_{\text{\'et}}, \Z_p) \otimes^L_{\Z_p} \A_{\text{inf}} \cong R \Gamma( X_{\text{pro\'et}}, \Z_p) \otimes^L_{\Z_p} \A_{\rm{inf}} \to R \Gamma_{\text{pro\'et}}(X, \A_{\text{inf},X}) \]
tel que la cohomologie de son c\^one est tu\'ee par $W(\mathfrak{m}_C^{\flat})$ (o\`u $\mathfrak{m}_C$ est l'id\'eal maximal de $\OC_C$). Comme $\mu=[\varepsilon]-1$ est dans $W(\mathfrak{m}_C^{\flat})$, on en d\'eduit le r\'esultat suivant : 

\begin{The}\cite[2.3]{CK17}
\label{Ainf-etale}
Si $\XF$ est propre sur $\OC_K$, il existe un quasi-isomorphisme :
\begin{equation}
\label{Et-Ainf}
R \Gamma( X_{\text{\emph{\'et}}}, \Z_p) \otimes^L_{\Z_p} \A_{\mathrm{inf}} [ \frac{1}{\mu} ] \xrightarrow{\sim} R\Gamma_{\text{\emph{\'et}}}( \XF_{\OC_C}, A \Omega_{\XF_{\OC_C}}) \otimes^L_{\A_{\mathrm{inf}}} \A_{\mathrm{inf}} [ \frac{1}{\mu} ].
\end{equation}
\end{The}  

On suppose maintenant que $\XF_{\OC_C}= \Spf(R)$ avec $R$ \'etale sur $R_{\square}$. On reprend les notations $R_{\rm inf}^+$, $R_{\rm cris}^+$ de \textsection 2.2.2 et $\A_R^+$, $\A_{\rm cris}(R)$ de \textsection 5.1 (on rappelle que $R_{\rm inf}^+ \cong \A_R^+$ et $R_{\rm cris}^+ \cong \A_{\rm cris}(R)$). On d\'efinit $R_{\infty}$ comme avant. On a alors que $R_{\infty}$ est perfecto\"ide et muni d'une action de $\Gamma_R$. Ainsi, la tour $X_{\infty}:= "\varprojlim" \Spa(R_m [ \frac{1}{p}], R_m)$ est un recouvrement affino\"ide perfecto\"ide de $\XF_{\OC_C}$.

\begin{The}\cite[\textsection 3.3]{CK17}   
\begin{enumerate}
\item On a des quasi-isomorphismes : 
\begin{equation}
\label{continue-proetale}
\begin{split}
 L \eta_{(\zeta_p-1)} (R \Gamma( \Gamma_R, R_{\infty})) \xrightarrow{\sim} L \eta_{(\zeta_p-1)} (R \Gamma_{\text{\emph{pro\'et}}}( X, \hat{O}_X^{+})) \\
 L \eta_{\mu} (R \Gamma( \Gamma_R, \A_{\mathrm{inf}}(R_{\infty}))) \xrightarrow{\sim} L \eta_{\mu} (R \Gamma_{\text{\emph{pro\'et}}}( X, \A_{\mathrm{inf},X})).
 \end{split}
 \end{equation}
 \item Plus g\'en\'eralement, si $R'_{\infty}$ d\'efinit un recouvrement affino\"ide perfecto\"ide de $R$ muni d'une action d'un groupe $\Gamma'$ et tel que $R_{\infty}'$ est un raffinement de $R_{\infty}$, alors on a des quasi-isomorphismes :    
 \begin{equation}
 \label{continue-proetale(1)}
\begin{split}
 L \eta_{(\zeta_p-1)} (R \Gamma( \Gamma', R'_{\infty})) \xrightarrow{\sim} L \eta_{(\zeta_p-1)} (R \Gamma_{\text{\emph{pro\'et}}}( X, \hat{O}_X^{+})) \\
 L \eta_{\mu} (R \Gamma( \Gamma', \A_{\mathrm{inf}}(R'_{\infty}))) \xrightarrow{\sim} L \eta_{\mu} (R \Gamma_{\text{\emph{pro\'et}}}( X, \A_{\mathrm{inf},X})).
 \end{split}
 \end{equation}
 \end{enumerate}
 \end{The} 
 
Le but est de construire un quasi-isomorphisme entre $R\Gamma_{\rm cris}( (R/p) / \AAC)$ et $R \Gamma( X_{\text{\'et}}, \Z_p) \otimes^L_{\Z_p} \AAC$. L'application va \^etre induite par la compos\'ee : 
\[ \Kos(\partial, \A_{\rm cris}(R)) \xrightarrow{\tilde{\beta}} \eta_{\mu} \Kos(\Gamma_R, \A_{\rm cris}(R)) \to \eta_{\mu} \Kos( \Gamma_R, \A_{\rm cris}(R_{\infty})) \]
o\`u $\tilde{\beta}$ est construit ci-dessous. Le probl\`eme est que l'anneau $\AAC$ vient de la compl\'etion d'un anneau qui n'est pas de type fini sur $\A_{\inf}$ et en particulier, $\AAC/\mu$ n'est pas $p$-adiquement s\'epar\'e. Notamment, $\eta_{\mu} \Kos( \Gamma_R, \A_{\rm cris}(R_{\infty}))$ ne calcule pas n\'ecessairement $R \Gamma( X_{\text{\'et}}, \Z_p) \otimes^L_{\Z_p} \AAC$. Pour contourner cette difficult\'e, Bhatt-Morrow-Scholze passent par l'anneau $\A_{\rm cris}^{(m)}$, construit \`a partir du m\^eme anneau que $\AAC$ mais tronqu\'e en degr\'e $m$.

Plus pr\'ecis\'ement, pour $m$ dans $\N$, on note $\A_{\rm cris}^{(m)}$ la compl\'etion $p$-adique de 
 $ \A_{\text{inf}}[ \frac{\xi^k}{k!}, k \le m]$.
 On a toujours une surjection $\theta : \A_{\rm cris}^{(m)} \to \OC_C$ et on \'etend le Frobenius de $\A_{\text{inf}}$ en $\varphi : \A_{\rm cris}^{(m)} \to \A_{\rm cris}^{(m)}$. Remarquons que pour $m<p$, on a $\A_{\cris}^{(m)}=\A_{\inf}$ et pour $m \ge p$, les topologies $p$-adiques et $(p,\mu)$-adique sur $\A_{\cris}^{(m)}$ co\"incident. On peut de plus identifier $\AAC$ \`a la compl\'etion $p$-adique de $\varinjlim \A_{\rm cris}^{(m)}$. On d\'efinit de la m\^eme fa\c con $\A_{\rm cris}^{(m)}(\overline{R})$ et $\A_{\rm cris}^{(m)}(R_{\infty})$. Enfin, soit $\A_{\rm cris}^{(m)}(R):= \A^+_R \widehat{\otimes}_{\A_{\text{inf}}} \A_{\rm cris}^{(m)}$. 

On a des isomorphismes (voir \cite[12.7]{BMS16}) :
\begin{equation} 
\label{continue et Acris}
\begin{split}
R \Gamma( \Gamma_R, \A_{\text{inf}}(R_{\infty})) \widehat{\otimes}^L_{\A_{\text{inf}}} \A_{\rm cris}^{(m)}  & \xrightarrow{\sim} R \Gamma( \Gamma_R, \A_{\rm cris}^{(m)}(R_{\infty})) \\
  L \eta_{\mu} (R \Gamma( \Gamma_R, \A_{\text{inf}}(R_{\infty}))) \widehat{\otimes}^L_{\A_{\text{inf}}} \A_{\rm cris}^{(m)} & \xrightarrow{\sim} L \eta_{\mu}(R \Gamma( \Gamma_R, \A_{\rm cris}^{(m)}(R_{\infty}))).
  \end{split}
 \end{equation}
 En combinant les r\'esultats \eqref{continue-proetale} et \eqref{continue et Acris}, on obtient : 
 
 \begin{Cor}
 \label{Ainf-Kos}
 Il existe un quasi-isomorphisme naturel : 
\[  R \Gamma_{\text{\emph{\'et}}}(\XF_{\OC_C}, A \Omega_{\XF_{\OC_C}}) \widehat{\otimes}^L_{\A_{\mathrm{inf}}} \AAC \xrightarrow{\sim} (\varinjlim_{m} ( \eta_{\mu} (\Kos(\Gamma_R,\A_{\rm{cris}}^{(m)}(R_{\infty}))))^{\widehat{~}}. \]
\end{Cor}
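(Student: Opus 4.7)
L'id\'ee est d'enchainer les quasi-isomorphismes d\'ej\`a \'etablis, en exploitant la pr\'esentation de $\AAC$ comme compl\'etion $p$-adique de la colimite filtrante des $\A_{\cris}^{(m)}$. Comme $\XF_{\OC_C}= \Spf(R)$ est affine, le terme de gauche s'identifie directement \`a $L\eta_{\mu}(R\Gamma_{\proet}(X, \A_{\inf,X})) \widehat{\otimes}^L_{\A_{\inf}} \AAC$ par d\'efinition de $A\Omega_{\XF_{\OC_C}}$. On utilise alors le quasi-isomorphisme \eqref{continue-proetale} pour remplacer la cohomologie pro-\'etale par la cohomologie continue de $\Gamma_R$, ce qui nous ram\`ene \`a identifier $L\eta_{\mu}(R\Gamma(\Gamma_R, \A_{\inf}(R_{\infty}))) \widehat{\otimes}^L_{\A_{\inf}} \AAC$ avec le terme de droite.

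Je proc\`ederais ensuite en deux temps. Premi\`erement, puisque $\AAC$ est la compl\'etion $p$-adique de $\varinjlim_m \A_{\cris}^{(m)}$, le produit tensoriel d\'eriv\'e compl\'et\'e se r\'e\'ecrit
\[ L\eta_{\mu}(R\Gamma(\Gamma_R, \A_{\inf}(R_{\infty}))) \widehat{\otimes}^L_{\A_{\inf}} \AAC \cong \Big(\varinjlim_m L\eta_{\mu}(R\Gamma(\Gamma_R, \A_{\inf}(R_{\infty}))) \widehat{\otimes}^L_{\A_{\inf}} \A_{\cris}^{(m)}\Big)^{\widehat{~}}, \]
en utilisant que la colimite filtrante commute au produit tensoriel d\'eriv\'e, et que la compl\'etion $p$-adique externe absorbe la compl\'etion effectu\'ee dans la d\'efinition de $\AAC$. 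Deuxi\`emement, on applique terme \`a terme l'isomorphisme \eqref{continue et Acris}, qui donne un quasi-isomorphisme entre $L\eta_{\mu}(R\Gamma(\Gamma_R, \A_{\inf}(R_{\infty}))) \widehat{\otimes}^L_{\A_{\inf}} \A_{\cris}^{(m)}$ et $L\eta_{\mu}(R\Gamma(\Gamma_R, \A_{\cris}^{(m)}(R_{\infty})))$. Enfin, comme la cohomologie continue $R\Gamma(\Gamma_R, -)$ est calcul\'ee par le complexe de Koszul $\Kos(\Gamma_R, -)$ lorsque $\Gamma_R \cong \Z_p^d$, et que ce complexe est explicitement sans $\mu$-torsion, le foncteur d\'eriv\'e $L\eta_{\mu}$ s'identifie au foncteur na\"if $\eta_{\mu}$. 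On obtient ainsi la formule annonc\'ee apr\`es recompl\'etion.

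Le point d\'elicat est le premier pas : justifier pr\'ecis\'ement que la compl\'etion $p$-adique d\'eriv\'ee du produit tensoriel commute avec la colimite filtrante $\varinjlim_m \A_{\cris}^{(m)}$. Il faut v\'erifier que le complexe $L\eta_{\mu}(R\Gamma(\Gamma_R, \A_{\inf}(R_{\infty})))$ est concentr\'e en degr\'es born\'es (ce qui d\'ecoule du fait que $\Gamma_R$ est un groupe de cohomologie $p$-adique de dimension $d$) et poss\`ede une repr\'esentation par un complexe de $\A_{\inf}$-modules plats. Sous cette hypoth\`ese, le produit tensoriel d\'eriv\'e avec un syst\`eme inductif filtrant se comporte correctement, et la compl\'etion $p$-adique externe r\'ecup\`ere la compl\'etion de $\AAC$. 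Une fois cette compatibilit\'e acquise, le reste de l'argument est une application directe des r\'esultats rappel\'es en \eqref{continue-proetale} et \eqref{continue et Acris}.
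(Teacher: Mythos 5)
Your proposal is correct and takes essentially the same route as the paper, which obtains the corollary precisely by combining \eqref{continue-proetale} with \eqref{continue et Acris}, exactly as you do, with the passage through the filtered colimit of the $\A_{\cris}^{(m)}$ and the final $p$-adic recompletion handled as in the cited results of Bhatt--Morrow--Scholze. The step you flag as delicate --- commuting the completed derived tensor product with $\varinjlim_m$ and identifying $L\eta_{\mu}$ with $\eta_{\mu}$ on the $\mu$-torsion-free Koszul complex --- is indeed the only bookkeeping the paper leaves implicit, and your justification of it is sound.
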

 
On a alors le th\'eor\`eme (voir \cite[Th. 12.1]{BMS16} et \cite[Th. 5.4]{CK17}) :

\begin{The}
\label{comparaison cris}
Il existe un quasi-isomorphisme Frobenius-\'equivariant : 
\begin{equation}
\gamma^{\rm{CK}} : R \Gamma_{\mathrm{cris}}( (\XF_{\OC_C}/p)/ \AAC) \xrightarrow{\sim} R \Gamma_{\text{\emph{\'et}}}(\XF_{\OC_C}, A \Omega_{\XF_{\OC_C}}) 
\widehat{\otimes}^L_{\A_{\mathrm{inf}}} \AAC.
\end{equation}
\end{The}

\begin{proof}
Les d\'etails de la preuve sont donn\'es dans \cite[\textsection 5.5 \`a \textsection 5.16]{CK17}. On rappelle ici rapidement la construction de $\gamma^{\rm CK}$. Pour $m \ge p^2$, on d\'efinit $\widetilde{\beta}^{(m)} : \Kos(\partial, \A_{\rm cris}^{(m)}(R)) \to \eta_{\mu} \Kos(\Gamma_R, \A_{\rm cris}^{(m)}(R))$ par : 
\[
\xymatrix{ 
\A_{\rm{cris}}^{(m)}(R) \ar[r]^-{(\partial_j)} \ar[d]^{\mathrm{Id}} & (\A_{\rm cris}^{(m)}(R))^{J_1} \ar[r] \ar[d]^{(\tilde{\beta}_j)} & \cdots \ar[r] & (\A_{\rm cris}^{(m)}(R))^{J_n} \ar[r] \ar[d]^{(\tilde{\beta}_{j_1} \cdots \tilde{\beta}_{j_n})} & \cdots \\
\A_{\rm{cris}}^{(m)}(R) \ar[r]^-{(\gamma_j-1) \quad} & (\mu \A_{\rm cris}^{(m)}(R))^{J_1} \ar[r] & \cdots \ar[r] & (\mu^n \A_{\rm cris}^{(m)}(R))^{J_n} \ar[r] & \cdots
}
\]
avec $\tilde{\beta}_j := \sum_{n \ge 1} \frac{t^n}{n!} \partial_j^{n-1}$. L'application $\widetilde{\beta}^{(m)}$ est bien d\'efinie puisqu'elle est induite par le morphisme de complexes :
\[ \xymatrix{
\A_{\rm cris}^{(m)}(R) \ar[r]^{\partial_j} \ar[d]^{\text{Id}} & \A_{\rm cris}^{(m)}(R) \ar[d]^{\widetilde{\beta}_j} \\
\A_{\rm cris}^{(m)}(R) \ar[r]^{\gamma_j-1} & \A_{\rm cris}^{(m)}(R) 
} \]
et $\tilde{\beta}_j \partial_j= \gamma_j-1$ (en utilisant $\Kos(\partial, \A_{\rm cris}^{(m)}(R))= \bigotimes_{j=1}^d (\A_{\rm cris}^{(m)}(R) \xrightarrow{\partial_j} \A_{\rm cris}^{(m)}(R))$). Ce sont des isomorphismes par le lemme 5.15 de \cite{CK17}. Pour obtenir le r\'esultat, il suffit ensuite de montrer que l'inclusion $\A_{\rm cris}^{(m)}(R) \hookrightarrow \A_{\rm cris}^{(m)}(R_{\infty})$ induit un isomorphisme 
\begin{equation}
\label{R-Rinfty}
 \eta_{\mu} \Kos(\Gamma_R, \A_{\rm{cris}}^{(m)}(R)) \xrightarrow{\sim} \eta_{\mu} \Kos(\Gamma_R, \A_{\rm{cris}}^{(m)}(R_{\infty})).
 \end{equation}
  Pour cela, on \'ecrit $\A^{(m)}_{\rm cris}(R_{\infty})$ comme une somme $\A^{(m)}_{\rm cris}(R) \oplus (N_{\infty}\widehat{\otimes}_{\AAinf} \A_{\rm cris}^{(m)})$ (voir \cite[3.14.5]{CK17}) avec $\mu \cdot H^i(\Gamma_R, N_{\infty} \widehat{\otimes}_{\AAinf} \A_{\rm cris}^{(m)})=0$ (voir \cite[3.32]{CK17}).  

Finalement, le quasi-isomorphisme $\gamma^{\rm CK}$ est donn\'e par la compos\'ee : 
\begin{equation*}
\begin{split}
 R \Gamma_{\mathrm{cris}}( (\XF_{\OC_C}/p)/ \AAC)&  \xrightarrow{\sim} (\varinjlim_{m} \Kos(\partial, \A_{\rm cris}^{(m)}(R)))^{\widehat{~}} \\
 & \xrightarrow[\sim]{\tilde{\beta}}  (\varinjlim_{m} \eta_{\mu} \Kos(\Gamma_R, \A_{\rm cris}^{(m)}(R)))^{\widehat{~}} \\
 & \xrightarrow{\sim} (\varinjlim_{m}  \eta_{\mu} \Kos(\Gamma_R, \A_{\rm cris}^{(m)}(R_{\infty})))^{\widehat{~}} \\
 & \xleftarrow[\sim]{\eqref{Ainf-Kos}} R \Gamma_{\text{\'et}}(\XF_{\OC_C}, A \Omega_{\XF_{\OC_C}}) 
\widehat{\otimes}^L_{\A_{\mathrm{inf}}} \AAC. 
\end{split}
\end{equation*}
\end{proof}
 
\subsubsection{Application globale}

Comme pr\'ec\'edemment, on va \'ecrire les complexes du th\'eor\`eme \ref{comparaison cris} de mani\`ere fonctorielle. 
On commence par le complexe de droite. On prend $(\Sigma, \Lambda)$ comme en \eqref{im fermee} et on d\'efinit $\Gamma_{\Sigma, \Lambda}$, $R^{\square}_{\Sigma, \Lambda, \infty}$ et $R_{\Sigma, \Lambda, \infty}$ comme avant. 
 On a alors un recouvrement pro-\'etale affino\"ide perfecto\"ide : 
 \begin{equation}
 \label{cover1}
  \Spa(R_{\Sigma, \Lambda, \infty}[ \frac{1}{p}], R_{\Sigma, \Lambda, \infty}) \to \Spa(R[\frac{1}{p}], R).
  \end{equation}
 Le raisonnement pr\'ec\'edent (et en particulier l'isomorphisme \eqref{continue-proetale(1)}) donne un quasi-isomorphisme : 
 \begin{equation}
 \label{etale et sigma} 
 R \Gamma_{\text{\'et}}(\XF_{\OC_C}, A \Omega_{\XF_{\OC_C}}) \widehat{\otimes}^L_{\A_{\rm{inf}}} \AAC \xrightarrow{\sim} (\varinjlim_{m}  \eta_{\mu} \Kos(\Gamma_R,\A_{\rm{cris}}^{(m)}(R_{\Sigma, \Lambda, \infty})))^{\widehat{~}}. 
 \end{equation}
On va maintenant \'ecrire le complexe $\Omega^{\bullet}_{R_{\Sigma, \Lambda}^{PD}/ \AAC}$ sous une forme semblable \`a celle de \eqref{etale et sigma}. On consid\`ere l'exactification de l'immersion $\Spec(R/p) \hookrightarrow \Spf(\A_{\rm cris}(R_{\Sigma, \Lambda}^{\square}))$ (voir \cite[\textsection 5.25 \`a 5.31]{CK17}) :
\[ \Spec(R/p) \xhookrightarrow{j} \mathcal{Y} \xrightarrow{q} \Spf(\A_{\rm cris}(R_{\Sigma, \Lambda}^{\square})) \]
avec $j$ immersion ferm\'ee exacte et $q$ log-\'etale. Si on note $D_j$ la log-PD-enveloppe associ\'ee \`a $j$, on a alors un isomorphisme $R_{\Sigma, \Lambda}^{PD} \xrightarrow{\sim} \widehat{D_j}$. On note $D_j^{(m)}$ l'anneau des puissances divis\'ees de degr\'e inf\'erieur \`a $m$ et $R_{\Sigma, \Lambda}^{PD, (m)}$ la compl\'etion de l'image de $D_j^{(m)}$ par cet isomorphisme.   

Les $R_{\Sigma, \Lambda}^{PD,(m)}$ sont alors des $\A_{\rm cris}^{(m)}$-alg\`ebres munies d'une action continue de $\Gamma_{\Sigma, \Lambda}$ et d'un Frobenius. On a de plus un $\AAC$-isomorphisme $R_{\Sigma, \Lambda}^{PD} \cong ( \varinjlim_{m} R_{\Sigma, \Lambda}^{PD,(m)})^{\widehat{~}}$, qui est \'equivariant pour l'action du Frobenius et de $\Gamma_{\Sigma, \Lambda}$. On obtient : 
 \begin{equation}
 \label{galois et sigma}
R \Gamma_{\rm cris}( (\XF_{\OC_C}/p)/ \AAC) \xrightarrow{\sim} (\varinjlim_m \Kos( \partial, R_{\Sigma, \Lambda}^{PD,(m)}))^{\widehat{~}}.
\end{equation} 
 
 Enfin, en se ramenant au cas o\`u on ne consid\`ere qu'une seule coordonn\'ees $\lambda$ (donn\'e par \ref{comparaison cris}), on construit un isomorphisme fonctoriel (voir \cite[\textsection 5.38-5.39]{CK17}) : 
 \begin{equation}
 \label{cris-galois sigma}
 \tilde{\beta}_{\Sigma, \Lambda} : (\varinjlim_m \Kos( \partial, R_{\Sigma, \Lambda}^{PD,(m)}))^{\widehat{~}} \xrightarrow{\sim}  (\varinjlim_{m}  \eta_{\mu} \Kos(\Gamma_{\Sigma, \Lambda}, \A_{\rm{cris}}^{(m)}(R_{\Sigma, \Lambda, \infty})))^{\widehat{~}}. 
 \end{equation}
 
 On peut maintenant prouver la version globale du th\'eor\`eme \ref{comparaison cris} :
  
 \begin{The}\cite[5.43]{CK17}
 \label{gammaCK}
 Soit $\XF$ un sch\'ema formel propre sur $\OC_K$ \`a r\'eduction semi-stable. Il existe un quasi-isomorphisme compatible avec le morphisme de Frobenius : 
\[ \gamma^{\rm{CK}} : R \Gamma_{\mathrm{cris}}( (\XF_{\OC_C}/p)/ \AAC) \xrightarrow{\sim} R \Gamma_{\text{\emph{\'et}}}(\XF_{\OC_C}, A \Omega_{\XF_{\OC_C}}) 
\widehat{\otimes}^L_{\A_{\mathrm{inf}}} \AAC. \]
 \end{The}
 
 \begin{proof}
 La preuve est similaire \`a celle du th\'eor\`eme \ref{comparaison globale}. Le morphisme global $\gamma^{\rm CK}$ est donn\'e par la compos\'ee : 
\begin{equation*}
\begin{split}
R \Gamma_{\rm{cris}}( (\XF_{\OC_C}/p)/ \AAC) & \xrightarrow{\sim} \hocolim_{\rm HRF} R \Gamma_{\rm{cris}}( (\mathfrak{U}^{\bullet}_{\OC_C}/p)/ \AAC)    \\
& \xrightarrow{\sim} \hocolim_{\rm HRF} R \Gamma_{\text{\'et}}(\mathfrak{U}^{\bullet}_{\OC_C}, A \Omega_{\mathfrak{U}^{\bullet}_{\OC_C}})  \\
&  \xleftarrow{\sim} R \Gamma_{\text{\'et}}(\XF_{\OC_C}, A \Omega_{\XF_{\OC_C}})
\end{split}
\end{equation*}
o\`u, si $\mathfrak{U}^{\bullet} \to \XF$ est un hyper-recouvrement affine de $\XF$ avec $\mathfrak{U}_{\OC_C}^{k}:= \Spf(R^k)$, le quasi-isomorphisme $R \Gamma_{\rm{cris}}( (\mathfrak{U}^{\bullet}_{\OC_C}/p)/ \AAC) \xrightarrow{\sim} R \Gamma_{\text{\'et}}(\mathfrak{U}^{\bullet}_{\OC_C}, A \Omega_{\mathfrak{U}^{\bullet}_{\OC_C}})$ est induit par :
\begin{equation*}
\begin{split}
 R \Gamma_{\rm{cris}}( (R^k/p)/ \AAC) & \xrightarrow[\eqref{galois et sigma}]{\sim} \varinjlim_{(\Sigma_k, \Lambda_k)} (\varinjlim_m \Kos( \partial, R_{\Sigma_k, \Lambda_k}^{PD,(m)}))^{\widehat{~}} \\
 & \xrightarrow[\eqref{cris-galois sigma}]{\sim} \varinjlim_{(\Sigma_k, \Lambda_k)} (\varinjlim_{m} \eta_{\mu} \Kos(\Gamma_{\Sigma, \Lambda}, \A_{\rm{cris}}^{(m)}(R_{\Sigma, \Lambda, \infty})))^{\widehat{~}} \\
 &  \xleftarrow[\eqref{etale et sigma}]{\sim} R \Gamma_{\text{\'et}}(\XF_{\OC_C}, A \Omega_{\XF_{\OC_C}}) \widehat{\otimes}^L_{\A_{\rm{inf}}} \AAC. 
 \end{split}
 \end{equation*}
 \end{proof}
 
On a un quasi-isomorphisme (il d\'epend du choix de $\varpi$ et est compatible avec les actions du Frobenius et de l'op\'erateur de monodromie \cite[9.2]{CK17}) :
 \begin{equation}
 \iota_{{\rm cris}, \varpi}^{\rm B} : R\Gamma_{\rm cris}(\XF_k/W(k)^0) \otimes^L_{W(k)} \B_{\rm st}^+ \xrightarrow{\sim} R\Gamma_{\rm cris}((\XF_{\OC_C}/p)/\AAC) \otimes^L_{\AAC} \B_{\rm st}^+ \end{equation}
 et on en d\'eduit l'isomorphisme de p\'eriode $\tilde{\alpha}^{\rm CK}$ : 
 
 \begin{The}\cite[9.5]{CK17}
 \label{SemistCK}
Soit $\XF$ un sch\'ema formel propre sur $\OC_K$ \`a r\'eduction semi-stable. Il existe un quasi-isomorphisme :
\[ \tilde{\alpha}^{\rm CK} : R\Gamma_{\emph{\text{\'et}}}(\XF_C, \Z_p) \otimes_{\Z_p}^L \Bst \xrightarrow{\sim} R\Gamma_{\rm cris}(\XF_k/W(k)^0) \otimes^L_{W(k)} \B_{\rm st}. \]
De plus, $\tilde{\alpha}^{\rm CK}$ est compatible avec les actions du morphisme de Frobenius, du groupe de Galois et de l'op\'erateur de monodromie et il induit un isomorphisme filtr\'e apr\`es tensorisation par $\Bdr$.  
 \end{The}
  
 \begin{proof}
 Le morphisme $\tilde{\alpha}^{\rm CK}$ est donn\'e par la compos\'ee : 
 \begin{equation*}
 \begin{split}
R\Gamma_{\text{\'et}}(\XF_C, \Z_p) \otimes_{\Z_p}^L \Bst & \xrightarrow[\sim]{\eqref{Ainf-etale}} R \Gamma_{\text{\'et}}(\XF_{\OC_C}, A \Omega_{\XF_{\OC_C}}) 
\otimes^L_{\A_{\mathrm{inf}}} \Bst \\
& \xleftarrow[\sim]{\gamma^{\rm CK}} R \Gamma_{\mathrm{cris}}( (\XF_{\OC_C}/p)/ \AAC)\otimes^L_{\AAC} \B_{\rm st} \\
& \xleftarrow[\sim]{\iota^B_{{\rm cris}, \varpi}} R\Gamma_{\rm cris}(\XF_k/W(k)^0) \otimes^L_{W(k)} \B_{\rm st}.
\end{split}
\end{equation*}
 \end{proof}
 En inversant $p$, on obtient un quasi-isomorphisme rationnel : 
 \[ \tilde{\alpha}^{\rm CK} : R\Gamma_{\text{\'et}}(\XF_C, \Q_p) \otimes_{\Q_p}^L \Bst \xrightarrow{\sim} R\Gamma_{\rm HK}(\XF) \otimes^L_{F} \B_{\rm st}. \] 
 
 \subsubsection{Comparaison avec $\tilde{\alpha}^{\text{0}}$}

Soit $\XF$ un sch\'ema formel propre sur $\OC_K$ \`a r\'eduction semi-stable, le but de cette section est de prouver le th\'eor\`eme suivant :  
 
 \begin{The}
 \label{correct1}
Les morphismes de p\'eriodes 
\[ \tilde{\alpha}^{\rm{CK}}, \tilde{\alpha}^{0} : H^i_{\text{\emph{\'et}}}( \XF_{C}, \Q_p) \otimes_{\Q_p} \B_{\mathrm{st}} \to H^i_{\rm HK}( \XF) \otimes_{F} \B_{\mathrm{st}} \]
\[ \tilde{\alpha}_{\dR}^{\rm{CK}}, \tilde{\alpha}_{\dR}^{0} :H^i_{\text{\emph{\'et}}}( \XF_{C}, \Q_p) \otimes_{\Q_p} \B_{\dR} \to H^i_{\rm HK}( \XF) \otimes_{F} \B_{\dR} \]
sont \'egaux. En particulier, on a $\tilde{\alpha}^{\rm{CK}}=\tilde{\alpha}^{\rm{FM}}$ et $\tilde{\alpha}^{\rm CK}_{\dR}= \tilde{\alpha}^{\FM}_{\dR}$.  
 \end{The}
 
  \begin{proof}Soit $r\geq 0$. 
 On rappelle que le morphisme $\tilde{\alpha}^0$ est donn\'e par la compos\'ee (voir \eqref{tildealpha0}) : 
\begin{equation*}
\begin{split}
\tau_{\le r} R \Gamma_{\text{\'et}} (\XF_C, \Q_p) \otimes^L_{\Q_p} \Bst \xleftarrow[\sim]{t^r} \tau_{\le r} R \Gamma_{\text{\'et}} (\XF_C, \Q_p(r)) \otimes^L_{\Q_p} \Bst \{ r \} \xleftarrow[\alpha^0_r]{\sim} \tau_{\le r} R\Gamma_{\rm syn}(\XF_{\OC_C},r)_{\Q} \otimes_{\Q_p}^L \Bst \{ r \}  \\ \xrightarrow{\rm can} R \Gamma_{\mathrm{cris}}( (\XF_{\OC_C}/p)/ \AAC)_{\Q} \otimes^L_{\AAC[\frac{1}{p}]} \B_{\rm st} \xleftarrow[\sim]{\iota^B_{{\rm cris}, \varpi}} \tau_{\le r} R\Gamma_{\rm HK}(\XF) \otimes^L_{F} \Bst
\end{split}
\end{equation*} 
 et $\tilde{\alpha}^{\rm CK}$ par : 
\begin{equation*}
\hspace{-1cm}\begin{split}
 R\Gamma_{\text{\'et}}(\XF_C, \Q_p) \otimes_{\Q_p}^L \Bst  \xrightarrow[\sim]{\eqref{Ainf-etale}} R \Gamma_{\text{\'et}}(\XF_{\OC_C}, A \Omega_{\XF_{\OC_C}})_{\Q} 
\otimes^L_{\A_{\mathrm{inf}}[\frac{1}{p}]} \Bst  \xleftarrow[\sim]{\gamma^{\rm CK}} R \Gamma_{\mathrm{cris}}( (\XF_{\OC_C}/p)/ \AAC)_{\Q} \otimes^L_{\AAC[\frac{1}{p}]} \B_{\rm st} 
\\  \xleftarrow[\sim]{\iota^B_{{\rm cris}, \varpi}} R\Gamma_{\rm HK}(\XF) \otimes^L_{F} \B_{\rm st}\end{split}
\end{equation*}

Pour prouver le th\'eor\`eme, il suffit de montrer que pour $r\geq 2d$, $d=\dim(X)$, le rectangle ext\'erieur du diagramme suivant commute (au moins au niveau des cohomologies) :  
 \[ \hspace{-1cm} {\footnotesize
\xymatrix{
\tau_{\le r} R\Gamma_{\text{\'et}}(\XF_C, \Q_p) \otimes_{\Q_p}^L \Bst  \ar[d]^-{\rotatebox{90}{$\sim$}}_{\eqref{Ainf-etale}} & \ar[l]_-{\sim}^-{t^r} \tau_{\le r}  R \Gamma_{\text{\'et}} (\XF_C, \Q_p(r)) \otimes^L_{\Q_p} \Bst\{r\} \ar[dl]_{f_r}&  
\ar[l]^-{\alpha^0_r}_-{\sim}\tau_{\le r}  R\Gamma_{\rm syn}(\XF_{\OC_C},r)_{\Q} \otimes_{\Q_p}^L \Bst\{r\}  \ar[d] \ar[dl]_{{\rm can}}\\
\tau_{\le r}  R \Gamma_{\text{\'et}}(\XF_{\OC_C}, A \Omega_{\XF_{\OC_C}})_{\Q} \otimes^L_{\A_{\mathrm{inf}}[\frac{1}{p}]} \Bst & \ar[l]^-{\sim}_{\gamma^{\rm CK}} \tau_{\le r} R \Gamma_{\mathrm{cris}}( (\XF_{\OC_C}/p)/ \AAC)_{\Q} \otimes^L_{\AAC[\frac{1}{p}]} \B_{\rm st} &  \ar[l]^-{\sim}_-{\iota^B_{{\rm cris}, \varpi}} \tau_{\le r} R\Gamma_{\rm HK}(\XF) \otimes^L_{F} \B_{\rm st} 
}}\]
L'application $f_r$ est d\'efinie de fa\c{c}on \`a ce que le triangle de gauche soit commutatif. Le triangle de droite est commutatif par \cite[\textsection 3.1]{NN16}, il suffit donc de montrer que le trap\`eze int\'erieur commute. Il est suffisant de le montrer au niveau des cohomologies et c'est le r\'esultat du lemme \ref{egalite1} ci-dessous.
 \end{proof} 
 
  \begin{Lem}
 \label{egalite1}
Soit $i\leq r$. 
Le diagramme suivant est commutatif : 
 \[ \xymatrix{
H^i_{\text{\emph{\'et}}}(\XF_{\OC_C}, A \Omega_{\XF_{\OC_C}})_{\Q} \otimes^L_{\A_{\mathrm{inf}}[\frac{1}{p}]} \Bst & \ar[l]^-{\gamma^{\rm CK}}_-{\sim} H^i_{\mathrm{cris}}( (\XF_{\OC_C}/p)/ \AAC)_{\Q} \otimes^L_{\AAC[\frac{1}{p}]} \B_{\rm st} \\
H^i_{\text{\emph{\'et}}}(\XF_C, \Q_p(r))  \otimes^L_{\Q_p} \Bst \{ r \}  \ar[u]^{f_r} & H^i_{\rm syn}(\XF_{\OC_C},r)_{\Q} \otimes^L_{\Q_p} \Bst \{ r \} \ar[u]^{{\rm can}} \ar[l]^{\alpha_r^0}_{\sim} 
 }\]
 \end{Lem} 
 
  \begin{proof}
 (i) {\em R\'eduction.} 
Pour prouver le lemme, par $\B^+_{\rm cris}[\frac{1}{\mu}] $-lin\'earit\'e, il suffit de voir que le diagramme suivant est commutatif :
\[ \xymatrix{
H^i_{\text{\emph{\'et}}}(\XF_{\OC_C}, A \Omega_{\XF_{\OC_C}})_{\Q} \otimes^L_{\A_{\mathrm{inf}}[\frac{1}{p}]}  \B^+_{\rm cris}[\frac{1}{\mu}] & \ar[l]^-{\gamma^{\rm CK}}_-{\sim} H^i_{\mathrm{cris}}( (\XF_{\OC_C}/p)/ \AAC)_{\Q} \otimes^L_{\A_{\rm cris}[\frac{1}{p}]}\B^+_{\rm cris}[\frac{1}{\mu}]  \\
H^i_{\text{\'et}}(\XF_C, \Q_p(r)) \ar[u]^{f_r} & H^i_{\rm syn}(\XF_{\OC_C},r)_{\Q} \ar[u]^{{\rm can}} \ar[l]^{\alpha_r^0}_{\sim} 
 }\]
Consid\'erons le diagramme :  
  \begin{equation}
   \label{paris1}\xymatrix{H^i_{\text{\'et}}(\XF_{\OC_C}, A \Omega_{\XF_{\OC_C}})_{\Q} \otimes^L_{\A_{\mathrm{inf}}[\frac{1}{p}]} \A^{[u,v]}[\frac{1}{p},\frac{1}{\mu}] & \ar[l]^-{\gamma^{\rm CK}_{[u,v]}}_-{\sim} H^i_{\mathrm{cris}}( (\XF_{\OC_C}/p)/ \AAC)_{\Q} \otimes^L_{\A_{\rm cris}[\frac{1}{p}]}\A^{[u,v]}[\frac{1}{p},\frac{1}{\mu}]  \\
H^i_{\text{\'et}}(\XF_{\OC_C}, A \Omega_{\XF_{\OC_C}})_{\Q} \otimes^L_{\A_{\mathrm{inf}}[\frac{1}{p}] } \AAC[\frac{1}{p},\frac{1}{\mu}]  \ar[u]^{{\rm Id}\otimes{\rm can}} & \ar[l]^-{\gamma^{\rm CK}}_-{\sim} H^i_{\mathrm{cris}}( (\XF_{\OC_C}/p)/ \AAC)_{\Q} \otimes^L_{\A_{\rm cris}[\frac{1}{p}]}\AAC[\frac{1}{p},\frac{1}{\mu}] \ar[u]^{{\rm Id}\otimes{\rm can}} \\
H^i_{\text{\'et}}(\XF_C, \Q_p(r))  \ar[u]^{f_r} & H^i_{\rm syn}(\XF_{\OC_C},r)_{\Q} \ar[u]^{{\rm can}} \ar[l]^{\alpha_r^0}_{\sim} 
 }
 \end{equation}
 
La fl\`eche verticale de gauche $\beta_1:={\rm Id}\otimes{\rm can}$ est injective. En effet, par le th\'eor\`eme \ref{Ainf-etale}, on a
\begin{align*}
  H^i_{\text{\'et}}(\XF_{\OC_C}, A \Omega_{\XF_{\OC_C}})_{\Q} \otimes^L_{\A_{\mathrm{inf}}[\frac{1}{p}]} \A_{\mathrm{inf}}[\frac{1}{p},\frac{1}{\mu}] \stackrel{\sim}{\leftarrow}H^i_{\text{\'et}}(\XF_C, \Q_p(r))  \otimes^L_{\Q_p}\A_{\mathrm{inf}}[\frac{1}{p},\frac{1}{\mu}] ,
  \end{align*}
et on en d\'eduit les isomorphismes : 
  \begin{align*}
  & H^i_{\text{\'et}}(\XF_{\OC_C}, A \Omega_{\XF_{\OC_C}})_{\Q} \otimes^L_{\A_{\mathrm{inf}}[\frac{1}{p}]} \A^{[u,v]}[\frac{1}{p},\frac{1}{\mu}]\stackrel{\sim}{\leftarrow}H^i_{\text{\'et}}(\XF_C, \Q_p(r))  \otimes^L_{\Q_p}\A^{[u,v]}[\frac{1}{p},\frac{1}{\mu}],\\
  & H^i_{\text{\'et}}(\XF_{\OC_C}, A \Omega_{\XF_{\OC_C}})_{\Q} \otimes^L_{\A_{\mathrm{inf}}[\frac{1}{p}] } \AAC[\frac{1}{p},\frac{1}{\mu}] \stackrel{\sim}{\leftarrow}H^i_{\text{\'et}}(\XF_C, \Q_p(r))  \otimes^L_{\Q_p}\AAC[\frac{1}{p},\frac{1}{\mu}]. 
\end{align*}
En utilisant que l'application canonique $\AAC[\frac{1}{p},\frac{1}{\mu}] \to \A^{[u,v]}[\frac{1}{p},\frac{1}{\mu}]$ est injective et $H^i_{\text{\'et}}(\XF_C, \Q_p(r)) $ est de rang fini sur $\Q_p$ on obtient que $\beta_1$ est injective.
 
On en d\'eduit que pour prouver le lemme, il suffit de montrer que le carr\'e ext\'erieur du diagramme (\ref{paris1}) commute. Mais en utilisant une nouvelle fois le th\'eor\`eme \ref{Ainf-etale} puis le th\'eor\`eme \ref{gammaCK} (et que $H^i_{\text{\'et}}(\XF_C, \Q_p(r)) $ est de $\Q_p$-rang fini), on a 
 \begin{align*}
 H^i_{\text{\'et}}(\XF_{\OC_C}, A \Omega_{\XF_{\OC_C}})_{\Q} \otimes^L_{\A_{\mathrm{inf}}[\frac{1}{p}]} \A^{[u,v]} [\frac{1}{p},\frac{1}{\mu}]  & \stackrel{\sim}{\to}
 H^i_{\text{\'et}}(R\Gamma(\XF_{\OC_C}, A \Omega_{\XF_{\OC_C}}) \otimes^L_{\A_{\mathrm{inf}}[\frac{1}{p}]} \A^{[u,v]}[\frac{1}{p},\frac{1}{\mu}] ),\\
 H^i_{\mathrm{cris}}( (\XF_{\OC_C}/p)/ \AAC)_{\Q} \otimes^L_{\AAC[\frac{1}{p}]}\A^{[u,v]}  [\frac{1}{p},\frac{1}{\mu}] & \stackrel{\sim}{\to} H^i(R\Gamma_{\mathrm{cris}}( (\XF_{\OC_C}/p)/ \AAC)_{\Q} \otimes ^L_{\AAC[\frac{1}{p}]}\A^{[u,v]}  [\frac{1}{p},\frac{1}{\mu}] )
 \end{align*}
et il suffit donc de montrer que le diagramme
\begin{equation}
\label{paris2}
\xymatrix{
\tau_{\leq r}R\Gamma_{\text{\'et}}(\XF_{\OC_C}, A \Omega_{\XF_{\OC_C}})_{\Q} \otimes^L_{\A_{\mathrm{inf}}[\frac{1}{p}]} \A^{[u,v]}[\frac{1}{p}, \frac{1}{\mu}] & \ar[l]^-{\gamma^{\rm CK}_{[u,v]}}_-{\sim} \tau_{\leq r}R\Gamma_{\mathrm{cris}}( (\XF_{\OC_C}/p)/ \AAC)_{\Q} \otimes^L_{\A_{\rm cris}[\frac{1}{p}]}\A^{[u,v]}[\frac{1}{p}, \frac{1}{\mu}]  \\
\tau_{\leq r}R\Gamma_{\text{\'et}}(\XF_C, \Q_p(r))  \ar[u]^{f_r} & \tau_{\leq r}R\Gamma_{\rm syn}(\XF_{\OC_C},r)_{\Q} \ar[u]^{{\rm can}} \ar[l]^{\alpha_r^0}_{\sim} 
}
\end{equation}
commute.

Pour cela, on va montrer que le diagramme suivant est $p^{N}$-commutatif (pour un $N \in \N$) :
\begin{equation}
\label{entier1}
\xymatrix{
\tau_{\leq r}R\Gamma_{\text{\'et}}(\XF_{\OC_C}, A \Omega_{\XF_{\OC_C}}) \widehat{\otimes}^L_{\A_{\mathrm{inf}}} \A^{[u,v]}[\frac{1}{\mu}] & \ar[l]^-{\gamma^{\rm CK}_{[u,v]}}_-{\sim} \tau_{\leq r}R\Gamma_{\mathrm{cris}}( (\XF_{\OC_C}/p)/ \AAC) \widehat{\otimes}^L_{\A_{\rm cris}}\A^{[u,v]}[\frac{1}{\mu}]  \\
\tau_{\leq r}R\Gamma_{\text{\'et}}(\XF_C, \Z_p(r))  \ar[u]^{f_r} & \tau_{\leq r}R\Gamma_{\rm syn}(\XF_{\OC_C},r) \ar[u]^{{\rm can}} \ar[l]^{\alpha_r^0}_{\sim} 
}
\end{equation}

 (ii) {\em Calcul local.} On suppose dans un premier temps que $\XF= \Spf(R)$ avec $R$ la compl\'etion d'une alg\`ebre \'etale sur $R_{\square}$. On va voir que la principale diff\'erence entre la construction du morphisme de \cite{CK17} et celle de $\alpha_r^0$ est que l'application $\tilde{\beta}$ utilis\'e par Bhatt-Morrow-Scholze est \'egale \`a l'inverse de l'application $\beta$ construite plus haut, tordue par $t$. Plus pr\'ecis\'ement, on va montrer qu'on a un diagramme $p^{c(r)}$-commutatif (pour une certaine constante $c(r) \in \N$) :  
 \begin{equation}
\label{CK-CN}
\begin{footnotesize}
\hspace{-1.5cm}
\xymatrix{K_{\varphi, \partial}(F^r \AAC(R)) \ar@/_2cm/[dddd]_-{\wr} \ar[d] \\
 K_{\partial}(\AAC(R))^{[u,v]} \ar@/^1cm/[rrrr]^{\gamma^{\rm CK}_{[u,v]}}\ar[r]^-{\tilde{\beta}}_-{\sim} \ar[d] & ( \eta_{\mu} \widetilde{K}_{\Gamma}(\A_{\rm cris}(R)))^{[u,v]} \ar[r]_-{\sim}^-{\eqref{R-Rinfty}} \ar[d] &  ( \eta_{\mu}\widetilde{K}_{\Gamma}( \A_{\rm cris}(R_{\infty})))^{[u,v]} \ar[d] & \ar[l]^-{\sim}_-{\eqref{continue et Acris}} \eta_{\mu} (K_{\Gamma}(\A^+_{R_{\infty}})^{[u,v]})\ar[d]^{\wr} \ar[dl]^{\sim} \ar[r]^{\widetilde{\mu}_H}_{\sim}&  \eta_{\mu} 
( C_{G}(\A^+_{\overline{R}})^{[u,v]})\ar[dl]^{\sim}\\
 K_{\partial}(\A_R^{[u,v]}) \ar[r]^-{\tilde{\beta}} & \eta_{\mu} K_{\Gamma}(\A_R^{[u,v]}) \ar[r]  &  \eta_{\mu} K_{\Gamma}(\A_{R_{\infty}}^{[u,v]})  \ar[r]^{\widetilde{\mu}_H}_{\sim} & \eta_{\mu} C_{G}(\A_{\overline{R}}^{[u,v]})\\
 K_{\partial}(F^r\A_R^{[u,v]}) \ar[r]^-{\beta'}_{\sim} \ar[u] &  K_{\Gamma}(\A_R^{[u,v]}(r)) \ar[r] \ar[u]^{t^r} &  K_{\Gamma}(\A_{R_{\infty}}^{[u,v]}(r)) \ar[u]^{t^r} \ar[r]^{\widetilde{\mu}_H}& 
 C_{G}(\A_{\overline{R}}^{[u,v]}(r))  \ar[u]^{t^r}  \\
 K_{\varphi, \partial}(F^r\A_R^{[u,v]}) \ar[r]^-{\beta'}_-{\sim} \ar[u] &  K_{\varphi, \Gamma}(\A_R^{[u,v]}(r)) \ar[r] \ar[u] &  K_{\varphi, \Gamma}(\A_{R_{\infty}}^{[u,v]}(r))  \ar[u]\ar[r]^{\widetilde{\mu}_H} & 
C_{G, \varphi}(\A_{\overline{R}}^{[u,v]}(r))  \ar[u] \\\
 & K_{\varphi, \Gamma}(\A_R^{(0,v]+}(r)) \ar[u]_{\rotatebox{90}{$\sim$}}^{\eqref{Av to Auv1}} \ar[d]^{\rotatebox{90}{$\sim$}}_{\eqref{Ar to Av}} \ar[r] & K_{\varphi,\Gamma}(\A_{R_{\infty}}^{(0,v]+}(r)) \ar[u] \ar[d] \ar[r]^{\widetilde{\mu}_H}& 
 C_{G, \varphi}(\A_{\overline{R}}^{(0,v]+}(r)) \ar[d]^{\wr}\ar[u]^{\wr}\\
 & K_{\varphi, \Gamma}(\A_R)(r)) \ar[r]_{\sim}^{\eqref{muinfini}} & K_{\varphi, \Gamma}(\A_{R_{\infty}}(r)) \ar[r]_{\sim}^{\eqref{muH}} & C_{G, \varphi}(\A_{\overline{R}}(r)) & C_G(\Z_p(r)) \ar[l]^{\sim}\ar[lu]^{\sim}\ar[luu]_{\sim}\ar[uuuuu]^{f_r}
}
\end{footnotesize} 
 \end{equation}
 
 \vspace{0.5cm}
Tous les complexes sont tronqu\'es par $\tau_{\leq r}$. L'exposant $[u,v]$ sur la deuxi\`eme ligne d\'esigne $(-) \widehat{\otimes}^L_{\A_{\text{inf}}} \A^{[u,v]}$. On pose $K_{\Gamma}(-):={\rm Kos}(\Gamma,-)$, $K_{\partial}(-):=\Kos(\partial,-)$ et $C_G(-)$ d\'esigne le complexe de cocha\^ines continues de $G$. Les notations $K_{\varphi, \Gamma}$ et $C_{G, \varphi}(-)$ sont celles de \ref{comparaison FM}. On \'ecrit $\eta_{\mu}\widetilde{K}_{\Gamma}(\A_{\rm cris}(R)):=\varinjlim \eta_{\mu} K_{\Gamma}(\A_{\rm cris}^{(m)}(R))$ et $
\eta_{\mu}\widetilde{K}_{\Gamma}(\A_{\rm cris}(R_{\infty})):=\varinjlim \eta_{\mu} K_{\Gamma}(\A_{\rm cris}^{(m)}(R_{\infty}))$. Enfin, les morphismes $\tilde{\mu}_H$ sont ceux induits par les morphismes 
\[ \Kos(\Gamma_R, \A^{?}_{R_{\infty}}) \to C(\Gamma_R, \A^{?}_{R_{\infty}}) \to C(G_R, \A^{?}_{\overline{R}}) \]
pour $? \in \{ (0,v]+, [u,v] \}$, o\`u la deuxi\`eme fl\`eche est le morphisme de bord du recouvrement de Galois $R_{\infty}$ de $R$. 

L'application $\beta'$ est donn\'ee par la compos\'ee :
{\small \[\hspace{-1.5cm}\tau_{\le r} \Kos( \partial, \varphi, F^r\A_R^{[u,v]}) \xrightarrow{t^{\bullet}} \tau_{\le r} \Kos(\Lie \Gamma, \varphi , F^r\A_R^{[u,v]} )   \xleftarrow{t^r} \tau_{\le r} \Kos(\Lie \Gamma, \varphi, \A_R^{[u,v]}(r)) \xleftarrow{\beta} \tau_{\le r} \Kos(\Gamma,\varphi,\A_R^{[u,v]}(r)) \] } 
 o\`u \[ \beta :  \A_R^{[u,v]}(r)^{J_j} \to  \A_R^{[u,v]}(r)^{J_j}, (a_{i_1 \dots i_j}) \mapsto (\beta_{i_j} \dots \beta_{i_1} (a_{i_1 \dots i_j})) \text{ avec }
  \beta_{k}:= \sum_{n \ge 1} \frac{(-1)^{n-1}}{n} \tau_k^{n-1} \] 
 En particulier, sur le premier terme, $\beta'$ induit : 
 {\small \begin{equation} \label{cat-jap} \tau_{\le r} \Kos(\partial, F^r\A_R^{[u,v]})  \xrightarrow{t^{\bullet}} \tau_{\le r} \Kos(\Lie \Gamma, F^r\A_R^{[u,v]} )   \xleftarrow{t^r} \tau_{\le r} \Kos(\Lie \Gamma, \A_R^{[u,v]}(r))  \\ \xleftarrow{\beta} \tau_{\le r} \Kos(\Gamma,\A_R^{[u,v]}(r)). \end{equation}}
Les deux premi\`eres fl\`eches sont des $p^{c(r)}$-quasi-isomorphisme (pour certaines constantes $c(r)$) et l'application $\beta$ est un isomorphisme. 

L'application $\widetilde{\beta}^{(m)} : (\A_{\rm cris}^{(m)}(R))^{J_i} \to (\A_{\rm cris}^{(m)}(R))^{J_i}$ est donn\'ee par les 
$\widetilde{\beta_k}= \sum_{n \ge 1} \frac{t^n}{n!} \partial^{n-1}_{k}. $ 
Soit $(a_{i_1 \dots i_j})$ dans $(F^{r-j} \A_R^{[u,v]})^{J_j}$, on a 
\[ t^r \beta'((a_{i_1 \dots i_j}))= (t^r \beta^{-1}_{i_1} \dots \beta^{-1}_{i_j} (t^{j-r}a_{i_1 \dots i_j}))=((\beta_{i_1}^{-1}t)\dots (\beta_{i_j}^{-1}t)( a_{i_1 \dots i_j})) \]
et $(\beta_{i_k}^{-1}t)= (\sum_{n \ge 0} b_n \tau_{i_k}^n) t$ o\`u les $b_n$ sont les coefficients de la s\'erie $\frac{X}{\log(1+X)}$. 
Comme \[ \widetilde{\beta}_{i_k}= \sum_{n \ge 1} \frac{t^n}{n!} \partial_{i_k}^{n-1} = (\sum_{n \ge 1} \frac{1}{n!} (t \partial_{i_k})^{n-1})t \] avec $(t \partial_{i_k})= \log(\gamma_{i_k})$, on obtient 
$ \beta_{i_k}^{-1}t=\widetilde{\beta}_{i_k} $
et on en d\'eduit que le carr\'e
\begin{equation}
\label{HstBee}
\xymatrix{ 
K_{\partial}(\A_R^{[u,v]}) \ar[r]^-{\tilde{\beta}} & \eta_{\mu} K_{\Gamma}(\A_R^{[u,v]}) \\
K_{\partial}(F^r\A_R^{[u,v]}) \ar[r]^-{\beta'} \ar[u] &  K_{\Gamma}(\A_R^{[u,v]}(r)) \ar[u]^{t^r}} 
\end{equation}
commute, o\`u $\beta'$ est l'application donn\'ee par \eqref{cat-jap}. On obtient finalement que le diagramme \eqref{CK-CN} est $p^{c(r)}$-commutatif (pour un certain $c(r) \in \N$).

Montrons que le trap\`eze dans le coin sup\'erieur droit :
\begin{equation}
\label{paris3}
\xymatrix{
 \eta_{\mu} K_{\Gamma}(\A^+_{R_{\infty}})^{[u,v]}\ar[dr]^{\sim} \ar[d]^{\wr} \ar[r]^{\widetilde{\mu}_H}_{\sim} & \eta_{\mu} C_{G}(\A^+_{\overline{R}})^{[u,v]}\ar[d]^{\wr}\\
   \eta_{\mu}K_{\Gamma}(\A_{R_{\infty}}^{[u,v]})  \ar[r]^{\widetilde{\mu}_H}_{\sim}  &  \eta_{\mu} C_{G}(\A_{\overline{R}}^{[u,v]}) 
}
\end{equation}
est constitu\'e de quasi-isomorphismes. 

La fl\`eche verticale de gauche est un isomorphisme de complexes. La fl\`eche horizontale du haut est un quasi-isomorphisme par \eqref{continue-proetale}. Montrons que la fl\`eche verticale de droite est un quasi-isomorphisme. Par d\'efinition, le complexe $C_{G}(\A^+_{\overline{R}})^{[u,v]}$ est \'egal \`a : 
\[ \holim_n ((R\Gamma(G, \A_{\overline{R}}^+) \otimes^L_{\AAinf} \A^{[u,v]}) \otimes^L_{\AAinf} \AAinf/(\xi,p)^n). \]
Mais $R\Gamma(G, \A_{\overline{R}}^+)$ est calcul\'e par le complexe born\'e $\Kos(\Gamma_R, \A_{R_{\infty}}^+)$ dont les termes sont plats sur $\AAinf$. On a donc 
\[ R\Gamma(G, \A_{\overline{R}}^+) \otimes^L_{\AAinf} \A^{[u,v]} \cong R\Gamma(G, \A_{\overline{R}}^+) \otimes_{\AAinf} \A^{[u,v]}. \]
Pour $n$ suffisamment grand (tel que $\xi^n \in p \A^{[u,v]}$), on a 
\[ {\rm Tor}^1_{\AAinf}(\A^{[u,v]}, \AAinf/(\xi,p)^n) \hookrightarrow {\rm Tor}^1_{\AAinf}(\A^{[u,v]}, \AAinf/p)=0 \] 
et donc 
\[ (R\Gamma(G, \A_{\overline{R}}^+) \otimes^L_{\AAinf} \A^{[u,v]}) \otimes^L_{\AAinf} \AAinf/(\xi,p)^n \cong R\Gamma(G, \A_{\overline{R}}^+) \otimes_{\AAinf} \A^{[u,v]}/(\xi,p)^n. \]
Les termes du complexes ci-dessus sont donn\'es par 
\[ {\rm Map}_{\rm cont}(G^m,\A^+_{\overline{R}}) \otimes_{\A_{\rm inf}} \A^{[u,v]}/(p, \xi)^n \] 
qui est isomorphe \`a (car $G$ est profini) :
\[ {\rm Map}_{\rm cont}(G^m,\A^+_{\overline{R}} \otimes_{\A_{\rm inf}}\A^{[u,v]}/(p, \xi)^n). \]
En prenant la limite sur $n$, on obtient finalement le quasi-isomorphisme :
\[ C_{G}(\A^+_{\overline{R}})^{[u,v]} \xrightarrow{\sim} C_{G}(\A_{\overline{R}}^{[u,v]}). \]

On en d\'eduit que les autres fl\`eches de (\ref{paris3}) sont elles aussi des quasi-isomorphismes. Finalement, une chasse au diagramme (de (\ref{CK-CN})) montre que \eqref{entier1} et donc (\ref{paris2}) commute
 dans le cas o\`u $\XF:= \Spf(R)$. Il reste \`a voir que le r\'esultat reste vrai pour $\XF$ global. 
 
(iii) {\em Cas global.}  
 Dans un premier temps, on suppose toujours $\XF= \Spf(R)$. 
Si $(\Sigma, \Lambda)$ sont tels qu'on ait une immersion \eqref{im fermee}, en d\'efinissant $R_{\Sigma, \Lambda}^{PD}$, $R_{\infty, \Sigma, \Lambda}$, etc ... comme pr\'ec\'edemment, on peut r\'e\'ecrire le diagramme \eqref{CK-CN} en utilisant les versions $(\Sigma, \Lambda)$ des anneaux. On prend ensuite la limite sur l'ensemble des $(\Sigma, \Lambda)$ et on obtient un diagramme commutatif fonctoriel : 
{\footnotesize \[\hspace{-1.5cm}\xymatrix{
\varinjlim_{\Sigma, \Lambda} \tau_{\le r}(\Kos(\partial, R_{\Sigma, \Lambda}^{PD})_{\Q} \widehat{\otimes}^L_{\A_{\rm cris}[\tfrac{1}{p}]} \A^{[u,v]}[\frac{1}{p},\frac{1}{\mu}]) \ar[rr]^-{\sim} & &  \varinjlim_{\Sigma, \Lambda} \tau_{\le r} (\eta_{\mu} 
C_G(\A^+_{\overline{R}_{ \Sigma, \Lambda}})_{\Q} \widehat{\otimes}^L_{\A_{\rm inf}[\tfrac{1}{p}]}  \A^{[u,v]}[\frac{1}{p},\frac{1}{\mu}]) \\
\varinjlim_{\Sigma, \Lambda} \tau_{\le r}\Kos(\partial, \varphi, F^rR_{\Sigma, \Lambda}^{PD})_{\Q} \ar[r]^-{\sim} \ar[u] & \tau_{\le r} 
C_G(\Kos(\varphi, \A_{\overline{R}}(r)))_{\Q} &  \tau_{\le r} C_G(\Z_p(r))_{\Q} \ar[l]_-{\sim} \ar[u]_{t^r}
}
\]}
Soit $\XF$ global. En consid\'erant des hyper-recouvrements affines $\mathfrak{U}^{\bullet} \to \XF$ comme pr\'ec\'edemment et en remarquant que les deux complexes de la ligne du haut du diagramme ci-dessus calculent respectivement
\[ \tau_{\leq r}R\Gamma_{\mathrm{cris}}( (\UF^k_{\OC_C}/p)/ \AAC)_{\Q} \widehat{\otimes}^L _{\A_{\rm cris}[\frac{1}{p}]}\A^{[u,v]}[\frac{1}{p}, \frac{1}{\mu}] \text{ et } \tau_{\leq r}R\Gamma_{\text{\'et}}(\UF^k_{\OC_C}, A \Omega_{\XF_{\OC_C}})_{\Q} \widehat{\otimes}^L_{\A_{\mathrm{inf}}[\frac{1}{p}]} \A^{[u,v]}[\frac{1}{p}, \frac{1}{\mu}]  \]
on en d\'eduit que le diagramme suivant commute : 
{\footnotesize \[ \xymatrix{
\tau_{\le r} R \Gamma_{\rm{cris}}( \XF )_{\Q}^{[u,v]} \ar[r]^-{\sim} & \tau_{\le r} \hocolim R \Gamma_{\rm{cris}}( (\mathfrak{U}^{\bullet})_{\Q}^{[u,v]}  \ar[r]^-{\sim} & \tau_{\le r} \hocolim R \Gamma_{\text{\'et}}(A \Omega_{\mathfrak{U}^{\bullet}})_{\Q}^{[u,v]}    &\ar[l]_-{\sim} \tau_{\le r} R \Gamma_{\text{\'et}}(A \Omega_{\XF})_{\Q}^{[u,v]}  \\
\tau_{\le r} R \Gamma_{\rm syn}(\XF_{\OC_C},r)_{\Q} \ar[r]^-{\sim} \ar[u] & \tau_{\le r} \hocolim R \Gamma_{\rm syn}(\mathfrak{U}^{\bullet}_{\OC_C},r)_{\Q} \ar[u]  \ar[r]^-{\sim} & \tau_{\le r} \hocolim R \Gamma_{\text{\'et}}(\mathfrak{U}^{\bullet}_C, \Q_p(r)) \ar[u]^{t^r} & \ar[l]_-{\sim} \tau_{\le r}R \Gamma_{\text{\'et}}(\XF_C, \Q_p(r)) \ar[u]^{t^r}
}\] }
o\`u on a not\'e $R\Gamma_{\cris}(-):=R\Gamma_{\mathrm{cris}}( ((-)_{\OC_C}/p)/ \AAC)$ et $R\Gamma_{\text{\'et}}(A \Omega_{(-)_{\OC_C}}):=R\Gamma_{\text{\'et}}((-)_{\OC_C}, A \Omega_{(-)_{\OC_C}})$.
On obtient le r\'esultat du lemme.

 \end{proof}

\bibliographystyle{plain}
\bibliography{biblio} 
\end{document}